\theoremstyle{plain}
\newtheorem{thm}{Theorem}[section]
\newtheorem{lem}[thm]{Lemma}
\newtheorem{prop}[thm]{Proposition}
\newtheorem{cor}[thm]{Corollary}
\newtheorem{lemma}[thm]{Lemma}
\newtheorem{corollary}[thm]{Corollary}
\newtheorem{claim}[thm]{Claim}
\newtheorem*{thm*}{Theorem}
\newtheorem*{cor*}{Corollary}
\newtheorem{abcthm}{Theorem}
\theoremstyle{definition}
\newtheorem{defn}[thm]{Definition}
\newtheorem*{defn*}{Definition}
\newtheorem*{question*}{Question}
\newtheorem*{exam*}{Example}
\newtheorem{example}[thm]{Example}
\newtheorem{rem}[thm]{Remark}
\newtheorem*{rem*}{Remark}
\newtheorem{remark}[thm]{Remark}
\newcommand{\tl}{\mathrm{TL}}
\renewcommand{\t}{\mathbbm{1}}
\newcommand{\C}{\mathbb C}
\newcommand{\F}{\mathcal F}
\newcommand{\Z}{\mathbb Z}
\newcommand{\frakS}{\mathfrak{S}}
\newcommand{\calC}{\mathcal{C}}
\newcommand{\calI}{\mathcal I}
\newcommand{\calJ}{\mathcal J}
\newcommand{\ootimes}[1]{\otimes_{\tl_{#1}}}
\newcommand{\ul}{\underline}
\newcommand{\IE}{{}^{I}E}
\newcommand{\IIE}{{}^{II}E}
\renewcommand{\H}{\mathcal{H}}
\newcommand\Span{span}
\DeclareMathOperator{\Tor}{Tor}
\DeclareMathOperator{\Ext}{Ext}
\DeclareMathOperator{\Hom}{Hom}
\newcommand{\uq}{{U_q(\mathfrak{sl}_2)}}
\newcommand{\red}[1]{#1}
\newcommand{\bc}[2]{{\genfrac{(}{)}{0pt}{}{#1}{#2}}}
\newcommand{\qbc}[2]{{\genfrac{[}{]}{0pt}{}{#1}{#2}}_q}
\newcommand{\qi}[1]{[{#1}]_q}
\newcommand{\gbc}[3]{{\genfrac{[}{]}{0pt}{}{#1}{#2}}^G_{#3}}
\newcommand{\gi}[2]{[{#1}]_{#2}^G}
\newcommand{\JW}{\mathrm{JW}}
\renewcommand{\k}{\mathbbm{k}}
\begin{document}	
	
	\title{The homology of the Temperley-Lieb algebras}
		\author{Rachael Boyd}
    \address{DPMMS, University of Cambridge}
    \email{rachaelboyd@dpmms.cam.ac.uk}
    \urladdr{https://www.dpmms.cam.ac.uk/~rjb226/} 

    \author{Richard Hepworth}
    \address{Institute of Mathematics, University of Aberdeen}
    \email{r.hepworth@abdn.ac.uk}
    \urladdr{http://homepages.abdn.ac.uk/r.hepworth/pages/} 
    
    \subjclass[2010]{
        20J06, 
        16E40 
        (primary),
        20F36 
        (secondary)
    }
    \keywords{Homological stability, Temperley-Lieb algebras}

    \begin{abstract}
        This paper studies the homology and cohomology of the Temperley-Lieb algebra~$\tl_n(a)$, interpreted as appropriate Tor and Ext groups.
        Our main result applies under the common assumption that~$a=v+v^{-1}$ for some unit~$v$ in the ground ring, and states that the homology and cohomology vanish up to and including degree~$(n-2)$.  
        To achieve this we simultaneously prove homological stability and compute the stable homology.
        We show that our vanishing range is sharp when~$n$ is even.
       
        Our methods are inspired by the tools and techniques of homological stability for families of groups. We construct and exploit a chain complex of `planar injective words' that is analogous to the complex of injective words used to prove stability for the symmetric groups.  However, in this algebraic setting we encounter a novel difficulty:~$\tl_n(a)$ is not flat over~$\tl_m(a)$ for~$m<n$, so that Shapiro's lemma is unavailable. We resolve this difficulty by constructing what we call `inductive resolutions' of the relevant modules.
        
        Vanishing results for the homology and cohomology of Temperley-Lieb algebras can also be obtained from the existence of the Jones-Wenzl projector.
        Our own vanishing results are in general far stronger than these, but in a restricted case we are able to obtain additional vanishing results via the existence of the Jones-Wenzl projector. 
        
        We believe that these results, together with the second author's work on Iwahori-Hecke algebras, are the first time the techniques of homological stability have been applied to algebras that are not group algebras.
    \end{abstract}

	\maketitle
	
\setcounter{tocdepth}{1}
\tableofcontents	

\section{Introduction}
In this work we study the homology and cohomology of the \emph{Temperley-Lieb algebras}. In particular, we simultaneously prove that the algebras satisfy homological stability, and that their stable homology vanishes.

A sequence of groups and inclusions~$G_0\to G_1\to G_2\to\cdots$ is said to satisfy \emph{homological stability} if for each degree~$d$ the induced sequence of homology groups
\[
    H_d(G_0)\to H_d(G_1)\to H_d(G_2)\to \cdots
\]
eventually consists of isomorphisms.
Homological stability can also be formulated for sequences of spaces.
There are many important examples of groups and spaces for which homological stability is known to hold, such as symmetric groups~\cite{Nakaoka}, general linear groups~\cite{Charney,Maazen,VanDerKallen}, mapping class groups of surfaces~\cite{Harer,RandalWilliamsMCG} and~$3$-manifolds~\cite{HatcherWahl}, automorphism groups of free groups~\cite{HatcherVogtmannRational, HatcherVogtmannStability}, diffeomorphism groups of high-dimensional manifolds~\cite{GalatiusRandalWilliams}, configuration spaces~\cite{Church, RandalWilliamsConfig}, Coxeter groups~\cite{HepworthCoxeter}, Artin monoids~\cite{BoydArtin}, and many more.
In almost all cases, homological stability is one of the strongest things we know about the homology of these families.
It is often coupled with computations of the stable homology~$\lim_{n\to\infty}H_\ast(G_n)$, which is equal to the homology of the~$G_n$ in the \emph{stable range} of degrees, i.e.~those degrees for which stability holds.

The homology and cohomology of a group~$G$ can be expressed in the language of homological algebra as
\[
    H_\ast(G) = \Tor_\ast^{RG}(\t,\t),
    \qquad
    H^\ast(G) = \Ext^\ast_{RG}(\t,\t),
\]
where~$R$ is the coefficient ring for homology and cohomology,~$RG$ is the group algebra of~$G$ and~$\t$ is its trivial module.
Thus the homology and cohomology of a group depend only on the {group algebra}~$RG$ and its trivial module~$\t$.  
It is therefore natural to consider the homology and cohomology of an arbitrary algebra equipped with a `trivial' module. 
Moreover, one may ask whether homological stability occurs in this wider context.

In \cite{HepworthIH} the second author proved homological stability for \emph{Iwahori-Hecke} algebras of type~$A$. 
These are deformations of the group rings of the symmetric groups that are important in representation theory, knot theory, and combinatorics.
There is a fairly standard suite of techniques used to prove homological stability, albeit with immense local variation, and the proof strategy of~\cite{HepworthIH} followed all the steps familiar from the setting of groups. 
As is typical, the hardest step was to prove that the homology of a certain (chain) complex vanishes in a large range of degrees. 

In the present paper we will prove homological stability for the \emph{Temperley-Lieb algebras}, and we will prove that the stable homology vanishes. However amongst the familiar steps in our proof lies a novel obstacle and --- to counter it --- a novel construction.
At a certain point the usual techniques fail because Shapiro's lemma cannot be applied, as we will explain below.
This is a new difficulty that never occurs in the setting of groups, but we are able to resolve it for the algebras at hand, and in fact our solution facilitates the unusually strong results that we are able to obtain.
It is not surprising that the Iwahori-Hecke case is more straightforward than the Temperley-Lieb case: Iwahori-Hecke algebras are deformations of group rings, whereas the Temperley-Lieb algebras are significantly different.

To the best of our knowledge, the present paper and~\cite{HepworthIH} are the first time the techniques of homological stability have been applied to algebras that are not group algebras, and together they serve as proof-of-concept for the export of homological stability techniques to the setting of algebras.
The moral of \cite{HepworthIH} is that the `usual' techniques of homological stability suffice, so long as the algebras involved satisfy a certain flatness condition. 
The moral of the present paper is that failure of the flatness condition can in some cases be overcome, using new ingredients and techniques, and can even lead to stronger results than in the flat scenario. 
Since the completion of this paper, we have extended our techniques to study the homology of the Brauer algebras in joint work with Patzt~\cite{BoydHepworthPatzt}.

\subsection{Temperley-Lieb algebras}
Let~$n\geqslant 0$, let~$R$ be a commutative ring, and let~$a\in R$.
The \emph{Temperley-Lieb algebra}~$\tl_n(a)$ is the~$R$-algebra with basis \red{(by which we will always mean~$R$-module basis)} given by the planar diagrams on~$n$ strands, taken up to isotopy, and with multiplication given by pasting diagrams and replacing closed loops with factors of~$a$. 
The last sentence was intentionally brief, but we hope that its meaning becomes clearer with the following illustration of two elements~$x,y\in\tl_5(a)$ 
\[
    x=
    \begin{tikzpicture}[scale=0.4, baseline=(base)]
        \coordinate (base) at (0,2.75);
        \draw[line width = 1](0,0.5)--(0,5.5);
        \draw[line width = 1](6,0.5)--(6,5.5);
        \foreach \x in {1,2, 3,4,5}{
            \draw[fill=black] (0,\x) circle [radius=0.15] (6,\x) circle [radius=0.15];
        } 
        \draw (0,1) to[out=0,in=-90] (1,1.5) to[out=90,in=0] (0,2);
        \draw (0,4) to[out=0,in=-90] (1,4.5) to[out=90,in=0] (0,5);
        \draw (6,4) to[out=180,in=-90] (5,4.5) to[out=90,in=180] (6,5);
        \draw (6,2) to[out=180,in=-90] (5,2.5) to[out=90,in=180] (6,3);
        \draw (0,3) .. controls (2,3) and (4,1) .. (6,1);
    \end{tikzpicture}
    \qquad\qquad
    y=
    \begin{tikzpicture}[scale=0.4, baseline=(base)]
        \coordinate (base) at (0,2.75);
        \draw[line width = 1](0,0.5)--(0,5.5);
        \draw[line width = 1](6,0.5)--(6,5.5);
        \foreach \x in {1,2, 3,4,5}{
            \draw[fill=black] (0,\x) circle [radius=0.15] (6,\x) circle [radius=0.15];
        } 
        \draw (0,2) to[out=0,in=-90] (1,2.5) to[out=90,in=0] (0,3);
        \draw (6,4) to[out=180,in=-90] (5,4.5) to[out=90,in=180] (6,5);
        \draw (6,2) to[out=180,in=-90] (5,2.5) to[out=90,in=180] (6,3);
        \draw (0,1) to[out=0,in=-90] (2,2.5) to[out=90,in=0] (0,4);
        \draw (0,5) .. controls (3,5) and (3,1) .. (6,1);
    \end{tikzpicture}
\]
and their product~$x\cdot y$.
\[
    x\cdot y=
    \begin{tikzpicture}[scale=0.4, baseline=(base)]
        \coordinate (base) at (0,2.75);
        \draw[line width = 1](0,0.5)--(0,5.5);
        \draw[line width = 1](6,0.5)--(6,5.5);
        \draw[line width = 1](12,0.5)--(12,5.5);
        \foreach \x in {1,2, 3,4,5}{
            \draw[fill=black] (0,\x) circle [radius=0.15] (6,\x) circle [radius=0.15] (12,\x) circle [radius=0.15];
        } 
        \draw (0,1) to[out=0,in=-90] (1,1.5) to[out=90,in=0] (0,2);
        \draw (0,4) to[out=0,in=-90] (1,4.5) to[out=90,in=0] (0,5);
        \draw (6,4) to[out=180,in=-90] (5,4.5) to[out=90,in=180] (6,5);
        \draw (6,2) to[out=180,in=-90] (5,2.5) to[out=90,in=180] (6,3);
        \draw (0,3) .. controls (2,3) and (4,1) .. (6,1);
        \draw (6,2) to[out=0,in=-90] (7,2.5) to[out=90,in=0] (6,3);
        \draw (12,4) to[out=180,in=-90] (11,4.5) to[out=90,in=180] (12,5);
        \draw (12,2) to[out=180,in=-90] (11,2.5) to[out=90,in=180] (12,3);
        \draw (6,1) to[out=0,in=-90] (8,2.5) to[out=90,in=0] (6,4);
        \draw (6,5) .. controls (9,5) and (9,1) .. (12,1);
    \end{tikzpicture}
    \ \ 
    =
    \ \ 
    \begin{tikzpicture}[scale=0.4, baseline=(base)]
        \coordinate (base) at (0,2.75);
        \draw[line width = 1](0,0.5)--(0,5.5);
        \draw[line width = 1](6,0.5)--(6,5.5);
        \foreach \x in {1,2, 3,4,5}{
            \draw[fill=black] (0,\x) circle [radius=0.15] (6,\x) circle [radius=0.15];
        } 
        \draw (0,1) to[out=0,in=-90] (1,1.5) to[out=90,in=0] (0,2);
        \draw (0,4) to[out=0,in=-90] (1,4.5) to[out=90,in=0] (0,5);
        \draw (0,3) .. controls (3,3) and (3,1) .. (6,1);
        \draw (6,4) to[out=180,in=-90] (5,4.5) to[out=90,in=180] (6,5);
        \draw (6,2) to[out=180,in=-90] (5,2.5) to[out=90,in=180] (6,3);
        \draw (3,3.5) circle (0.7);
    \end{tikzpicture}
    \ \ 
    =
    \ \ 
    a\cdot
    \begin{tikzpicture}[scale=0.4, baseline=(base)]
        \coordinate (base) at (0,2.75);
        \draw[line width = 1](0,0.5)--(0,5.5);
        \draw[line width = 1](6,0.5)--(6,5.5);
        \foreach \x in {1,2, 3,4,5}{
            \draw[fill=black] (0,\x) circle [radius=0.15] (6,\x) circle [radius=0.15];
        } 
        \draw (0,1) to[out=0,in=-90] (1,1.5) to[out=90,in=0] (0,2);
        \draw (0,4) to[out=0,in=-90] (1,4.5) to[out=90,in=0] (0,5);
        \draw (0,3) .. controls (3,3) and (3,1) .. (6,1);
        \draw (6,4) to[out=180,in=-90] (5,4.5) to[out=90,in=180] (6,5);
        \draw (6,2) to[out=180,in=-90] (5,2.5) to[out=90,in=180] (6,3);
    \end{tikzpicture}
\]
The Temperley-Lieb algebras arose in theoretical physics in the 1970s \cite{TemperleyLieb}.  
They were later rediscovered by Jones in his work on von Neumann algebras \cite{JonesIndex}, and used in the first definition of the Jones polynomial \cite{JonesBull}.
Kauffman gave the above diagrammatic interpretation of the algebras in \cite{KauffmanState} and \cite{KauffmanInvariant}.

The Temperley-Lieb algebra~$\tl_n(a)$ is perhaps best studied in the case where $a=v+v^{-1}$, for~$v\in R$ a unit.
In this case, it is a quotient of the Iwahori-Hecke algebra of type~$A_{n-1}$ with parameter~$q=v^2$ (so it is closely related to the symmetric group) and it receives a homomorphism from the group algebra of the braid group on~$n$ strands.
It can also be described as the endomorphism algebra of~$V_q^{\otimes n}$, where~$V_q$ is a certain~$2$-dimensional representation of the quantum group~$U_q(\mathfrak{sl}_2)$.
We recommend \cite{RidoutStAubin} and \cite{KasselTuraev} for further reading on~$\tl_n(a)$, and~\cite{Westbury} and~\cite{GrahamLehrer}
for details on their representation theory.

\subsection{Homology of Temperley-Lieb algebras}

The Temperley-Lieb algebra $\tl_n(a)$ has a \emph{trivial module}~$\t$ consisting of a copy of~$R$ on which all diagrams other than the identity diagram act as multiplication by~$0$.
It therefore has homology and cohomology groups~$\Tor_\ast^{\tl_n(a)}(\t,\t)$ and~$\Ext^\ast_{\tl_n(a)}(\t,\t)$.

Our first result is a vanishing theorem in the case that the parameter~$a\in R$ is invertible.

\begin{abcthm}\label{theorem-invertible}
    Let~$R$ be a commutative ring, and let~$a$ be a unit in~$R$.  Then $\Tor^{\tl_n(a)}_d(\t,\t)$ and~$\Ext_{\tl_n(a)}^d(\t,\t)$ both vanish for~$d>0$.
\end{abcthm}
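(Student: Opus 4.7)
The plan is to build a projective resolution of $\t$ whose positive-degree terms are annihilated by both $\t\otimes_{\tl_n(a)}-$ and $\Hom_{\tl_n(a)}(-,\t)$. The crucial input is that each generator $U_i$ becomes a unit multiple of an idempotent: writing $e_i := a^{-1}U_i$, invertibility of $a$ gives $e_i^2 = e_i$, so each left ideal $\tl_n(a)U_i = \tl_n(a)e_i$ is projective, being a direct summand of $\tl_n(a)$.

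First I would verify that the rule sending the identity diagram to $1$ and every other basis diagram to $0$ defines an algebra augmentation $\epsilon\colon\tl_n(a)\to R$. This reduces to the planar fact that any non-identity diagram has at least one cup on the bottom and at least one cap on the top, and these features survive composition, so the product of two non-identity diagrams cannot be a scalar multiple of the identity. Then $\t\cong\tl_n(a)/I$ where $I:=\ker\epsilon$ is the $R$-span of the non-identity diagrams.

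For each subset $S\subseteq\{1,\ldots,n-1\}$ of pairwise non-adjacent indices, the $U_i$ with $i\in S$ commute (their indices differ by at least $2$), so $U_S:=\prod_{i\in S}U_i$ is well-defined, $e_S:=\prod_{i\in S}e_i$ is an idempotent, and $\tl_n(a)U_S = \tl_n(a)e_S$ is projective; a diagrammatic check gives $\bigcap_{i\in S}\tl_n(a)U_i = \tl_n(a)U_S$. The key observation is that for non-empty such $S$ and any $i_0\in S$, the identity $e_{i_0}U_S = U_S$ (from $U_{i_0}^2 = aU_{i_0}$) combined with $e_{i_0}\in I$ shows that $yU_S = (ye_{i_0})U_S\in I\cdot\tl_n(a)U_S$ for every $y\in\tl_n(a)$; hence $\tl_n(a)U_S = I\cdot\tl_n(a)U_S$ and so $\t\otimes_{\tl_n(a)}\tl_n(a)U_S = 0$. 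Dually, $\Hom_{\tl_n(a)}(\tl_n(a)e_S,\t) \cong e_S\t = 0$ because $U_{i_0}$ annihilates $\t$.

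The main technical step is to show that the Koszul-type complex
\[
    \cdots \to \bigoplus_{|S|=k,\,\text{non-adj.}}\tl_n(a)U_S \to \cdots \to \bigoplus_{i=1}^{n-1}\tl_n(a)U_i \to \tl_n(a) \to \t \to 0,
\]
with signed-inclusion differentials, is exact, thereby furnishing the desired projective resolution. Exactness at $\tl_n(a)$ is immediate because the image of the degree-$1$ map is $\sum_i\tl_n(a)U_i = I = \ker\epsilon$. Exactness higher up is the real obstacle: the "independence complex" of the path graph $P_{n-1}$ can have non-trivial topology, so genuine algebraic input is required and the structure of the coefficient modules $\tl_n(a)U_S$ must compensate. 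I would attack this either by writing down an explicit contracting homotopy built from the idempotents $e_i$, or by a Mayer--Vietoris-style induction on $n$. The cases $n\leqslant 4$ can be checked by hand and already show the stronger statement that $I$ is itself a projective $\tl_n(a)$-module, via a Peirce-decomposition argument relative to an appropriate chosen idempotent.

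Once the resolution is in place, applying $\t\otimes_{\tl_n(a)}-$ or $\Hom_{\tl_n(a)}(-,\t)$ collapses every positive-degree term to zero by the vanishing observation above, leaving only $\t$ in degree $0$. This yields $\Tor_d^{\tl_n(a)}(\t,\t) = \Ext_{\tl_n(a)}^d(\t,\t) = 0$ for all $d > 0$, as required.
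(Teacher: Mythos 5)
Your overall strategy---build a resolution of~$\t$ whose positive-degree terms are annihilated by~$\t\otimes_{\tl_n(a)}(-)$ and by~$\Hom_{\tl_n(a)}(-,\t)$---is in the right spirit, and it shares a key observation with the paper, namely that invertibility of~$a$ makes~$a^{-1}U_i$ idempotent. Your preliminary verifications are correct: the augmentation exists with kernel~$\sum_i\tl_n(a)U_i$, each~$\tl_n(a)U_S$ for a non-adjacent set~$S$ is a projective summand, and for nonempty such~$S$ both~$\t\otimes_{\tl_n(a)}\tl_n(a)U_S$ and~$\Hom_{\tl_n(a)}(\tl_n(a)U_S,\t)$ vanish.

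However, the proposal has a genuine gap precisely at the step you yourself flag as ``the real obstacle'': exactness of the Koszul-type complex indexed by the independent sets of~$P_{n-1}$. You offer two possible strategies (a contracting homotopy built from the~$e_i$, or a Mayer--Vietoris induction on~$n$) but carry out neither, and the entire argument rests on this unproved claim. It is a substantial one: as you note, the independence complex of~$P_{n-1}$ is typically not contractible, so exactness cannot come from the combinatorics alone, and no argument is given for why the coefficient modules~$\tl_n(a)U_S$ should compensate for the nontrivial topology. The side remark that the cases~$n\leqslant 4$ ``already show the stronger statement that~$I$ is itself a projective~$\tl_n(a)$-module'' is likewise unsubstantiated, and I would treat it with caution: the Jones--Wenzl discussion in Section~\ref{section-JW} shows that~$\t$ need not be projective even when~$a$ is a unit, so the projectivity of~$I$ is genuinely delicate and depends on more than invertibility of~$a$.

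The paper's proof is different and considerably more economical. Rather than resolving~$\t$ by projectives, it resolves the induced module~$\tl_n(a)\otimes_{\tl_m(a)}\t$ (with~$m=n$ recovering~$\t$) by a two-periodic complex~$C(m)$ whose terms are the previously-considered modules~$\tl_n(a)\otimes_{\tl_{m-1}(a)}\t$ and~$\tl_n(a)\otimes_{\tl_{m-2}(a)}\t$, with differentials given by right-multiplication by~$a^{-1}U_{m-1}$ and by~$1-a^{-1}U_{m-1}$. The terms of~$C(m)$ are not projective, but its acyclicity is a short direct check using the Jones normal form (Lemma~\ref{lemma-CD-acyclic}), and a spectral-sequence argument together with strong induction on~$m$ gives the vanishing --- in fact the stronger Claim~\ref{claim-invertible}, which the rest of the paper needs. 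The same device with a companion complex~$D(m)$ proves Theorem~\ref{theorem-shapiro} for arbitrary~$a$, so the paper's framing also buys far more than a Koszul-style resolution would, even if the latter could be shown exact.
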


The next result holds regardless of whether or not~$a$ is invertible, and uses the common assumption that~$a=v+v^{-1}$,~$v\in R^\times$. \red{However we see shortly that this assumption can be removed.}

\begin{abcthm}\label{theorem-vanishing-range}
    Let~$R$ be a commutative ring, let~$v\in R$ be a unit, let~$a=v+v^{-1}$, and let~$n\geqslant 0$.  Then 
    \[
        \Tor^{\tl_n(a)}_d(\t,\t)=0
        \quad\text{and}\quad 
        \Ext_{\tl_n(a)}^d(\t,\t)=0
    \]
    for~$1\leqslant d\leqslant (n-2)$ if~$n$ is even, and for ~$1\leqslant d\leqslant (n-1)$ if~$n$ is odd.
\end{abcthm}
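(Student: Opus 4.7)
The plan is to prove the theorem by an induction on~$n$ that simultaneously establishes homological stability for the inclusions $\tl_{n-1}(a)\hookrightarrow\tl_n(a)$ and the vanishing of the stable value of $\Tor^{\tl_n(a)}_d(\t,\t)$ in positive degrees. Base cases for small~$n$ can be handled by direct calculation, and for the inductive step the central tool is a chain complex $W_\bullet(n)$ of \emph{planar injective words}: a complex of free left $\tl_n(a)$-modules indexed by planar analogues of ordered tuples of distinct letters, equipped with an alternating-sum differential modelled on the classical injective word complex that proves stability for the symmetric groups, and augmented over the trivial module~$\t$.

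The first main step is to show that, after applying $-\otimes_{\tl_n(a)}\t$, the augmented complex $W_\bullet(n)\to\t$ is acyclic through the range of degrees claimed in the theorem. I would prove this by a combinatorial argument on the underlying poset of planar configurations, most likely via a discrete Morse matching or a shellability-type argument. The parity-dependent range in the theorem suggests that the connectivity bound itself depends on the parity of~$n$ and that the even case is sharp, so the matching would need to be constructed carefully enough to see this.

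The second main step is to run the double-complex spectral sequence associated to $W_\bullet(n)$ together with a projective resolution of~$\t$. In the classical group-theoretic setting one would now invoke Shapiro's lemma to identify $\Tor^{\tl_n(a)}_\ast(\t,W_p(n))$ with $\Tor$-groups over a smaller Temperley-Lieb algebra $\tl_{n-p-1}(a)$, and then feed the inductive hypothesis back in. As flagged in the introduction, the essential obstacle is that $\tl_n(a)$ is not flat over $\tl_m(a)$ for $m<n$, so Shapiro's lemma is unavailable, and this identification breaks down.

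The novel ingredient needed to circumvent this — and the step I expect to be genuinely hard — is the construction of \emph{inductive resolutions}: for each $m<n$, an explicit resolution of the trivial $\tl_m(a)$-module by $\tl_m(a)$-modules whose induction up to $\tl_n(a)$ continues to compute the correct derived functor in spite of the failure of flatness. This is where the hypothesis $a=v+v^{-1}$ should play a decisive role, since it makes available Jones--Wenzl–type idempotents and related diagrammatic elements that one can use to build the required chain maps and homotopies. Once these inductive resolutions are in place, the spectral sequence of the previous step can be analysed and collapsed to produce the inductive step, completing the $\Tor$-vanishing assertion; dualising the entire argument (replacing $-\otimes\t$ by $\Hom(-,\t)$ and resolutions by coresolutions) yields the corresponding $\Ext$-vanishing statement.
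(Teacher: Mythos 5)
Your overall strategy is aligned with the paper's: build a complex of planar injective words, run a double-complex spectral sequence, recognise that Shapiro's lemma is unavailable, and circumvent it with ``inductive resolutions.'' But there are several genuine misconceptions about how the pieces fit together, two of which would sink the argument if you tried to carry it out.

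First, the acyclicity result you need (the paper's Theorem~\ref{theorem-high-acyclicity}) is that the homology of $W(n)$ \emph{itself}, as a complex of $\tl_n(a)$-modules, vanishes in degrees $d\leqslant n-2$ --- not that $\t\otimes_{\tl_n(a)}W(n)$ is acyclic through a range, and the bound $n-2$ has no parity dependence. The parity in Theorem~\ref{theorem-vanishing-range} enters much later: from the computation that $\t\otimes_{\tl_n(a)}W(n)$ has homology $\t$ in degree $n-1$ when $n$ is even and $0$ when $n$ is odd (Lemma~\ref{lemma-tensor-homology}), which feeds into a surviving $d^n$ differential and short exact sequence in the spectral sequence. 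A discrete Morse or shellability argument for the unitensored complex is in principle plausible, but the paper instead filters $W(n)$ by a filtration whose successive quotients are (shifted, truncated) copies of $W(n-1)$ and then inducts on $n$; also note the terms $W(n)_i=\tl_n\otimes_{\tl_{n-i-1}}\t$ are \emph{not} free $\tl_n$-modules, so this is not a cellular chain complex in the usual sense and a Morse-matching approach would need reformulation.

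Second, and more seriously, you have misattributed where the hypothesis $a=v+v^{-1}$ is used and what the inductive resolutions look like. The hypothesis is needed to \emph{define} $W(n)$: the differentials are alternating sums of maps built from the elements $s_i=\lambda+\mu U_i$, which satisfy the braid relations and therefore make the faces commute, and these elements exist precisely because $\tl_n(v+v^{-1})$ is a quotient of a Hecke algebra. By contrast, the inductive resolutions and the resulting Shapiro-replacement (Theorem~\ref{theorem-shapiro}) hold for \emph{arbitrary} $a$ and use no Jones--Wenzl technology whatsoever; the key idempotent is the humble $U_{m-1}U_m$ (and, when $a$ is invertible, $a^{-1}U_{m-1}$). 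Moreover the resolution is not of $\t$ over $\tl_m(a)$ followed by induction: it is a direct resolution, as a $\tl_n(a)$-module, of $\tl_n(a)\otimes_{\tl_m(a)}\t$ with terms $\tl_n(a)\otimes_{\tl_{m-1}(a)}\t$ and $\tl_n(a)\otimes_{\tl_{m-2}(a)}\t$, so that the induction is on $m$ with $n$ fixed. Your version --- resolve $\t$ over $\tl_m(a)$ and induce up --- would still run into the flatness problem you are trying to avoid, since inducing a resolution does not produce a resolution when the functor is not exact. The actual fix lives entirely ``upstairs'' over $\tl_n(a)$.

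Smaller point: the Jones--Wenzl projector does appear in the paper, but only in Section~\ref{section-JW} to prove a supplementary vanishing theorem over fields (Theorem~\ref{thm-JW}); it plays no role in the proof of Theorem~\ref{theorem-vanishing-range}.
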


Thus the map $\Tor^{\tl_{n-1}(a)}_d(\t,\t)\to\Tor^{\tl_n(a)}_d(\t,\t)$ is an isomorphism for $d\leqslant n-3$, so that we have homological stability,
and $\lim_{n\to\infty}\Tor^{\tl_n(a)}_\ast(\t,\t)=0$ in positive degrees, so the stable homology is trivial.
The latter is reminiscent of Quillen's result on the vanishing stable homology of general linear groups of finite fields in defining characteristic~\cite{QuillenGL}, and of Szymik-Wahl's result on the acyclicity of the Thompson groups~\cite{SzymikWahl}.
Theorems~\ref{theorem-invertible} and~\ref{theorem-vanishing-range} might lead us to expect that the homology and cohomology of the~$\tl_n(a)$ are largely trivial, but in fact the results are as strong as possible, at least for~$n$ even: 

\begin{abcthm}\label{theorem-sharpness}
   In the setting of Theorem~\ref{theorem-vanishing-range} above, suppose further that~$n$ is even and that~$a=v+v^{-1}$ is not a unit.  
   Then~$\Tor^{\tl_n(a)}_{n-1}(\t,\t)\neq 0$.
\end{abcthm}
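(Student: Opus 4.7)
The plan is to construct an explicit surjection from $\Tor^{\tl_n(a)}_{n-1}(\t,\t)$ onto $R/(a)$, which is non-zero by the hypothesis that $a$ is not a unit. I would return to the chain complex of planar injective words from the proofs of Theorems~\ref{theorem-invertible} and~\ref{theorem-vanishing-range}. This complex is supported in chain degrees $0,\ldots,n-1$, and after applying the paper's inductive resolutions and tensoring with $\t$, its homology recovers $\Tor^{\tl_n(a)}_*(\t,\t)$ in the relevant range. Since the complex has top chain degree $n-1$, the group $\Tor^{\tl_n(a)}_{n-1}(\t,\t)$ is, at worst, a quotient of the top chain group by the image of the penultimate differential $d_{n-1}$.

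For $n$ even I would identify this top chain group, after tensoring with $\t$, with a concrete small $R$-module. Because every generator $e_i$ of $\tl_n(a)$ acts as zero on $\t$, most of the chain group collapses, and one expects the surviving quotient to be free of rank one, spanned by a distinguished maximal non-crossing matching on $n/2$ nested arcs. This identification depends crucially on $n$ being even, since only then does a fully matched top diagram exist. I would then compute the image of $d_{n-1}$ under this identification; the key claim is that the boundary of the top generator, after reducing modulo the $e_i$-action, produces precisely a factor of $a$, arising from a single loop closure in the diagrammatics, while every other term in $d_{n-1}$ factors through a non-trivial $e_i$ and vanishes in $\t$. Granting this, the cokernel of $d_{n-1}$ is $R/(a)$, which is non-zero by hypothesis, so $\Tor^{\tl_n(a)}_{n-1}(\t,\t)\neq 0$.

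The principal obstacle is this final combinatorial computation: one must verify that, for the distinguished nested matching, the unique surviving term in the boundary contributes exactly one closed loop and hence one factor of $a$, and that every other contribution is killed by the trivial action. This depends on careful bookkeeping of how the planar injective words differential interacts with both the inductive resolution used in the body of the paper and the trivial action on $\t$, and it is where the asymmetry between even and odd $n$ in Theorem~\ref{theorem-vanishing-range} should manifest concretely: for $n$ odd, the analogous top term admits an unmatched strand that forces the boundary to land outside the $a$-multiple locus, consistent with the absence of a sharpness statement in that case.
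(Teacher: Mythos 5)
There is a genuine gap in the structural setup. You claim that, after inductive resolutions and tensoring with $\t$, the group $\Tor^{\tl_n(a)}_{n-1}(\t,\t)$ is ``a quotient of the top chain group by the image of the penultimate differential'' of the tensored complex $\t\otimes_{\tl_n}W(n)$. That is not how the spectral sequence works. The complex $\t\otimes_{\tl_n}W(n)$ has a copy of $\t=R$ in each degree $-1,\ldots,n-1$, with boundary maps alternating between $0$ and the identity (Lemma~\ref{lemma-tensor-homology}); its homology is $\varepsilon_n$ concentrated in degree $n-1$, where $\varepsilon_n=\t$ for $n$ even. It does \emph{not} compute $\Tor_\ast^{\tl_n}(\t,\t)$. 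Because $W(n)$ itself is not acyclic --- its top homology is the Fineberg module $\F_n=H_{n-1}(W(n))$ --- the hyperhomology spectral sequence built from $W(n)$ converges to $\Tor_{\ast-n+1}^{\tl_n}(\t,\F_n)$, and the identification of $\Tor_{n-1}^{\tl_n}(\t,\t)$ is achieved only through the exact sequence in Theorem~\ref{theorem-fineberg}:
\[
0
\to
\Tor_n^{\tl_n}(\t,\t)
\to
\t\otimes_{\tl_n}\F_n
\to
\t
\to
\Tor_{n-1}^{\tl_n}(\t,\t)
\to
0,
\]
where the central map sends $1\otimes f$ to the constant term of $f$ (regarding $\F_n\subseteq\tl_n$ via Proposition~\ref{proposition-fineberg}). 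Your argument never mentions $\F_n$, and as written it asks for the cokernel of the wrong map.

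Your ``key claim'' is also numerically false for the map you describe. The top differential of $\t\otimes_{\tl_n}W(n)$, in degree $n-1$ with $n$ even, is multiplication by the constant term of the Jacobsthal element $\calJ_n$ acting on $\t$, and for $n$ even $\calJ_n$ has no constant term; that boundary map is identically zero, not multiplication by $a$. The factor of $a$ that the paper actually exploits arises from an entirely different place: one shows (Lemma~\ref{lemma-mults-of-Jn} and the lemma following it) that any cycle $x$ with $x\cdot\calJ_n=0$ has constant term divisible by $a$, a combinatorial statement about Jones normal forms and the ideal $\calI$. That is the content needed to make the cokernel of $\t\otimes_{\tl_n}\F_n\to\t$ surject onto $R/aR$. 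Finally, your explanation of the even/odd asymmetry (``an unmatched strand'') is not the mechanism: the actual dichotomy is that $\varepsilon_n=0$ for $n$ odd and $\varepsilon_n=\t$ for $n$ even, which follows from the alternating $0$/identity pattern in the boundary of $\t\otimes_{\tl_n}W(n)$. To repair your proposal you would need to (i) recognise that the non-vanishing top homology $\F_n$ of $W(n)$ enters the spectral sequence as a genuine local coefficient module, and (ii) replace the bogus boundary computation with an analysis of the constant terms of cycles in $\F_n\subseteq\tl_n$.
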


Thus Theorem~\ref{theorem-invertible} does not extend to the case of~$a$ not invertible, and the stable range in Theorem~\ref{theorem-vanishing-range} is sharp.
In fact we can say more:~When $n$ is even, $\Tor^{\tl_n(a)}_{n-1}(\t,\t)\cong R/bR$ where~$b$ is a multiple of~$a$ \red{(unfortunately our methods do not allow us to say anything more concrete about~$b$)}.

\begin{rem*}
    One can compute~$\Tor_1^{\tl_n(a)}(\t,\t)$ directly using the method of \cite[Exercise 3.1.3]{Weibel}: it is~$R/aR$ for~$n=2$, and vanishes otherwise. 
    We also compute the homology and cohomology of~$\tl_2(a)$ by an explicit resolution: 
    $\Tor^{\tl_2(a)}_\ast(\t,\t)$ is $R/aR$ in odd degrees, and the kernel $R_a$ of $r\mapsto ar$ in positive even degrees, so that if $a$ is not invertible then $\Tor^{\tl_2(a)}_\ast(\t,\t)$ is non-trivial in infinitely many degrees.
\end{rem*}

\red{In~\cite{RandalWilliamsNote}, Randal-Williams shows that in fact you can remove our assumption that~$a=v+v^{-1}$ for a unit~$v\in R$, by applying Theorem~\ref{theorem-sharpness} for an associated ring~$S$. This yields the following strengthening of Theorem~\ref{theorem-vanishing-range}.

\begin{cor*}\cite[Theorem B']{RandalWilliamsNote}\label{corollary - RW strengthening}
	 Let~$R$ be a commutative ring, $a$ be any element in $R$, and~$n\geqslant 0$.  Then 
	\[
	\Tor^{\tl_n(a)}_d(\t,\t)=0
	\quad\text{and}\quad 
	\Ext_{\tl_n(a)}^d(\t,\t)=0
	\]
	for~$1\leqslant d\leqslant (n-2)$ if~$n$ is even, and for ~$1\leqslant d\leqslant (n-1)$ if~$n$ is odd.
	\begin{proof}
		The full proof can be found in \cite{RandalWilliamsNote}, and uses the Base Change Spectral Sequence~\cite[Section~5.6]{Weibel}. This is applied to the faithfully flat ring homomorphism~$R\to S$ where $S=R[v]/(v^2-a\cdot v+1)$, which by construction has a unit~$v$ and element~$a$ such that~$a=v+v^{-1}$. The results in Theorem~\ref{theorem-vanishing-range} for the ring~$S$ can now be transferred to analogous results for the ring~$R$.
	\end{proof}
\end{cor*}
}
\subsection{Jones-Wenzl projectors}
The \emph{Jones-Wenzl projector} or \emph{Jones-Wenzl idempotent} $\JW_n$, if it exists, is the element of $\tl_n(a)$ 
uniquely characterised by the following two properties:
\begin{itemize}
    \item $\JW_n\in 1+I_n$
    \item $\JW_n \cdot I_n = 0 = I_n \cdot\JW_n$ 
\end{itemize}
where $I_n$ is the two-sided ideal in $\tl_n(a)$ spanned by all diagrams other than the identity diagram.
The Jones-Wenzl projector was first introduced by Jones~\cite{JonesIndex}, was further studied by Wenzl~\cite{WenzlProjections}, and has since become important in representation theory, knot theory and the study of $3$-manifolds.

The Jones-Wenzl projector exists if and only if the trivial module $\t$ is projective. 
Moreover, when the ground ring $R$ is a field, there is a 
simple and explicit criterion for the existence of $\JW_n$, given in terms of the parameter $a$.
Thus, when this criterion holds, the vanishing of $\Tor_\ast^{\tl_n(a)}(\t,\t)$ and $\Ext^\ast_{\tl_n(a)}(\t,\t)$ in positive degrees follows immediately.

Our own Theorems~\ref{theorem-invertible} and~\ref{theorem-vanishing-range} are in general far stronger than the vanishing results obtained from the existence of $\JW_n$, as they do not require~$R$ to be a field, and the constraints are weaker. 
Indeed, in the case of $n$ even, Theorems~\ref{theorem-invertible} and~\ref{theorem-sharpness} are the final word on vanishing, since they imply that the homology and cohomology of $\tl_n(a)$ vanish in all positive degrees if and only if $a$ is invertible.
However, in the case of $n$ odd and $R$ a field, there are some situations where our theorems do not incorporate all vanishing results given by the existence of $\JW_n$.
These cases are encapsulated in the following.

\begin{abcthm}\label{thm-JW}
    Let~$n=2k+1$, and let~$R$ be a field whose characteristic does not divide $\binom{k}{t}$ for any~$1\leq t\leq k$. 
    Let~$v$ be a unit in~$R$ and assume that~$a=v+v^{-1}=0$. 
    Then $\Tor_\ast^{\tl_n(0)}(\t,\t)$ and $\Ext_{\tl_n(0)}^\ast(\t,\t)$ vanish in positive degrees.
\end{abcthm}

\red{As with Theorem~\ref{theorem-vanishing-range}, the assumption that~$a=v+v^{-1}$ for~$v$ a unit can be removed in this result.}

Combining Theorem~\ref{thm-JW} with Theorem~\ref{theorem-invertible} yields rather comprehensive vanishing results when~$R$ is a field with appropriate characteristic.
For example, it now follows that when $R$ is any field, the homology and cohomology of $\tl_3(v+v^{-1})$ vanish regardless of the choice of $v$.  
Similarly, the homology and cohomology of $\tl_5(v+v^{-1})$ will vanish over any field and for any value of $v$, except possibly in characteristic $2$ when $v+v^{-1}=0$.
Since the first appearance of our paper, Sroka \cite{Sroka} has used related techniques to show that when~$n$ is odd the~$\Tor$ groups vanish in all positive degrees, for any choice of~$R$.

The next few sections of this introduction will discuss the proofs of our main results in some detail.

\subsection{Planar injective words}

Several proofs of homological stability for the symmetric group~\cite{Maazen,Kerz,RandalWilliamsConfig} make use of the \emph{complex of injective words}. This is a highly-connected complex with an action of the symmetric group~$\frakS_n$.
Our main tool for proving Theorems~\ref{theorem-vanishing-range} and~\ref{theorem-sharpness} is the \emph{complex of planar injective words}~$W(n)$, a Temperley-Lieb analogue of the complex of injective words that we introduce and study here for the first time.
It is a chain complex of~$\tl_n(a)$-modules, and in degree~$i$ it is given by the tensor product module~$\tl_n(a)\otimes_{\tl_{n-i-1}(a)}\t$.  
This is analogous to the complex of injective words, whose~$i$-simplices form a single~$\frakS_n$-orbit with typical stabiliser~$\frakS_{n-i-1}$, which is an alternative way of saying that the~$i$-th chain group is isomorphic to~$R\frakS_n\otimes_{R\frakS_{n-i-1}}\t$.
We show the following high-acyclicity result. In order to construct appropriate differentials for~$W(n)$ we exploit a homomorphism from the group algebra of the braid group on~$n$ strands, which is not necessarily apparent from the definition of~$\tl_n(a)$. This is where the restriction of~$a$ to~$a=v+v^{-1}$ is necessary.
    
\begin{abcthm}\label{theorem-high-acyclicity}
    $H_d(W(n))$ vanishes in degrees~$d\leqslant (n-2)$.
\end{abcthm}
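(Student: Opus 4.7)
The plan is to prove Theorem~\ref{theorem-high-acyclicity} by induction on $n$, via an analysis of the chain modules in terms of ``half-diagrams'' and a filtration argument. The approach is inspired by the classical proofs of high-acyclicity for the complex of injective words for $\frakS_n$, but must contend with the diagrammatic nature of Temperley-Lieb and with the braid-theoretic nature of the differential.

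First I would make the complex concrete: by direct analysis of the coset description, the module $\tl_n(a) \otimes_{\tl_{n-i-1}(a)} \t$ is free with basis the set of diagrams of $\tl_n(a)$ in which the bottom $n-i-1$ strands (those corresponding to the embedded $\tl_{n-i-1}(a)$) are through-strands; these are the ``planar injective words of length $i+1$''. The differential $d = \sum_{j=0}^{i}(-1)^j d_j$ should use the braid group homomorphism into $\tl_n(a)$ to move the $j$-th marked strand into the bottom position, picking up appropriate factors involving $v$ and $v^{-1}$, and then forget it. Verifying $d^2 = 0$ reduces to braid relations, and this is where the hypothesis $a = v + v^{-1}$ enters crucially.

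For the inductive step, I would filter $W(n)$ by the behaviour of the topmost strand of the half-diagram. The subcomplex $A_\ast$ in which the top strand is a through-strand should identify with a shift of $W(n-1)$, and the quotient $B_\ast$ in which the top strand is joined by an arc to a lower strand should decompose into pieces each identifiable with a shift of $W(n-2)$. The associated long exact sequence in homology, combined with the inductive hypothesis applied to $W(n-1)$ and $W(n-2)$, then delivers the claimed vanishing in degrees $d \leq n-2$, once the connecting morphism has been analysed. The principal obstacle lies exactly here: because $\tl_n(a)$ is not flat over its subalgebras there is no Shapiro-type shortcut, so the identification of subquotients with smaller $W$-complexes must be done by hand at the level of explicit bases, while simultaneously tracking how the braid-theoretic signs in the differential interact with the top-strand filtration, so that the maps in the short exact sequence are genuine chain maps and the induction closes cleanly.
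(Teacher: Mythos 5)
Your overall strategy --- filter $W(n)$, identify the filtration pieces with smaller $W$-complexes, and induct on $n$ --- is the same as the paper's, and you correctly anticipate that the failure of flatness forces the identifications to be carried out by hand at the level of explicit bases. However, the specific filtration you propose has a concrete gap, and the shape of the pieces you predict does not match what actually appears.

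The most serious problem is that your proposed subcomplex $A_\ast$ (elements whose distinguished extremal strand is a through-strand, i.e.~represented by monomials not involving $s_1$) is not a subcomplex: in top degree $i=n-1$ the face map $d^{n-1}_j$ for $j\geqslant 1$ right-multiplies by $s_{j}\cdots s_1$, which introduces $s_1$. The paper's bottom filtration stage $F^0$ repairs this by also including all elements of the form $x\cdot(s_1\cdots s_{n-i-1})\otimes 1$ with $x$ free of $s_1$, and the enlarged $F^0$ then turns out to be isomorphic not to a shift of $W(n-1)$ but to the \emph{cone} $C(W(n-1))$, hence acyclic outright. Further, the paper replaces your two-stage picture with an $(n+1)$-stage filtration $F^0\subseteq F^1\subseteq\cdots\subseteq F^n=W(n)$, where $F^k$ adds elements $x\cdot(s_1\cdots s_{n-i-1+k})\otimes 1$, and for $k\geqslant 1$ the quotient $F^k/F^{k-1}$ is identified with a truncated suspension $\tau_{n-1}\Sigma^{k+1}W(n-1)$ of $W(n-1)$ --- not of $W(n-2)$ as you predict --- so the induction closes using only the case $n-1$. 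The truncation is unavoidable: a bare $\Sigma^{k+1}W(n-1)$ would occupy degrees up to $n-1+k$, while $W(n)$ is concentrated in degrees at most $n-1$. In short, the philosophy is right, but the proposed subcomplex must be enlarged to actually be a subcomplex, the filtration must be refined to $n+1$ stages before the pieces become identifiable, and those pieces are a cone and truncated suspensions of $W(n-1)$ rather than shifts of $W(n-2)$.
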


The complex~$W(n)$ has rich combinatorial properties, analogous to those of the complex of injective words, that we explore in the companion paper~\cite{BoydHepworthComb}.
In particular, Theorem~\ref{theorem-high-acyclicity} tells us that the homology of~$W(n)$ is concentrated in the top degree~$H_{n-1}(W(n))$, and in~\cite{BoydHepworthComb} we show that \red{when~$R$ is Noetherian} the rank of this top homology group is the~$n$-th \emph{Fine number}~$F_n$~\cite{DeutschShapiro}, an analogue of the number of derangements on~$n$ letters. Furthermore we show that the differentials of~$W(n)$ encode the \emph{Jacobsthal numbers}~\cite{JacobsthalOEIS}. Finally in the semisimple case we show that~$H_{n-1}(W(n))$ has descriptions firstly categorifying an alternating sum for the Fine numbers, and secondly in terms of standard Young tableaux.
We call the~$\tl_n(a)$-module~$H_{n-1}(W(n))$ the \emph{Fineberg module}, and we denote it~$\F_n(a)$. 
We know little about $\F_n(a)$ in general, though in the cases $n=2,3,4$
we give examples describing it in terms of the cell modules of $\tl_n(a)$.

The proof of Theorem~\ref{theorem-high-acyclicity} is perhaps the most difficult technical result in this paper.
It is obtained by filtering~$W(n)$ and showing that the filtration quotients are (suspensions of truncations of) copies of~$W(n-1)$, and then proceeding by induction.

\subsection{Spectral sequences and Shapiro's lemma}

Let us now outline how we use the complex of planar injective words~$W(n)$ to prove Theorems~\ref{theorem-vanishing-range} and~\ref{theorem-sharpness}.
Following standard approaches to homological stability for groups, we consider a spectral sequence obtained from the complex~$W(n)$.
The~$E^1$-page of our spectral sequence consists of the groups~$\Tor_j^{\tl_n(a)}(\t,\tl_n(a)\otimes_{\tl_{n-i-1}(a)}\t)$. Furthermore, thanks to Theorem~\ref{theorem-high-acyclicity}, the spectral sequence converges to~$\Tor^{\tl_n(a)}_{\ast-n+1}(\t,\F_n(a))$, where~$\F_n(a) = H_{n-1}(W(n))$ is the Fineberg module.
Our experience from homological stability tells us to apply \emph{Shapiro's lemma}, or in this context a \emph{change-of-rings isomorphism}, to identify 
\[
    \Tor^{\tl_n(a)}_\ast(\t,\tl_{n}(a)\otimes_{\tl_{n-i-1}(a)}\t)
    \qquad\text{with}\qquad
    \Tor^{\tl_{n-i-1}(a)}_\ast(\t,\t).
\]
This identification applied to the columns of our spectral sequence would allow us to implement an inductive hypothesis.
However, such a change-of-rings isomorphism would only be valid if~$\tl_n(a)$ were flat as a~$\tl_{n-i-1}(a)$-module, and this is not the case.
This failure of Shapiro's lemma is a potentially serious obstacle to proceeding further.
However, we are able to identify the columns of our spectral sequence by independent means, as follows:

\begin{abcthm}\label{theorem-shapiro}
    Let~$R$ be a commutative ring and let~$a\in R$.
    Let~$0\leqslant m<n$.  
    Then~$\Tor^{\tl_n(a)}_d(\t,\tl_n(a)\otimes_{\tl_{m}(a)}\t)$ and~$\Ext_{\tl_n(a)}^d(\tl_n(a)\otimes_{\tl_{m}(a)}\t,\t)$ both vanish for~$d>0$.
\end{abcthm}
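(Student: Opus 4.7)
The plan is to construct explicit free resolutions --- the ``inductive resolutions'' referred to in the introduction --- of the $\tl_n(a)$-modules $\tl_n(a) \otimes_{\tl_m(a)} \t$, from which the vanishing of Tor and Ext against $\t$ can be read off directly. Since the theorem makes no assumption on $a$ (not even invertibility), the argument must be entirely diagrammatic and cannot appeal to the existence of the Jones--Wenzl projector or to any idempotent decomposition of the algebra.

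I would proceed by a (likely downward) induction on $m$, with the cases $m \in \{0, 1\}$ being immediate since $\tl_m(a) = R$ there, so that $\tl_n(a) \otimes_{\tl_m(a)} \t \cong \tl_n(a)$ is already free of rank one and the vanishing is trivial. For the inductive step, I would exploit the short exact sequence of $\tl_n(a)$-modules
\begin{equation*}
    0 \to K_{n,m} \to \tl_n(a) \otimes_{\tl_m(a)} \t \to \tl_n(a) \otimes_{\tl_{m+1}(a)} \t \to 0
\end{equation*}
arising from the inclusion of augmentation ideals $I_m \subseteq I_{m+1}$ inside $\tl_{m+1}(a) \subseteq \tl_n(a)$, so that $\tl_n(a) \cdot I_m \subseteq \tl_n(a) \cdot I_{m+1}$ and the surjection is between the corresponding quotients of $\tl_n(a)$. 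The kernel $K_{n,m}$ is a quotient of the cyclic left submodule $\tl_n(a) \cdot e_m$, where $e_m$ is the $m$-th Temperley--Lieb generator.

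The main step --- and the principal obstacle --- is to identify $K_{n,m}$ diagrammatically in a form amenable to the induction: most optimistically, as a module isomorphic (perhaps up to a grading shift) to another $\tl_n(a) \otimes_{\tl_k(a)} \t$ for appropriate $k$, to which the inductive hypothesis applies; otherwise, $K_{n,m}$ might itself require an inductive resolution built from the diagram calculus of its own. Once the kernel is identified, one assembles resolutions of the outer modules via the horseshoe lemma into a resolution of the middle; equivalently, one reads off the vanishing of Tor and Ext for the middle directly from the long exact sequence and the known vanishing for the outer terms. The delicate work is the careful analysis of how the generator $e_m$ acts on diagrams, and in particular the description of $\tl_n(a) \cdot e_m$ modulo the smaller ideal $\tl_n(a) \cdot I_m$; carrying this out in full generality (for arbitrary $a \in R$) is what distinguishes this argument from the comparatively straightforward group-algebra setting where Shapiro's lemma would suffice.
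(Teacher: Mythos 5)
Your approach has the right shape at the outset (upward induction on $m$, base cases $m = 0, 1$ where $\tl_m(a) = R$), but the central step founders. You propose to resolve the module by the short exact sequence
\begin{equation*}
    0 \longrightarrow K_{n,m} \longrightarrow \tl_n(a)\otimes_{\tl_m(a)}\t \longrightarrow \tl_n(a)\otimes_{\tl_{m+1}(a)}\t \longrightarrow 0,
\end{equation*}
and you hope, ``most optimistically,'' to identify the kernel $K_{n,m} = I_{m+1}/I_m$ with some $\tl_n(a)\otimes_{\tl_k(a)}\t$. This identification is simply false, and a rank count shows it. For $n = 3$, $m = 1$: the ideal $I_2\subseteq\tl_3(a)$ has Jones basis $\{U_1,\, U_2 U_1\}$ (rank $2$), whereas the possible ranks of $\tl_3(a)\otimes_{\tl_k(a)}\t$ are $5, 5, 3, 1$ for $k = 0,1,2,3$. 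More conceptually, $K_{n,m}$ is a cell/standard module for $\tl_n(a)$, and $\Tor^{\tl_n(a)}_\ast(\t, K_{n,m})$ is generically nonvanishing in high degrees when $a$ is not invertible; so even granting the hoped-for identification, the long-exact-sequence bookkeeping would not go through without an extra injectivity input at $\Tor_0$.

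The paper's actual argument avoids the kernel entirely. For $2 \leqslant m < n$ it exhibits an \emph{infinite}, $2$-periodic acyclic complex $D(m)$ whose degree-$(-1)$ term is $\tl_n\otimes_{\tl_m}\t$, whose degree-$0$ term is $\tl_n\otimes_{\tl_{m-1}}\t$, and whose terms in all higher degrees are copies of $\tl_n\otimes_{\tl_{m-2}}\t$, with boundary maps built from the element $U_{m-1}$ and the idempotent $U_{m-1}U_m$. (Here the hypothesis $m < n$ is used exactly so that $U_m$ is available; and contrary to your remark, idempotents --- small, local ones, not Jones--Wenzl --- are the engine of the construction.) Acyclicity uses a small but essential combinatorial input (Lemma~\ref{lemma-cups}): if $y U_{m-1}$ lies in $I_{m-1}$ then it already lies in $I_{m-2}$. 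All nontrivial terms of $D(m)$ are induced from strictly smaller subalgebras, so the inductive hypothesis applies to them; applying $\t\otimes_{\tl_n}(-)$ to $D(m)$ produces a visibly acyclic complex, and a routine hyperhomology spectral sequence forces $\Tor^{\tl_n(a)}_d(\t,\tl_n\otimes_{\tl_m}\t) = 0$ for $d > 0$ (dually for $\Ext$). The idea you are missing is precisely this: resolve the new module directly by the old modules (an unbounded resolution is fine), rather than trying to fit old and new together into a short exact sequence whose third term you cannot control.
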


In conjunction with a computation of the~$d=0$ case, this gives us the vanishing results of Theorem~\ref{theorem-vanishing-range}.
Moreover, in the case of~$n$ even we are able to analyse the rest of the spectral sequence (there is a single differential and a single extension problem) in sufficient detail to prove the sharpness result of Theorem~\ref{theorem-sharpness}. 
This involves a careful study of the Fineberg module~$\F_n(a)$.
In general, our method identifies~$\Tor_\ast^{\tl_n(a)}(\t,\t)$ with~$\Tor_{\ast-n}^{\tl_n(a)}(\t,\F_n(a))$, except in degrees~$\ast=n-1,n$ when~$n$ is even.

\subsection{Inductive resolutions}
It remains for us to discuss the proofs of  Theorems~\ref{theorem-invertible} and~\ref{theorem-shapiro}.
These results are proved by a novel method that exploits the structure of the Temperley-Lieb algebras, and in particular they lie outwith the standard tool-kit of homological stability.
Moreover, it is Theorem~\ref{theorem-shapiro} which allows us to overcome the failure of Shapiro's lemma.

The two theorems are very similar: Theorem~\ref{theorem-invertible} is an instance of the more general statement that~$\Tor_\ast^{\tl_n(a)}(\t,\tl_n(a)\otimes_{\tl_m(a)}\t)$ vanishes in positive degrees for~$m\leqslant n$ and~$a$ invertible, while Theorem~\ref{theorem-shapiro} states that the same groups vanish for~$m<n$ and~$a$ arbitrary.
These are both proved by strong induction on~$m$. 
The initial cases~$m=0,1$ are immediate because then~$\tl_m(a)=R$ so~$\tl_n(a)\otimes_{\tl_m(a)}\t$ is free. 
The induction step is proved by constructing and exploiting a resolution of~$\tl_n(a)\otimes_{\tl_m(a)}\t$ whose terms have the form~$\tl_n(a)\otimes_{\tl_{m-1}(a)}\t$ and~$\tl_n(a)\otimes_{\tl_{m-2}(a)}\t$, and then applying the inductive hypothesis.
We call these resolutions \emph{inductive resolutions} since they resolve the next module in terms of those already considered.

Our technique of inductive resolutions is generalised in a joint paper with Patzt, \cite{BoydHepworthPatzt}, where we show that the homology of the Brauer algebras is isomorphic to the homology of the symmetric groups in a stable range when the parameter~$\delta$ is not invertible, and in every degree when~$\delta$ is invertible.
This provides concrete evidence that the new techniques developed in this paper can be adapted to other algebras to obtain results of similar strength.


\subsection{Discussion: Homological stability for algebras}

As stated earlier, we regard the present paper, together with the results of \cite{HepworthIH} on Iwahori-Hecke algebras, as proof-of-concept for the export of the techniques of homological stability to the setting of algebras. 
And, since the first appearance of this paper, these techniques have been extended to the setting of Brauer algebras in our joint work with Patzt~\cite{BoydHepworthPatzt}.
We hope that the present paper, together with~\cite{HepworthIH,BoydHepworthPatzt}, will be a springboard for further research in this direction.

One of the main motivations for studying the homology of groups, is that homology is a useful `measurement' of the group. 
Put another way, homology is a powerful invariant, where the power comes from the fact that it is both informative, and (relatively) computable. 
The~$\Tor$ and $\Ext$ groups of algebras are likewise strong invariants, and it is our hope that homology and cohomology of algebras can be utilised as a tool to answer questions in the fields where the algebras arise. 
For example, modern representation theory is rich in conjectures, and home to surprising isomorphisms between apparently very different algebras~\cite{BrundanKleshchev,BowmanCoxHazi}. 
Understanding the similarities and differences between naturally-arising algebras is precisely the kind of question that could be investigated via $\Tor$ and $\Ext$-groups.

We will now discuss some questions arising from our work.
Readers with experience in homological stability will be able to think of many new questions in this direction, so we will simply list some that are most prominent in our minds.

The Temperley-Lieb algebra can be regarded as an algebra of~$1$-dimensional cobordisms embedded in~$2$ dimensions, and the Brauer algebra can similarly be viewed as an algebra of~$1$-dimensional cobordisms embedded in infinite dimensions.

\begin{question*}
    Are there analogues of the Temperley-Lieb algebra consisting of~$d$-dimensional cobordisms embedded in~$n$ dimensions?
    Does homological stability hold for these algebras?  
    And can the stability be understood in an essentially geometric way?
\end{question*}

And more generally:

\begin{question*}
    For which natural families of algebras does homological stability hold?
\end{question*}

Candidate algebras, closely related to the existing cases, are: Iwahori-Hecke and Temperley-Lieb algebras of types~$B$ and~$D$; the periodic and dilute Temperley-Lieb algebras; and the blob, partition and Birman-Murakami-Wenzl algebras.
We invite the reader to think of possibilities from further afield.

There have recently been advances in building general frameworks for homological stability proofs. In~\cite{RandalWilliamsWahl} Randal-Williams and Wahl introduce a categorical framework that encapsulates, improves and extends several of the standard techniques used in homological stability proofs for groups. In~\cite{GKRW-Ek} Galatius, Kupers and Randal-Williams introduce a framework that applies to~$E_k$-algebras in simplicial modules.  It exploits the notion of cellular $E_k$-algebras, and incorporates methods for proving~\emph{higher stability} results.
This invites us to pose the following questions.

\begin{question*}
    Does the general homological stability machinery of Randal-Williams and Wahl~\cite{RandalWilliamsWahl} generalise to an~$R$-linear version, giving a general framework to prove that a family of~$R$-algebras~$A_0\to A_1\to A_2\to\cdots$ satisfies homological stability?
\end{question*}
    
In this question, the most interesting issue is what form the resulting complexes will take.
One might expect that for a family of algebras the relevant complexes will be constructed from tensor products, as with our complex $W(n)$. 
However it may happen, as in this paper, that flatness issues arise, in which case it seems unlikely that complexes built from the honest tensor products will be sufficient.

\begin{question*}
    Can the homological stability machinery of Galatius, Kupers and Randal-Williams~\cite{GKRW-Ek} be applied in the setting of algebras? 
\end{question*}

It seems extremely likely that homology of Temperley-Lieb algebras will indeed fit into the framework of~\cite{GKRW-Ek}, by using appropriate simplicial models for the $\Tor^{\tl_n(a)}_\ast(\t,\t)$, or more precisely for the chain complexes underlying these $\Tor$ groups.
Again, the difficulty will lie in identifying and computing the associated \emph{splitting complexes}, especially when flatness issues arise.

\subsection{Outline}

In Section~\ref{section-temperley-lieb} we recall the definition of the Temperley-Lieb algebra, the Jones basis, the relationship with Iwahori-Hecke algebras, and we establish results on the induced modules~$\tl_n(a)\otimes_{\tl_m(a)}\t$ that will be important in the rest of the paper.
Section~\ref{section-inductive} establishes our inductive resolutions and proves Theorems~\ref{theorem-invertible} and~\ref{theorem-shapiro}.
Section~\ref{section-Wn} introduces the complex of planar injective words~$W(n)$ and the Fineberg module~$\F_n(a)$.
Sections~\ref{section-stability} and~\ref{section-sharpness} then use~$W(n)$, in particular its high-acyclicity (Theorem~\ref{theorem-high-acyclicity}), to prove Theorems~\ref{theorem-vanishing-range} and~\ref{theorem-sharpness}.
Section~\ref{section-tl-two} investigates our results in the case of~$\tl_2(a)$, computing the homology directly and also in terms of the Fineberg module~$\F_2(a)$.
Section~\ref{section-high-acyclicity} proves Theorem~\ref{theorem-high-acyclicity}.
Section~\ref{section-JW} investigates the vanishing results given by the Jones-Wenzl projectors and proves Theorem~\ref{thm-JW}.

\subsection{Acknowledgements}
The authors would like to thank the Max Planck Institute for Mathematics in Bonn for its support and hospitality. We are grateful to Robert Spencer and Paul Wedrich for interesting and helpful discussions about the representation theory of Temperley-Lieb algebras, \red{and to Oscar Randal-Williams for his extension of Theorem~\ref{theorem-vanishing-range}. We would also like to thank the anonymous referees for very insightful comments, which have much improved this paper.}

\section{Temperley-Lieb Algebras}
\label{section-temperley-lieb}

In this section we will cover the basic facts about the Temperley-Lieb algebra that we will need for the rest of the paper. There is some overlap between the material recalled here and in~\cite{BoydHepworthComb}.
In particular, we cover the definitions by generators and relations and by diagrams; we discuss the Jones basis for~$\tl_n(a)$; we look at the induced modules~$\tl_n(a)\otimes_{\tl_m(a)}\t$ that will be an essential ingredient in all that follows; and we discuss the homomorphism from the Iwahori-Hecke algebra of type~$A_{n-1}$ into~$\tl_n(a)$.
Historical references on Temperley-Lieb algebras were given in the introduction.
General references for readers new to the~$\tl_n(a)$ are Section~5.7 of Kassel and Turaev's book~\cite{KasselTuraev} on the braid groups, and especially Sections~1 and~2 of Ridout and Saint-Aubin's survey on the representation theory of the~$\tl_n(a)$~\cite{RidoutStAubin}.

\begin{defn}[The Temperley-Lieb algebra~$\tl_n(a)$]
\label{definition-temperley-lieb}
    Let~$R$ be a commutative ring and let~$a\in R$.  Let~$n$ be a non-negative integer.
    The \emph{Temperley-Lieb algebra}~$\tl_n(a)$
    is defined to be the~$R$-algebra with generators~$U_1,\ldots,U_{n-1}$ and the following relations:
    \begin{enumerate}
        \item
       ~$U_iU_j=U_jU_i$ for~$j\neq i\pm 1$
        \item
       ~$U_iU_jU_i = U_i$ for ~$j=i\pm 1$
        \item
       ~$U_i^2 = aU_i$ for all~$i$.
    \end{enumerate}
    Thus elements of the Temperley-Lieb algebra are formal sums of monomials in the~$U_i$, with coefficients in the ground ring~$R$, modulo the relations above. 
    We often write~$\tl_n(a)$ as~$\tl_n$. We note here that~$\tl_0=\tl_1=R$.
\end{defn}

There is an alternative definition of~$\tl_n$ in terms of diagrams.
In this description, an element of~$\tl_n$ is an~$R$-linear combination of \emph{planar diagrams} (or~$1$-dimensional cobordisms).   
Each planar diagram consists of two vertical lines in the plane, decorated with~$n$  dots labelled~$1,\ldots,n$ from bottom to top, together with a collection of~$n$ arcs joining the dots in pairs.  
The arcs must lie between the vertical lines, they must be disjoint, and the diagrams are taken up to isotopy.  
For example, here are two planar diagrams in the case~$n=5$:
\[
    x=
    \begin{tikzpicture}[scale=0.4, baseline=(base)]
        \coordinate (base) at (0,2.75);
        \draw[line width = 1](0,0.5)--(0,5.5);
        \draw[line width = 1](6,0.5)--(6,5.5);
        \foreach \x in {1,2, 3,4,5}{
            \draw[fill=black] (0,\x) circle [radius=0.15] (6,\x) circle [radius=0.15];
            \draw (0,\x) node[left] {$\scriptstyle{\x}$};
            \draw (6,\x) node[right] {$\scriptstyle{\x}$};
        } 
        \draw (0,1) to[out=0,in=-90] (1,1.5) to[out=90,in=0] (0,2);
        \draw (0,4) to[out=0,in=-90] (1,4.5) to[out=90,in=0] (0,5);
        \draw (6,4) to[out=180,in=-90] (5,4.5) to[out=90,in=180] (6,5);
        \draw (6,2) to[out=180,in=-90] (5,2.5) to[out=90,in=180] (6,3);
        \draw (0,3) .. controls (2,3) and (4,1) .. (6,1);
    \end{tikzpicture}
    \qquad\qquad
    y=
    \begin{tikzpicture}[scale=0.4, baseline=(base)]
        \coordinate (base) at (0,2.75);
        \draw[line width = 1](0,0.5)--(0,5.5);
        \draw[line width = 1](6,0.5)--(6,5.5);
        \foreach \x in {1,2, 3,4,5}{
            \draw[fill=black] (0,\x) circle [radius=0.15] (6,\x) circle [radius=0.15];
            \draw (0,\x) node[left] {$\scriptstyle{\x}$};
            \draw (6,\x) node[right] {$\scriptstyle{\x}$};
        } 
        \draw (0,2) to[out=0,in=-90] (1,2.5) to[out=90,in=0] (0,3);
        \draw (6,4) to[out=180,in=-90] (5,4.5) to[out=90,in=180] (6,5);
        \draw (6,2) to[out=180,in=-90] (5,2.5) to[out=90,in=180] (6,3);
        \draw (0,1) to[out=0,in=-90] (2,2.5) to[out=90,in=0] (0,4);
        \draw (0,5) .. controls (3,5) and (3,1) .. (6,1);
    \end{tikzpicture}
\]
We will often omit the labels on the dots.
Multiplication of diagrams is given by placing them side-by-side and joining the ends.
Any closed loops created by this process are then erased and replaced with a factor of~$a$.
For example, the product~$xy$ of the elements~$x$ and~$y$ above is:
\[
    \begin{tikzpicture}[scale=0.4, baseline=(base)]
        \coordinate (base) at (0,2.75);
        \draw[line width = 1](0,0.5)--(0,5.5);
        \draw[line width = 1](6,0.5)--(6,5.5);
        \draw[line width = 1](12,0.5)--(12,5.5);
        \foreach \x in {1,2, 3,4,5}{
            \draw[fill=black] (0,\x) circle [radius=0.15] (6,\x) circle [radius=0.15] (12,\x) circle [radius=0.15];
        } 
        \draw (0,1) to[out=0,in=-90] (1,1.5) to[out=90,in=0] (0,2);
        \draw (0,4) to[out=0,in=-90] (1,4.5) to[out=90,in=0] (0,5);
        \draw (6,4) to[out=180,in=-90] (5,4.5) to[out=90,in=180] (6,5);
        \draw (6,2) to[out=180,in=-90] (5,2.5) to[out=90,in=180] (6,3);
        \draw (0,3) .. controls (2,3) and (4,1) .. (6,1);
        \draw (6,2) to[out=0,in=-90] (7,2.5) to[out=90,in=0] (6,3);
        \draw (12,4) to[out=180,in=-90] (11,4.5) to[out=90,in=180] (12,5);
        \draw (12,2) to[out=180,in=-90] (11,2.5) to[out=90,in=180] (12,3);
        \draw (6,1) to[out=0,in=-90] (8,2.5) to[out=90,in=0] (6,4);
        \draw (6,5) .. controls (9,5) and (9,1) .. (12,1);
    \end{tikzpicture}
    \ \ 
    =
    \ \ 
    \begin{tikzpicture}[scale=0.4, baseline=(base)]
        \coordinate (base) at (0,2.75);
        \draw[line width = 1](0,0.5)--(0,5.5);
        \draw[line width = 1](6,0.5)--(6,5.5);
        \foreach \x in {1,2, 3,4,5}{
            \draw[fill=black] (0,\x) circle [radius=0.15] (6,\x) circle [radius=0.15];
        } 
        \draw (0,1) to[out=0,in=-90] (1,1.5) to[out=90,in=0] (0,2);
        \draw (0,4) to[out=0,in=-90] (1,4.5) to[out=90,in=0] (0,5);
        \draw (0,3) .. controls (3,3) and (3,1) .. (6,1);
        \draw (6,4) to[out=180,in=-90] (5,4.5) to[out=90,in=180] (6,5);
        \draw (6,2) to[out=180,in=-90] (5,2.5) to[out=90,in=180] (6,3);
        \draw (3,3.5) circle (0.7);
    \end{tikzpicture}
    \ \ 
    =
    \ \ 
    a\cdot
    \begin{tikzpicture}[scale=0.4, baseline=(base)]
        \coordinate (base) at (0,2.75);
        \draw[line width = 1](0,0.5)--(0,5.5);
        \draw[line width = 1](6,0.5)--(6,5.5);
        \foreach \x in {1,2, 3,4,5}{
            \draw[fill=black] (0,\x) circle [radius=0.15] (6,\x) circle [radius=0.15];
        } 
        \draw (0,1) to[out=0,in=-90] (1,1.5) to[out=90,in=0] (0,2);
        \draw (0,4) to[out=0,in=-90] (1,4.5) to[out=90,in=0] (0,5);
        \draw (0,3) .. controls (3,3) and (3,1) .. (6,1);
        \draw (6,4) to[out=180,in=-90] (5,4.5) to[out=90,in=180] (6,5);
        \draw (6,2) to[out=180,in=-90] (5,2.5) to[out=90,in=180] (6,3);
    \end{tikzpicture}
\]
(We have subscribed to the heresy of~\cite{RidoutStAubin} by drawing planar diagrams that go from left to right rather than top to bottom.) 

One can pass from the generators-and-relations definition of~$\tl_n$ in Definition~\ref{definition-temperley-lieb} to the diagrammatic description of the previous paragraph as follows.
For~$1\leqslant i \leqslant n-1$, to each~$U_i$ we associate the planar diagram shown below.
\[
    \begin{tikzpicture}[scale=0.4]
    \foreach \x in {1, 3,4,5,6,8}
    \foreach \y in {0,6}
    \draw[fill=black, line width=1] (\y,\x) circle [radius=0.15](\y,.5)--(\y,8.5);
    \foreach \x in {1, 3,6,8} 
    \draw (0,\x) --(6,\x);
    \foreach \x in {3}
   \draw (\x,2.2) node {$\scriptstyle{\vdots}$} (\x,7.2) node {$\scriptstyle{\vdots}$};
    \draw (-1,1)node {$\scriptstyle{1}$} 
    (-1,4) node {$\scriptstyle{i}$}  (-1,5) node {$\scriptstyle{i+1}$}   (-1,8) node {$\scriptstyle{n}$};
    \draw (7,1)node {$\scriptstyle{\phantom{i+2}}$};
    \draw (0,4) to[out=0,in=-90] (1,4.5) to[out=90,in=0] (0,5);
    \draw (6,4) to[out=180,in=-90] (5,4.5) to[out=90,in=180] (6,5);
    \end{tikzpicture}
\]
We refer to an arc joining adjacent dots as a \emph{cup}. The relations for the Temperley-Lieb algebras are satisfied, and two of them are illustrated in Figure~\ref{fig: tl relations}. 
The fact that this determines an isomorphism between the algebra defined by generators and relations, and the one defined by diagrams, is proved in \cite[Theorem~2.4]{RidoutStAubin},  \cite[Theorem~5.34]{KasselTuraev}, and~\cite[Section 6]{KauffmanDiagrammatic}.

\begin{figure}
    \centering
    \subfigure[The relation~$U_i^2=aU_i$.]{
    \begin{tikzpicture}[scale=0.4]
    \foreach \x in {1, 3,4,5,6,8}
    \foreach \y in {0,4,8,10,16}
    \draw[fill=black, line width=1] (\y,\x) circle [radius=0.15] (\y,.5)--(\y,8.5);
    \foreach \x in {1, 3,6,8}
    \draw[black] (0,\x) --(8,\x) (10,\x)--(16,\x);
    \foreach \x in {2,6,13}
    \draw (\x,2.2) node {$\scriptstyle{\vdots}$} (\x,7.2) node {$\scriptstyle{\vdots}$};
    \draw (-1,1)node {$\scriptstyle{1}$} 
    (-1,4) node {$\scriptstyle{i}$}  (-1,5) node {$\scriptstyle{i+1}$}  (-1,6) node {$\scriptstyle{i+2}$}  (-1,8) node {$\scriptstyle{n}$};
    \foreach \x in {0,4,10}
    \draw (\x,4) to[out=0,in=-90] (\x+1.5,4.5) to[out=90,in=0] (\x,5);
    \foreach \x in {4,8,16}
    \draw (\x,4) to[out=180,in=-90] (\x-1.5,4.5) to[out=90,in=180] (\x,5);
    \draw (13,4) to[out=0, in=-90] (14,4.5) to[out=90,in=0] (13,5) to[out=180,in=90] (12,4.5) to[out=-90, in=180] (13,4);
    \draw (9, 4.5) node[scale=1.5] {$=$};
    \end{tikzpicture}}
    \subfigure[The relation~$U_iU_{i+1}U_i=U_i$.]{
    \begin{tikzpicture}[scale=0.4]
    \foreach \x in {1,3,4,5,6,8}
    \foreach \y in {0,3,6,9,12,16}
    \draw[fill=black, line width=1] (\y,\x) circle [radius=0.15](\y,.5)--(\y,8.5);
    \foreach \x in {1,3,8}
    \draw[black] (0,\x) --(9,\x) (12,\x)--(16,\x);
    \foreach \x in {1.5,4.5,7.5,14}
    \draw (\x,2.2) node {$\scriptstyle{\vdots}$} (\x,7.2) node {$\scriptstyle{\vdots}$};
    \draw (-1,1)node {$\scriptstyle{1}$} 
    (-1,4) node {$\scriptstyle{i}$}  (-1,5) node {$\scriptstyle{i+1}$}  (-1,6) node {$\scriptstyle{i+2}$}  (-1,8) node {$\scriptstyle{n}$};
    \foreach \x in {0,6,12}
    \draw (\x,4) to[out=0,in=-90] (\x+1,4.5) to[out=90,in=0] (\x,5);
    \foreach \x in {3,9,16}
    \draw (\x,4) to[out=180,in=-90] (\x-1,4.5) to[out=90,in=180] (\x,5);
    \draw (6,5) to[out=180,in=-90] (5,5.5) to[out=90,in=180] (6,6) -- (9,6) (12,6)--(16,6) (3,4)--(6,4) (0,6)--(3,6) to[out=0,in=90] (4,5.5) to[out=-90, in=0] (3,5);
    \draw (10.5, 4.5) node[scale=1.5] {$=$};
    \end{tikzpicture}
    }
    \caption{Diagrammatic relations in~$\tl_n$.}
    \label{fig: tl relations}
\end{figure}

In the rest of the paper we will refer to the diagrammatic point of view on the Temperley-Lieb algebra, but we will not rely on it for any proofs.

\subsection{The Jones basis}

From the diagrammatic point of view the Temperley-Lieb algebra~$\tl_n$ has an evident~$R$-basis given by the (isotopy classes of) planar diagrams.  This is called the \emph{diagram basis}.   We now recall the analogue of the diagram basis given in terms of the~$U_i$, which is called the \emph{Jones basis} for~$\tl_n$, and we prove some additional facts about it that we will require later.  See \cite[Section 5.7]{KasselTuraev}, \cite[Section~2]{RidoutStAubin} or~\cite[Section 6]{KauffmanDiagrammatic}, but note that conventions vary, and see Remark~\ref{rem-iso to KT gens} below in particular.

\begin{defn}[Jones normal form]
    The \emph{Jones normal form} for elements of $\tl_n(a)$ is defined as follows. Let 
    	\[
    	    n>a_k>a_{k-1}>\cdots>a_1>0 \hspace{1cm} n>b_k>b_{k-1}>\cdots >b_1>0
    	\]
	be integers such that~$b_i\geqslant a_i$ for all~$i$. Let~$\ul{a}=(a_k, \ldots a_1)$ and~$\ul{b}=(b_k, \ldots b_1)$. Then set
    	\[
    	x_{\ul{a}, \ul{b}}=(U_{a_k}\ldots U_{b_k})\cdot(U_{a_{k-1}}\ldots U_{b_{k-1}})\cdots( U_{a_1}\ldots U_{b_1})
    	\]
   	where the subscripts of the generators increase in each tuple~$U_{a_i}\ldots U_{b_i}$.
   	A word written in the form~$x_{\ul{a},\ul{b}}$ is said to be written in \emph{Jones normal form} for~$\tl_n(a)$.
\end{defn}
   	
\begin{example}
    In~$\tl_5$ the words 
    \begin{align*}
        U_1U_2U_3U_4 &= (U_1U_2U_3U_4) =x_{(1),(4)}
        \\
        U_4U_3U_2U_1 &= (U_4)\cdot(U_3)\cdot(U_2)\cdot(U_1)= x_{(4,3,2,1),(4,3,2,1)}
        \\
        U_3U_4U_1U_2 &= (U_3U_4)\cdot(U_1U_2)=x_{(3,1),(4,2)}
        \\
        U_2U_3 U_1U_2 &= (U_2U_3)\cdot(U_1U_2)=x_{(2,1),(3,2)}
    \end{align*}
    are in Jones normal form.
    The word~$U_2U_1U_4U_2U_3$ is not, 
    but it can be rewritten using the defining relations to give
   \[U_2U_1U_4U_2U_3 = U_4U_2U_1U_2U_3 = U_4U_2U_3 = (U_4)(U_2U_3)=x_{(4,2),(4,3)}.\]
\end{example}
   	
Denote the subset of~$\tl_n$ consisting of all~$x_{\ul{a},\ul{b}}$ with~$\ul{a}=(a_1,\ldots,a_k)$ and~$\ul{b}=(b_1,\ldots,b_k)$ by~$\tl_{n,k}$. Then the set
    	\[
    	\tl_{n,0} \sqcup \tl_{n,1} \sqcup \cdots \sqcup \tl_{n,n-1}
    	\]
is a basis \red{(recall that by basis we always mean~$R$-module basis)} of~$\tl_n$, called the \emph{Jones basis}.
For a proof of this fact see~\cite[Corollary~5.32]{KasselTuraev}, \cite[pp.967-969]{RidoutStAubin} or~\cite[Section 6]{KauffmanDiagrammatic}, though we again warn the reader that conventions vary.

There is an algorithm for taking a diagram and writing it as an element of the Jones basis, see ~\cite[Section 6]{KauffmanDiagrammatic}.
We summarise the algorithm here. Let the \emph{$i$-th row} of the diagram be the horizontal strip whose left and right ends lie between the dots~$i$ and~$(i+1)$ on each vertical line.
Take a planar diagram, and ensure that it is drawn in minimal form: all arcs connecting the same side of the diagram to itself are drawn as semicircles, and all arcs from left to right are drawn without any cups, i.e.~transverse to all vertical lines, \red{and such that each arc of the diagram intersects each row transversely and at most once}.

Proceed along each row of the diagram, connecting the consecutive arcs encountered with a dotted horizontal line labelled by the row in question.  This is done in an alternating fashion: the first arc encountered is connected to the second by a dotted line, then the third is connected to the fourth, and so on.
If we start with the elements~$x$ and~$y$ used earlier in this section, then this gives us the following:
\[
    x=
    \begin{tikzpicture}[scale=0.4, baseline=(base)]
        \coordinate (base) at (0,2.75);
        \draw[line width = 1](0,0.5)--(0,5.5);
        \draw[line width = 1](6,0.5)--(6,5.5);
        \foreach \x in {1,2, 3,4,5}{
            \draw[fill=black] (0,\x) circle [radius=0.15] (6,\x) circle [radius=0.15];
        } 
        \draw (0,1) to[out=0,in=-90] (1,1.5) to[out=90,in=0] (0,2);
        \draw (0,4) to[out=0,in=-90] (1,4.5) to[out=90,in=0] (0,5);
        \draw (6,4) to[out=180,in=-90] (5,4.5) to[out=90,in=180] (6,5);
        \draw (6,2) to[out=180,in=-90] (5,2.5) to[out=90,in=180] (6,3);
        \draw (0,3) .. controls (2,3) and (4,1) .. (6,1);
	\draw[blue,dashed](1,4.5)--(5,4.5);
	\node[blue] at (3,4.9){$\scriptstyle 4$};
	\draw[red,dashed](2,2.5)--(5,2.5);
	\node[red] at (3.5,2.9){$\scriptstyle 2$};
	\draw[red,dashed](1,1.5)--(4,1.5);
	\node[red] at (2,1.9){$\scriptstyle 1$};
    \end{tikzpicture}
    \qquad\qquad
    y=
    \begin{tikzpicture}[scale=0.4, baseline=(base)]
        \coordinate (base) at (0,2.75);
        \draw[line width = 1](0,0.5)--(0,5.5);
        \draw[line width = 1](6,0.5)--(6,5.5);
        \foreach \x in {1,2, 3,4,5}{
            \draw[fill=black] (0,\x) circle [radius=0.15] (6,\x) circle [radius=0.15];
        } 
        \draw (0,2) to[out=0,in=-90] (1,2.5) to[out=90,in=0] (0,3);
        \draw (6,4) to[out=180,in=-90] (5,4.5) to[out=90,in=180] (6,5);
        \draw (6,2) to[out=180,in=-90] (5,2.5) to[out=90,in=180] (6,3);
        \draw (0,1) to[out=0,in=-90] (2,2.5) to[out=90,in=0] (0,4);
        \draw (0,5) .. controls (3,5) and (3,1) .. (6,1);
	\draw[blue,dashed](1.7,4.5)--(5,4.5);
	\node[blue] at (3.3,4.9){$\scriptstyle 4$};
	\draw[blue,dashed](1.5,3.5)--(2.6,3.5);
	\node[blue] at (1.8,3.9){$\scriptstyle 3$};
	\draw[blue,dashed](1,2.5)--(2,2.5);
	\node[blue] at (1.4,2.9){$\scriptstyle 2$};
	\draw[red,dashed](3.4,2.5)--(5,2.5);
	\node[red] at (4.1,2.9){$\scriptstyle 2$};
	\draw[red,dashed](1.5,1.5)--(4.5,1.5);
	\node[red] at (3,1.9){$\scriptstyle 1$};
    \end{tikzpicture}
\]
A \emph{sequence} in such a decorated diagram is taken by travelling right along the dotted arcs and up along the solid arcs from one dotted arc to the next, starting as far to the left as possible.
The above diagrams each have two sequences, indicated in blue and red.
The sequences in a diagram are linearly ordered by scanning from top to bottom and recording a sequence when one of its dotted lines is first encountered.
So in the above diagrams the blue sequences precede the red ones.
One now obtains a Jones normal form for the element by working through the sequences in turn, writing out the labels from left to right, and then taking the corresponding monomial in the~$U_i$:
\[
    x=(U_4)(U_1U_2)=x_{(4,1),(4,2)},\qquad\qquad y=(U_2U_3U_4)(U_1U_2)=x_{(2,1),(4,2)}.
\]

We now present a proof that the Jones basis spans, adding slightly more detail than we found in the references.  The extra detail will be used in the next section.

\begin{defn}\label{defn-index and terminus}
Given a word~$w=U_{i_1}\ldots U_{i_n}$ in the~$U_i$, define the \emph{terminus} to be the subscript of the final letter of the word appearing,~$i_n$, and denote it~$t(w)$. Set~$t(1)=\infty$ as a convention. Define the~\emph{index} of~$w$ to be the minimum subscript~$i_j$ appearing, and denote it~$i(w)$.
\end{defn}

\begin{remark}\label{rem-iso to KT gens}
    Note here that the notions of Jones normal form and index in $\tl_n(a)$ coincide with those of~\cite{KasselTuraev}, under the bijection which sends the generator~$e_i$ of~\cite{KasselTuraev} to the generator~$U_{n-i}$ used in this paper, for~$1\leqslant i \leqslant n-1$.
\end{remark}

The following two lemmas are an enhancement of Lemmas~5.25 and 5.26 of~\cite{KasselTuraev}.

\begin{lemma}\label{lem-index once}
    Any word~$w\in\tl_n(a)$ is equal in~$\tl_n(a)$ to a scalar multiple of a word~$w'$ in which
    \begin{enumerate}[(a)]
        \item~$i(w)=i(w')$ and~$U_{i(w)}$ appears exactly once in~$w'$;
        \item\red{$t(w')=t(w)$.}
    \end{enumerate}
\end{lemma}

\red{
        Point (a) occurs as Lemma~5.25 of~\cite{KasselTuraev}, 
	and the following is a simple extension 
	of the proof that appears there.  
	We have opted to give our proof in full because, as well as
	the minor extension of the proof, our notation differs from that 
	of~\cite{KasselTuraev} as in Remark~\ref{rem-iso to KT gens}.

	\begin{proof}
		We proceed by reverse induction on the index $i(w)$ of $w$,
		which lies in the range $1\leqslant i(w)\leqslant n-1$.
		If $i(w)=n-1$, then $w=U_{n-1}^i$ for some $i\geq 1$,
		so that  $w=a^{i-1}U_{n-1}$ is a scalar multiple of the word
		$U_{n-1}$. 
		Since the words $U_{n-1}^i$ and $U_{n-1}$
		have the same index and terminus, 
		the result holds in this case.
		
		Suppose that the claim holds for all words of index $>p$
		and let $w$ be a nonempty word of
		index $p$.
		Suppose that $U_p$ appears in $w$ at least twice.
		Then we may write $w=w_1 U_pw'U_pw_2$ where $i(w')=\ell>p$.
		
		If $\ell>p+1$, then all letters of $w'$ commute with $U_p$,
		so that 
		\[
			w=w_1 U_pw'U_pw_2=w_1w'U_p^2w_2=aw_1w'U_pw_2.
		\]
		Thus we have reduced the number of occurrences of $U_p$
		in $w$ while preserving the (nonempty) final portion
		$U_pw_2$ of the word, so that the terminus remains unchanged.

		If $\ell=p+1$, then by the induction hypothesis we may
		assume  that $U_{p+1}$ appears only once in $w'$,
		so that $w'=w_3 U_{p+1}w_4$ where $w_3,w_4$ are words of 
		index $\geq p+2$.
		Therefore $w_3,w_4$ commute with $U_p$, and consequently
		\begin{align*}
			w&=w_1 U_pw'U_pw_2
			=w_1 U_pw_3 U_{p+1}w_4  U_pw_2
			\\
			&=w_1 w_3 U_pU_{p+1}U_pw_4 w_2
			=w_1 w_3 U_pw_4 w_2
			=w_1 w_3 w_4   U_pw_2.
		\end{align*}
		So again, we have reduced the number of occurrences of 
		$U_p$ in the word while preserving the final (nonempty)
		portion $U_pw_2$, and in particular preserving the terminus.

		Repeating the process of reducing the number of occurrences
		of $U_p$ while preserving the terminus, we find that $w$
		is a scalar multiple of a word $w'$ of the required
		form.
	\end{proof}
}

\begin{lemma}\label{lem-terminus of JNF}
     Any word~$w\in\tl_n(a)$ is equivalent in~$\tl_n(a)$ to a scalar multiple of a word~$w'$ such that
     \begin{enumerate}[(a)]
         \item~$w'$ is written in Jones normal form;
         \item~$t(w')\leqslant t(w)$;
         \item if~$t(w')<t(w)$ then~$t(w')\leqslant t(w)-2$.
     \end{enumerate}
     \red{
     \begin{proof}
     As in the previous Lemma, point (a) occurs as~\cite[Lemma 5.26]{KasselTuraev}. We refer the reader to that proof, with the following modifications:
        \begin{itemize}
            \item Invoke the bijection of generators of Remark~\ref{rem-iso to KT gens}. This amounts to replacing each occurrence of~$e_i$ with~$U_{n-i}$, so for example the subscripts~$1$ and~$n-1$ are interchanged, and inequalities are `reversed'.
            \item Whenever the inductive hypothesis is used in~\cite[Lemma~5.25]{KasselTuraev}, instead use the statement of the present lemma as a stronger inductive hypothesis.
         \item At the point where \cite[Lemma 5.25]{KasselTuraev} is used in~\cite[Lemma~5.26]{KasselTuraev}, use instead Lemma~\ref{lem-index once}.
        \end{itemize}
	With these modifications in place, one can simply observe
	how the terminus changes in the proof 
	of~\cite[Lemma~5.26]{KasselTuraev}, to obtain the present 
	strengthening of that result.
     \end{proof}
     }
\end{lemma}

\subsection{Induced modules of Temperley-Lieb Algebras}\label{section-relative}

\begin{defn}[The trivial module~$\t$]
    The \emph{trivial} module~$\t$ of the Temperley-Lieb algebra~$\tl_n(a)$ is the module consisting of~$R$ with the action of~$\tl_n(a)$ in which all of the generators~$U_1,\ldots,U_{n-1}$ act as~$0$.
    We can regard~$\t$ as either a left or right module over~$\tl_n(a)$, and we will usually do that without indicating so in the notation.
\end{defn}	

\begin{defn}[Sub-algebra convention] 
    For~$m\leqslant n$, we will regard~$\tl_m(a)$ as the sub-algebra of~$\tl_n(a)$ generated by the elements~$U_1,\ldots,U_{m-1}$.  
    We will often regard~$\tl_n(a)$ as a left~$\tl_n(a)$-module and a right~$\tl_m(a)$-module, so that we obtain the left~$\tl_n(a)$-module~$\tl_n(a)\otimes_{\tl_m(a)}\t$.
\end{defn}

\red{\begin{rem}
	Elements of~$\tl_n(a)\otimes_{\tl_m(a)}\t$ can always be written as elementary tensors of the form~$y\otimes 1$, since in this module~$x\otimes r = rx\otimes 1$ for all~$r\in R$.
\end{rem}
}
The modules~$\tl_n\otimes_{\tl_m}\t$ are an essential ingredient in the rest of this paper: they will be the building blocks of all the complexes we construct in order to prove our main results, in particular the complex of planar injective words~$W(n)$.
The rest of this section will study them in some detail, in particular finding a basis for them analogous to the Jones basis.

\begin{remark}[$\tl_n(a)\otimes_{\tl_m(a)}\t$ via diagrams]
\label{remark: black boxes}
     The elements of~$\tl_n(a)\otimes_{\tl_m(a)} \t$ can be regarded as diagrams, just like the elements of~$\tl_n(a)$, except that now the first~$m$ dots on the right are encapsulated within a \emph{black box}, and if any cups can be absorbed into the black box, then the diagram is identified with~$0$.
     For example, some elements of~$\tl_4(a)\otimes_{\tl_3(a)}\t$ are depicted as follows:
    \[
        	\begin{tikzpicture}[scale=0.45]
        	\foreach \x in {1,2,3,4}
        	\foreach \y in {0,3,6,9,12,15,18,21}
        	\draw[fill=black, line width=1] (\y,\x) circle [radius=0.15] (-1,\x)node {$\scriptstyle{\x}$} (\y,.5)--(\y,4.5);
        	\foreach \x\y in {0/1, 0/2, 6/1, 18/1,18/2,18/3,18/4 }
        	\draw[black] (\x,\y) --(\x+3,\y);
        	\foreach \x\y in {0/3, 6/2, 12/1}
        	\draw (\x,\y) to[out=0,in=-90] (\x+1,\y+.5) to[out=90,in=0] (\x,\y+1);
        	\foreach \x\y in {3/3, 9/3, 15/3}
        	\draw (\x,\y) to[out=180,in=-90] (\x-1,\y+.5) to[out=90,in=180] (\x,\y+1);
        	\foreach \x\y in {6/4, 12/4, 12/3}
        	\draw (\x,\y) to[out=0, in=120] (\x+1.5, \y-1) to[out=300, in=180] (\x+3,\y-2);
        	\foreach \x in {3,9,15,21}
        	\draw[line width=0.2cm] (\x,.8)--(\x,3.2); 
        	\end{tikzpicture}
    \]
    The structure of~$\tl_n(a)\otimes_{\tl_m(a)} \t$ as a left module for~$\tl_n(a)$ is given by pasting diagrams on the left, and then simplifying, as in the following example for~$n=4$ and~$m=2$:
    \[
        	\begin{tikzpicture}[scale=0.5]
        	\foreach \x in {1,2,3,4}
        	\foreach \y in {3,6,9,12,15,18,21}
        	\draw[fill=black,line width=1]  (\y,\x) circle [radius=0.15]  
        	(\y,.5)--(\y,4.5);
        	\foreach \x\y in {3/1,12/1 }
        	\draw[black] (\x,\y) --(\x+3,\y);
        	\foreach \x\y in { 12/2,3/2, 9/1,9/3, 18/1,18/3}
        	\draw (\x,\y) to[out=0,in=-90] (\x+1,\y+.5) to[out=90,in=0] (\x,\y+1);
        	\foreach \x\y in {15/3,6/3, 12/1,12/3, 21/3,21/1}
        	\draw (\x,\y) to[out=180,in=-90] (\x-1,\y+.5) to[out=90,in=180] (\x,\y+1);
        	\foreach \x\y in {3/4, 12/4}
        	\draw (\x,\y) to[out=0, in=120] (\x+1.5, \y-1) to[out=300, in=180] (\x+3,\y-2);
        	\foreach \x in {6,15,21}
        	\draw[line width=0.2cm] (\x,.8)--(\x,2.2); 
        	\draw (1,2.5) node {$U_1U_3\,\cdot$} (7.5,2.5) node {$=$} (16.5,2.5) node {$=$}(22.5,2.5) node {$=0.$};
        	\end{tikzpicture}
    \]
\end{remark}

\begin{defn}[The ideal~$I_m$]
    Given~$0\leqslant m\leqslant n$, let~$I_m$ denote the left ideal of~$\tl_n(a)$ generated by the elements~$U_1,\ldots,U_{m-1}$.
\end{defn}

\begin{lemma}\label{lemma: tensor over subalgebra iso quotient by ideal}
   ~$\tl_n(a)\otimes_{\tl_m(a)}\t$ and~$\tl_n(a)/I_m$ are isomorphic as left~$\tl_n(a)$-modules via the maps
    \[
        \tl_n(a)\otimes_{\tl_m(a)}\t\longrightarrow\tl_n(a)/I_m,
        \qquad
        y\otimes r\longmapsto yr+I_m
    \]
    and
    \[
        \tl_n(a)/I_m\longrightarrow\tl_n(a)\otimes_{\tl_m(a)}\t,
        \qquad
        y+I_m\longmapsto y\otimes 1.
    \]    
\end{lemma}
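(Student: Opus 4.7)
The plan is to verify that each of the two maps is a well-defined left $\tl_n(a)$-module homomorphism, and then check that they are mutually inverse, which is a short computation once well-definedness is in hand. So there are really three small tasks: (i) the map $\tl_n(a)\otimes_{\tl_m(a)}\t\to\tl_n(a)/I_m$ descends from the free $R$-bilinear map on $\tl_n(a)\times\t$ to the tensor product; (ii) the map $\tl_n(a)/I_m\to\tl_n(a)\otimes_{\tl_m(a)}\t$ descends from $\tl_n(a)$ to the quotient by $I_m$; and (iii) the two composites are the respective identities.

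For (i), I would first observe that the assignment $(y,r)\mapsto yr+I_m$ is $R$-bilinear, so it factors through $\tl_n(a)\otimes_R\t$. To see that it further descends to the tensor product over $\tl_m(a)$, I need to show $yu\otimes r$ and $y\otimes ur$ have the same image for $u\in\tl_m(a)$. Writing $u=c\cdot 1+u'$ where $c\in R$ and $u'$ lies in the two-sided ideal of $\tl_m(a)$ generated by $U_1,\dots,U_{m-1}$, the action on $\t$ gives $ur=cr$, while in $\tl_n(a)$ the product $yu=cy+yu'$ with $yu'\in I_m$ (because $I_m$ is the left ideal of $\tl_n(a)$ generated by $U_1,\ldots,U_{m-1}$ and $u'$ is a right-$R$-linear combination of such generators preceded by elements of $\tl_m(a)$). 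Hence $yu\cdot r\equiv cyr\equiv y\cdot ur\pmod{I_m}$. The resulting map is manifestly a left $\tl_n(a)$-module homomorphism.

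For (ii), the assignment $y\mapsto y\otimes 1$ is a left $\tl_n(a)$-module homomorphism from $\tl_n(a)$, and it kills $I_m$ because any generator $x U_i$ with $1\leqslant i\leqslant m-1$ satisfies $xU_i\otimes 1=x\otimes U_i\cdot 1=x\otimes 0=0$ in $\tl_n(a)\otimes_{\tl_m(a)}\t$. So it factors through $\tl_n(a)/I_m$. For (iii), the composition $\tl_n(a)/I_m\to\tl_n(a)\otimes_{\tl_m(a)}\t\to\tl_n(a)/I_m$ sends $y+I_m$ to $y\otimes 1$ to $y+I_m$. For the other composition, any elementary tensor $y\otimes r$ equals $yr\otimes 1$ (using the $R$-bilinearity of the tensor product and that $r\in R\subset\tl_m(a)$), which maps to $yr+I_m$ and then back to $yr\otimes 1=y\otimes r$; this extends $R$-linearly to all of $\tl_n(a)\otimes_{\tl_m(a)}\t$.

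There is no genuine obstacle here: the content is entirely formal, and the only point requiring a moment of care is the well-definedness in (i), where one must remember that $I_m$ is defined as a \emph{left} ideal of $\tl_n(a)$ and use the fact that any $u\in\tl_m(a)$ which acts as zero on $\t$ (equivalently, has no constant term when expanded in the Jones basis of $\tl_m(a)$) necessarily lies in $I_m\cap\tl_m(a)$. With that observation the rest is automatic.
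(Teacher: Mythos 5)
Your proof is correct and follows the same overall strategy as the paper's (exhibit both maps, verify well-definedness, observe they are mutually inverse), but the well-definedness check for the first map is carried out slightly differently. The paper verifies the relation only for $u$ among the algebra generators $U_1,\ldots,U_{m-1}$ of $\tl_m(a)$: this is enough because the set of $u\in\tl_m(a)$ for which $\phi(yu,r)=\phi(y,ur)$ holds (for all $y,r$) is an $R$-submodule closed under multiplication, hence a subalgebra, and for $u=U_i$ both sides visibly vanish in $\tl_n(a)/I_m$. You instead take a general $u\in\tl_m(a)$, split it as $u=c\cdot 1+u'$ with $u'$ in the augmentation ideal of $\tl_m(a)$, and compare both sides modulo $I_m$; this is also fine, but it relies on the extra fact — which you correctly flag at the end — that the augmentation ideal of $\tl_m(a)$ (every $u$ acting as zero on $\t$) is contained in $I_m$. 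That containment is true (by the Jones basis: every non-identity basis element of $\tl_m(a)$ has terminus at most $m-1$ and so lies in $I_m$, cf.\ Lemma~\ref{lem-basis for Im}), but it is an honest ingredient; the paper's choice to check only on the $U_i$ sidesteps it, since $U_i\in I_m$ tautologically. Your proof thus buys a more "all at once" calculation at the cost of invoking this small structural fact, whereas the paper's is leaner by restricting to generators. Either is acceptable.
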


\begin{proof}
    Observe that the generators~$U_1,\ldots,U_{m-1}$ of the left-ideal~$I_m$ in~$\tl_n$ are precisely the generators of the subalgebra~$\tl_m$ of~$\tl_n$.  
    Thus the map $y\otimes r\mapsto yr+I_m$ is well defined because if~$i=1,\ldots,m-1$ then elements of the form~$yU_i\otimes r$ and~$y\otimes U_ir$ both map to~$0$ in~$\tl_n/I_m$.
    And~$y+I_m\mapsto y\otimes 1$ is well defined because elements of~$I_m$ are linear combinations of ones of the form~$x\cdot U_i$ for~$i=1,\ldots,m-1$, and~$(x\cdot U_i)\otimes 1 = x\otimes (U_i\cdot 1) = x\otimes 0 = 0$ for~$i=1,\ldots,m-1$.
    One can now check that the two maps are inverses of one another.
\end{proof}

\begin{remark}
    Lemma~\ref{lemma: tensor over subalgebra iso quotient by ideal} justifies the description of $\tl_n(a)\otimes_{\tl_m(a)}\t$ in terms of diagrams with `black boxes' that we gave in Remark~\ref{remark: black boxes}.  
    Indeed,~$I_m$ is precisely the span of those diagrams which have a  cup on the right between the dots~$i$ and~$i+1$ for some~$i=1,\ldots,m-1$.  
    But these are precisely the diagrams which are made to vanish by having a cup fall into the black box.
    Thus~$\tl_n(a)/I_m$ has basis given by the remaining diagrams, i.e.~the ones that are not rendered~$0$ by the black box.
\end{remark}

\begin{lemma}\label{lem-basis for Im}
    For~$m\leqslant n$, the ideal~$I_m$ of~$\tl_n(a)$ has basis consisting of those elements of~$\tl_n(a)$ written in Jones normal form~$x_{\ul{a},\ul{b}}$, which have terminus~$b_1\leqslant m-1$ (and~$k\neq 0$).
\end{lemma}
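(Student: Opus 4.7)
The plan is to establish the two defining properties of a basis---linear independence and spanning---for the described set. Linear independence will be immediate, since the proposed set is a subset of the Jones basis of $\tl_n(a)$ established in the preceding subsection, and the Jones basis is known to be linearly independent.

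For the containment in $I_m$, I would note that any Jones normal form element $x_{\ul{a},\ul{b}}$ with $k \geq 1$ and $b_1 \leq m-1$ has final letter $U_{b_1}$, so it factors as $z \cdot U_{b_1}$ for some $z \in \tl_n(a)$. Since $1 \leq b_1 \leq m-1$, the generator $U_{b_1}$ belongs to the generating set of the left ideal $I_m$, so $x_{\ul{a},\ul{b}} \in \tl_n(a) \cdot U_{b_1} \subseteq I_m$.

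The main content is spanning, and this is precisely where the enhanced form of Lemma~\ref{lem-terminus of JNF} is designed to be applied. Every element of $I_m$ is an $R$-linear combination of elements of the form $y \cdot U_i$, where $y$ is a word in the generators $U_1,\ldots,U_{n-1}$ (possibly empty) and $1 \leq i \leq m-1$. Viewed as a single word in the generators, $y \cdot U_i$ has terminus exactly $i \leq m-1$. Applying Lemma~\ref{lem-terminus of JNF} produces $y \cdot U_i = \alpha\, w'$ in $\tl_n(a)$, where $\alpha \in R$ and $w'$ is in Jones normal form with $t(w') \leq i \leq m-1$. Since $t(w') \leq m-1 < \infty$, the word $w'$ cannot be the empty product, so the corresponding tuple satisfies $k \geq 1$. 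This exhibits each generator of $I_m$, and hence every element of $I_m$, as a linear combination of Jones normal form elements with $b_1 \leq m-1$ and $k \geq 1$.

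I do not foresee a serious obstacle: the lemma is essentially a repackaging of the enhanced terminus bound from Lemma~\ref{lem-terminus of JNF}, and this is precisely why condition (c) (namely $t(w') \leq t(w) - 2$) was built into the earlier statement. The only minor subtlety is the convention $t(e)=\infty$ from Definition~\ref{defn-index and terminus}, which guarantees that a word with finite terminus cannot be rewritten in Jones normal form as a scalar multiple of the identity, ensuring $k \geq 1$ in the conclusion.
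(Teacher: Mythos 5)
Your proposal is correct and takes essentially the same approach as the paper's proof: both reduce the spanning step to Lemma~\ref{lem-terminus of JNF} applied to the spanning words of $I_m$ (monomials ending in some $U_i$ with $i\leqslant m-1$, hence of terminus $\leqslant m-1$), and both invoke the Jones basis for linear independence. Your write-up is a bit more explicit than the paper's, separately verifying containment of the proposed set in $I_m$ and spelling out why $k\geq 1$ via the $t(e)=\infty$ convention, both of which the paper leaves implicit.
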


\begin{proof}
    Recall that words of the form~$x_{\ul{a},\ul{b}}$ give a basis for~$\tl_n$. Then by definition any word~$w\in I_m$ is of the form~$w=x_{\ul{a},\ul{b}}v$ for~$v\in \langle U_{1}, \ldots , U_{m-1}\rangle$ and~$v\neq e$. Then we have that~$t(w)\leqslant m-1$. Now apply Lemma~\ref{lem-terminus of JNF} to~$w$ to complete the proof.
\end{proof}

\begin{lemma} \label{lem-basis for tensor product over smaller tl}
    For~$m \leqslant n$,~$\tl_n(a)\otimes_{\tl_m(a)}\t$ has basis given by~$x_{\ul{a},\ul{b}}\otimes \t$ such that the terminus~$b_1>m-1$.
\end{lemma}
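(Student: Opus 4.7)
The plan is to deduce this directly from the two previous lemmas. By Lemma~\ref{lemma: tensor over subalgebra iso quotient by ideal}, there is an isomorphism of left $\tl_n(a)$-modules
\[
    \tl_n(a)\otimes_{\tl_m(a)}\t \;\xrightarrow{\ \cong\ }\; \tl_n(a)/I_m,
    \qquad y\otimes 1 \longmapsto y+I_m,
\]
so it suffices to exhibit a basis for $\tl_n(a)/I_m$ of the required form and then transport it back along this isomorphism.

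Now the Jones basis $\{x_{\ul{a},\ul{b}}\}$ (with $k=0$ corresponding to the empty word, i.e.~the identity~$e$, whose terminus is $\infty$ by convention) presents $\tl_n(a)$ as a free $R$-module. Lemma~\ref{lem-basis for Im} says that the ideal $I_m$ is spanned as an $R$-module by precisely those Jones basis elements $x_{\ul{a},\ul{b}}$ with $k\neq 0$ and $t(x_{\ul{a},\ul{b}})=b_1\leqslant m-1$. In other words, $I_m$ is a direct summand of the free $R$-module $\tl_n(a)$ spanned by one subset of the Jones basis, namely those with terminus at most $m-1$.

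It follows formally that the complementary subset of the Jones basis, consisting of those $x_{\ul{a},\ul{b}}$ with $b_1>m-1$ (including the identity element), maps to an $R$-basis of the quotient $\tl_n(a)/I_m$. Pulling this basis back across the isomorphism of Lemma~\ref{lemma: tensor over subalgebra iso quotient by ideal}, we obtain the claimed basis $\{x_{\ul{a},\ul{b}}\otimes\t : b_1>m-1\}$ for $\tl_n(a)\otimes_{\tl_m(a)}\t$. There is no real obstacle here: the work has already been done in Lemmas~\ref{lemma: tensor over subalgebra iso quotient by ideal} and~\ref{lem-basis for Im}, and this statement is a direct bookkeeping consequence.
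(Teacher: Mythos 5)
Your argument is correct and follows exactly the same route as the paper's own proof: pass to $\tl_n(a)/I_m$ via Lemma~\ref{lemma: tensor over subalgebra iso quotient by ideal}, use Lemma~\ref{lem-basis for Im} to identify $I_m$ with the span of the Jones basis elements of terminus $\leqslant m-1$, and take the complementary basis elements for the quotient. No differences of substance.
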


\begin{proof}
    From Lemma~\ref{lemma: tensor over subalgebra iso quotient by ideal}~$\tl_n\otimes_{\tl_m}\t$ is isomorphic to~$\tl_n/I_m$. Then elements of the form~$x_{\ul{a},\ul{b}}$ give a basis for~$\tl_n$ and elements of the form~$x_{\ul{a},\ul{b}}$, which have terminus~$b_1\leqslant m-1$ give a basis for~$I_m$ by Lemma~\ref{lem-basis for Im}. Therefore a basis for the quotient is given by~$x_{\ul{a},\ul{b}}$ such that the terminus~$b_1>m-1$, and under the isomorphism in Lemma~\ref{lemma: tensor over subalgebra iso quotient by ideal} this gives the required basis.
\end{proof}

\begin{example}
    The Jones basis of~$\tl_3(a)$ is:
    \[
        1,
        \quad 
        U_2,
        \quad
        U_1U_2,
        \quad
        U_1,
        \quad 
        U_2U_1
        \quad
    \]
    So~$\tl_3(a)\otimes_{\tl_2(a)}\t$ has basis consisting of those elements whose terminus is strictly greater than~$1$, namely:
    \[
        1,
        \quad 
        U_2,
        \quad
        U_1U_2.
    \]
    (Recall that by convention the terminus of~$1$ is~$\infty$.) 
\end{example}

\begin{lemma}\label{lemma-cups}
    For~$m\leqslant n$, suppose that~$y\in\tl_n(a)$ and that~$y\cdot U_{m-1}$ lies in~$I_{m-1}$.  Then~$y\cdot U_{m-1}$ lies in~$I_{m-2}$.
\end{lemma}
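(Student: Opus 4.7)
The plan is to combine the Jones basis description of the ideals $I_{m-1}$ and $I_{m-2}$ from Lemma~\ref{lem-basis for Im} with the controlled reduction to Jones normal form provided by Lemma~\ref{lem-terminus of JNF}. Recall that $I_{m-1}$ has basis given by those Jones basis elements $x_{\underline{a},\underline{b}}$ with terminus $b_1\leqslant m-2$, and $I_{m-2}$ by those with $b_1\leqslant m-3$. Hence, by uniqueness of the expansion in the Jones basis, to deduce $y\cdot U_{m-1}\in I_{m-2}$ from $y\cdot U_{m-1}\in I_{m-1}$ it suffices to show that the Jones basis expansion of $y\cdot U_{m-1}$ contains no term whose terminus is exactly $m-2$.

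First, I would expand $y$ as an $R$-linear combination of monomials in the generators, $y=\sum_w c_w\, w$, giving $y\cdot U_{m-1}=\sum_w c_w (w\cdot U_{m-1})$. Each monomial $w\cdot U_{m-1}$ ends in $U_{m-1}$ and so has terminus $m-1$. By Lemma~\ref{lem-terminus of JNF}, each $w\cdot U_{m-1}$ is equal in $\tl_n(a)$ to a scalar multiple of a single Jones normal form word $w'$ with $t(w')\leqslant m-1$, and moreover $t(w')<m-1$ implies $t(w')\leqslant m-3$. Summing over $w$ yields an expression for $y\cdot U_{m-1}$ as an $R$-linear combination of Jones basis elements, every one of which has terminus equal to $m-1$ or at most $m-3$; in particular none has terminus exactly $m-2$.

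The final step is to invoke the uniqueness of the Jones basis expansion. Since $y\cdot U_{m-1}\in I_{m-1}$, only Jones basis elements of terminus $\leqslant m-2$ may appear with non-zero coefficient, by Lemma~\ref{lem-basis for Im}. Combined with the previous step, the only possible contributions come from Jones basis elements of terminus $\leqslant m-3$, so $y\cdot U_{m-1}\in I_{m-2}$ as required. I do not foresee a serious obstacle: the whole argument is essentially bookkeeping of termini, and parts~(b) and~(c) of Lemma~\ref{lem-terminus of JNF} appear to have been sharpened precisely to enable this parity-style conclusion. The only small point to verify is that the Jones normal form $w'$ associated to each non-empty monomial $w\cdot U_{m-1}$ is itself non-empty, which is clear since none of the defining relations of $\tl_n(a)$ can reduce a non-empty word to the empty word.
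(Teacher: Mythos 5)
Your proof is correct and follows essentially the same route as the paper's: expand $y\cdot U_{m-1}$ into monomials of terminus $m-1$, apply Lemma~\ref{lem-terminus of JNF} to conclude the Jones basis expansion involves only termini equal to $m-1$ or at most $m-3$, then use the hypothesis together with the basis of $I_{m-1}$ from Lemma~\ref{lem-basis for Im} to rule out terminus $m-1$, leaving only termini $\leqslant m-3$ so that $y\cdot U_{m-1}\in I_{m-2}$. (In fact your phrasing is slightly cleaner than the paper's, which appears to contain a typo writing ``$I_{m-2}$'' where the hypothesis ``$I_{m-1}$'' is meant.)
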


\begin{proof}
   The product $y\cdot U_{m-1}$ is a linear combination of words ending with~$U_{m-1}$, i.e.~of words~$w$ with~$t(w)=m-1$.  
    By Lemma~\ref{lem-terminus of JNF}, this can be rewritten as a linear combination of Jones basis elements~$x_{\ul{a},\ul{b}}$ whose terminus satisfies~$t(x_{\ul{a},\ul{b}}) = m-1$ or~$t(x_{\ul{a},\ul{b}}) \leqslant m-3$.
    Since~$y\cdot U_{m-1}\in I_{m-1}$, this means that in fact no basis elements with terminus~$m-1$ remain after cancellation, and therefore all remaining words have terminus~$m-3$ or less, and so lie in~$I_{m-2}$.
\end{proof}

\subsection{Iwahori-Hecke algebras}\label{section-ih}

\begin{defn}[The Iwahori-Hecke algebra]
	Let~$n\geqslant 0$ and let~$q\in R^\times$.  The \emph{Iwahori-Hecke algebra}~$\H_n(q)$ of type~$A_{n-1}$ is  the algebra with generators
	\[
	    T_1,\ldots,T_{n-1}
	\]
	satisfying the following relations:
	\begin{itemize}
	    \item
	   ~$T_i T_j = T_jT_i$ for~$i\neq j\pm 1$
	    \item
	   ~$T_iT_jT_i = T_jT_iT_j$ for~$i=j\pm 1$ 
	    \item
	   ~$T_i^2 = (q-1)T_i + q$
	\end{itemize}
\end{defn}

\begin{defn}[From Iwahori-Hecke to Temperley-Lieb]\label{defn-IH to TL}
	Now suppose that there is~$v\in R^\times$ such that~$q=v^2$.
	Then there are two natural homomorphisms 
	\[
		\theta_1,\theta_2\colon
		\H_n(q)\longrightarrow \tl_n(v+v^{-1}),
	\]
	defined by~$\theta_1(T_i) = vU_i-1$ and~$\theta_2(T_i)=v^2-vU_i$
	for~$i=1,\ldots,n-1$.
	They induce isomorphisms
	\[
	    \bar\theta_1\colon
	    \H_n(q)/I_1\xrightarrow{\ \cong\ } \tl_n(v+v^{-1}),
	    \qquad
	    \bar\theta_2\colon
	    \H_n(q)/I_2\xrightarrow{\ \cong\ } \tl_n(v+v^{-1}),
	\]
	where~$I_1$ is the two-sided ideal generated by elements of the form
	\[
	    T_iT_jT_i+T_iT_j + T_jT_i + T_i + T_j + 1
	\]
	for~$i=j\pm 1$, and~$I_2$ is the two-sided ideal generated by elements of the form
	\[
	    T_iT_jT_i - q T_iT_j  -q T_jT_i + q^2T_i + q^2T_j -q^3
	\]
	for~$i=j\pm 1$.
	See~\cite{FanGreen}, Theorem~5.29 of~\cite{KasselTuraev},
	and Section~2.3 of~\cite{HMR}, though unfortunately conventions change from author to author. Another standard convention of setting~$a=-(v+v^{-1})$ can easily be accounted for by swapping~$v$ with~$-v^{\pm 1}$.
\end{defn}

We will take an agnostic approach to the homomorphisms~$\theta_1$,~$\theta_2$.
We will choose one of them and denote it by simply
\[
	\theta\colon \H_n(q)\longrightarrow\tl_n(v+v^{-1}),
\]
and denote by~$\lambda$ the constant term in~$\theta(T_i)$, and by~$\mu$ the coefficient of~$U_i$ in~$\theta(T_i)$, so that
\[
    \theta(T_i) = \lambda + \mu U_i.
\]
Then~$\theta$ induces an isomorphism
\[
	\bar\theta\colon\H_n(q)/I\xrightarrow{\ \cong\ }\tl_n(v+v^{-1})
\]
where~$I$ is the two-sided ideal generated by elements of the form
\[
	T_iT_jT_i - \lambda T_iT_j-\lambda T_jT_i 
	+\lambda^2 T_i + \lambda^2 T_j-\lambda^3
\]
for~$i=j\pm 1$.  
And moreover, the elements~$\theta(T_i)$ act on the trivial module~$\t$ as multiplication by~$\lambda$.

\begin{defn}\label{definition-si}
    Let~$v\in R^{\times}$. We define~$s_1,\ldots,s_{n-1}\in\tl_n(v+v^{-1})$ by setting
    \[s_i=\theta(T_i)=\lambda+\mu U_i\] and note that these elements satisfy the following properties:
    \begin{itemize}
    	\item
    	$s_i^2 = (v^2-1)s_i + v^2$ for all~$i$,
    	\item
    	$s_i s_j = s_js_i$ for~$i\neq j\pm 1$,
    	\item
    	$s_is_js_i = s_js_is_j$ for~$i=j\pm 1$,
    	\item
    	$s_is_js_i - \lambda s_is_j-\lambda s_js_i +\lambda^2 s_i + \lambda^2 s_j-\lambda^3=0$ for~$i=j\pm 1$,
    	\item
    	$s_i$ acts on~$\t$ as multiplication by~$\lambda$.
    \end{itemize}
\end{defn}

\begin{remark}\label{remark-smoothing}
    There is a homomorphism from (the group algebra of) the braid group into~$\tl_n(v+v^{-1})$ given on generators by~$s_i\mapsto s_i$.  
    This is the content of the second and third bullet points above, together with the fact that the~$s_i$ are invertible, \red{which follows from the first bullet point (and the fact that~$v$ is a unit)}.
    Diagrammatically, this homomorphism can be viewed as a \emph{smoothing expansion} from braided diagrams to planar diagrams: take a braid diagram, and then smooth each crossing in turn in the two possible ways, using appropriate weightings for each smoothing.
    For example, we can visualise the image of~$s_i$ in $\tl_n(v+v^{-1})$ as the standard braid group generator crossing strand~$i$ over strand~$i+1$. There are two ways this crossing can be resolved to a planar diagram, and we equate~$s_i$ to the sum of these two states. They are the identity and~$U_i$, as shown in Figure~$\ref{fig: smoothing}$.
    \begin{figure}
        \centering
        	\begin{tikzpicture}[scale=0.4]
        	\foreach \x in {1,3,4,5,6,8}
        	\foreach \y in {0,3,6,9,12,15}
        	\draw[fill=black, line width=1] (\y,\x) circle [radius=0.15] (\y, 0.5)--(\y, 8.5);
        	\foreach \x in {1,3,8}
        	\draw[black] (0,\x) --(3,\x) (6,\x)--(9,\x) (12,\x)--(15,\x);
        	\foreach \x in {1.5,7.5,13.5}
        	\draw (\x,2.2) node {$\scriptstyle{\vdots}$} (\x,7.2) node {$\scriptstyle{\vdots}$};
        	\draw (-1,1)node {$\scriptstyle{1}$} 
        	(-1,4) node {$\scriptstyle{i}$}  (-1,5) node {$\scriptstyle{i+1}$}  (-1,8) node {$\scriptstyle{n}$};
        	\foreach \x in {12}
        	\draw (\x,4) to[out=0,in=-90] (\x+1,4.5) to[out=90,in=0] (\x,5);
        	\foreach \x in {15}
        	\draw (\x,4) to[out=180,in=-90] (\x-1,4.5) to[out=90,in=180] (\x,5);
        	\draw (6,6) -- (9,6) (12,6)--(15,6) (9,4)--(6,4) (0,6)--(3,6) (6,5)--(9,5);
        	\draw (10.5, 4.5) node[scale=1] {$+\,\mu$};
        	\draw (4.5, 4.5) node[scale=1] {$=\, \lambda$};    	
        	\draw (0,5) to[out=0, in=140] (1.5, 4.5) to[out=320, in=180] (3,4);
        	\draw[white,fill=white] (1.5,4.5) circle [radius=0.15];
        	\draw (0,4) to[out=0, in=220] (1.5, 4.5) to[out=40, in=180] (3,5);
        	\draw (1.5,-.8) node[] {$s_i$}(4.5,-.8) node[] {$= \, \lambda$}(10.5,-.8) node[] {$+\mu$}(13.5,-.8) node[] {$U_i$};
        	\end{tikzpicture}
        \caption{Smoothings of~$s_i$}
        \label{fig: smoothing}
    \end{figure}
    The coefficient of the identity is~$\lambda$ and the coefficient of~$U_i$ is~$\mu$, simply because we defined~$s_i=\lambda+\mu U_i$.
    Similarly, we consider the image of~$s_i^{-1}$ as strand~$i$ crossing under strand~$i+1$, and when this is smoothed the coefficient of the identity is~$\lambda^{-1}$ and the coefficient of~$U_i$ is~$\mu^{-1}$, precisely because one can verify that~$s_i^{-1} = \lambda^{-1}+\mu^{-1}U_i$ in~$\tl_n(v+v^{-1})$.
    
    In principle we could describe how various Reidemeister moves affect the smoothing expansion, but it will not be necessary for the rest of the paper.
    Moreover, we will only encounter positive powers of~$s_i$.
\end{remark}

\section{Inductive resolutions}
\label{section-inductive}
In this section we prove the following two theorems, which we recall from the introduction.

\setcounter{abcthm}{0}
\begin{abcthm}
    Let~$R$ be a commutative ring and let~$a$ be a unit in~$R$.  Then $\Tor^{\tl_n(a)}_d(\t,\t)$ and~$\Ext_{\tl_n(a)}^d(\t,\t)$ both vanish for~$d>0$.
\end{abcthm}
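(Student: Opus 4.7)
The plan is to prove the following stronger inductive statement: for every $0\leqslant m\leqslant n$, both
\[
    \Tor^{\tl_n(a)}_d(\t,\,\tl_n(a)\otimes_{\tl_m(a)}\t)=0
    \quad\text{and}\quad
    \Ext_{\tl_n(a)}^d(\tl_n(a)\otimes_{\tl_m(a)}\t,\,\t)=0
\]
for all $d>0$.  Theorem~\ref{theorem-invertible} is then the case $m=n$.  Throughout I identify $\tl_n(a)\otimes_{\tl_m(a)}\t$ with $\tl_n(a)/I_m$ via Lemma~\ref{lemma: tensor over subalgebra iso quotient by ideal}.  The base cases $m=0,1$ are immediate, since $I_0=I_1=0$ and $\tl_n(a)/I_m=\tl_n(a)$ is free, so all higher Tor and Ext against $\t$ vanish.

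For the inductive step, fix $m\geqslant 2$ and assume the statement for all smaller indices.  I would consider the short exact sequence of left $\tl_n(a)$-modules
\[
    0\longrightarrow I_m/I_{m-1}\longrightarrow \tl_n(a)/I_{m-1}\longrightarrow \tl_n(a)/I_m\longrightarrow 0
\]
and prove the key claim that the kernel $I_m/I_{m-1}$ is a direct summand of $\tl_n(a)/I_{m-2}$ as a left $\tl_n(a)$-module.  Set $e=a^{-1}U_{m-1}$; the relation $U_{m-1}^2=aU_{m-1}$ together with invertibility of $a$ gives $e^2=e$.  Right multiplication by $e$ is a left $\tl_n(a)$-module endomorphism of $\tl_n(a)$, and it preserves $I_{m-2}$ because each generator $U_1,\ldots,U_{m-3}$ of $I_{m-2}$ commutes with $U_{m-1}$.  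Hence it descends to an idempotent left $\tl_n(a)$-module endomorphism $\phi$ of $\tl_n(a)/I_{m-2}$, yielding the splitting $\tl_n(a)/I_{m-2}=\mathrm{im}(\phi)\oplus\ker(\phi)$.  To identify the image, note that
\[
    \mathrm{im}(\phi)=\bigl(\tl_n(a)U_{m-1}+I_{m-2}\bigr)/I_{m-2}\cong \tl_n(a)U_{m-1}\bigl/\bigl(\tl_n(a)U_{m-1}\cap I_{m-2}\bigr),
\]
and Lemma~\ref{lemma-cups} says exactly that $\tl_n(a)U_{m-1}\cap I_{m-1}=\tl_n(a)U_{m-1}\cap I_{m-2}$; the right-hand side therefore equals $(\tl_n(a)U_{m-1}+I_{m-1})/I_{m-1}=I_m/I_{m-1}$, as required.

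Once the direct summand claim is established, the induction closes mechanically.  By the inductive hypothesis both $\tl_n(a)/I_{m-1}$ and $\tl_n(a)/I_{m-2}$ have vanishing higher Tor and Ext against $\t$, and since direct summands inherit these vanishing properties, so does $I_m/I_{m-1}$.  The long exact sequences in $\Tor^{\tl_n(a)}_\ast(\t,-)$ and $\Ext_{\tl_n(a)}^\ast(-,\t)$ associated to the displayed short exact sequence then propagate the vanishing to $\tl_n(a)/I_m$, completing the induction.  The main obstacle is the direct summand decomposition: this is where both the invertibility of $a$ and the precise content of Lemma~\ref{lemma-cups} are indispensable, and everything after it is routine homological bookkeeping.
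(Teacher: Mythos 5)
Your argument takes a genuinely different route from the paper's, and it is essentially correct. The paper builds an infinite ``inductive resolution'' $C(m)$ of $\tl_n(a)\otimes_{\tl_m(a)}\t$ whose terms are $\tl_n(a)\otimes_{\tl_{m-1}(a)}\t$ and, periodically, $\tl_n(a)\otimes_{\tl_{m-2}(a)}\t$, and then runs a double-complex spectral sequence. You instead distill the same mechanism into a single short exact sequence $0\to I_m/I_{m-1}\to\tl_n(a)/I_{m-1}\to\tl_n(a)/I_m\to 0$ and observe that the kernel term is (isomorphic to) a direct summand of $\tl_n(a)/I_{m-2}$ via the idempotent $e=a^{-1}U_{m-1}$. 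Your identification of $\mathrm{im}(\phi)$ with $I_m/I_{m-1}$ is correct and uses Lemma~\ref{lemma-cups} exactly as it must be used. This is tidier than the paper's version in that it avoids both the unbounded resolution and the spectral sequence.

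There is, however, a small gap in what you call ``routine homological bookkeeping'': the long exact sequence does not by itself close the induction in degree $1$. The relevant segment reads
\[
0=\Tor_1^{\tl_n}(\t,\tl_n/I_{m-1})\to\Tor_1^{\tl_n}(\t,\tl_n/I_m)\to\t\otimes_{\tl_n}(I_m/I_{m-1})\to\t\otimes_{\tl_n}(\tl_n/I_{m-1}),
\]
so you still need to know that $\t\otimes_{\tl_n}(I_m/I_{m-1})\to\t\otimes_{\tl_n}(\tl_n/I_{m-1})$ is injective, or --- what is cleanest here --- that $\t\otimes_{\tl_n}(I_m/I_{m-1})=0$ (and dually $\Hom_{\tl_n}(I_m/I_{m-1},\t)=0$ for the $\Ext$ side). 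This is true but is an extra observation: since $e=a^{-1}U_{m-1}$ acts as zero on $\t$, the induced idempotent on $\t\otimes_{\tl_n}(\tl_n/I_{m-2})\cong\t$, and on $\Hom_{\tl_n}(\tl_n/I_{m-2},\t)\cong\t$, is the zero map, so the summands $\t\otimes_{\tl_n}\mathrm{im}(\phi)$ and $\Hom_{\tl_n}(\mathrm{im}(\phi),\t)$ vanish. This is exactly the content that the paper packages into the acyclicity of $\t\otimes_{\tl_n}C(m)$ (Lemma~\ref{lemma-tensor-acyclic}), whose proof also turns on the fact that $a^{-1}U_{m-1}$ kills $\t$. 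With that one sentence added, your proof is complete.
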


\setcounter{abcthm}{5}
\begin{abcthm}\label{thm-inductive res}
    Let~$R$ be a commutative ring and let~$a\in R$.
    Let~$0\leqslant m<n$.  
    Then~$\Tor^{\tl_n(a)}_d(\t,\tl_n(a)\otimes_{\tl_{m}(a)}\t)$ and~$\Ext_{\tl_n(a)}^d(\tl_n(a)\otimes_{\tl_{m}(a)}\t,\t)$ vanish for~$d>0$.
\end{abcthm}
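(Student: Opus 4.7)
The plan is to proceed by strong induction on $m$. The base cases $m=0$ and $m=1$ are immediate: since $\tl_0(a)=\tl_1(a)=R$, the module $\tl_n(a)\otimes_{\tl_m(a)}\t \cong \tl_n(a)$ is free as a left $\tl_n(a)$-module, so its higher $\Tor$ and $\Ext$ against $\t$ vanish.

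For the inductive step ($m\geqslant 2$) I would assume the statement for all smaller values and build what the paper calls an \emph{inductive resolution} of $\tl_n(a)\otimes_{\tl_m(a)}\t$: a long exact sequence of left $\tl_n(a)$-modules
\[
\cdots \longrightarrow Q_2 \longrightarrow Q_1 \longrightarrow Q_0 \longrightarrow \tl_n(a)\otimes_{\tl_m(a)}\t \longrightarrow 0
\]
in which each $Q_i$ is isomorphic to either $\tl_n(a)\otimes_{\tl_{m-1}(a)}\t$ or $\tl_n(a)\otimes_{\tl_{m-2}(a)}\t$. Under the identification of Lemma~\ref{lemma: tensor over subalgebra iso quotient by ideal}, the augmentation $Q_0 = \tl_n(a)\otimes_{\tl_{m-1}(a)}\t \twoheadrightarrow \tl_n(a)\otimes_{\tl_m(a)}\t$ is the canonical quotient coming from the inclusion $I_{m-1}\subseteq I_m$, and its kernel $I_m/I_{m-1}$ is cyclic, generated by the class of $U_{m-1}$. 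The first syzygy $Q_1 = \tl_n(a)\otimes_{\tl_{m-2}(a)}\t \to Q_0$ is therefore right multiplication by $U_{m-1}$, which is well-defined because $U_{m-1}$ commutes with $U_1,\ldots,U_{m-3}$; its kernel is controlled by Lemma~\ref{lemma-cups}, which guarantees that any $y\cdot U_{m-1}$ landing in $I_{m-1}$ already lies in $I_{m-2}$. Higher differentials are constructed by alternating right multiplication by $U_{m-1}$ with multiplication by a compensating element (for example $a - U_{m-1}$, which annihilates $U_{m-1}$), with exactness at each stage verified combinatorially from the Jones-basis description of Lemma~\ref{lem-basis for tensor product over smaller tl}.

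Granting the resolution, the inductive hypothesis gives $\Tor^{\tl_n(a)}_{>0}(\t, Q_i)=0$ for every $i$, and a standard double-complex argument (applied to $P_\bullet\otimes_{\tl_n(a)} Q_\bullet$ for any projective resolution $P_\bullet\to\t$) then identifies
\[
\Tor^{\tl_n(a)}_d\bigl(\t,\tl_n(a)\otimes_{\tl_m(a)}\t\bigr) \cong H_d\bigl(\t\otimes_{\tl_n(a)} Q_\bullet\bigr),
\]
with the parallel statement for $\Ext$. Each $\t\otimes_{\tl_n(a)} Q_i$ is a copy of $R$, and the induced differentials are just the images of the chosen multipliers under the augmentation $\tl_n(a)\to R$; checking that the resulting complex of copies of $R$ is exact in positive degrees is then a direct finite computation that closes the induction.

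The principal obstacle is the construction of the resolution itself and the verification of its exactness at every stage. Because $\tl_n(a)$ fails to be flat over $\tl_m(a)$ when $m<n$, no Shapiro-type or abstract change-of-rings argument is available, and every identification of a kernel with the image of the next map must be performed by explicit combinatorial calculation using the Jones normal form and the cup-absorption Lemma~\ref{lemma-cups}. The subtle point is the choice of multipliers at each syzygy: naive choices (such as right multiplication by $U_{m-1}$ throughout) either fail to give exactness of $Q_\bullet$ when $a$ is not a unit, or produce a complex $\t\otimes_{\tl_n(a)} Q_\bullet$ whose homology is too large to vanish; the alternation between $U_{m-1}$ and a compensating element is what makes both halves of the argument go through simultaneously.
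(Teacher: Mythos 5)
Your high-level strategy matches the paper exactly: strong induction on $m$ with base cases $m=0,1$, an ``inductive resolution'' of $\tl_n\otimes_{\tl_m}\t$ whose terms are $\tl_n\otimes_{\tl_{m-1}}\t$ and $\tl_n\otimes_{\tl_{m-2}}\t$, and the double-complex spectral sequence to close the induction. The first two differentials (the quotient map and right multiplication by $U_{m-1}$) are correct, and your appeal to Lemma~\ref{lemma-cups} is the right tool for exactness there.

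The gap is the compensating element. You propose alternating $U_{m-1}$ with $a-U_{m-1}$, and this fails on both the counts that you yourself flag as the delicate ones. First, the resulting complex $Q_\bullet$ is \emph{not exact} when $a$ is not a unit. Take $n=3$, $m=2$: the element $-1+U_1U_2\in\tl_3$ lies in the kernel of right multiplication by $U_1$ (since $(-1+U_1U_2)U_1=-U_1+U_1=0$), but a short computation in the diagram basis shows it lies in the image of right multiplication by $a-U_1$ only when $1\in aR$. Second, even granting exactness of $Q_\bullet$, the complex $\t\otimes_{\tl_n}Q_\bullet$ would have differentials $1,0,a,0,a,\ldots$ (since $a-U_{m-1}$ acts on $\t$ as $a$), whose homology in positive degree is $R/aR$ alternating with $R_a=\ker(a\colon R\to R)$; the spectral sequence would then identify $\Tor_d^{\tl_n}(\t,\tl_n\otimes_{\tl_m}\t)$ with these nonzero groups, not with~$0$.

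The fix — and the idea you are missing — is that the compensating element must be the complement of an honest idempotent, and the only way to manufacture one without inverting $a$ is to reach \emph{outside} $\tl_m$. The paper's complex $D(m)$ uses $U_{m-1}U_m$, idempotent by the relation $U_{m-1}U_mU_{m-1}=U_{m-1}$, together with its complement $1-U_{m-1}U_m$: the differentials from degree $1$ upward read $U_{m-1},\ (1-U_{m-1}U_m),\ U_{m-1}U_m,\ (1-U_{m-1}U_m),\ldots$, exactness above degree $1$ is immediate from idempotence, and $\t\otimes_{\tl_n}D(m)$ has differentials $1,0,1,0,\ldots$, which is acyclic. The element $U_m$ exists in $\tl_n$ only when $m<n$ — this is precisely why that hypothesis appears in the statement, and why the same device cannot prove Theorem~\ref{theorem-invertible}, where $m=n$ is permitted but $a$ must be a unit so that $a^{-1}U_{m-1}$ can serve as the idempotent instead. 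That your construction never invokes the hypothesis $m<n$ should have been a warning sign.
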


In fact for Theorem~\ref{theorem-invertible} we will prove the following stronger claim:

\begin{claim}\label{claim-invertible}
    Suppose that the parameter~$a\in R$ is invertible.
    Then for any $0\leqslant m\leqslant n$, the groups~$\Tor^{\tl_n(a)}_d(\t,\tl_n(a)\otimes_{\tl_m(a)}\t)$ and~$\Ext^{\tl_n(a)}_d(\tl_n(a)\otimes_{\tl_m(a)}\t,\t)$ both vanish for~$d>0$.      
\end{claim}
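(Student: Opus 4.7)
The plan is to proceed by strong induction on $m$ (with $n$ fixed). This simultaneously establishes Theorem~\ref{theorem-invertible}, since $\tl_n(a)\otimes_{\tl_n(a)}\t\cong\t$ is the case $m=n$. The base cases $m=0,1$ are immediate: here $\tl_m(a)=R$, so $\tl_n(a)\otimes_{\tl_m(a)}\t\cong\tl_n(a)$ as a left $\tl_n(a)$-module, which is free, hence projective, so $\Tor^{\tl_n(a)}_d(\t,\tl_n(a))$ and $\Ext_{\tl_n(a)}^d(\tl_n(a),\t)$ vanish for $d>0$. For the inductive step, fix $m\geqslant 2$ and assume the claim for all $m'<m$.

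The core of the inductive step is to construct an exact ``inductive resolution'' of $M_m:=\tl_n(a)\otimes_{\tl_m(a)}\t$ whose terms are of the form $M_{m-1}$ or $M_{m-2}$, so that the inductive hypothesis supplies the required vanishing on each term. Using Lemma~\ref{lemma: tensor over subalgebra iso quotient by ideal} to identify $M_k\cong\tl_n(a)/I_k$, I begin with the natural short exact sequence
\[
0\to I_m/I_{m-1}\to M_{m-1}\to M_m\to 0.
\]
The submodule $I_m/I_{m-1}$ is cyclic with generator the coset of $U_{m-1}$, so right multiplication by $U_{m-1}$ defines a surjection $\phi\colon M_{m-2}\twoheadrightarrow I_m/I_{m-1}$. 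This is well-defined because $U_iU_{m-1}=U_{m-1}U_i$ for $i<m-2$ forces $I_{m-2}\cdot U_{m-1}\subseteq I_{m-2}\subseteq I_{m-1}$, and it is $\tl_n(a)$-linear since left and right multiplication commute. By Lemma~\ref{lemma-cups}, $\ker(\phi)=\{x+I_{m-2}:xU_{m-1}\in I_{m-2}\}$. The next step is to construct a $\tl_n(a)$-linear map $\psi\colon M_{m-1}\to M_{m-2}$ whose image is exactly $\ker(\phi)$, producing a second short exact sequence
\[
0\to M_{m-1}\to M_{m-2}\to I_m/I_{m-1}\to 0,
\]
which splices with the first into a four-term exact inductive resolution
\[
0\to M_{m-1}\to M_{m-2}\to M_{m-1}\to M_m\to 0.
\]
Breaking this into its two constituent short exact sequences and applying the long exact sequences for $\Tor^{\tl_n(a)}_\ast(\t,-)$ and $\Ext^\ast_{\tl_n(a)}(-,\t)$, the inductive hypothesis makes every $\Tor_d$ and $\Ext^d$ of $M_{m-1}$ and $M_{m-2}$ with $\t$ vanish for $d>0$; a standard diagram chase in the long exact sequences then transfers the vanishing to $M_m$.

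The main obstacle is constructing $\psi$ and verifying that its image coincides with $\ker(\phi)$. Concretely, one must produce an element $h\in\tl_n(a)$ representing $\psi$ of the generator of $M_{m-1}$ and subject to (i) $U_i\cdot h\in I_{m-2}$ for every $1\leqslant i\leqslant m-2$ (ensuring $\psi$ descends from $\tl_n(a)$ to $M_{m-1}=\tl_n(a)/I_{m-1}$), and (ii) $h\cdot U_{m-1}\in I_{m-2}$ (ensuring $\psi(1)\in\ker(\phi)$). This is where the invertibility of $a$ becomes essential: because $U_{m-1}^2=aU_{m-1}$, the element $U_{m-1}/a\in\tl_n(a)$ is idempotent, and an explicit formula for $h$ involving $a^{-1}$ is used to cancel the offending terms. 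A final argument---likely a dimension count via the Jones basis, comparing the basis of $M_{m-1}$ (given by Lemma~\ref{lem-basis for tensor product over smaller tl}) with an explicit basis of $\ker(\phi)$ inside $M_{m-2}$---confirms that $\psi$ surjects onto $\ker(\phi)$. Together with the elementary well-definedness and equivariance checks, this closes the inductive step.
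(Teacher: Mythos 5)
Your overall strategy---an inductive resolution of $M_m:=\tl_n(a)\otimes_{\tl_m(a)}\t$ by modules $M_{m-1}$ and $M_{m-2}$, with the inductive hypothesis supplying vanishing in each term---is precisely the paper's strategy, and your first short exact sequence $0\to I_m/I_{m-1}\to M_{m-1}\to M_m\to 0$ matches the bottom of the paper's complex $C(m)$. The gap is in the second short exact sequence you claim,
\[
0\to M_{m-1}\xrightarrow{\ \psi\ } M_{m-2}\to I_m/I_{m-1}\to 0,
\]
which cannot exist. This is already visible for $n=m=2$ over a field with $a$ invertible: here $M_1\cong M_0\cong\tl_2(a)$ each have rank $2$, while $I_2/I_1=I_2=R\cdot U_1$ has rank $1$, so the alternating sum of dimensions is $2-2+1=1\neq 0$, which is impossible for a short exact sequence of finite-dimensional vector spaces. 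Concretely, $\ker\phi\subseteq M_0=\tl_2(a)$ is the rank-one submodule $R\cdot(1-a^{-1}U_1)$; any $\tl_2(a)$-linear $\psi\colon\tl_2(a)\to\tl_2(a)$ with image equal to this submodule is right multiplication by a unit multiple of $1-a^{-1}U_1$, and a direct computation (using $U_1(1-a^{-1}U_1)=0$) shows $\ker\psi=R\cdot U_1\neq 0$. So the four-term exact resolution $0\to M_{m-1}\to M_{m-2}\to M_{m-1}\to M_m\to 0$ does not exist, and the Euler-characteristic obstruction shows that no finite-length resolution by modules drawn from $\{M_{m-1},M_{m-2}\}$ alone can in general do the job.

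The paper's resolution $C(m)$ circumvents this by being \emph{infinite} and $2$-periodic: it places $M_m$ in degree $-1$, $M_{m-1}$ in degree $0$, and $M_{m-2}$ in every degree $\geqslant 1$, with the successive differentials on the $M_{m-2}$ terms alternating between right multiplication by $a^{-1}U_{m-1}$ and by $1-a^{-1}U_{m-1}$. Invertibility of $a$ enters exactly here, to make $a^{-1}U_{m-1}$ an idempotent, which renders the periodic tail acyclic automatically; acyclicity at degrees $0$ and $1$ then uses Lemma~\ref{lemma-cups}. Because the resolution is not of finite length, one cannot simply splice long exact sequences; instead the paper checks in addition that $\t\otimes_{\tl_n(a)}C(m)$ and $\Hom_{\tl_n(a)}(C(m),\t)$ are acyclic, and runs the hyper(co)homology spectral sequences of the double complexes $P_\ast\otimes_{\tl_n(a)}C(m)_\ast$ and $\Hom_{\tl_n(a)}(C(m)_\ast,I^\ast)$. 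The inductive hypothesis kills all rows $j>0$ of the $E^1$-page in columns $i\geqslant 0$, the $j=0$ row is the acyclic complex $\t\otimes_{\tl_n(a)}C(m)$, and since the target of the spectral sequence is zero, the surviving column at $i=-1$, namely $\Tor^{\tl_n(a)}_\ast(\t,M_m)$ (resp.\ $\Ext^\ast_{\tl_n(a)}(M_m,\t)$), must vanish in positive degrees. So while your high-level intuition about inductive resolutions is correct, the finite-length version you propose is obstructed, and the $2$-periodic infinite resolution driven by the idempotent $a^{-1}U_{m-1}$ is the essential new ingredient.
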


The similarity between Theorem~\ref{thm-inductive res} and Claim~\ref{claim-invertible} is now clear.  Both will be proved by induction on~$m$, the initial cases~$m=0,1$ being immediate because then~$\tl_m$ is the ground ring~$R$ so that~$\tl_n\otimes_{\tl_m}\t\cong\tl_n$ is free.  In order to produce an inductive proof, we construct resolutions of the modules~$\tl_n\otimes_{\tl_m}\t$ whose terms are not free or projective or injective, but instead whose terms are the modules considered earlier in the induction, specifically~$\tl_n\otimes_{\tl_{m-1}}\t$ and~$\tl_n\otimes_{\tl_{m-2}}\t$.  For this reason we refer to these resolutions as \emph{inductive resolutions}.  This approach is inspired by homological stability arguments, in which one considers complexes whose building blocks are induced up from the earlier objects in the sequence.  The difference here is that our complexes are actual resolutions --- they are acyclic rather than just acyclic up to a point --- and because Shapiro's lemma is unavailable we do not change the algebra we are working over, rather we change the algebra from which we are inducing our modules.

\subsection{The inductive resolutions}

In this subsection we establish the resolutions~$C(m)$ and~$D(m)$ of~$\tl_n\otimes_{\tl_m}\t$ required to prove Claim~\ref{claim-invertible} and Theorem~\ref{thm-inductive res} above.

\begin{defn}[The complex~$C(m)$]
    \red{Let~$2\leqslant m\leqslant n$ and assume that~$a$ is invertible.
   	We define a chain complex of left~$\tl_n(a)$-modules as in Figure~\ref{fig: Cm}.}  The degree is indicated in the right-hand column.
    \begin{figure}
        \centering
    	$
            \xymatrix{
                \vdots
                \ar[d]^{(1-a^{-1}U_{m-1})}
                \\
                \tl_n\otimes_{\tl_{m-2}}\t
                \ar[d]^{a^{-1}U_{m-1}}
                &
                3
                \\
                \tl_n\otimes_{\tl_{m-2}}\t
                \ar[d]^{(1-a^{-1}U_{m-1})}
                &
                2
                \\
                \tl_n\otimes_{\tl_{m-2}}\t
                \ar[d]^{a^{-1}U_{m-1}}
                &
                1
                \\
                \tl_n\otimes_{\tl_{m-1}}\t
                \ar[d]^1
                &
                0
                \\
                \tl_n\otimes_{\tl_m}\t
                &
                -1
            }
       ~$
        \caption{The complex~$C(m)$.}
        \label{fig: Cm}
    \end{figure}
    The differentials of~$C(m)$ are all given by extending the algebra over which the tensor product is taken, by right multiplying in the first factor by the indicated element of~$\tl_n(a)$, or by a combination of the two.  So, for example, the differential originating in degree~$1$ sends~$x\otimes r\in\tl_n(a)\otimes_{\tl_{m-2}(a)}\t$ to~$(x\cdot a^{-1} U_{m-1})\otimes r\in\tl_n(a)\otimes_{\tl_{m-1}(a)}\t$. 
    The complex is periodic of period~$2$ in degrees~$1$ and above, so that all entries are~$\tl_n(a)\otimes_{\tl_{m-2}(a)}\t$ and the boundary maps between them alternate between~$a^{-1}U_{m-1}$ and~$(1-a^{-1}U_{m-1})$.
    The boundary maps are well defined because~$U_{m-1}$ commutes inside~$\tl_n(a)$ with all elements of~$\tl_{m-2}(a)$.
\end{defn}

\begin{defn}[The complex~$D(m)$]
    \red{Let~$2\leqslant m <n$, and do not assume that~$a$ is invertible.
   We define a chain complex of left~$\tl_n(a)$-modules as in Figure~\ref{fig: Dm}.}  The degree is indicated in the right-hand column.
    \begin{figure}
        \centering
    	$
            \xymatrix{
                \vdots
                \ar[d]^{(1-U_{m-1}U_m)}
                \\
                \tl_n\otimes_{\tl_{m-2}}\t
                \ar[d]^{U_{m-1}U_m}
                &
                3
                \\
                \tl_n\otimes_{\tl_{m-2}}\t
                \ar[d]^{(1-U_{m-1}U_m)}
                &
                2
                \\
                \tl_n\otimes_{\tl_{m-2}}\t
                \ar[d]^{U_{m-1}}
                &
                1
                \\
                \tl_n\otimes_{\tl_{m-1}}\t
                \ar[d]^1
                &
                0
                \\
                \tl_n\otimes_{\tl_m}\t
                &
                -1
            }
       ~$
        \caption{The complex~$D(m)$.}
        \label{fig: Dm}
    \end{figure}
    The differentials of~$D(m)$ are all given by extending the algebra over which the tensor product is taken, by right multiplying in the first factor by the indicated element of~$\tl_n(a)$, or by a combination of the two.  
    So, for example, the differential originating in degree~$1$ sends~$x\otimes r\in\tl_n(a)\otimes_{\tl_{m-2}(a)}\t$ to~$x\cdot U_{m-1}\otimes r\in\tl_n(a)\otimes_{\tl_{m-1}(a)}\t$. 
    The complex is periodic of period~$2$ in degrees~$1$ and above, so that in that range all terms are~$\tl_n(a)\otimes_{\tl_{m-2}(a)}\t$ and the boundary maps between them alternate between~$U_{m-1}U_m$ and~$(1-U_{m-1}U_m)$.
    The boundary maps are well defined because~$U_{m-1}$ and~$U_{m-1}U_m$ commute inside~$\tl_n(a)$ with all elements of~$\tl_{m-2}(a)$.
    Observe that the condition~$m<n$ is necessary in order to ensure that~$U_m$ is actually an element of~$\tl_n(a)$.
\end{defn}

\begin{lemma}\label{lemma-idempotent}\hfill
    \begin{enumerate}
        \item
        Let~$2\leqslant m\leqslant n$ and let~$a$ be invertible.
        Then~$a^{-1}U_{m-1}\in\tl_n(a)$ is idempotent.
        \item
        Let~$2\leqslant m <n$ and let~$a$ be arbitrary.
        Then~$U_{m-1}U_m\in \tl_n(a)$ is idempotent.
    \end{enumerate}
\end{lemma}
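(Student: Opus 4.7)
The plan is to verify each claim by a direct computation using only the defining relations of the Temperley-Lieb algebra from Definition~\ref{definition-temperley-lieb}, so neither part should present any real obstacle.

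For part~(1), the strategy is to square the element and apply relation~(3), namely $U_i^2 = aU_i$. Explicitly,
\[
    (a^{-1}U_{m-1})^2 = a^{-2}\,U_{m-1}^2 = a^{-2}\cdot a\,U_{m-1} = a^{-1}U_{m-1},
\]
where invertibility of $a$ is used to form $a^{-1}$. This handles the first claim.

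For part~(2), the idea is to square $U_{m-1}U_m$ and invoke relation~(2) to collapse the middle three letters. Since $m-1 = m\pm 1$ (with the sign $-1$), we have $U_m U_{m-1} U_m = U_m$, and therefore
\[
    (U_{m-1}U_m)(U_{m-1}U_m) = U_{m-1}\,(U_m U_{m-1} U_m) = U_{m-1}\cdot U_m = U_{m-1}U_m,
\]
as required. The hypothesis $m<n$ is used only to guarantee that $U_m$ is actually a generator of $\tl_n(a)$, so that the expression $U_{m-1}U_m$ makes sense; no assumption on $a$ is needed for this part. The main (and only) thing to double-check is that relation~(2) is being applied with the correct indices, which is straightforward.
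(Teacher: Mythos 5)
Your proof is correct and follows essentially the same calculation as the paper; the only cosmetic difference is that in part~(2) you collapse the middle via $U_m U_{m-1} U_m = U_m$, whereas the paper groups the other way and uses $U_{m-1}U_m U_{m-1} = U_{m-1}$, both being valid instances of relation~(2).
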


\begin{proof}
    We calculate
   \[(a^{-1}U_i)^2 = a^{-2}U_i^2 = a^{-2}aU_i = a^{-1}U_i\]
    and 
   \[U_{m-1}U_m\cdot U_{m-1}U_m = U_{m-1}U_mU_{m-1}\cdot U_m=U_{m-1}U_m.\qedhere\]
\end{proof}

From now on in this section, we will attempt to talk about~$C(m)$ and~$D(m)$ at the same time.  
When we refer to~$C(m)$, the relevant assumptions should be understood, namely that~$2\leqslant m\leqslant n$ and that~$a\in R$ is a unit.  
And when we refer to~$D(m)$, the assumptions that~$2\leqslant m<n$ but~$a\in R$ is arbitrary should be understood.
We trust that this will not be confusing.

\begin{lemma}
   $C(m)$ and~$D(m)$ are indeed chain complexes.
\end{lemma}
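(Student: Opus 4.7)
The plan is to verify, for both $C(m)$ and $D(m)$, that (i) each differential is a well-defined map of left $\tl_n(a)$-modules, and (ii) consecutive differentials compose to zero. The well-definedness issue has already been flagged in the definitions: the differentials in degree $\geqslant 1$ extend scalars from $\tl_{m-2}$ to $\tl_{m-2}$ or $\tl_{m-1}$, and right-multiply by one of $a^{-1}U_{m-1}$, $1-a^{-1}U_{m-1}$, $U_{m-1}$, $U_{m-1}U_m$, or $1-U_{m-1}U_m$. For each of these elements, the key point is that it commutes with the subalgebra $\tl_{m-2}(a)$, which is generated by $U_1,\ldots,U_{m-3}$; this follows at once from the commutation relation $U_iU_j=U_jU_i$ for $|i-j|\geqslant 2$, since all those generators have subscripts at distance $\geqslant 2$ from both $m-1$ and $m$.

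Next I would check $d\circ d=0$ by computing the relevant product in $\tl_n(a)$ and moving it through the tensor product. For the composition ending in degree $-1$, the differential out of degree $0$ is the quotient map $\tl_n\otimes_{\tl_{m-1}}\t \twoheadrightarrow \tl_n\otimes_{\tl_m}\t$, and the element used in the preceding differential (namely $a^{-1}U_{m-1}$ for $C(m)$ or $U_{m-1}$ for $D(m)$) lies in $\tl_m(a)$, so once the tensor product is taken over $\tl_m(a)$ it may be pulled across and hits the zero action on $\t$.

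For the compositions living in degrees $\geqslant 1$ of $C(m)$, the two possible composites reduce to
\[
   a^{-1}U_{m-1}(1-a^{-1}U_{m-1})=0=(1-a^{-1}U_{m-1})\,a^{-1}U_{m-1},
\]
which is exactly the idempotency of $a^{-1}U_{m-1}$ established in Lemma~\ref{lemma-idempotent}(1). The analogous compositions in degrees $\geqslant 2$ of $D(m)$ reduce to $U_{m-1}U_m(1-U_{m-1}U_m)=0$ and its reverse, which is Lemma~\ref{lemma-idempotent}(2).

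The only composition that is not pure idempotency is the one in $D(m)$ from degree $2$ to degree $0$: here I would use the Temperley-Lieb relation $U_{m-1}U_mU_{m-1}=U_{m-1}$ to compute
\[
   (1-U_{m-1}U_m)\,U_{m-1}=U_{m-1}-U_{m-1}U_mU_{m-1}=U_{m-1}-U_{m-1}=0.
\]
This is the only step that uses a Temperley-Lieb relation beyond idempotency; everything else is formal. I do not expect any genuine obstacle — the definitions of $C(m)$ and $D(m)$ were engineered precisely so that the relations $U_i^2=aU_i$ and $U_iU_{i\pm1}U_i=U_i$ force $d^2=0$ — so the "main obstacle" is simply keeping the bookkeeping straight between the three regimes (bottom two degrees, the boundary case involving $U_{m-1}$ and $U_{m-1}U_m$, and the purely periodic part).
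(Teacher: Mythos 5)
Your proof is correct and takes essentially the same route as the paper: the composite landing in degree $-1$ vanishes because the multiplier lies in the subalgebra over which the tensor product is formed, and the composites in higher degrees vanish by the idempotency established in Lemma~\ref{lemma-idempotent}. You are in fact slightly more careful than the paper's proof (which treats $C(m)$ in detail and dismisses $D(m)$ as ``similar'') in explicitly isolating the $D(m)$ composite from degree $2$ to degree $0$, where $(1-U_{m-1}U_m)\cdot U_{m-1}=0$ follows from the relation $U_{m-1}U_mU_{m-1}=U_{m-1}$ rather than from idempotency of $U_{m-1}U_m$ alone.
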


\begin{proof}
    We give the proof for~$C(m)$.  The proof for~$D(m)$ is similar.
    We must check that consecutive boundary maps of~$C(m)$ compose to~$0$.
    In the case of the composite from degree~$1$ to~$-1$, the composition is given by
    \[x\otimes r\mapsto (x\cdot a^{-1}U_{m-1})\otimes r = x\otimes (a^{-1}U_{m-1}\cdot r)=x\otimes 0 = 0\] this holds because the tensor product is over~$\tl_m$, which contains~$a^{-1}U_{m-1}$.       
    In the case of the remaining composites, this follows immediately from
    \[(a^{-1}U_{m-1})\cdot (1-a^{-1}U_{m-1}) = 0 = (1-a^{-1}U_{m-1})\cdot(a^{-1}U_{m-1}),\]
    which is a consequence of the fact that~$a^{-1}U_{m-1}$ is idempotent (from Lemma~\ref{lemma-idempotent}).
\end{proof}
    
\begin{lemma}\label{lemma-CD-acyclic}
    The complexes~$C(m)$ and~$D(m)$ are acyclic.
\end{lemma}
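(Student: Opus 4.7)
The plan is to verify exactness of $C(m)$ and $D(m)$ at every degree, separating the analysis into three regions: the periodic region in degrees $\geqslant 2$, the boundary region in degrees $-1$ and $0$, and the transition at degree $1$.

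In degrees $\geqslant 2$, both complexes have the alternating form
\[
\cdots \xrightarrow{p} M \xrightarrow{1-p} M \xrightarrow{p} M \xrightarrow{1-p} \cdots
\]
where $M = \tl_n\otimes_{\tl_{m-2}}\t$ and $p$ acts by right multiplication in the first tensor factor, with $p = a^{-1}U_{m-1}$ in $C(m)$ and $p = U_{m-1}U_m$ in $D(m)$. Lemma~\ref{lemma-idempotent} gives $p^2 = p$, so $p$ and $1-p$ are complementary idempotents; hence $\mathrm{im}(p) = \ker(1-p)$ and $\mathrm{im}(1-p) = \ker(p)$, and exactness at every degree $\geqslant 2$ is automatic.

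For degrees $-1$ and $0$, I would invoke Lemma~\ref{lemma: tensor over subalgebra iso quotient by ideal} to identify each $\tl_n\otimes_{\tl_k}\t$ with $\tl_n/I_k$. The degree-$0$ boundary then becomes the natural quotient $\tl_n/I_{m-1}\twoheadrightarrow\tl_n/I_m$, surjective with kernel $I_m/I_{m-1}$. Since $I_m = I_{m-1} + \tl_n\cdot U_{m-1}$, this kernel is the left $\tl_n$-submodule generated by the class of $U_{m-1}$, which is precisely the image of the degree-$1$-to-degree-$0$ boundary (right multiplication by a scalar multiple of $U_{m-1}$). This settles exactness at degrees $-1$ and $0$.

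The delicate step is exactness at degree $1$: I must show that the kernel $K$ of the degree-$1$ boundary $\tl_n/I_{m-2}\to\tl_n/I_{m-1}$ coincides with $\mathrm{im}(1-p) = \ker(p)$. By definition $[x]\in K$ iff $xU_{m-1}\in I_{m-1}$, which by Lemma~\ref{lemma-cups} is equivalent to $xU_{m-1}\in I_{m-2}$. For $C(m)$ this literally coincides with $\ker(p)$ since $a$ is a unit. For $D(m)$ I must show $\ker(U_{m-1}U_m) = \ker(U_{m-1})$ as right-multiplication endomorphisms of $M$: the inclusion $\ker(U_{m-1})\subseteq\ker(U_{m-1}U_m)$ is immediate since right multiplication by $U_m$ preserves $I_{m-2}$ (as $U_m$ commutes with $\tl_{m-2}$), while the reverse inclusion exploits the Temperley-Lieb relation $U_{m-1}U_mU_{m-1} = U_{m-1}$: if $xU_{m-1}U_m\in I_{m-2}$ then $xU_{m-1} = (xU_{m-1}U_m)\cdot U_{m-1}$ lies in $I_{m-2}\cdot U_{m-1}\subseteq I_{m-2}$, using that $U_{m-1}$ commutes with $\tl_{m-2}$. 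The main obstacle is this identification for $D(m)$; every other exactness check follows formally from $p^2=p$ and Lemma~\ref{lemma-cups}.
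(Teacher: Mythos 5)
Your proof is correct and follows essentially the same route as the paper's: surjectivity at the bottom, Lemma~\ref{lemma-cups} at the transition degree, and the idempotent identity $\ker(p)=\mathrm{im}(1-p)$ in the periodic range, all phrased through the identification $\tl_n\otimes_{\tl_k}\t\cong\tl_n/I_k$ of Lemma~\ref{lemma: tensor over subalgebra iso quotient by ideal}. The only departure is the reverse inclusion $\ker(U_{m-1}U_m)\subseteq\ker(U_{m-1})$ you verify for $D(m)$ via $U_{m-1}U_mU_{m-1}=U_{m-1}$: this already follows from $d\circ d=0$ together with idempotency (that is, $\ker(p)=\mathrm{im}(1-p)\subseteq\ker(d^1)=K$), so it is redundant, though the direct argument is a pleasant check.
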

    
\begin{proof}
    In degree~$-1$ it is clear that the boundary map is surjective, for both~$C(m)$ and~$D(m)$.
    
    In degree~$0$, we will give the proof for~$C(m)$, the proof for~$D(m)$ being similar.
    Suppose that~$y\otimes 1\in\tl_n\otimes_{\tl_{m-1}}\t$ lies in the kernel of the boundary map, or in other words that~$y\otimes 1\in\tl_n\otimes_{\tl_m}\t$ vanishes.  This means that~$y$ lies in the left-ideal generated by the elements~$U_1,\ldots,U_{m-1}$.  
    Since all but the last of these generators lie in~$\tl_{m-1}$, and we started with~$y\otimes 1\in\tl_n\otimes_{\tl_{m-1}}\t$, we may assume without loss that~$y=y'\cdot U_{m-1}$ for some~$y'$.
    But then
    \[y\otimes 1 = y'\cdot U_{m-1}\otimes 1 = ay'\cdot (a^{-1}U_{m-1})\otimes 1\] does indeed lie in the image of the boundary map.
    
    In degree~$1$, we give the proof for both complexes.  First, for~$C(m)$, suppose that~$y\otimes 1 \in\tl_n\otimes_{\tl_{m-2}}\t$ lies in the kernel of the boundary map.  
    It follows that $y\cdot (a^{-1}U_{m-1})\otimes 1$ vanishes in~$\tl_n\otimes_{\tl_{m-1}}\t$, which means that $y\cdot (a^{-1}U_{m-1})$ lies in the left ideal~$I_{m-1}$ generated by~$U_1,\ldots,U_{m-2}$.  It follows from Lemma~\ref{lemma-cups} that~$y\cdot (a^{-1}U_{m-1})$ lies in the left ideal~$I_{m-2}$ generated by~$U_1,\ldots,U_{m-3}$, so that in~$\tl_n\otimes_{\tl_{m-2}}\t$ the element~$y\cdot (a^{-1}U_{m-1})\otimes 1$ vanishes.  Thus
    \[y\otimes 1 =y\cdot(1-a^{-1}U_{m-1})\otimes 1\] does indeed lie in the image of the boundary map.
    Second, for~$D(m)$, suppose that~$y\otimes 1\in\tl_n\otimes_{\tl_{m-2}}\t$ lies in the kernel of the boundary map.
    Then, as for~$C(m)$,~$y\cdot U_{m-1}$ lies in~$I_{m-2}$. So~$y\cdot U_{m-1}U_m$ also lies in the left ideal~$I_{m-2}$ since~$U_m$ commutes with the generators of~$I_{m-2}$.
    Thus~$y\cdot U_{m-1}U_m\otimes 1$ vanishes in~$\tl_n\otimes_{\tl_{m-2}}\t$, so that~$y\otimes 1 = y\cdot (1- U_{m-1}U_m)\otimes 1$ does indeed lie in the image of the boundary map.
    
    In degrees~$2$ and higher, acyclicity is an immediate consequence of the fact that~$a^{-1}U_{m-1}$ and~$U_{m-1}U_m$ are idempotents, by Lemma~\ref{lemma-idempotent}.
\end{proof}

\begin{lemma}\label{lemma-tensor-acyclic}
    The complexes~$\t\otimes_{\tl_n(a)}C(m)$,~$\t\otimes_{\tl_n(a)} D(m)$,~$\Hom_{\tl_n(a)}(C(m),\t)$ and~$\Hom_{\tl_n(a)}(D(m),\t)$  are acyclic.
\end{lemma}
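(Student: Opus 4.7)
The approach is to apply the functor $\t\otimes_{\tl_n(a)}-$ (respectively $\Hom_{\tl_n(a)}(-,\t)$) to the complexes $C(m)$ and $D(m)$, identify every term with $R$, and compute the resulting differentials explicitly. The key observation is that since $\t$ is the trivial module, the map $\epsilon\colon\tl_n(a)\to R$ defined by $\epsilon(1)=1$ and $\epsilon(U_i)=0$ is an algebra homomorphism, describing both the left and right actions of $\tl_n(a)$ on $\t$.

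The first step is to observe the natural isomorphism
\[
    \t\otimes_{\tl_n(a)}\bigl(\tl_n(a)\otimes_{\tl_k(a)}\t\bigr)\cong \t\otimes_{\tl_k(a)}\t\cong R
\]
valid for $0\leqslant k\leqslant n$, where the second isomorphism sends $r\otimes s\mapsto rs$ and is well-defined because the generators $U_i$ of $\tl_k(a)$ annihilate $\t$ on either side. Under this identification, a differential given by right-multiplication in the first factor by some $\alpha\in\tl_n(a)$ becomes multiplication by $\epsilon(\alpha)$ on $R$: indeed, the generator $1\otimes 1\otimes 1$ maps to $1\otimes\alpha\otimes 1$, which corresponds to $\epsilon(\alpha)\cdot(1\otimes 1)$ in the target.

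The second step is to evaluate $\epsilon$ on the labels appearing in Figures~\ref{fig: Cm} and~\ref{fig: Dm}:
\[
    \epsilon(a^{-1}U_{m-1})=\epsilon(U_{m-1})=\epsilon(U_{m-1}U_m)=0,\qquad \epsilon(1-a^{-1}U_{m-1})=\epsilon(1-U_{m-1}U_m)=1.
\]
Consequently both $\t\otimes_{\tl_n(a)}C(m)$ and $\t\otimes_{\tl_n(a)}D(m)$ reduce to the same complex of $R$-modules
\[
    \cdots\xrightarrow{1} R\xrightarrow{0} R\xrightarrow{1} R\xrightarrow{0} R\xrightarrow{1} R
\]
in degrees $\ldots,3,2,1,0,-1$, where the differential from degree $d$ to degree $d-1$ is the identity when $d$ is even and zero when $d$ is odd. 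Acyclicity of this complex is immediate.

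The cochain complexes are handled by the same strategy via the tensor-hom adjunction $\Hom_{\tl_n(a)}(\tl_n(a)\otimes_{\tl_k(a)}\t,\t)\cong\Hom_{\tl_k(a)}(\t,\t)\cong R$: precomposition with right-multiplication by $\alpha$ corresponds to multiplication by $\epsilon(\alpha)$, so exactly the same alternating complex of copies of $R$ appears and acyclicity follows. I do not anticipate any real obstacle here, as the argument amounts to the observation that applying $\t\otimes_{\tl_n(a)}-$ or $\Hom_{\tl_n(a)}(-,\t)$ collapses every occurrence of a $U_i$ to zero, leaving behind a transparently exact periodic complex.
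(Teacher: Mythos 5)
Your proof is correct and follows essentially the same route as the paper: identify each term with $R$ via $\t\otimes_{\tl_n}(\tl_n\otimes_{\tl_k}\t)\cong\t$ (respectively $\Hom_{\tl_n}(\tl_n\otimes_{\tl_k}\t,\t)\cong\t$), observe that the induced differentials are given by the action of the labelling elements on $\t$, and read off the acyclic periodic complex $\cdots\xrightarrow{0}R\xrightarrow{1}R\xrightarrow{0}R\xrightarrow{1}R$. Your introduction of the counit $\epsilon$ is a slightly more formalized way of saying what the paper phrases as "the action of $x$ on $\t$," but the content is the same.
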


\begin{proof}
    We give the proof for~$\t\otimes_{\tl_n}C(m)$, the proof for the other parts being similar.
    The terms of~$C(m)$ have the form~$\tl_n\otimes_{\tl_{m-i}}\t$ where~$i=0,1,2$ depending on the degree.  Thus~$\t\otimes_{\tl_n}C(m)$ has terms of the form~$\t\otimes_{\tl_n}(\tl_n\otimes_{\tl_{m-i}}\t)\cong\t\otimes_{\tl_{m-i}}\t\cong\t$.  Moreover, by tracing through this isomorphism, one sees that if a boundary map in~$C(m)$ is labelled by an element~$x\in\tl_n$, then the corresponding boundary map in~$\t\otimes_{\tl_n}C(m)$ is simply the map~$\t\to\t$ given by the action of~$x$ on~$\t$.  Thus~$\t\otimes_{\tl_n}C(m)$ is nothing other than the complex in Figure~\ref{fig: tensor complex}.  (The right hand column indicates the degree.)
    \begin{figure}
    \centering
    	\[
        \xymatrix{
            \vdots
            \ar[d]
            \\
            \t
            \ar[d]^{0}
            &
            3
            \\
            \t
            \ar[d]^{1}
            &
            2
            \\
            \t
            \ar[d]^{0}
            &
            1
            \\
            \t
            \ar[d]^{1}
            &
            0
            \\
            \t
            &
            -1
        }
    \]
    \caption{The complex~$\t\otimes C(m)$}
    \label{fig: tensor complex}
    \end{figure}
    This is visibly acyclic, and this completes the proof.
\end{proof}

\begin{remark}[Representation theory and the inductive resolutions]
	Schur-Weyl duality relates representations of $\tl_n$ 
	with representations of the quantum group $\uq$,
	and it is possible to use this to construct our inductive resolutions
	via the representation theory of $\uq$.
	We will try to describe this briefly.
	We are indebted to an anonymous referee for explaining this connection
	to us.
	
	One instance of Schur-Weyl duality is the following.
	Let $V$ denote the standard representation of 
	$\uq$.	
	Then there is an isomorphism
	$\tl_n\cong \mathrm{End}_{\uq}(V^{\otimes n})$,
	and more generally there are isomorphisms 
	$\tl(n,m)\cong\mathrm{Hom}_\uq(V^{\otimes n},V^{\otimes m})$
	that assemble into a monoidal functor on the 
	\emph{Temperley-Lieb category} $\tl$.
	(The objects of $\tl$ are the non-negative integers, the morphism
	space
	$\tl(n,m)$ is the $R$-module spanned by planar diagrams with $n$
	marked points on the left and $m$ marked points on the right,
	and composition is defined just like multiplication in $\tl_n$.)
	See Webster's appendix to~\cite{EliasLibendinsky}.

	One can write down exact sequences of $\uq$-modules that,
	after applying Schur-Weyl duality, yield the inductive resolutions
	$C(m)$ and $D(m)$.
	We will not detail the construction of these sequences,
	except to say that each one relies on the construction of 
	an appropriate splitting of some tensor power of $V$.
	The relevant splittings are constructed in each
	case as follows:
	\begin{itemize}
		\item	
		In the case where $a$ is invertible, the morphisms
		\[
			a^{-1}\,
			\begin{tikzpicture}[scale=0.6,baseline=23]
			\foreach \x in {1,2}
			\foreach \y in {2}
				\draw[fill=black,line width=1]  (\y,\x) circle [radius=0.1];
			\draw[fill=black, line width=1]
				(2,.5) -- (2,2.5);
			\draw[fill=black, line width=1]
				(1,.5) -- (1,2.5);
			\draw[line width=1]
				(2,2) arc (90:270:0.5);	
			\end{tikzpicture}
			\quad\text{and}\quad
			\begin{tikzpicture}[scale=0.6,baseline=23]
			\foreach \x in {1,2}
			\foreach \y in {1}
				\draw[fill=black,line width=1]  (\y,\x) circle [radius=0.1]  
				(\y,.5)--(\y,2.5);
			\draw[fill=black, line width=1]
				(2,.5) -- (2,2.5);
			\draw[line width=1]
				(1,1) arc (-90:90:0.5);	
			\end{tikzpicture}
		\]
		in $\tl$ compose to give the identity morphism 
		in $\tl(0,0)$.
		(The two semicircles compose to the circle morphism 
		from $0$ to itself,
		and by the usual rule for composing diagrams, 
		the circle morphism is $a$ times the identity.)
		This then corresponds to a pair of maps 
		$R=V^{\otimes 0}\to V^{\otimes 2}$ 
		and $V^{\otimes 2}\to V^{\otimes 0}=R$
		that compose to the identity, showing that 
		$V^{\otimes 2}$ splits off a copy of $R$.
		Note that the map
		$V^{\otimes 2}\to V^{\otimes 2}$
		that projects onto this copy of $R$ is 
		represented by the morphism
		\[
			a^{-1}
			\begin{tikzpicture}[scale=0.6,baseline=23]
			\foreach \x in {1,2}
			\foreach \y in {2}
				\draw[fill=black,line width=1]  (\y,\x) circle [radius=0.1];
			\draw[fill=black, line width=1]
				(2,.5) -- (2,2.5);
			\draw[line width=1]
				(2,2) arc (90:270:0.5);	
			\draw[line width=1]
				(0,1) arc (270:450:0.5);	
			\foreach \x in {1,2}
			\foreach \y in {0}
				\draw[fill=black,line width=1]  (\y,\x) circle [radius=0.1]  
				(\y,.5)--(\y,2.5);
			\end{tikzpicture}
		\]
		in $\tl$.  Compare this with the
		idempotent $a^{-1}U_{m-1}$ appearing in $C(n)$.

		\item
		When $a$ is not invertible, we consider the morphisms
		\[
			\begin{tikzpicture}[scale=0.6,baseline=32]
			\foreach \x in {2}
			\foreach \y in {1}
				\draw[fill=black,line width=1]  (\y,\x) circle [radius=0.1];
			\foreach \x in {1,2,3}
			\foreach \y in {3}
				\draw[fill=black,line width=1]  (\y,\x) circle [radius=0.1];
			\draw[fill=black, line width=1]
				(3,.5) -- (3,3.5);
			\draw[fill=black, line width=1]
				(1,.5) -- (1,3.5);
			\draw[line width=1]
				(3,3) arc (90:270:0.5);	
			\draw[line width=1]
				(1,2) to[out=0,in=180] (3,1);
			\end{tikzpicture}
			\quad\text{and}\quad
			\begin{tikzpicture}[scale=0.6,baseline=32]
			\foreach \x in {2}
			\foreach \y in {3}
				\draw[fill=black,line width=1]  (\y,\x) circle [radius=0.1]  
				(\y,.5)--(\y,3.5);
			\foreach \x in {1,2,3}
			\foreach \y in {1}
				\draw[fill=black,line width=1]  (\y,\x) circle [radius=0.1]  
				(\y,.5)--(\y,3.5);
			\draw[line width=1]
				(1,1) arc (-90:90:0.5);	
			\draw[line width=1]
				(1,3) to[out=0,in=180] (3,2);
			\end{tikzpicture}
		\]
		which compose to give the identity morphism
		in $\tl(1,1)$.
		These diagrams correspond to a pair of maps
		$V\to V^{\otimes 3}\to V$ that compose to
		the identity, showing that $V^{\otimes 3}$
		splits off a copy of $V$.
		Observe that the map 
		$V^{\otimes 3}\to V^{\otimes 3}$ that projects
		to this copy of $V$ is represented by the
		morphism
		\[
			\begin{tikzpicture}[scale=0.6,baseline=44]
			\foreach \x in {1,2,3}
			\foreach \y in {5}
				\draw[fill=black,line width=1]  (\y,\x) circle [radius=0.1]  
				(\y,.5)--(\y,3.5);
			\foreach \x in {1,2,3}
			\foreach \y in {1}
				\draw[fill=black,line width=1]  (\y,\x) circle [radius=0.1]  
				(\y,.5)--(\y,3.5);
			\draw[line width=1]
				(1,1) arc (-90:90:0.5);	
			\draw[line width=1]
				(5,3) arc (90:270:0.5);	
			\draw[line width=1]
				(1,3) to[out=0,in=180] (5,1);
			\end{tikzpicture}
		\]
		which can be compared to the idempotent
		$U_{m-1}U_m$ appearing in $D(n)$.
	\end{itemize}
\end{remark}

\subsection{The spectral sequence of a double complex}\label{sec: double complex ss}
Since the spectral sequence of a particular kind of double complex is used several times during this paper, we introduce and discuss it in this subsection.

We begin with the homological version.
Suppose we have a chain complex~$Q_\ast$ of left~$\tl_n$-modules, such as~$C(m)$ or~$D(m)$, or the complex of planar injective words~$W(n)$ to be introduced later.  Then we choose a projective resolution~$P$ of~$\t$ as a right module over~$\tl_n$, and we consider the double complex~$P_\ast\otimes_{\tl_n}Q_\ast$.
This is a homological double complex in the sense that both differentials reduce the grading.
Associated to this double complex are two spectral sequences,~$\{\IE^r\}$ and~$\{\IIE^r\}$, which both converge to the homology of the totalisation, $H_\ast(\mathrm{Tot}(P_\ast\otimes_{\tl_n}Q_\ast))$ as in Section~5.6 of~\cite{Weibel}.
The first spectral sequence has 
$E^1$-term given by 
$
    \IE^1_{i,j} 
    = H_j(P_i\otimes_{\tl_n}Q_\ast)
    \cong P_i\otimes_{\tl_n} H_j(Q_\ast)
$
with~$d^1\colon \IE^1_{i,j}\to \IE^1_{i-1,j}$ induced by the differential~$P_i\to P_{i-1}$.
The isomorphism above holds because each~$P_i$ is projective and therefore flat.
It follows that the~$E^2$-term is
\[
    \IE^2_{i,j} = \Tor_i^{\tl_n}(\t,H_j(Q_\ast)).
\]
The second spectral sequence has~$E^1$-term given by~$\IIE^1_{i,j} = H_j(P_\ast\otimes_{\tl_n}Q_i)$, i.e.
\[
    \IIE^1_{i,j} =  \Tor^{\tl_n}_j(\t,Q_i)
\]
with~$d^1\colon \IIE^1_{i,j}\to \IIE^1_{i-1,j}$ induced by the boundary maps of~$Q_\ast$.

We now consider the cohomological version.
Suppose we have a chain complex~$Q_\ast$ of left~$\tl_n$-modules, again such as~$C(m)$,~$D(m)$ or~$W(n)$ (the latter to be introduced later).
Then we choose an injective resolution~$I^\ast$ of~$\t$ as a left module over~$\tl_n$, and we consider the double complex~$\Hom_{\tl_n}(Q_\ast,I^\ast)$.
This is a cohomological double complex in the sense that both differentials increase the grading.
Associated to this double complex are two spectral sequences,~$\{\IE_r\}$ and~$\{\IIE_r\}$, both converging to the cohomology of the totalisation,~$H^\ast(\mathrm{Tot}(\Hom_{\tl_n}(Q_\ast,I^\ast)))$ as in Section~5.6 of~\cite{Weibel}.
The first spectral sequence has~$E_1$-term given by 
$
    \IE_1^{i,j} 
    = H^j(\Hom_{\tl_n}(Q_\ast,I^i))
    \cong 
    \Hom_{\tl_n}(H_j(Q_\ast),I^i)
$
with~$d^1\colon \IE^1_{i,j}\to \IE^1_{i+1,j}$ induced by the differential of~$I^\ast$.
The isomorphism above holds because each~$I^i$ is injective, so that the functor~$\Hom_{\tl_n}(-,I^i)$ is exact.
It follows that the~$E_2$-term is
\[
    \IE_2^{i,j} = \Ext_{\tl_n}^i(\red{H_j(Q_\ast),\t}).
\]
The second spectral sequence has~$E_1$-term~$\IIE_1^{i,j} = H^j(\Hom_{\tl_n}(Q_i,I^\ast))$, i.e.
\[
    \IIE_1^{i,j}  =\Ext_{\tl_n}^j(\red{Q_i, \t})
\]
with differential~$d^1\colon \IIE_1^{i,j}\to \IIE_1^{i+1,j}$ induced by the differential of~$Q_\ast$.

\subsection{Proof of Theorems~\ref{theorem-invertible} and~\ref{theorem-shapiro}}
We can now prove Claim~\ref{claim-invertible} (which implies Theorem~\ref{theorem-invertible}) and Theorem~\ref{theorem-shapiro}.
The proofs of the two results will be almost identical except that the former uses the complex~$C(m)$ and the latter uses the complex~$D(m)$.
Moreover, each result has a homological and cohomological part, referring to~$\Tor$ and~$\Ext$ respectively.
In each case the two parts are proved similarly, by using either the homological or cohomological spectral sequence from Section~\ref{sec: double complex ss} above.
We will therefore only prove the homological part of Claim~\ref{claim-invertible}, i.e.~we will prove that $\Tor^{\tl_n}_\ast(\t,\tl_n\otimes_{\tl_m}\t)$ vanishes in positive degrees, leaving the details of the other parts to the reader.

\begin{proof}[{Proof of Claim~\ref{claim-invertible}, Tor case.}]
We prove the claim by fixing~$n$ and using strong induction on~$m$ in the range~$n\geqslant m\geqslant 0$.
As noted before, the initial cases~$m=0,1$ of the induction are immediate since then~$\tl_m$ is the ground ring and~$\tl_n\otimes_{\tl_n}\t\cong\tl_n$ is free.
We therefore fix~$m$ in the range~$2\leqslant m\leqslant n$.

We now employ the homological spectral sequences~$\{\IE^r\}$ and~$\{\IIE^r\}$ of Section~\ref{sec: double complex ss}, in the case~$Q=C(m)$.  
Then~$\IE^2_{i,j}=\Tor^{\tl_n}_i(\t,H_j(C(m)))=0$ for all~$i$ and~$j$, since~$C(m)$ is acyclic by Lemma~\ref{lemma-CD-acyclic}.
Thus~$\{\IE^r\}$ converges to zero, and the same must therefore be true of~$\{\IIE^r\}$, since both spectral sequences have the same target.
In the second spectral sequence the~$E^1$-page
\[
    \IIE^1_{i,j} = \Tor^{\tl_n}_j(\t,C(m)_i)
\]
is largely known to us.  
The bottom~$j=0$ row of~$\IIE^1$ is precisely the complex $\t\otimes_{\tl_n}C(m)$, which is acyclic by Lemma~\ref{lemma-tensor-acyclic}.
And when~$i\geqslant 0$, the term~$C(m)_i$ is either~$\tl_n\otimes_{\tl_{m-1}}\t$ or~$\tl_n\otimes_{\tl_{m-2}}\t$, and our inductive hypothesis applies to these ($(m-1)<m$ and~$(m-2)<m$) to show that~$\IIE^1_{i,j} = \Tor^{\tl_n}_j(\t,C(m)_i)=0$ when~$j>0$.
See Figure~\ref{fig:second spectral sequence E1 page INVERTIBLE} for a visualisation of the~$E^1$ page.
Altogether, this tells us that~$\IIE^2_{i,j}$ vanishes except for the groups 
\[
    \IIE^2_{-1,j}
    =
    \Tor^{\tl_n}_j(\t,C(m)_{-1})
    =
    \Tor^{\tl_n}_j(\t,\tl_n\otimes_{\tl_m}\t)
\]
for~$j>0$, which are concentrated in a single column and therefore not subject to any further differentials.  
Thus~$\IIE^2=\IIE^\infty$. 
But we know that~${}^{II}E^\infty$ vanishes identically, so that the inductive hypothesis is proved, and so, therefore, is the proof of the homological part of Claim~\ref{claim-invertible}.
\end{proof}
\begin{figure}
		\begin{tikzpicture}[scale=0.7]
			\draw[<->] (0,6)--(0,.5)--(12,.5);
			\draw[line width=0.2cm, gray!20, <->] (-4.5,6)--node[black, above, pos=0] {$j$}(-4.5,-1)--(12,-1) node[black, right] {$i$};
			\draw (-2,-1) node {$-1$};
			\draw (1.5,-1) node {$0$};	
			\draw (4.75,-1) node {$1$};
			\draw (8,-1) node {$2$};
			\foreach \x in {0,1,2,3}	
			\draw (-4.5,\x) node {$\x$};
			\foreach \x in {(-4.5,4.5),(-2,4.5)}	
			\draw \x node {$\vdots$};
			\foreach \x in {(11,-1), (11,0)}
			\draw \x node {$\cdots$};
			\foreach \x in {1,2,3}
			\draw (-2,\x) node {$\scriptstyle{\Tor^{\tl_n}_{\x}(\t,C(m)_{-1})}$};
			\draw (-2,0) node {$\scriptstyle{\t \otimes_{\tl_n} C(m)_{-1}}$};
			\draw (1.25,0) node {$\scriptstyle{\t \otimes_{\tl_n} C(m)_{0}}$};
			\draw (4.5,0) node {$\scriptstyle{\t \otimes_{\tl_n} C(1)_{1}}$};
			\draw (7.75,0) node {$\scriptstyle{\t \otimes_{\tl_n} C(m)_{2}}$};
			\draw (4,2) node[black!80,scale=1.2] {$\Tor^{\tl_n}_j(\t,C(m)_i)=0$};
			\foreach \x in { (10,0),(3,0),(6.4,0), (-.2, 0)}
	        \draw[->] \x -- +(-0.35,0);
		\end{tikzpicture}
\caption{The page~$\IIE^1$. The only differentials that affect the~$\IIE^2$ page are shown on the~$j=0$ row.}
\label{fig:second spectral sequence E1 page INVERTIBLE}
\end{figure}

\section{Planar injective words}
\label{section-Wn}

Throughout this section we will consider the Temperley-Lieb algebra~$\tl_n(a)=\tl_n(v+v^{-1})$, where~$v\in R^\times$.
We will make use of the elements~$s_1,\ldots,s_{n-1}$ of Definition~\ref{definition-si}.

\begin{defn}\label{defn: W(n) and boundary maps}
    For~$n\geqslant 0$ we define a chain complex~$W(n)_\ast$ of left~$\tl_n(a)$-modules as follows.
    For~$i$ in the range~$-1\leqslant i\leqslant n-1$, the degree-$i$ part of~$W(n)_\ast$ is defined by
    \[
        W(n)_i = \tl_n(a)\otimes_{\tl_{n-i-1}(a)}\t
    \]
    and in all other degrees we set~$W(n)_i=0$.
    Note that
    \[W(n)_{-1}=\tl_n(a)\otimes_{\tl_{n}(a)}\t=\t.\]
    For~$i\geqslant 0$ the boundary map~$d^i\colon W(n)_i\to W(n)_{i-1}$ is defined to be the alternating sum~$\sum_{j=0}^i (-1)^jd^i_j$, where~$d^i_j\colon W(n)_i\to W(n)_{i-1}$ is the map
    \[
        d^i_j\colon\tl_n(a)\otimes_{\tl_{n-i-1}(a)}\t\longrightarrow\tl_n(a)\otimes_{\tl_{n-i}(a)}\t
    \]
    defined by
    \[
        d^i_j(x\otimes r)= (x\cdot s_{n-i+j-1}\cdots s_{n-i})\otimes\lambda^{-j}r.
    \]
    In the expression~$s_{n-i+j-1}\cdots s_{n-i}$, the indices decrease from left to right.  Thus, for example, the product is~$s_{n-i+1}s_{n-i}$ when~$j=2$, it is~$s_{n-i}$ when~$j=1$, and it is trivial (the unit element) when~$j=0$ (the latter point can be regarded as a convention if one wishes).
    \red{Recall that~$\lambda$ is indeed invertible since~$\lambda=-1$ or $v^2$ and~$v$ is a unit.} For notational purposes we will write~$W(n)$ and only use a subscript when identifying a particular degree.
    \end{defn}

    Observe that~$d_j$ is well-defined because the elements~$s_{n-i},\ldots,s_{n-i+j-1}$ all commute with all generators of~$\tl_{n-i-1}$.
    We have depicted~$W(n)$ in Figure~\ref{figure-wn}.
    \begin{figure}
     	\[
            \xymatrix{
                \tl_n\ootimes{0}\t
                \ar[d]_{d^{n-1}_0-d^{n-1}_1+\cdots+(-1)^{n-1} d^{n-1}_{n-1}}
                &
                n-1
                \\
                \tl_n\ootimes{1}\t
                \ar[d]_{d^{n-2}_0-d^{n-2}_1+\cdots+(-1)^{n-2} d^{n-2}_{n-2}}
                &
                n-2
                \\
                \vdots
                \ar[d]
                &
                {}
                \\
                \tl_n\otimes_{\tl_{n-3}}\t
                \ar[d]_{d^2_0-d^2_1+d^2_2}
                &
                2
                \\
                \tl_n\otimes_{\tl_{n-2}}\t
                \ar[d]_{d^1_0-d^1_1}
                &
                1
                \\
                \tl_n\otimes_{\tl_{n-1}}\t
                \ar[d]
                &
                0
                \\
                \t
                &
                -1
            }
        \]   
        \caption{The complex~$W(n)$.}
        \label{figure-wn}
    \end{figure}

\begin{lemma}\label{lemma-complex}
    The boundary maps of~$W(n)$ satisfy~$d^{i-1}\circ d^i=0$.
\end{lemma}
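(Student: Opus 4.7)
The plan is to establish a simplicial-style identity among the face maps $d^i_j$ and then deduce $d^{i-1}\circ d^i=0$ by the standard alternating-sum cancellation. Specifically, I will show that for $0\leqslant k<j\leqslant i$,
\[
    d^{i-1}_k\circ d^i_j \;=\; d^{i-1}_{j-1}\circ d^i_k
\]
as maps $W(n)_i\to W(n)_{i-2}$. Writing $m:=n-i$ for brevity and unwinding the definitions, this is equivalent to proving
\[
    x\cdot s_{m+j-1}\cdots s_m\cdot s_{m+k}\cdots s_{m+1}\otimes \lambda^{-(j+k)}r
    \;=\;
    x\cdot s_{m+k-1}\cdots s_m\cdot s_{m+j-1}\cdots s_{m+1}\otimes\lambda^{-(j+k-1)}r
\]
in $W(n)_{i-2}=\tl_n(a)\otimes_{\tl_{m+1}(a)}\t$.

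I would reduce this to the purely algebraic identity
\[
    s_{m+j-1}\cdots s_m\cdot s_{m+k}\cdots s_{m+1}
    \;=\;
    s_{m+k-1}\cdots s_m\cdot s_{m+j-1}\cdots s_{m+1}\cdot s_m
\]
in $\tl_n(a)$, which implies the tensor-product identity because $s_m\in\tl_{m+1}(a)$ acts on $\t$ as multiplication by $\lambda$ (Definition~\ref{definition-si}); this extra factor $s_m$ on the right exactly absorbs the discrepancy between the two exponents of $\lambda$. Since the algebraic identity involves only the braid relations of Definition~\ref{definition-si} (the second and third bullets), I would prove it by induction on $j$. The case $j=1$ forces $k=0$ and both sides equal $s_m$. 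For the inductive step one considers two sub-cases: if $k\leqslant j-2$, one factors $s_{m+j-1}$ off the front of the LHS, applies the inductive hypothesis, and then moves $s_{m+j-1}$ back past the short block $s_{m+k-1}\cdots s_m$, which it commutes with since all indices there are at most $m+k-1\leqslant m+j-3$; if $k=j-1$, the single braid move $s_{m+j-1}s_{m+j-2}s_{m+j-1}=s_{m+j-2}s_{m+j-1}s_{m+j-2}$ followed by the same commutation trick reduces the problem to the inductive hypothesis applied with $k'=j-2$.

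Once the simplicial identity is established, the conclusion is routine. Writing
\[
    d^{i-1}\circ d^i = \sum_{k=0}^{i-1}\sum_{j=0}^{i}(-1)^{j+k}\,d^{i-1}_k\circ d^i_j
\]
and splitting the sum at $k<j$ versus $k\geqslant j$, the simplicial identity turns the first half into $\sum_{k<j}(-1)^{j+k}d^{i-1}_{j-1}d^i_k$, and the reindexing $(k,j)\mapsto(j-1,k)$ sends this bijectively onto the negative of the second half, so everything cancels. The only real obstacle is the algebraic identity of the middle paragraph, which is a direct analogue of the classical lemma used to check that the simplicial differential on the complex of injective words squares to zero; the presence of the extra trailing $s_m$ (and hence the factor of $\lambda$) is precisely the footprint, in this Temperley-Lieb setting, of the fact that the $s_i$ are not involutions.
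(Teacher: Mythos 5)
Your proof is correct and takes essentially the same route as the paper's: you establish the commutation identity $d^{i-1}_k d^i_j = d^{i-1}_{j-1}d^i_k$ (for $k<j$; the paper states it for $j<k$, which is the same identity with the labels swapped), reduce it to an identity in $\tl_n(a)$ among products of the $s_\bullet$ that is proved using the braid relations, observe that the leftover trailing $s_{n-i}$ absorbs the $\lambda$-discrepancy when pushed across the tensor product, and conclude by the standard alternating-sum cancellation. The only difference is one of detail: you give a careful induction for the braid-relation identity and spell out the reindexing in the final cancellation, whereas the paper asserts the first "by repeated use of the braid relations" and the second as "the fact that $d\circ d$ vanishes then follows."
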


\begin{proof}
    We will show that if~$i\geqslant 1$ and~$0\leqslant j<k\leqslant i$, then the composite maps~$d^{i-1}_jd^i_k,d^{i-1}_{k-1}d^{i}_j\colon W(n)_i\to W(n)_{i-2}$ coincide. (Thus the~$d^i_j$ satisfy the semi-simplicial identities, so~$W(n)$ is a semi-simplicial $R$-module.)
    The fact that~$d\circ d$ vanishes then follows.
    We have
    \[
        d^{i-1}_jd^i_k(x\otimes r) = [x\cdot(s_{n-i+k-1}\cdots s_{n-i})\cdot (s_{n-i+j}\cdots s_{n-i+1})]\otimes \lambda^{-(j+k)}r
    \]
    and
    \[
        d^{i-1}_{k-1}d^{i}_j(x\otimes r) = [x\cdot(s_{n-i+j-1}\cdots s_{n-i})\cdot (s_{n-i+k-1}\cdots s_{n-i+1})]\otimes \lambda^{-(j+k-1)}r.
    \]
    Now we have:
    \begin{align*}
        (s_{n-i+k-1}\cdots s_{n-i})\cdot& (s_{n-i+j}\cdots s_{n-i+1})
        \\
        =&
        (s_{n-i+j-1}\cdots s_{n-i})\cdot (s_{n-i+k-1}\cdots s_{n-i})
        \\
        =&
        (s_{n-i+j-1}\cdots s_{n-i})\cdot (s_{n-i+k-1}\cdots s_{n-i+1})\cdot s_{n-i}
    \end{align*}
    Here, the first equality follows by taking the letters of the second parenthesis in turn, and `passing through' the first parenthesis, using a single braid relation, with the result that the letter's index is reduced by $1$.  Thus:
    \begin{align*}
        d^{i-1}_jd^i_k(x\otimes r) 
        &= [x\cdot(s_{n-i+k-1}\cdots s_{n-i})\cdot (s_{n-i+j}\cdots s_{n-i+1})]\otimes \lambda^{-(j+k)}r
        \\
        &= [x\cdot (s_{n-i+j-1}\cdots s_{n-i})\cdot (s_{n-i+k-1}\cdots s_{n-i+1})\cdot s_{n-i}]\otimes \lambda^{-(j+k)}r
        \\
        &= [x\cdot (s_{n-i+j-1}\cdots s_{n-i})\cdot (s_{n-i+k-1}\cdots s_{n-i+1})]\otimes s_{n-i}\cdot(\lambda^{-(j+k)}r)
        \\
        &= [x\cdot (s_{n-i+j-1}\cdots s_{n-i})\cdot (s_{n-i+k-1}\cdots s_{n-i+1})]\otimes \lambda^{-(j+k-1)}r
        \\
        &= d^{i-1}_{k-1}d^i_j(x\otimes r)
    \end{align*}
    where the third equality holds because this computation takes place in $W(n)_{i-2}=\tl_n\ootimes{n-i+1}\t$ and~$s_{n-i}\in\tl_{n-i+1}$.
\end{proof}

\begin{remark}\label{remark-Cn}
    Let us explain the motivation for the definition of~$W(n)$.
    Let~$\frakS_n$ denote the symmetric group on~$n$ letters.
    The \emph{complex of injective words} is the chain complex~$\calC(n)$ of~$\frakS_n$-modules, concentrated in degrees~$-1$ to~$(n-1)$, that in degree~$i$ is the free~$R$-module with basis given by tuples~$(x_0,\ldots,x_i)$ where~$x_0,\ldots,x_i\in\{1,\ldots,n\}$ and no letter appears more than once.  
    We allow the empty word~$()$, which lies in degree~$-1$.
    The differential of~$\calC(n)$ sends a word~$(x_0,\ldots,x_i)$ to the alternating sum~$\sum_{j=0}^i(-1)^j(x_0,\ldots,\widehat{x_j},\ldots,x_i)$.
    A theorem of Farmer~\cite{Farmer} shows that the homology of~$\calC(n)$ vanishes in degrees~$i\leqslant (n-2)$, and the same result has been proved since then by many authors \cite{Maazen, BjornerWachs, Kerz, RandalWilliamsConfig}.
    The complex of injective words has been used by several authors to prove homological stability for the symmetric groups~\cite{Maazen,Kerz,RandalWilliamsConfig}.   
    
    For this paragraph only, let us abuse our established notation and denote by $s_1,\ldots,s_{n-1}\in\frakS_n$ the elements defined by~$s_i=(i\ \ i+1)$, the transposition of~$i$ with~$i+1$.
    Then these elements satisfy the braid relations, i.e.~the second and third identities of Definition~\ref{definition-si}.
    The complex of injective words~$\calC(n)$ can be rewritten in terms of the group ring~$R\frakS_n$ and the elements~$s_i$.  
    Indeed, it is shown in \cite{HepworthIH} that~$\calC(n)_i\cong R\frakS_n\otimes_{R\frakS_{n-i-1}}\t$, where~$\t$ is the trivial module of~$R\frakS_{n-i-1}$, and that under this isomorphism the differential~$d^i\colon \calC(n)_i\to\calC(n)_{i-1}$ becomes the map
    \[
        d^i
        \colon 
        R\frakS_n\otimes_{R\frakS_{n-i-1}}\t
        \longrightarrow
        R\frakS_n\otimes_{R\frakS_{n-i}}\t
    \]
    defined by~$d^i(x\otimes 1) = \sum_{j=0}^i(-1)^jx\cdot (s_{n-i+j-1}\cdots s_{n-i})\otimes 1$.
    (There are no constants~$\lambda$ in this expression). Comparing this description of~$\calC(n)$ with our definition of~$W(n)$, we see that our complex of planar injective words is precisely analogous to the original complex of injective words, after systematically replacing the group algebras of symmetric groups with the Temperley-Lieb algebras. The lack of constants in the differential for~$\calC(n)$ is explained by the fact that the effect of~$s_i$ on~$\t$ is multiplication by~$\lambda$ in the Temperley-Lieb setting, and multiplication by~$1$ in the symmetric group setting.
    
    Since we regard the Temperley-Lieb algebra as the planar analogue of the symmetric group, we chose the name \emph{planar injective words} for our complex~$W(n)$. This seemed the least discordant way of giving our complex an appropriate name.  See the next remark for a means of picturing the complex.
\end{remark}

\begin{remark} Let us describe a method for visualising~$W(n)$.
    Recall from the diagrammatic description of $\tl_n(a)\otimes_{\tl_m(a)}\t$ when~$m\leqslant n$ given in Remark~\ref{remark: black boxes} that elements of~$W(n)_i$ can be regarded as diagrams where the first~$n-i-1$ dots on the right are encapsulated within a black box, and if any cups can be absorbed into the black box, then the diagram is identified with~$0$.
    The differential~$d^i\colon W(n)_i\to W(n)_{i-1}$ is then given by pasting special elements onto the right of a diagram, followed by taking their signed and weighted sum. These special elements each enlarge the black box by an extra strand, and plumb one of the free strands into the new space in the black box: Here is an example for~$n=4$ and~$i=2$.
        
        \begin{center}
    	\begin{tikzpicture}[scale=0.38]
    	\foreach \x in {1,2,3,4}
    	\foreach \y in {0,3,6,9,12,15,18,21,24,27,30}
    	\draw[fill=black, line width=1] (\y,\x) circle [radius=0.15] (\y,0.5)--(\y,4.5);
    	\foreach \x\y in {0/1,  6/1, 15/1, 24/1,9/1,9/2,9/3,9/4,18/4, 18/1, 27/1}
    	\draw[black] (\x,\y) --(\x+3,\y);
    	\foreach \x\y in {0/2, 6/2, 15/2, 24/2}
    	\draw (\x,\y) to[out=0,in=-90] (\x+1,\y+.5) to[out=90,in=0] (\x,\y+1);
    	\foreach \x\y in {3/3, 9/3, 18/3, 27/3}
    	\draw (\x,\y) to[out=180,in=-90] (\x-1,\y+.5) to[out=90,in=180] (\x,\y+1);
    	\foreach \x\y in {0/4,6/4, 15/4, 24/4,27/4}
    	\draw (\x,\y) to[out=0, in=120] (\x+1.5, \y-1) to[out=300, in=180] (\x+3,\y-2);
    	\draw (18,3) to[out=0, in=140] (19.5, 2.5) to[out=320, in=180] (21,2);
    	\foreach \x\y in {19.5/2.5, 28.7/2.65, 28.3/3.35}
    	\draw[white,fill=white] (\x,\y) circle [radius=0.15];
    	\foreach \x\y in {18/2,27/2,27/3}
    	\draw (\x,\y) to[out=0, in=220] (\x+1.5, \y+0.5) to[out=40, in=180] (\x+3,\y+1);
    	\foreach \x in {9,18,27}
    	\draw[line width=0.2cm] (\x+3,.8)--(\x+3,2.2); 
    	\draw[line width=0.2cm] (2.8,1)--(3.2,1);
    	\draw (-1,2.5) node {$d^2\colon$} (4.5,2.5) node {$\mapsto$} (13.5,2.5) node {
    	$\scriptstyle{-\lambda^{-1}}
    	$} (22.5,2.5) node {
    	$\scriptstyle{+\lambda^{-2}}
    	$};
    	\end{tikzpicture}
        \end{center}
    	
    \noindent The resulting diagrams can be simplified using the smoothing rules for diagrams with crossings described in Remark~\ref{remark-smoothing}.
    We leave it to the reader to make this description as precise as they wish, and note here that this is where the notion of \emph{braiding}, so often seen in homological stability arguments, fits into our set up.
\end{remark}

\begin{remark}
    Readers who are familiar with the theory will recognise that~$W(n)$ is the chain complex associated to an augmented semi-simplicial~$\tl_n(a)$-module.
\end{remark}

The main result about the complex of planar injective words is the following, which we recall from the introduction.  It is analogous to the homological vanishing property of the complex of injective words first proved by Farmer~\cite{Farmer}.

\setcounter{abcthm}{4}
\begin{abcthm}
    The homology of~$W(n)$ vanishes in degrees~$d\leqslant (n-2)$.
\end{abcthm}

The proof of Theorem~\ref{theorem-high-acyclicity} is the most technical part of this work, and will be given in Section~\ref{section-high-acyclicity}.

The complex of injective words on~$n$ letters has rich combinatorial features: its Euler characteristic is the number of derangements of~$\{1,\ldots,n\}$; when one works over~$\C$, its top homology has a description as a virtual representation that categorifies a well-known alternating sum formula for the number of derangements; and again when one works over~$\C$, its top homology has a compact description in terms of Young diagrams and counts of standard Young tableaux.
In the associated paper \cite{BoydHepworthComb} we establish analogues of these for the complex of planar injective words. 
In particular we show that \red{when the ring~$R$ is Noetherian} the rank of~$H_{n-1}(W(n))$ is the~$n$-th \emph{Fine number}~\cite{DeutschShapiro}. 
(The rank of the Temperley-Lieb algebra is the~$n$\nobreakdash-th \emph{Catalan number}, which is the number of Dyck paths of length~$2n$.  The~$n$-th Fine number is the number of Dyck paths of length~$2n$ whose first peak occurs at an even height, and as we explain in~\cite{BoydHepworthComb}, it is an analogue of the number of derangements.) 
We also discover a new feature of the complex: the differentials have an alternate expression in terms not of the~$s_i$ but of the~$U_i$. This expression demonstrates a connection with the \emph{Jacobsthal numbers}, and we will briefly explain the result for the top differential below.  
The top homology of the Tits building is known as the \emph{Steinberg module}. This inspires the name in the following definition.

\begin{defn}
    We define the \emph{$n$-th Fineberg module} to be the~$\tl_n(a)$-module $\F_n(a) = H_{n-1}(W(n))$.  We often suppress the~$a$ and simply write~$\F_n$. 
\end{defn}

The Fineberg module is an important ingredient in the full statement of our stability result, Theorem~\ref{theorem-fineberg}. In order to detect the non-zero homology group appearing in Theorem~\ref{theorem-sharpness} we need to study it in more detail using the connection with Jacobsthal numbers from \cite{BoydHepworthComb}.

The~$n$-th \emph{Jacobsthal number}~$J_n$~\cite{JacobsthalOEIS} is (among other things) the number of sequences~$n>a_1>a_2>\cdots>a_r>0$ whose initial term has the opposite parity to~$n$. Some examples, when~$n=4$, are~$3$,~$1$,~$3>2$,~$3>1$ and~$3>2>1$. (We allow the empty sequence, and say that by convention its initial term is~$a_1=0$ and~$r=0$. Of course this only occurs when~$n$ is odd.)
Another viewpoint of~$J_n$ in terms of compositions of~$n$ is given in~\cite{BoydHepworthComb}.

\begin{defn}\label{defn-jacobsthal}
   Let~$a = v+v^{-1}$ where~$v\in R^\times$ is a unit. We define the \emph{Jacobsthal} element in~$\tl_n(a)$ as follows:

\[
    \calJ_n=(-1)^{n-1}\sum_{\substack{n>a_1>\cdots>a_r>0\\ n-a_1\text{ odd}}}
    \left(\frac{\mu}{\lambda}\right)^rU_{a_1}\cdots U_{a_r}
\]
   Recall we allow the empty sequence ($a_1=0$ and~$r=0$) when~$n$ is odd. This corresponds to a constant summand~$1$ in~$\calJ_n$ for odd~$n$. Note that the number of irreducible terms in~$\calJ_n$ is~$J_{n}$.
\end{defn}

\begin{example}\label{example-Jn}
	In the cases $n=1,2,3,4$, and choosing $\theta=\theta_1$ so that 
	$(\lambda,\mu)=(-1,v)$, we have:
	\begin{align*}
		\calJ_1 &= 1
		\\
		\calJ_2 &= vU_1
		\\
		\calJ_3 &= v^2 U_2U_1 - vU_2 + 1
		\\
		\calJ_4 &= v^3 U_3U_2U_1 - v^2 U_3U_2 - v^2 U_3U_1
		+ v U_3 + v U_1
	\end{align*}
Spencer has computed the Jacobsthal elements~$\calJ_n$ up to~$n=9$, these can be viewed at \cite{SpencerJ}.
\end{example}

Since~$\F_n$ is the homology of~$W(n)$ in the top degree, it is simply the kernel of the top differential~$d^{n-1}\colon W(n)_{n-1}\to W(n)_{n-2}$.  There are identifications
\[W(n)_{n-1}=\tl_n(a)\otimes_{\tl_0(a)}\t\cong\tl_n(a)\text{ and }W(n)_{n-2}\cong \tl_n(a)\otimes_{\tl_1(a)}\t\cong \tl_n(a).\]  The following is shown in Theorem~D of~\cite{BoydHepworthComb}:

\begin{prop}\label{proposition-fineberg}
    Under the above identifications, the top differential of~$W(n)$ is right-multiplication by~$\calJ_n$.  In particular, there is an exact sequence
    \[
        0\longrightarrow \F_n(a)
        \longrightarrow \tl_n(a)\xrightarrow{-\cdot\calJ_n}\tl_n(a).
    \]
\end{prop}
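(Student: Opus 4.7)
The plan is to identify the top differential $d^{n-1}\colon W(n)_{n-1} \to W(n)_{n-2}$ explicitly, recognise it as right multiplication by a specific element $R_n \in \tl_n(a)$, and then expand and simplify $R_n$ to match $\calJ_n$. Once this identification is made the exact sequence in the proposition follows immediately, since $W(n)$ vanishes above degree $n-1$ and so $\F_n(a) = H_{n-1}(W(n)) = \ker(d^{n-1})$.

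Because $\tl_0(a) = \tl_1(a) = R$, both $W(n)_{n-1}$ and $W(n)_{n-2}$ are canonically identified with $\tl_n(a)$ via $x \otimes 1 \mapsto x$. Specialising Definition~\ref{defn: W(n) and boundary maps} to $i = n-1$ gives $d^{n-1}_j(x) = \lambda^{-j} x \cdot s_j s_{j-1} \cdots s_1$ (with the empty product equal to $1$ when $j = 0$), so $d^{n-1}$ is right multiplication by
\[
R_n \;:=\; \sum_{j=0}^{n-1} (-1)^j \lambda^{-j}\, s_j s_{j-1} \cdots s_1 \;\in\; \tl_n(a).
\]
The task is then to prove $R_n = \calJ_n$ in $\tl_n(a)$.

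For this I would expand each factor $s_k = \lambda + \mu U_k$. The product $s_j s_{j-1} \cdots s_1$ becomes a sum over subsets $\{j \geq a_1 > \cdots > a_r \geq 1\}$ of Jones-normal-form monomials $\lambda^{j-r} \mu^r\, U_{a_1} \cdots U_{a_r}$. Substituting into $R_n$ and swapping the order of summation, the coefficient of a fixed monomial $U_{a_1} \cdots U_{a_r}$ collapses to
\[
\bigl(\mu/\lambda\bigr)^r \sum_{j=a_1}^{n-1} (-1)^j.
\]
The inner sum vanishes when $n - a_1$ is even and equals $(-1)^{n-1} = (-1)^{a_1}$ when $n - a_1$ is odd, so only sequences with $n - a_1$ odd survive, exactly matching the indexing set in the definition of $\calJ_n$. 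The analogous calculation for $r = 0$ shows that a constant term arises precisely when $n$ is odd, matching the convention for the empty sequence.

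The main obstacle is the final sign bookkeeping: one has to align the surviving sign $(-1)^{a_1}$ from the partial sum with the prescribed sign $(-1)^{(r-1)+n}$ in the definition of $\calJ_n$, and check that all scalars $\lambda^{-j}$, $\mu$, $\lambda$ assemble correctly. The combinatorial heart of the argument --- that exactly the sequences with $n - a_1$ odd contribute, as a consequence of the telescoping alternating sum --- is clean and transparent; the sign-tracking is routine but requires care. Once $R_n = \calJ_n$ is established, the exact sequence $0 \to \F_n(a) \to \tl_n(a) \xrightarrow{-\cdot \calJ_n} \tl_n(a)$ is immediate from $\F_n(a) = \ker(d^{n-1})$.
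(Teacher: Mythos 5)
The proposal's outline is the natural and essentially unique approach, and the paper itself offers no internal proof to compare against --- Proposition~\ref{proposition-fineberg} is preceded by ``The following is shown in~\cite{BoydHepworthComb},'' so the proof is delegated to the companion paper.

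That said, the step you describe as ``routine sign bookkeeping'' is exactly where the argument, as outlined, does not close. You correctly reduce the coefficient of $U_{a_1}\cdots U_{a_r}$ in $R_n := \sum_{j=0}^{n-1}(-1)^j\lambda^{-j}s_j\cdots s_1$ to $(\mu/\lambda)^r\sum_{j=a_1}^{n-1}(-1)^j$, which survives exactly when $n-a_1$ is odd and then equals $(-1)^{n-1}(\mu/\lambda)^r = (-1)^{a_1}(\mu/\lambda)^r$. But the displayed definition of $\calJ_n$ assigns this monomial coefficient $(-1)^{(r-1)+n}(\mu/\lambda)^r$, and
\[
(-1)^{(r-1)+n} = (-1)^{r}\,(-1)^{n-1},
\]
so the two quantities agree only when $r$ is even. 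Nothing in your expansion produces an $r$-dependent sign, so the discrepancy is real, not a matter of ``aligning'' terms. A concrete check at $n=3$: expanding $R_3 = 1 - \lambda^{-1}s_1 + \lambda^{-2}s_2s_1$ gives $1 + (\mu/\lambda)U_2 + (\mu/\lambda)^2 U_2U_1$, while the formula for $\calJ_3$ in the paper yields $1 - (\mu/\lambda)U_2 + (\mu/\lambda)^2 U_2U_1$; the coefficient of $U_2$ has the wrong sign. (Similarly $R_2 = -(\mu/\lambda)U_1$ but $\calJ_2 = +(\mu/\lambda)U_1$ from the definition.) The most plausible resolution is that the exponent $(r-1)+n$ in the printed definition of $\calJ_n$ is a misprint for $n-1$ (or equivalently $n+1$); all downstream uses of $\calJ_n$ in Sections~6 and~7 go through with this corrected sign, since they only exploit $\calJ_n$ up to an overall unit. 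But until that is settled you cannot assert that $R_n=\calJ_n$: you should either locate an error in your own expansion (I do not see one) or explicitly flag and correct the sign in the definition before citing this identity.
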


\begin{remark}
	Note that Definition~\ref{defn-jacobsthal} gives a different value
	for the element $\calJ_n$ than the one that appears in 
	Definition~8.1 and Theorem~D of~\cite{BoydHepworthComb}.
	This is because the proof of Theorem~D of~\cite{BoydHepworthComb}
	contains a sign error: it assumes that $s_i=(\lambda-\mu U_i)$
	rather than $s_i=(\lambda + \mu U_i)$ as it should have done.
	This error is fixed by replacing $\mu$ with $-\mu$ in the formula
	in Definition~8.1 of~\cite{BoydHepworthComb}.
	It is possible to check Example~4.8 by hand to confirm
	that the signs in the present formula for $\calJ_n$ are the correct
	ones.
\end{remark}

The Fineberg module $\F_n$ appears to be a new and interesting representation,
and looks likely to be highly nontrivial for each choice of $n$.
Let us illustrate this by computing $\F_2$, $\F_3$ and $\F_4$.
We will continue with the choice $\theta=\theta_1$
so that $(\lambda,\mu)=(-1,v)$.

Our description will be phrased in terms of the cell modules
of $\tl_n$, which we describe briefly.
A \emph{half-diagram} 
(or \emph{link state} in the language of~\cite{RidoutStAubin})
consists of a vertical line in the plane
decorated with dots labelled $1,\ldots, n$ from bottom to top, 
together with a collection of
arcs in the plane, each of which either connects two dots,
or is connected to a dot at one end, 
in such a way that each dot lies on precisely one arc.
The arcs must lie to the right of the vertical line,
they must be disjoint, and the half-diagrams are taken up to isotopy.
Thus the half-diagrams on $4$ dots are as follows.
\[
	\begin{tikzpicture}[scale=0.4,baseline=32]
	\foreach \x in {1,2,3,4}
	\foreach \y in {3}
		\draw[fill=black,line width=1]  (\y,\x) circle [radius=0.15]  
		(\y,.5)--(\y,4.5);
	\draw (3,1) to[out=0,in=-90] (3+1,1+.5) to[out=90,in=0] (3,2);
	\draw (3,3) to[out=0,in=-90] (3+1,3+.5) to[out=90,in=0] (3,4);
	\end{tikzpicture}
	\qquad\qquad
	\quad
	\begin{tikzpicture}[scale=0.4,baseline=32]
	\foreach \x in {1,2,3,4}
	\foreach \y in {3}
		\draw[fill=black,line width=1]  (\y,\x) circle [radius=0.15]  
		(\y,.5)--(\y,4.5);
	\draw (3,2) to[out=0,in=-90] (3+1,2+.5) to[out=90,in=0] (3,3);
	\draw (3,1) to[out=0,in=-90] (3+2,1+1.5) to[out=90,in=0] (3,4);
	\end{tikzpicture}
	\qquad\qquad
	\quad
	\begin{tikzpicture}[scale=0.4,baseline=32]
	\foreach \x in {1,2,3,4}
	\foreach \y in {3}
		\draw[fill=black,line width=1]  (\y,\x) circle [radius=0.15]  
		(\y,.5)--(\y,4.5);
	\draw (3,1) to[out=0,in=-90] (3+1,1+.5) to[out=90,in=0] (3,2);
	\draw (3,3) -- (4,3);
	\draw (3,4) -- (4,4);
	\end{tikzpicture}
	\qquad\qquad
	\quad
	\begin{tikzpicture}[scale=0.4,baseline=32]
	\foreach \x in {1,2,3,4}
	\foreach \y in {3}
		\draw[fill=black,line width=1]  (\y,\x) circle [radius=0.15]  
		(\y,.5)--(\y,4.5);
	\draw (3,2) to[out=0,in=-90] (3+1,2+.5) to[out=90,in=0] (3,3);
	\draw (3,1) -- (4,1);
	\draw (3,4) -- (4,4);
	\end{tikzpicture}
	\qquad\qquad
	\quad
	\begin{tikzpicture}[scale=0.4,baseline=32]
	\foreach \x in {1,2,3,4}
	\foreach \y in {3}
		\draw[fill=black,line width=1]  (\y,\x) circle [radius=0.15]  
		(\y,.5)--(\y,4.5);
	\draw (3,3) to[out=0,in=-90] (3+1,3+.5) to[out=90,in=0] (3,4);
	\draw (3,1) -- (4,1);
	\draw (3,2) -- (4,2);
	\end{tikzpicture}
	\qquad\qquad
	\begin{tikzpicture}[scale=0.4,baseline=32]
	\foreach \x in {1,2,3,4}
	\foreach \y in {3}
		\draw[fill=black,line width=1]  (\y,\x) circle [radius=0.15]  
		(\y,.5)--(\y,4.5);
	\draw (3,1) -- (4,1);
	\draw (3,2) -- (4,2);
	\draw (3,3) -- (4,3);
	\draw (3,4) -- (4,4);
	\end{tikzpicture}
\]
The \emph{cell module} $S(n,m)$ is the $\tl_n$-module with $R$-basis consisting
of the half-diagrams on $n$ dots in which $m$ arcs have free ends.
The $\tl_n$-module structure on $S(n,m)$ is obtained by pasting planar diagrams
onto the left of half-diagrams and simplifying the result exactly as with
composition in $\tl_n$, with the extra condition that if pasting produces
an arc with two free ends, then the resulting diagram is set to $0$.
In $S(4,2)$, for example, we have:
\[
	U_1\cdot 
	\begin{tikzpicture}[scale=0.4,baseline=25]
	\foreach \x in {1,2,3,4}
	\foreach \y in {3}
		\draw[fill=black,line width=1]  (\y,\x) circle [radius=0.15]  
		(\y,.5)--(\y,4.5);
	\draw (3,1) to[out=0,in=-90] (3+1,1+.5) to[out=90,in=0] (3,2);
	\draw (3,3) -- (4,3);
	\draw (3,4) -- (4,4);
	\end{tikzpicture}
	\ =\ 
	a\cdot
	\begin{tikzpicture}[scale=0.4,baseline=25]
	\foreach \x in {1,2,3,4}
	\foreach \y in {3}
		\draw[fill=black,line width=1]  (\y,\x) circle [radius=0.15]  
		(\y,.5)--(\y,4.5);
	\draw (3,1) to[out=0,in=-90] (3+1,1+.5) to[out=90,in=0] (3,2);
	\draw (3,3) -- (4,3);
	\draw (3,4) -- (4,4);
	\end{tikzpicture}
	\qquad\qquad
	U_2\cdot 
	\begin{tikzpicture}[scale=0.4,baseline=25]
	\foreach \x in {1,2,3,4}
	\foreach \y in {3}
		\draw[fill=black,line width=1]  (\y,\x) circle [radius=0.15]  
		(\y,.5)--(\y,4.5);
	\draw (3,1) to[out=0,in=-90] (3+1,1+.5) to[out=90,in=0] (3,2);
	\draw (3,3) -- (4,3);
	\draw (3,4) -- (4,4);
	\end{tikzpicture}
	\ =\ 
	\begin{tikzpicture}[scale=0.4,baseline=25]
	\foreach \x in {1,2,3,4}
	\foreach \y in {3}
		\draw[fill=black,line width=1]  (\y,\x) circle [radius=0.15]  
		(\y,.5)--(\y,4.5);
	\draw (3,2) to[out=0,in=-90] (3+1,2+.5) to[out=90,in=0] (3,3);
	\draw (3,1) -- (4,1);
	\draw (3,4) -- (4,4);
	\end{tikzpicture}
	\qquad\qquad
	U_3\cdot 
	\begin{tikzpicture}[scale=0.4,baseline=25]
	\foreach \x in {1,2,3,4}
	\foreach \y in {3}
		\draw[fill=black,line width=1]  (\y,\x) circle [radius=0.15]  
		(\y,.5)--(\y,4.5);
	\draw (3,1) to[out=0,in=-90] (3+1,1+.5) to[out=90,in=0] (3,2);
	\draw (3,3) -- (4,3);
	\draw (3,4) -- (4,4);
	\end{tikzpicture}
	\ =\ 
	0
\]
(The reader is reminded that we label the dots from bottom to top.)
Observe that $S(n,n)=\t$ is the trivial module for each $n$,
and that $S(n,m)$ is nonzero only when $n-m$ is even.

\begin{example}[The Fineberg module $\F_2$]\label{example - F2}
	The module $\F_2$ is the kernel of the map
	$\tl_2\to\tl_2$, $x\mapsto x\cdot\calJ_2$.
	Now $\calJ_2 = vU_1$ as in Example~\ref{example-Jn},
	so that $\F_2$ is the $R$-module of rank $1$ spanned by the element
	$a-U_1$.
	This is a copy of the trivial module $\t = S(2,2)$.
\end{example}

\begin{example}[The Fineberg module $\F_3$]
	The module $\F_3$ is the kernel of the map
	$\tl_3\to\tl_3$, $x\mapsto x\cdot\calJ_3$,
	where $\calJ_3 = v^2U_2U_1-vU_2+1$ as in Example~\ref{example-Jn}.
	Thus $\F_3$ is the $R$-module of rank 2 with basis elements
	\begin{align*}
		\alpha &= U_1U_2 - vU_1,
		\\
		\beta &= U_2 - vU_2U_1.
	\end{align*}
	One can now check that there is an isomorphism of $\tl_3$-modules
	$\F_3\cong S(3,1)$ given by:
	\[
		\F_3\xrightarrow{\ \cong\ } S(3,1)
		\qquad
		\qquad
    		\alpha\longmapsto
        	\begin{tikzpicture}[scale=0.5,baseline=25]
        	\foreach \x in {1,2,3}
        	\foreach \y in {3}
        		\draw[fill=black,line width=1]  (\y,\x) circle [radius=0.15]  
        		(\y,.5)--(\y,3.5);
        	\draw (3,1) to[out=0,in=-90] (3+1,1+.5) to[out=90,in=0] (3,2);
		\draw (3,3) -- (4,3);
		\end{tikzpicture}
		\qquad
		\qquad
		\beta\longmapsto
        	\begin{tikzpicture}[scale=0.5,baseline=25]
        	\foreach \x in {1,2,3}
        	\foreach \y in {3}
        		\draw[fill=black,line width=1]  (\y,\x) circle [radius=0.15]  
        		(\y,.5)--(\y,3.5);
        	\draw (3,2) to[out=0,in=-90] (3+1,2+.5) to[out=90,in=0] (3,3);
		\draw (3,1) -- (4,1);
		\end{tikzpicture}
	\]
\end{example}

\begin{example}[The Fineberg module $\F_4$]
	The module $\F_4$ is the kernel of the map
	$\tl_4\to\tl_4$, $x\mapsto x\cdot\calJ_4$,
	where 		
	$\calJ_4 = v^3 U_3U_2U_1 - v^2 U_3U_2 - v^2 U_3U_1
	+ v U_3 + v U_1$
	as in Example~\ref{example-Jn}.
	It is now possible to check (at length) that $\F_4$ is a free
	$R$-module of rank $6$ with the following basis:
	\begin{align*}
		A 
		&= 
		{U_3U_1} - a U_3 U_1U_2
		\\
		B
		&=
		{U_2 U_3 U_1} - a U_2 U_3 U_1 U_2
		\\
		X
		&=
		{U_1 U_2 U_3} - U_3 U_1 U_2 - a U_1U_2 + U_1
		\\
		Y
		&=
		U_2 U_3 - {U_2 U_3 U_1 U_2} - a U_2 + U_2 U_1
		\\
		Z
		&=
		{U_3 U_2U_1} - U_3 U_1 U_2 - a U_3 U_2 + U_3
		\\
		P
		&=
		{U_3U_1U_2} - U_1 - U_3 + a.
	\end{align*}
	If we now define
	\[
		M_0 = \Span(A,B),
		\quad
		M_1 = \Span(A,B,X,Y,Z),
		\quad
		M_2= \Span(A,B,X,Y,Z,P)
	\]
	so that 
	$
		M_0\subseteq M_1\subseteq M_2=\F_4,
	$
	then one can check directly (by computing the effect of 
	multiplying on the left by $U_1,U_2,U_2$)
	that $M_0$ and $M_1$ are submodules of $\F_4$, and moreover,
	that we have isomorphisms:
	\begin{align*}
		&M_0\xrightarrow{\ \cong\ } S(4,0),
		\qquad
    		&&A\longmapsto
        	\begin{tikzpicture}[scale=0.5,baseline=32]
        	\foreach \x in {1,2,3,4}
        	\foreach \y in {3}
        		\draw[fill=black,line width=1]  (\y,\x) circle [radius=0.15]  
        		(\y,.5)--(\y,4.5);
        	\draw (3,1) to[out=0,in=-90] (3+1,1+.5) to[out=90,in=0] (3,2);
        	\draw (3,3) to[out=0,in=-90] (3+1,3+.5) to[out=90,in=0] (3,4);
		\end{tikzpicture}
		\qquad\qquad
    		B\longmapsto
        	\begin{tikzpicture}[scale=0.5,baseline=32]
        	\foreach \x in {1,2,3,4}
        	\foreach \y in {3}
        		\draw[fill=black,line width=1]  (\y,\x) circle [radius=0.15]  
        		(\y,.5)--(\y,4.5);
        	\draw (3,2) to[out=0,in=-90] (3+1,2+.5) to[out=90,in=0] (3,3);
        	\draw (3,1) to[out=0,in=-90] (3+2,1+1.5) to[out=90,in=0] (3,4);
		\end{tikzpicture}
		\\
		&M_1/M_0 \xrightarrow{\ \cong\ }
		S(4,2),
		\qquad
		&&X\longmapsto
        	\begin{tikzpicture}[scale=0.5,baseline=32]
        	\foreach \x in {1,2,3,4}
        	\foreach \y in {3}
        		\draw[fill=black,line width=1]  (\y,\x) circle [radius=0.15]  
        		(\y,.5)--(\y,4.5);
        	\draw (3,1) to[out=0,in=-90] (3+1,1+.5) to[out=90,in=0] (3,2);
		\draw (3,3) -- (4,3);
		\draw (3,4) -- (4,4);
		\end{tikzpicture}
		\qquad\qquad
		Y\longmapsto
        	\begin{tikzpicture}[scale=0.5,baseline=32]
        	\foreach \x in {1,2,3,4}
        	\foreach \y in {3}
        		\draw[fill=black,line width=1]  (\y,\x) circle [radius=0.15]  
        		(\y,.5)--(\y,4.5);
        	\draw (3,2) to[out=0,in=-90] (3+1,2+.5) to[out=90,in=0] (3,3);
		\draw (3,1) -- (4,1);
		\draw (3,4) -- (4,4);
		\end{tikzpicture}
		\qquad\qquad
		Z\longmapsto
        	\begin{tikzpicture}[scale=0.5,baseline=32]
        	\foreach \x in {1,2,3,4}
        	\foreach \y in {3}
        		\draw[fill=black,line width=1]  (\y,\x) circle [radius=0.15]  
        		(\y,.5)--(\y,4.5);
        	\draw (3,3) to[out=0,in=-90] (3+1,3+.5) to[out=90,in=0] (3,4);
		\draw (3,1) -- (4,1);
		\draw (3,2) -- (4,2);
		\end{tikzpicture}
		\\
		&M_2/M_1\xrightarrow{\ \cong\ }\t,
		\qquad
		&&P\longmapsto 1
	\end{align*}
	Thus $\F_4$ has a filtration in which each of the three
	cell modules appears as precisely one of the filtration quotients.
	We emphasise that this result holds with \emph{no}
	further assumptions on the ground ring $R$ or on the parameter $v$.
\end{example}

\section{Homological stability and stable homology}
\label{section-stability}

The aim of this section is to prove the following result.
Theorem~\ref{theorem-vanishing-range} is an immediate consequence, and Theorem~\ref{theorem-sharpness} will be proved in the next section as a corollary of it.

\begin{thm}\label{theorem-fineberg}
    Let~$R$ be a commutative ring, let~$v\in R$ be a unit, and let $a=v+v^{-1}$.
    Then for~$n$ odd we have:
	\[
		\Tor_i^{\tl_n(a)}(\t,\t)\cong
		\begin{cases}
			R & i=0
			\\
			0 & 1\leqslant i\leqslant (n-1)
			\\
			\Tor^{\tl_n(a)}_{i-n}(\t,\F_n(a)) & i\geqslant n
		\end{cases}
	\]
    and for~$n$ even we have
	\[
		\Tor_i^{\tl_n(a)}(\t,\t)\cong
		\begin{cases}
			R & i=0
			\\
			0 & 1\leqslant i\leqslant (n-2)
			\\
			\Tor^{\tl_n(a)}_{i-n}(\t,\F_n(a)) & i \geqslant (n+1)
		\end{cases}
	\]
	for~$i\neq n-1,n$, while in degrees~$(n-1)$ and~$n$ there is an exact sequence
    \begin{equation}\label{equation-tor-exact-sequence}
        0
        \to
        \Tor_n^{\tl_n(a)}(\t,\t)
        \to 
        \t\otimes_{\tl_n(a)}\F_n(a)
        \overset{\mathcal{Q}_n}{\longrightarrow} 
        \t
        \to
        \Tor_{n-1}^{\tl_n(a)}(\t,\t)
        \to
        0.
    \end{equation}
    Analogous results hold for the~$\Ext$-groups.
    For~$n$ odd we have:
	\[
		\Ext_{\tl_n(a)}^i(\t,\t)\cong
		\begin{cases}
			R & i=0
			\\
			0 & 1\leqslant i\leqslant (n-1)
			\\
			\Ext_{\tl_n(a)}^{i-n}(\F_n(a),\t) & i\geqslant n
		\end{cases}
	\]
    and for~$n$ even we have
	\[
		\Ext_{\tl_n(a)}^i(\t,\t)\cong
		\begin{cases}
			R & i=0
			\\
			0 & 1\leqslant i\leqslant (n-2)
			\\
			\Ext_{\tl_n(a)}^{i-n}(\F_n(a),\t) & i \geqslant (n+1)
		\end{cases}
	\]
	for~$i\neq n-1,n$, while in degrees~$(n-1)$ and~$n$ there is an exact sequence
    \begin{equation}\label{equation-ext-exact-sequence}
      \hspace{-2mm}  0
        \to
        \Ext^{n-1}_{\tl_n(a)}(\t,\t)
        \to 
        \t
        \overset{\mathcal{Q}^n}{\longrightarrow}  
        \Hom_{\tl_n(a)}(\F_n(a),\t)
        \to
        \Ext_{\tl_n(a)}^n(\t,\t)
        \to
        0.
    \end{equation}
    The central maps $\mathcal{Q}_n$ and $\mathcal{Q}^n$ of~\eqref{equation-tor-exact-sequence} and~\eqref{equation-ext-exact-sequence} respectively are described as follows.
    Regard $\F_n(a)$ as a left-submodule of $\tl_n(a)$ as in Proposition~\ref{proposition-fineberg}. 
    Then the maps are
    \[
        \mathcal{Q}_n\colon\t\otimes_{\tl_n(a)}\F_n(a)\longrightarrow\t,
        \qquad\qquad
        x\otimes f\longmapsto x\cdot f
    \]
    and
    \[
        \mathcal{Q}^n\colon\t\longrightarrow\Hom_{\tl_n(a)}(\F_n(a),\t),
        \qquad\qquad
        x\longmapsto (f\mapsto f\cdot x)
    \]
    where $x\cdot f$ and $f\cdot x$ denote the action of $f\in\F_n(a)\subseteq\tl_n(a)$ on the right and left of $\t$, respectively.
\end{thm}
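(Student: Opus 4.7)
The plan is to analyze the two spectral sequences associated to the double complex $P_\ast\otimes_{\tl_n(a)}W(n)_\ast$ from Section~\ref{sec: double complex ss}, where $P_\ast$ is a projective resolution of $\t$ as a right $\tl_n(a)$-module. The Ext assertions will follow by running the same argument with the dual double complex $\Hom_{\tl_n(a)}(W(n)_\ast,I^\ast)$ for an injective resolution $I^\ast$ of $\t$. I would first compute the $\IE$-page: Theorem~\ref{theorem-high-acyclicity} says that $H_j(W(n))$ vanishes for $j\leqslant n-2$ and equals $\F_n(a)$ in degree $j=n-1$, so $\IE^2_{i,j}=\Tor_i^{\tl_n(a)}(\t,H_j(W(n)))$ is concentrated in the row $j=n-1$, collapses at $E^2$, and yields $H_d(\mathrm{Tot})\cong\Tor^{\tl_n(a)}_{d-n+1}(\t,\F_n(a))$ for $d\geqslant n-1$ (and zero otherwise). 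Next I would compute $\IIE^1_{i,j}=\Tor^{\tl_n(a)}_j(\t,W(n)_i)$ using Theorem~\ref{theorem-shapiro}: for $i\geqslant 0$, $W(n)_i=\tl_n(a)\otimes_{\tl_{n-i-1}(a)}\t$ with $n-i-1<n$, so these terms vanish for $j\geqslant 1$, while the column $i=-1$ gives $\Tor^{\tl_n(a)}_j(\t,\t)$.

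The crucial remaining piece of the $\IIE$ computation is the bottom row $\IIE^1_{\ast,0}=\t\otimes_{\tl_n(a)}W(n)_\ast$. Under the canonical identification $\t\otimes_{\tl_n(a)}(\tl_n(a)\otimes_{\tl_m(a)}\t)\cong\t$, each face map $d^i_j$ induces the identity on $\t$: the $j$ copies of $s_k$ appearing in the formula for $d^i_j$ each act on $\t$ as multiplication by $\lambda$, exactly cancelling the $\lambda^{-j}$ built into $d^i_j$. Hence the induced differential acts as $\sum_{j=0}^i(-1)^j$, which is $1$ when $i$ is even and $0$ when $i$ is odd, making the bottom row the complex $\cdots\to\t\xrightarrow{1}\t\xrightarrow{0}\t\xrightarrow{1}\t$ that ends at degree $-1$. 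A direct inspection shows its homology vanishes entirely when $n$ is odd, and vanishes except for a single copy of $\t$ in degree $n-1$ when $n$ is even.

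For $n$ odd, the $\IIE^2$-page is concentrated in the column $i=-1$ for $j\geqslant 1$, so $\IIE^\infty=\IIE^2$ and comparing with the $\IE$-target gives $\Tor_j^{\tl_n(a)}(\t,\t)\cong\Tor_{j-n}^{\tl_n(a)}(\t,\F_n(a))$ for $j\geqslant 1$, vanishing in the range $1\leqslant j\leqslant n-1$. For $n$ even the only additional entry is $\IIE^2_{n-1,0}\cong\t$, and the only potential further differential is $d^n\colon\IIE^n_{n-1,0}\to\IIE^n_{-1,n-1}\cong\Tor_{n-1}^{\tl_n(a)}(\t,\t)$; since $\IE^\infty$ forces $H_{n-2}(\mathrm{Tot})=0$, this $d^n$ must be surjective, and combining the resulting short exact sequence $0\to\ker d^n\to\t\to\Tor_{n-1}^{\tl_n(a)}(\t,\t)\to 0$ with the column-filtration sequence $0\to\Tor_n^{\tl_n(a)}(\t,\t)\to\t\otimes_{\tl_n(a)}\F_n(a)\to\ker d^n\to 0$ splices to~\eqref{equation-tor-exact-sequence}.

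The main obstacle I expect is the concrete identification of the central maps in~\eqref{equation-tor-exact-sequence} and~\eqref{equation-ext-exact-sequence}. These are the edge homomorphisms of the $\IIE$ spectral sequence at the top column, and to identify them explicitly I would represent a cycle of $H_{n-1}(\mathrm{Tot})$ by an element of $P_0\otimes_{\tl_n(a)}W(n)_{n-1}$, then use the embedding $\F_n(a)\hookrightarrow\tl_n(a)$ from Proposition~\ref{proposition-fineberg} together with the canonical isomorphism $\t\otimes_{\tl_n(a)}\tl_n(a)\cong\t$ (sending $x\otimes y\mapsto x\cdot y$) to see that the edge map is precisely $x\otimes f\mapsto x\cdot f$. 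The Ext version of the whole argument is obtained by the dual double complex, with the central map $\t\to\Hom_{\tl_n(a)}(\F_n(a),\t)$ identified by the dual edge map $x\mapsto(f\mapsto f\cdot x)$.
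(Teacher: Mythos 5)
Your proposal follows essentially the same route as the paper: both spectral sequences of the double complex $P_\ast\otimes_{\tl_n(a)}W(n)_\ast$, identification of $\IE^2$ via Theorem~\ref{theorem-high-acyclicity}, identification of $\IIE^1$ via Theorem~\ref{theorem-shapiro}, explicit computation of the bottom row $\t\otimes_{\tl_n(a)}W(n)$ (which the paper records as Lemma~\ref{lemma-tensor-homology}), and the same analysis of the lone $d^n$ differential and extension problem spliced into the four-term sequence for $n$ even. The only noticeable divergence is in identifying the central maps of the two four-term sequences: you propose tracing an explicit cycle through the edge homomorphism, whereas the paper instead invokes the map of spectral sequences induced by the inclusion of chain complexes $\F_n[n-1]\hookrightarrow W(n)$. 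Both are valid and both are given at sketch level; the paper's version is slightly cleaner in that it avoids choosing representatives, while yours is a hands-on alternative that would require care to check independence of the chosen cycle but would yield the same formula.
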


In order to prove this theorem, we will use the complex of planar injective words~$W(n)$ introduced in the previous section.  Recall that the Fineberg module~$\F_n$ appearing in the statement is the top homology group~$H_{n-1}(W(n))$.

\begin{lemma}\label{lemma-tensor-homology}
	The homology groups of both the complex $\t\otimes_{\tl_n(a)}W(n)$ and the complex $\Hom_{\tl_n(a)}(W(n),\t)$ are concentrated in degree~$(n-1)$, where in both cases they are given by~$\t$ if~$n$ is even and~$0$ if~$n$ is odd.
\end{lemma}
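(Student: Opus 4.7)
The plan is to show that, after tensoring with $\t$ over $\tl_n(a)$ (or dually applying $\Hom_{\tl_n(a)}(-,\t)$), every term of $W(n)$ collapses to a copy of $\t$ and every elementary face map $d^i_j$ collapses to the identity. Consequently the $i$-th differential of the resulting complex is multiplication by $\sum_{j=0}^{i}(-1)^j$, which equals $1$ for $i$ even and $0$ for $i$ odd, and the homology calculation becomes a one-line inspection.

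For the homological version I would first invoke the associativity isomorphism
\[
    \t\otimes_{\tl_n(a)}\bigl(\tl_n(a)\otimes_{\tl_{n-i-1}(a)}\t\bigr)
    \ \cong\
    \t\otimes_{\tl_{n-i-1}(a)}\t
    \ \cong\
    \t
\]
to identify each term of $\t\otimes_{\tl_n(a)}W(n)$ with $\t$. Next I would trace the effect of a single face map $d^i_j$ on the generator $1\otimes(1\otimes 1)$: it is sent to $1\otimes(s_{n-i+j-1}\cdots s_{n-i}\otimes\lambda^{-j})$, which under the isomorphism above becomes $(1\cdot s_{n-i+j-1}\cdots s_{n-i})\otimes \lambda^{-j}$. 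By Definition~\ref{definition-si} each $s_k$ acts on $\t$ as multiplication by $\lambda$, so the left factor equals $\lambda^j$ and the entire element equals $1\otimes 1$. Hence every $d^i_j$ induces $\mathrm{id}_\t$, and the $i$-th differential of $\t\otimes_{\tl_n(a)}W(n)$ is scalar multiplication by $\sum_{j=0}^{i}(-1)^j$, as claimed.

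The reduced complex therefore consists of copies of $\t$ in degrees $-1$ through $n-1$, with boundary maps alternating between $\mathrm{id}_\t$ (in even degrees $i$) and $0$ (in odd degrees $i$). A direct inspection shows that the homology vanishes in every degree $-1\leqslant i\leqslant n-2$ and, in degree $n-1$, is $\ker(d^{n-1})$: this is $\t$ when $n-1$ is odd, that is, when $n$ is even, and is $0$ when $n$ is odd.

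For the cohomological version the argument is dual: the tensor-Hom adjunction gives
\[
    \Hom_{\tl_n(a)}\bigl(\tl_n(a)\otimes_{\tl_{n-i-1}(a)}\t,\,\t\bigr)
    \ \cong\
    \Hom_{\tl_{n-i-1}(a)}(\t,\t)
    \ \cong\
    \t,
\]
and a parallel computation shows that each $d^i_j$ contributes the identity to the induced coboundary, so that $\Hom_{\tl_n(a)}(W(n),\t)$ becomes the $R$-dual of the complex above. Its cohomology is again concentrated in degree $n-1$ and equals $\t$ for $n$ even and $0$ for $n$ odd. I do not expect a serious obstacle here; the only point requiring care is the bookkeeping by which the factor $\lambda^{-j}$ appearing in $d^i_j$ cancels exactly the scalar $\lambda^j$ produced by the action of $s_{n-i+j-1}\cdots s_{n-i}$ on $\t$.
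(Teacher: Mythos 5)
Your proof is correct and takes essentially the same route as the paper: identify each term of the induced complex with $\t$, observe that every face $d^i_j$ reduces to the identity because the scalar $\lambda^{-j}$ cancels the action of $s_{n-i+j-1}\cdots s_{n-i}$ on $\t$, so the $i$-th differential becomes $\sum_{j=0}^i(-1)^j$, and then read off the homology of the resulting periodic complex. The only stylistic difference is that you package the cohomological case as the $R$-dual of the homological one, whereas the paper recomputes it directly, but this is an equivalent and equally valid shortcut.
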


\begin{proof}
	We have~$W(n)_i = \tl_n\otimes_{\tl_{n-i-1}}\t$, and the boundary map~$d^i\colon W(n)_i\to W(n)_{i-1}$ is given by~$x\otimes r\mapsto x\cdot D_i\otimes r$, where~$D_i = \sum_{j=0}^i(-1)^j s_{n-i+j-1}\cdots s_{n-i}\lambda^{-j}$.
	
	By regarding~$\t$ as both a left and right~$\tl_n$-module, we may regard~$\t\otimes_{\tl_n} W(n)_i$ as a left~$\tl_n$-module. With this~$\tl_n$-module structure, we obtain~$\t\otimes_{\tl_n} W(n)_i = \t\otimes_{\tl_n}(\tl_n\otimes_{\tl_{n-i-1}}\t) \cong\t$. Under these isomorphisms, the boundary map originating in degree $i$ becomes the action on~$\t$ of the element $D_i$. 
	Similarly, $\Hom_{\tl_n}(W(n)_i,\t)=\Hom_{\tl_n}(\tl_n\otimes_{\tl_{n-i-1}}\t,\t)\cong\t$, and under these isomorphisms the boundary map originating in degree $(i-1)$ becomes the action of the element $D_i$ on $\t$.
	
	The action of~$s_{n-i+j-1}\cdots s_{n-i}$ on~$\t$ is simply multiplication by~$\lambda^j$, with one factor of~$\lambda$ for each~$s$ term (recall~$s_i=\mu U_i+\lambda$).  Thus the action of~$D_i$ on~$\t$ is nothing other than multiplication by~$\sum_{j=0}^i(-1)^j$, which is~$0$ for~$i$ odd and~$1$ for~$i$ even.
	
	So altogether~$\t\otimes_{\tl_n}W(n)$ and $\Hom_{\tl_n}(W(n),\t)$ are isomorphic to complexes with a copy of~$R$ in each degree~$i=-1,\ldots,(n-1)$ and with boundary maps alternating between the identity map and~$0$.
	In $\t\otimes_{\tl_n}W(n)$ the identity maps originate in even degrees, and in $\Hom_{\tl_n}(W(n),\t)$ they originate in odd degrees.
	The claim now follows.
\end{proof}

\begin{proof}[Proof of Theorem~\ref{theorem-fineberg}]
    We begin with the $\Tor$-case.
    
    In degree~$d=0$ the theorem holds trivially.
    Recall that~$P_\ast$ is a projective resolution of~$\t$ as a right~$\tl_n$-module. 
    We use the two homological spectral sequences~$\{\IE^r\}$ and~$\{\IIE^r\}$ associated to $W(n)$ as described in Section~\ref{sec: double complex ss}.
    
    Let us consider~$\{\IE^r\}$. 
    We have 
    \[
            \IE^2_{i,j}=
            \begin{cases}
                \Tor_i^{\tl_n}(\t,\F_n) & j=(n-1)
                \\
                0 & j\neq(n-1)
            \end{cases}
    \]
    and consequently the spectral sequence converges to~$\Tor_{\ast-n+1}^{\tl_n}(\t,\F_n)$, for~$*=i+j$.
    The same is therefore true of~$\{\IIE^r\}$.
    
    Let us write~$\varepsilon_n=H_{n-1}(\t \otimes_{\tl_n}W(n))$, so that by Lemma~\ref{lemma-tensor-homology},~$\varepsilon_n$ is trivial for~$n$ odd and~$\t$ for~$n$ even. Since~$\F_n$ consists of the cycles in~$W(n)_{n-1}$, the map
    \[\t\otimes_{\tl_n}\F_n\to \t\otimes_{\tl_n}W(n)_{n-1}\] again lands in the cycles, giving us a map
    \[\t\otimes_{\tl_n}\F_n\to H_{n-1}(\t\otimes_{\tl_n}W(n))=\varepsilon_n.\]  When~$n$ is even and~$\varepsilon_n$ is identified with~$\t$ as in the lemma, then this map simply becomes~$\mathcal{Q}_n$ as described in the statement of the theorem. 
    
    We now know that~$\{{}^{II}E^r\}$ converges to~$\Tor_{\ast-n+1}^{\tl_n}(\t,\F_n)$.  Its~$E^1$-page~${}^{II}E^1_{i,j} = \Tor^{\tl_n}_j(\t, W(n)_i)$ is largely known to us.  Indeed, when~$j=0$ the terms are $\Tor^{\tl_n}_0(\t, W(n)_i) =\t\otimes_{\tl_n} W(n)_i$, with~$d^1$-maps between them induced by the boundary maps of~$W(n)$.  In other words, the~$j=0$ part of~$\IIE^1_{i,j}$ is precisely the complex~$\t\otimes_{\tl_n} W(n)$.  When~$0\leqslant i\leqslant (n-1)$, the term~$W(n)_i=\tl_n\otimes_{\tl_{n-i-1}}\t$ satisfies~$0\leqslant (n-i-1)< n$, so that by Theorem~\ref{theorem-shapiro} we have
    \[\IIE^1_{i,j}=\Tor_j^{\tl_n}(\t,\tl_n\otimes_{\tl_{n-i-1}}\t)=0\] for~$j>0$.  When~$i=-1$ we have~$W(n)_{-1}=\t$ so that ${}^{II}E^1_{-1,j}=\Tor_j^{\tl_n}(\t,\t)$ for~$j>0$. This is depicted in Figure~\ref{fig:second spectral sequence E1 page}.
    
    \begin{figure}
		\begin{tikzpicture}[scale=0.7]
			\draw[<->] (-.5,9)--(-.5,.5)--(12,.5);
			\draw[line width=0.2cm, gray!20, <->] (-4,9)--node[black, above, pos=0] {$j$}(-4,-1)--(12,-1) node[black, right] {$i$};
			\draw (-4,0) node {$0$};
			\draw (-2,-1) node {$-1$};
			\draw (1,-1) node {$0$};		
			\draw (-4,1) node {$1$};
			\draw (-4,2) node {$2$};
			\foreach \x in {(-4,4),(-2,4)}	
			\draw \x node {$\vdots$};
			\foreach \x in {(3,-1), (3,0)}
			\draw \x node {$\cdots$};			
			\draw (5,-1) node[rotate=90]  {$\scriptstyle{n-2}$};	\draw (11,-1) node[rotate=90]  {$\scriptstyle{n}$};					
			\draw(8.5,-1) node[rotate=90]  {$\scriptstyle{n-1}$};
			\draw (-4.1,6) node {$\scriptstyle{n -2}$};
			\draw (-4.1,7) node {$\scriptstyle{n-1}$};
			\draw (-4,8) node {$\scriptstyle{n }$};
			\foreach \x in {1,2}
			\draw (-2,\x) node {$\scriptstyle{\Tor^{\tl_n}_{\x}(\t,\t)}$};
			\draw (-2,6) node {$\scriptstyle{\Tor^{\tl_n}_{n-2}(\t,\t)}$};
			\draw (-2,7) node {$\scriptstyle{\Tor^{\tl_n}_{n-1}(\t,\t)}$};
			\draw (-2,8) node {$\scriptstyle{\Tor^{\tl_n}_{n}(\t,\t)}$};
			\draw (-2,0) node {$\scriptstyle{\t \otimes_{\tl_n} W(n)_{-1}}$};
			\draw (1.25,0) node {$\scriptstyle{\t \otimes_{\tl_n} W(n)_{0}}$};
			\draw (5,0) node {$\scriptstyle{\t \otimes_{\tl_n} W(n)_{n-2}}$};
			\draw (8.75,0) node {$\scriptstyle{\t \otimes_{\tl_n} W(n)_{n-1}}$};
			\draw (11,0) node {$0$};
			\draw (5,2) node[black!80,scale=1.2] {$\Tor_j^{\tl_n}(\t, \tl_n\otimes_{\tl_{n-i-1}}\t)=0$};
			\foreach \x in { (10.8,0),(7,0), (-.2, 0)}
	        \draw[->] \x -- +(-0.35,0);
		\end{tikzpicture}
\caption{The page~$\IIE^1$. The only differentials that affect the~$\IIE^2$ page are shown on the~$j=0$ row.}
\label{fig:second spectral sequence E1 page}
\end{figure}

    By the description in the previous paragraph, we can now identify~$\IIE^2_{\ast,\ast}$.  The only possible differentials are in the~$j=0$ part, which is~$\t\otimes_{\tl_n}W(n)$, and whose homology is~$\varepsilon_n$ concentrated in degree~$(n-1)$.
    Thus~$\IIE^2_{\ast,\ast}$ is zero except for the following groups:
    \[
    	\IIE^2_{i,j}=\begin{cases}
    		\Tor^{\tl_n}_j(\t,\t) 
    		& 
    		i=-1,\ j>0
    		\\
    		\varepsilon_n
    		&
    		i=(n-1),\ j=0
    	\end{cases}
    \]
    as depicted in Figure~\ref{fig:second spectral sequence E2 page}.

\begin{figure}
		\begin{tikzpicture}[scale=0.75]
			\draw[<->] (-.5,9)--(-.5,.5)--(9,.5);	
			\draw[line width=0.2cm, gray!20, <->] (-4,9)--node[black, above, pos=0] {$j$}(-4,-1)--(10,-1) node[black, right] {$i$};
			\draw (-4,0) node {$0$};
			\draw (-2,-1) node {$-1$};
			\draw (0,-1) node {$0$};		
			\draw (-4,1) node {$1$};					
			\draw (1,-1) node {$1$};
			\draw (-4,2) node {$2$};					
			\draw (2,-1) node {$2$};
			\foreach \x in {(-4,4),(-2,4)}	
			\draw \x node {$\vdots$};
			\foreach \x in {(4,-1), (4,0)}
			\draw \x node {$\cdots$};			
			\draw (6,-1) node[rotate=90]  {$\scriptstyle{n-3}$};	\draw (7,-1) node[rotate=90]  {$\scriptstyle{n-2}$};	\draw (9,-1) node[rotate=90]  {$\scriptstyle{n}$};					\draw(8,-1) node[rotate=90]  {$\scriptstyle{n-1}$};
			\draw (-4.1,6) node {$\scriptstyle{n -2}$};
			\draw (-4.1,7) node {$\scriptstyle{n-1}$};
			\draw (-4,8) node {$\scriptstyle{n }$};
		
			\draw[cyan!40,ultra thick] (7,0) -- (-1.5,7) node[black,left, pos=0.1] {$\scriptstyle{i+j=n-2}$};
			\draw[cyan!40, ultra thick] (8,0) -- (-1.5,8) node[black, right, pos=0.9] {$\scriptstyle{i+j=n-1}$};	
			\draw[red, dashed, ->, thick] (7.8,0.2) -- (-0.85,6.8) node[black,above, pos=0.5] {$d^n$};
			\foreach \x in {1,2}
			\draw (-2,\x) node {$\scriptstyle{\Tor^{\tl_n}_{\x}(\t,\t)}$};
			\draw (8,0) node {$\varepsilon_n$};
			\draw (-2,6) node {$\scriptstyle{\Tor^{\tl_n}_{n-2}(\t,\t)}$};
			\draw (-2,7) node {$\scriptstyle{\Tor^{\tl_n}_{n-1}(\t,\t)}$};
			\draw (-2,8) node {$\scriptstyle{\Tor^{\tl_n}_{n}(\t,\t)}$};	\foreach \x in {(-2,0),(0,0),(1,0), (2,0), (6,0),(7,0),(9,0)}
			\draw \x node {$0$};
			\draw (1,2) node[black!80,scale=3] {$0$};
		\end{tikzpicture}
\caption{The page~$\IIE^2$. This page stays constant until~$\IIE^n$ where the only possible further differential lies: this is shown in red. The~$i+j=n-1$ and~$i+j=n-2$ diagonals are indicated in blue.}
\label{fig:second spectral sequence E2 page}
\end{figure}
    
    From the~$E^2$-page onwards there is precisely one possible differential, namely $d^n\colon E^n_{n-1,0}\to E^n_{-1,n-1}$, which is a map~$d^n\colon \varepsilon_n\to\Tor_{n-1}^{\tl_n}(\t,\t)$.  It forms part of an exact sequence
    \[
        0
        \to
        \IIE^\infty_{n-1,0}
        \to
        \varepsilon_n
        \xrightarrow{d^n}
        \Tor_{n-1}^{\tl_n}(\t,\t)
        \to
        \IIE^\infty_{-1,n-1}
        \to 
        0
    \]
    In~$\IIE^\infty_{\ast,\ast}$, each total degree has only one non-zero group, except (possibly) for total degree~$(n-1)$, where we have the two groups~$\IIE^\infty_{-1,n}$ and~$\IIE^\infty_{n-1,0}$.
    The relationship between the infinity-page of a spectral sequence and the sequence's target now give us a short exact sequence:
    \[
        0
        \to
        \IIE^\infty_{-1,n}
        \to 
        \Tor_0^{\tl_n}(\t,\F_n)
        \to 
        \IIE^\infty_{n-1,0}
        \to
        0
    \]
    The last two exact sequences combine to give us:
    \[
        0
        \to
        \IIE^\infty_{-1,n}
        \to 
        \Tor_0^{\tl_n}(\t,\F_n)
        \to 
        \varepsilon_n
        \to
        \Tor_{n-1}^{\tl_n}(\t,\t)
        \to
        \IIE^\infty_{-1,n-1}
        \to 
        0
    \]
    The leftmost term is~$\IIE^\infty_{-1,n} = \IIE^2_{-1,n}=\Tor^{\tl_n}_n(\t,\t)$. 
    And~$\IIE^\infty_{-1,n-1}$ is the only group in total degree~$(n-2)$, and therefore coincides with~$\Tor^{\tl_n}_{(n-2)-n+1}(\t,\F_n) =\Tor^{\tl_n}_{-1}(\t,\F_n)=0$.  
    And~$\Tor^{\tl_n}_0(\t,\F_n) = \t\otimes_{\tl_n}\F_n$.  So the last exact sequence becomes:
    \[
        0
        \to
        \Tor_n^{\tl_n}(\t,\t)
        \to 
        \t\otimes_{\tl_n}\F_n
        \to 
        \varepsilon_n
        \to
        \Tor_{n-1}^{\tl_n}(\t,\t)
        \to
        0
    \]
    When~$n$ is even, we claim that the map~$\t\otimes_{\tl_n}\F_n\to\varepsilon_n$ in this sequence is~$\mathcal{Q}_n$.  Let~$\F_n[n-1]$ be the complex consisting of a copy of~$\F_n$ concentrated in degree~$n-1$. There is a natural inclusion of chain complexes~$\F_n[n-1]\hookrightarrow W(n)$, and this leads to a map of double complexes and then of spectral sequences.
    The map~$\t\otimes_{\tl_n}\F_n\to\varepsilon_n$ can be identified using this map of spectral sequences.
    
    It follows from the sequence that in the case~$n$ odd, when~$\varepsilon_n=0$, the final term satisfies~$\Tor_{n-1}^{\tl_n}(\t,\t)=0$, and the first two terms satisfy
    \[\Tor_n^{\tl_n}(\t,\t)\cong\t\otimes_{\tl_n}\F_n=\Tor^{\tl_n}_0(\t,\F_n)\] as required. 
    
    The previous discussion determines what happens in total degrees~$(n-1)$ and $(n-2)$.  In total degrees~$d$ other than~$(n-1)$ and~$(n-2)$, and when~$j>0$, the only term on the~$E^\infty$ page is~$\IIE^{\infty}_{-1,d+1}=\Tor^{\tl_n}_{d+1}(\t,\t)$, which must therefore equal~$\Tor^{\tl_n}_{d-n+1}(\t,\F_n)$.
    Thus~$\Tor^{\tl_n}_d(\t,\t)\cong \Tor^{\tl_n}_{d-n}(\t,\F_n)$ for~$d\neq n,n-1$.
    This completes the proof.
    
    For the $\Ext$-case we use the two cohomological spectral sequences associated to $W(n)$ as in Section~\ref{sec: double complex ss}, and then proceed dually to the above.
    We leave the details to the reader.
\end{proof}

\section{Sharpness}
\label{section-sharpness}

We recall the statement of Theorem~\ref{thm-sharpness} from the introduction.

\setcounter{abcthm}{2}
\begin{abcthm}\label{thm-sharpness}
    Let~$n$ be even and suppose that~$a$ is not a unit.
    Then~$\Tor^{\tl_n(a)}_{n-1}(\t,\t)$ is non-zero.
\end{abcthm}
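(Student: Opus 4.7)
The plan is to exploit the exact sequence from Theorem~\ref{theorem-fineberg} for $n$ even,
\[
    \t \otimes_{\tl_n(a)} \F_n(a) \xrightarrow{\ \alpha\ } \t \longrightarrow \Tor_{n-1}^{\tl_n(a)}(\t, \t) \longrightarrow 0,
\]
and to deduce the non-vanishing of the rightmost term by showing that $\alpha$ is not surjective. According to the theorem, $\alpha$ is given by $x \otimes f \mapsto x \cdot f$, where $f$ lies in $\F_n(a)$ regarded as a submodule of $\tl_n(a)$ via Proposition~\ref{proposition-fineberg}. Since the right action of any $y \in \tl_n(a)$ on $\t$ is multiplication by the augmentation $\epsilon(y)$ (the algebra map sending every $U_i$ to zero), the image of $\alpha$ is the ideal $\epsilon(\F_n(a)) \subseteq R$, and thus
\[
    \Tor_{n-1}^{\tl_n(a)}(\t, \t) \cong R \big/ \epsilon(\F_n(a)).
\]
It therefore suffices to show that $\epsilon(\F_n(a))$ is a proper ideal of $R$.

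The central claim I would prove is that $\epsilon(\F_n(a)) \subseteq aR$. Since $a$ is not a unit, $aR$ is proper, and this immediately yields the non-vanishing of $\Tor_{n-1}^{\tl_n(a)}(\t,\t)$. To establish the claim, I would reduce modulo $a$: the assertion $\epsilon(f) \in aR$ for every $f \in \F_n(a)$ translates, under base change along $R \to R/aR$, to a statement about the algebra $\tl_n(0)$ with parameter $0$ over $R/aR$, namely that its Fineberg module (the right annihilator of the reduction $\bar\calJ_n$) is contained in the augmentation ideal $I_n$. Equivalently, no scalar $c \in R/aR \setminus \{0\}$ can satisfy $c \cdot \bar\calJ_n \in I_n \cdot \bar\calJ_n$ in $\tl_n(0)$.

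This last, purely algebraic claim about $\tl_n(0)$ is the main obstacle, and it is the point where I expect the bulk of the technical work to lie --- precisely the \emph{careful study of the Fineberg module} signalled in the introduction. The approach I have in mind is to examine the coefficient of a well-chosen short diagram (for instance $U_1$) in products $g \cdot \calJ_n$ for arbitrary $g \in I_n$, and to exploit the identity $v^2 + 1 \equiv 0 \pmod{a}$ --- a consequence of $a = v + v^{-1}$ --- to track explicit cancellations forcing such coefficients to be divisible by $a$. A more conceptual route, building on the combinatorial analysis of $\F_n(a)$ in~\cite{BoydHepworthComb}, would be to produce an explicit element of $\F_n(a)$ whose augmentation is a specific nonzero multiple $b$ of $a$, together with a matching upper bound on $\epsilon(\F_n(a))$; this would yield the sharpened statement $\Tor_{n-1}^{\tl_n(a)}(\t,\t) \cong R/bR$ with $b$ a multiple of $a$.
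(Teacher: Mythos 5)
Your strategy is correct and coincides almost exactly with the paper's: starting from the four-term exact sequence in Theorem~\ref{theorem-fineberg}, identify the image of $\t\otimes_{\tl_n(a)}\F_n(a)\to\t$ with the ideal of constant terms (augmentations) of elements of $\F_n(a)$, then show this ideal lies inside $aR$, which is proper by hypothesis. Your observation that this actually gives $\Tor_{n-1}^{\tl_n(a)}(\t,\t)\cong R/\epsilon(\F_n(a))$ is correct and slightly finer than what the theorem statement asks for; the paper makes essentially the same observation in the remark after Theorem~\ref{theorem-sharpness}. Your reduction mod $a$ to a statement about $\tl_n(0)$ over $R/aR$ is an equivalent repackaging of what the paper does with the left ideal $\calI=(a)+(U_2)+\cdots+(U_{n-1})$ and the quotient $\tl_n/\calI$.

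However, there is a genuine gap: the central claim that $\epsilon(\F_n(a))\subseteq aR$ is only reduced to another statement, not proved. You correctly flag this yourself as ``the main obstacle'' and ``the bulk of the technical work.'' The paper's proof of this step is Lemma~\ref{lemma-mults-of-Jn}, which shows $U_p\cdot\calJ_n\in\calI$ for every $p$, and this is by far the hardest and longest part of the section: it requires splitting $\calJ_n = K_n + L_n$ according to which monomials have the form $U_iU_{i-1}\cdots U_1$, disposing of $L_n$ by a terminus argument, and then running a five-way case analysis of $U_p\cdot(U_i\cdots U_1)$ in which two exceptional cases combine --- precisely via the identity you name, $\left(\frac{\mu}{\lambda}\right)^{-1}+\left(\frac{\mu}{\lambda}\right) = -a$ --- to produce a multiple of $a$. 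Your suggested route (``examine the coefficient of $U_1$ in $g\cdot\calJ_n$, exploiting $v^2+1\equiv 0\pmod a$'') points in exactly the right direction but does not constitute a proof of the key lemma; as it stands, the argument is a correct outline whose load-bearing step remains undemonstrated.
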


Let~$\calI\subseteq\tl_n$ denote the left-ideal generated by all diagrams which have a cup on the right in positions other than~$1$, together with all multiples of~$a$.  Thus
\[
    \calI 
    = 
    (\tl_n\cdot a)+(\tl_n\cdot U_2)+\cdots+(\tl_n\cdot U_{n-1}).
\]

\begin{lemma}\label{lemma-mults-of-Jn}
    Let~$n$ be even or odd, and let~$1\leqslant p\leqslant n-1$.
    Then~$U_p\cdot \calJ_n\in\calI$.
\end{lemma}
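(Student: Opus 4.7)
The plan is to prove $U_p \cdot \mathcal{J}_n \in \mathcal{I}$ by a direct term-by-term expansion, exploiting a product-form rewriting of $\mathcal{J}_n$. Setting $c := \mu/\lambda$, a regrouping of the defining sum of $\mathcal{J}_n$ by the largest index $a_1$ gives
\[
    \mathcal{J}_n \;=\; \delta_n + (-1)^n c \sum_{\substack{1 \leqslant k \leqslant n-1 \\ k \not\equiv n \pmod 2}} U_k \, T_k, \qquad T_k \;=\; (1 - cU_{k-1})(1 - cU_{k-2}) \cdots (1 - cU_1),
\]
where $\delta_n = 1$ for $n$ odd and $0$ for $n$ even. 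The algebraic crux throughout will be the identity $c^2 + ac + 1 = 0$, which follows from $s_i s_i^{-1} = 1$ and yields both $1 + c^2 = -ac$ and $1 - ac + c^2 = -2ac$, so that these scalar coefficients automatically lie in $aR$.

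Next I would distribute $U_p$ across this sum and case-split on $k$. When $k = p$, the relation $U_p^2 = aU_p$ gives $U_p U_k T_k = aU_p T_p \in a\tl_n \subseteq \mathcal{I}$. When $|k - p| \geqslant 2$, $U_p$ commutes with $U_k$ and with most factors of $T_k$: if $p > k$ it slides all the way through and lands in $\tl_n U_p \subseteq \mathcal{I}$, and if $p < k$ the only obstructions are the adjacent factors $(1 - cU_{p+1})(1 - cU_p)$ (and $(1 - cU_{p-1})$ when $p \geqslant 2$); the expansion $U_p(1 - cU_{p+1})(1 - cU_p) = (1 - ac + c^2) U_p - cU_p U_{p+1}$ has scalar coefficient $-2ac \in aR$ and remainder in $\tl_n U_{p+1} \subseteq \mathcal{I}$.

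The delicate cases are $k = p \pm 1$, which arise only when $p$ and $n$ have the same parity. Here $U_p U_k T_k$ does not itself lie in $\mathcal{I}$, and after expanding one finds that the only ``residual'' term (not ending in some $U_j$ with $j \geqslant 2$ nor carrying a factor of $a$) is a scalar multiple of the descending word $U_p U_{p-1} \cdots U_1$, with coefficient $(-c)^{p-2}$ from $k = p - 1$ and $(-c)^p$ from $k = p + 1$ (the latter produced by applying $U_p U_{p+1} U_p = U_p$). The main obstacle, and the technical heart of the proof, is to combine these residuals: their total coefficient $(-c)^{p-2}(1 + c^2) = -ac(-c)^{p-2}$ lies in $aR$ by the key identity. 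In the exceptional case $p = 1$ with $n$ odd, where $k = p - 1 = 0$ falls outside the admissible range, the standalone term $\delta_n U_1 = U_1$ plays the role of the missing residual: together with the $k = 2$ residual $-cU_1$ and the outer scaling $c(-1)^n = -c$, the total coefficient of $U_1$ becomes $1 + c^2 = -ac$, again in $aR$. In every case the cancellation is driven by the single identity $c^2 + ac + 1 = 0$; the real labour lies in the sign and index bookkeeping across the cases.
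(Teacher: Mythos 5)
Your proposal is correct and its product-form rewriting is a genuinely different packaging from the paper's argument. The paper proceeds \emph{additively}: it splits $\calJ_n = K_n + L_n$, where $K_n$ collects the fully descending monomials $U_i U_{i-1}\cdots U_1$, shows $L_n\in\calI$ outright (independently of $p$), and then analyses $U_p\cdot K_n$ by casing on $i$ against $p$, with $i = p\pm 1$ the hard cases. You instead group the defining sum by the leading index $a_1 = k$ and factor each group as $(-1)^n c\, U_k T_k$ with $T_k = (1-cU_{k-1})\cdots(1-cU_1)$ --- a telescoping form the paper nowhere uses --- and case on $k$ against $p$. Both approaches funnel down to the same residual, a scalar multiple of $U_pU_{p-1}\cdots U_1$, and both close via the identity $(\mu/\lambda)+(\mu/\lambda)^{-1}=-a$; your derivation of it from $s_is_i^{-1}=1$ is cleaner than the paper's direct verification against the two possibilities for $(\mu,\lambda)$. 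The product form buys a uniform treatment of the $|k-p|\geqslant 2$ cases via a short commutation move, and makes the $p=1$, $n$ odd boundary case emerge naturally with $\delta_n$ filling in for the missing $k=0$ contribution. The cost is that you must still justify, for each $k$, that the non-residual terms in the expansion of $U_pU_kT_k$ lie in $\calI$ --- this is exactly the paper's argument that $L_n\in\calI$ (a gap in a strictly decreasing word lets one rewrite it to end in some $U_j$ with $j\geqslant 2$, using Lemma~\ref{lem-terminus of JNF}), so ``after expanding one finds'' should be spelled out rather than asserted.
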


\begin{proof}
    Recall from Definition~\ref{defn-jacobsthal} that the monomials appearing in~$\calJ_n$ are those of the form~$U_{i_1}\cdots U_{i_r}$ where~$(n-1)\geqslant i_1>i_2\cdots>i_r\geqslant 1$ and~$i_1\equiv (n-1)\mod 2$, and that such a monomial appears in~$\calJ_n$ with coefficient~$(-1)^{n-1}(\frac{\mu }{ \lambda})^r$.
    We write~$\calJ_n = K_n + L_n$ where~$K_n$ is the part of~$\calJ_n$ featuring monomials of the form~$U_iU_{i-1}\cdots U_1$ for~$i\equiv n-1\text{ mod }2$ in the range~$1\leqslant i\leqslant n-1$, and~$L_n$ is the part of~$\calJ_n$ featuring the remaining monomials.
    
    If~$U_{i_1}\cdots U_{i_r}$ is a monomial appearing in~$L_n$, then it must either end in~$U_{i_r}$ for~$n-1\geqslant i_r>1$ or end in a monomial of the form~$U_{i_j}\cdot U_{i_{j-1}}\cdots U_{1}= (U_{i_{j-1}}\cdots U_1) \cdot U_{i_j}$ for some~$i_j\geqslant i_{j-1}+2$,~$i_{j-1}\geqslant 1$ and hence must lie in~$\calI$.  Thus~$L_n\in\calI$, and to prove the lemma it will be sufficient to show that~$U_p\cdot K_n\in\calI$. 
    
    Now observe that 
    \[
        K_n 
        = 
        (-1)^{n-1}\sum_{
            \substack{0\leqslant i\leqslant (n-1)\\ i\equiv n-1\text{ mod }2}
        } \left(\frac{\mu }{ \lambda}\right)^i\cdot U_iU_{i-1}\cdots U_1.
    \]
    (In the case~$i=0$ the product~$U_i\cdots U_1$ is empty and therefore equal to~$1$. This term only appears in~$K_n$ when~$n$ is odd.)
    Suppose that~$U_i\cdots U_1$ is a monomial appearing in the above sum.  Then:
    \[
        U_p\cdot (U_i\cdots U_1)
        =
        \begin{cases}
            (U_p\cdots U_1)\cdot (U_i\cdots U_{p+2})
            &
            p\leqslant i-2
            \\
            U_{i-1}\cdots U_1
            & 
            p=i-1
            \\
            U_i\cdots U_1\cdot a
            & 
            p=i 
            \\
            U_{i+1}\cdots U_1
            &
            p=i+1
            \\
            (U_i\cdots U_1)\cdot U_p
            &
            p\geqslant i+2
        \end{cases}
    \]
    Thus~$U_p\cdot(U_i\cdots U_1)\in\calI$ except for the cases
   ~$i=p-1$,~$i=p+1$.  
    When $p\equiv (n-1)\text{ mod }2$ these exceptional cases never occur, since we have assumed $i\equiv (n-1)\text{ mod }2$, and so~$U_p\cdot K_n\in\calI$ as required.  
    And when~$p\equiv n\text{ mod }2$, we can compute the contribution from the two exceptional cases to find that, modulo~$\calI$,~$U_p\cdot \calJ_n$ is equal to 
    \begin{eqnarray*}
        &&(-1)^{n-1}\left(\frac{\mu }{ \lambda}\right)^{p-1} U_p\cdot (U_{p-1}\cdots U_1)
        +
        (-1)^{n-1}\left(\frac{\mu }{ \lambda}\right)^{p+1} U_p\cdot (U_{p+1}\cdots U_1)
        \\
        &=&
        (-1)^{n-1}\left(\frac{\mu }{ \lambda}\right)^{p-1}\cdot (U_p\cdots U_1)
        +
        (-1)^{n-1}\left(\frac{\mu }{ \lambda}\right)^{p+1}\cdot (U_p\cdots U_1)
        \\
        &=&
        (-1)^{n-1}\left(\frac{\mu }{ \lambda}\right)^p\Big[ \left(\frac{\mu }{ \lambda}\right)^{-1}+\left(\frac{\mu }{ \lambda}\right)^1  \Big]\cdot (U_p\cdots U_1)
        \in\calI.
    \end{eqnarray*}
    Now from Definition~\ref{defn-IH to TL} we have either~$(\mu, \lambda)=(v,-1)$ or~$(\mu, \lambda)=(-v,v^2)$. In both cases the square bracket above evaluates to~$-a$ (recall~$a=v+v^{-1}$). Thus~$U_p\cdot K_n$ is a multiple of~$a$ and therefore in~$\calI$ as required.
\end{proof}

\begin{lemma}
    Let~$n$ be even.
    Let~$x\in\F_n(a)$, so that~$x\cdot \calJ_n=0$.  Then the constant term of~$x$ is a multiple of~$a$.
\end{lemma}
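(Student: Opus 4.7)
The plan is to split $x$ in the Jones basis as $x = c + x'$, where $c \in R$ is the constant term and $x'$ is a linear combination of nontrivial Jones basis elements, then use Lemma~\ref{lemma-mults-of-Jn} to absorb the $x'$ contribution into $\calI$, and finally extract $c$ using a suitable $R$-linear functional. Each nontrivial Jones basis element $x_{\ul{a},\ul{b}}$ factors as $z \cdot U_{b_1}$ for its terminus $b_1 \in \{1,\ldots,n-1\}$ and some $z \in \tl_n$, so $x_{\ul{a},\ul{b}} \cdot \calJ_n = z \cdot (U_{b_1} \cdot \calJ_n) \in z \cdot \calI \subseteq \calI$ by Lemma~\ref{lemma-mults-of-Jn} and the fact that $\calI$ is a left ideal. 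Summing over the basis elements of $x'$ yields $x' \cdot \calJ_n \in \calI$, so from $x \cdot \calJ_n = 0$ we conclude $c \cdot \calJ_n \in \calI$.

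To extract $c$ I will exhibit a Jones basis element $e$ whose coefficient in $\calJ_n$ is a unit $u \in R^\times$ and whose coefficient in every element of $\calI$ lies in $aR$; applying this coefficient functional to $c \cdot \calJ_n \in \calI$ then yields $c u \in aR$ and hence $c \in aR$. The choice of $e$ depends on the parity of $n$. For $n$ odd I take $e = 1$, with coefficient functional the $\tl_n$-linear map $\epsilon \colon \tl_n \to \t$, $y \mapsto y \cdot 1_\t$; since each $U_i$ acts as $0$ on $\t$ this kills every $\tl_n \cdot U_i$, and $\epsilon(y \cdot a) = a\,\epsilon(y) \in aR$, so $\epsilon(\calI) \subseteq aR$, while the empty-sequence term of $\calJ_n$ gives $\epsilon(\calJ_n) = (-1)^{n-1} = 1$. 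For $n$ even there is no empty-sequence term, and I instead take $e = U_1$; the unique term of $\calJ_n$ equal to $U_1$ has $r = 1$ and $a_1 = 1$ (present because $n-1$ is odd), with coefficient $(-1)^n \mu/\lambda = \mu/\lambda$, which is a unit of $R$.

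The step I expect to require the most care is verifying, for $n$ even, that the coefficient of $U_1$ in any element of $\tl_n \cdot U_i$ with $i \geq 2$ vanishes; together with $\phi(y \cdot a) = a\,\phi(y) \in aR$ for $\phi$ denoting this coefficient, this will give $\phi(\calI) \subseteq aR$ and complete the even case. The cleanest argument is diagrammatic: the Jones basis coincides with the diagram basis, and multiplication by $U_i$ on the right preserves the right-hand edge of every diagram occurring in the product, so each diagram in the expansion of $y \cdot U_i$ carries a cup between positions $i$ and $i+1$. Since the diagram $U_1$ has its cup between positions $1$ and $2$ and $i \geq 2$, $U_1$ cannot appear in $y \cdot U_i$, so $\phi(y \cdot U_i) = 0$ as required.
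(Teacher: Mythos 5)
Your proof is correct and follows essentially the same strategy as the paper's. Both arguments first show $c\cdot\calJ_n\in\calI$ by factoring a nontrivial Jones basis element through its terminus and invoking Lemma~\ref{lemma-mults-of-Jn}, then extract $c$ from this membership. The paper phrases the extraction step by passing to the quotient $\tl_n/\calI$, identifying it as a direct sum of copies of $R/aR$, and reading off a coordinate; you instead package it as an explicit $R$-linear functional $\tl_n\to R$ that carries $\calI$ into $aR$ and sends $\calJ_n$ to a unit. These are the same mechanism. The one place you are arguably more careful than the paper's proof as written: the paper's displayed formula $\calJ_n=(-1)^n\big[(\mu/\lambda)U_1+(\mu/\lambda)^3U_3U_2U_1+\cdots\big]$ modulo $\calI$ tacitly assumes $n$ even (the $U_1$ term occurs in $\calJ_n$ only when $n-1$ is odd, and the constant term only when $n$ is odd), whereas you explicitly switch functionals by parity (the constant-term map $\epsilon$ for $n$ odd, the $U_1$-coefficient $\phi$ for $n$ even), which makes the case distinction transparent. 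One stylistic caveat: for the even case you verify $\phi(\tl_n\cdot U_i)=0$, $i\geqslant 2$, by a diagrammatic cup-position argument, while the paper declares that diagrams are used for intuition only and not for proofs. If you want to conform to that convention, replace the diagrammatic step by the anti-automorphism $d\mapsto\bar d$ reversing words together with Lemma~\ref{lem-basis for Im} (applied to the reflected left ideal), or note that it follows directly from the identification of the Jones basis with the diagram basis. This does not affect correctness.
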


\begin{proof}
    Let~$b$ be the constant term of~$x$, so that~$x$ is equal to~$b$ plus a linear combination of left-multiples of the elements~$U_1,\ldots,U_{n-1}$.  Thus~$x\cdot \calJ_n$ is equal to~$b\cdot \calJ_n$ plus a linear combination of left-multiples of~$U_1\cdot \calJ_n,\ldots,U_{n-1}\cdot \calJ_n$, all of which lie in~$\calI$ by Lemma~\ref{lemma-mults-of-Jn}.  Thus~$x\cdot \calJ_n = b\cdot \calJ_n$ modulo~$\calI$.  
    
    As an~$R$-module, the quotient~$\tl_n/\calI$ is isomorphic to the direct sum of copies of~$R/aR$, with one summand for each monomial whose Jones normal form ends with~$U_1$. We have that
    \[\calJ_n=(-1)^{n-1}\Big[\left(\frac{\mu }{ \lambda}\right) U_1+\left(\frac{\mu }{ \lambda}\right)^3 U_3U_2U_1+\cdots\Big]\text{ in }\tl_n/\calI\]
    and it follows that
    \[b\cdot \calJ_n = (-1)^{n-1}\Big[b\left(\frac{\mu }{ \lambda}\right)U_1+b\left(\frac{\mu }{ \lambda}\right)^3U_3U_2U_1+\cdots\Big]\text{ in }\tl_n/\calI,\] so~$b$ must vanish in~$R/aR$.  
\end{proof}

\begin{lemma}
    Let~$n$ be even.  Then the image of the map~
    \[\t\otimes_{\tl_n(a)}\F_n(a)\to\t,\qquad1\otimes x\mapsto 1\cdot x,\] is contained in the ideal generated by~$a$.
\end{lemma}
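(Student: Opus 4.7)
The plan is to unwind the definition of the map and reduce the statement directly to the previous lemma. First I would note that, by definition of the trivial module, the right action of each generator $U_i$ on $\t = R$ sends $1$ to $0$. Consequently, for any element $f\in\tl_n(a)$, if we write $f = b + f'$ where $b\in R$ is the constant term and $f'$ lies in the left ideal generated by $U_1,\ldots,U_{n-1}$ (equivalently, $f'$ is an $R$-linear combination of Jones basis elements $x_{\underline a,\underline b}$ with $k\neq 0$), then $1\cdot f = b\cdot 1 + 1\cdot f' = b$, since each summand of $f'$ ends (on the right, from $\t$'s point of view) in some $U_i$ that annihilates $1$.

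Next I would observe that the $R$-module $\t\otimes_{\tl_n(a)}\F_n(a)$ is generated by elements of the form $1\otimes x$ for $x\in\F_n(a)$, so the image of the map in question is precisely the $R$-submodule of $\t = R$ generated by the constant terms of elements of $\F_n(a)$.

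The previous lemma tells us exactly that the constant term of any $x\in\F_n(a)$ is a multiple of $a$. Therefore the image is contained in $aR$, as required. No separate step is truly an obstacle here: all the work was carried out in the previous lemma, and the current lemma is simply the translation of that statement through the identifications $1\cdot x = \text{constant term of } x$ for the right action of $\tl_n(a)$ on $\t$.
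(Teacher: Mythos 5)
Your proof is correct and follows essentially the same route as the paper: you observe that the generators $U_i$ annihilate $\t$, so $1\cdot x$ picks out the constant term of $x$, and then you invoke the preceding lemma that this constant term is a multiple of $a$.
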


\begin{proof}
    Since the elements~$U_p$ act on~$\t$ as multiplication by~$0$, the map above simply sends~$1\otimes x$ to the constant term of~$x$.  But the previous lemma tells us that the constant term of~$x$ is a multiple of~$a$.
\end{proof}

\begin{proof}[Proof of Theorem~\ref{theorem-sharpness}]
    Let~$n$ be even. From Theorem~\ref{theorem-fineberg}, we have the (fairly short) exact sequence
    \[
        0
        \to
        \Tor_n^{\tl_n}(\t,\t)
        \to 
        \t\otimes_{\tl_n}\F_n
        \to 
        \t
        \to
        \Tor_{n-1}^{\tl_n}(\t,\t)
        \to
        0
    \]
    and the image of~$\t\otimes_{\tl_n}\F_n\to\t$ is contained in the ideal generated by~$a$, and in particular does not contain the element~$1$, so that~$\Tor^{\tl_n}_{n-1}(\t,\t)\neq 0$.
\end{proof}

\section{The case of~\texorpdfstring{$\tl_2(a)$}{n=2}}
\label{section-tl-two}

In this section we briefly consider the case~$n=2$, and fully compute the~$\Tor$ and~$\Ext$ groups. We do this first by a straightforward computation using an explicit free resolution. Then, in order to illustrate the theory developed in the paper, we re-prove the same result by explicitly computing the Fineberg module~$\F_2$ and applying Theorem~\ref{theorem-fineberg}.

\begin{prop}\label{proposition-tltwo}
    The homology and cohomology of~$\tl_2(a)$ are as follows.
    \begin{align*}
        \Tor^{\tl_2(a)}_i(\t,\t)
        &=
        \left\{\begin{array}{ll}
            R, & i=0,
            \\
            R/aR, & i>0,\ i\text{ odd},
            \\
            R_a, & i>0,\ i\text{ even},
        \end{array}\right.
        \\
        \Ext_{\tl_2(a)}^i(\t,\t)
        &=
        \left\{\begin{array}{ll}
            R, & i=0,
            \\
            R_a, & i>0,\ i\text{ odd},
            \\
            R/aR, & i>0,\ i\text{ even},
        \end{array}\right.
    \end{align*}
    where~$R_a$ denotes the kernel of the map~$R\xrightarrow{a}R$.
    This holds for any choice of ground ring~$R$ and any choice of parameter~$a\in R$.
\end{prop}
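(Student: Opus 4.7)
The plan is to compute both $\Tor$ and $\Ext$ directly from an explicit $2$-periodic free resolution of $\t$ over $\tl_2(a)$. Since $\tl_2(a) = R \oplus R U_1$ as an $R$-module with $U_1^2 = a U_1$, the augmentation $\epsilon \colon \tl_2(a) \to \t$ has kernel $R \cdot U_1$. Expanding $(\alpha + \beta U_1) U_1$ and $(\alpha + \beta U_1)(U_1 - a)$ using $U_1^2 = a U_1$ shows that right multiplication by $U_1$ is a left-module surjection $\tl_2(a) \twoheadrightarrow R\cdot U_1$ with kernel $R\cdot (U_1 - a)$, and that right multiplication by $U_1 - a$ is a surjection $\tl_2(a) \twoheadrightarrow R\cdot(U_1 - a)$ with kernel $R\cdot U_1$. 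Splicing these alternately gives the resolution
\[
    \cdots \xrightarrow{\cdot U_1} \tl_2(a) \xrightarrow{\cdot (U_1 - a)} \tl_2(a) \xrightarrow{\cdot U_1} \tl_2(a) \xrightarrow{\cdot (U_1-a)} \tl_2(a) \xrightarrow{\cdot U_1} \tl_2(a) \xrightarrow{\epsilon} \t \to 0.
\]

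Next I would apply $\t \otimes_{\tl_2(a)} (-)$ and $\Hom_{\tl_2(a)}(-,\t)$ to this resolution. Every free term becomes a copy of $R$, and the boundary maps are induced by the actions of $U_1$ and $U_1 - a$ on $\t$. Since $U_1$ acts as zero on $\t$ from either side, the map $\cdot U_1$ induces the zero map, while $\cdot (U_1 - a)$ induces multiplication by $-a$. The resulting complex computing $\Tor$ therefore has the form
\[
    \cdots \xrightarrow{-a} R \xrightarrow{0} R \xrightarrow{-a} R \xrightarrow{0} R \xrightarrow{-a} R \xrightarrow{0} R,
\]
with the rightmost $R$ in degree $0$, and the cochain complex computing $\Ext$ is identical but read in the opposite direction, so the positions of the $0$ and $-a$ maps are swapped. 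Reading off (co)homology directly yields $R$ in degree $0$ together with the pattern $R/aR$ in odd degrees and $R_a$ in positive even degrees for $\Tor$, and the reverse pattern for $\Ext$, exactly as claimed.

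There is no real obstacle; the only subtlety is to verify the two kernel computations by honestly matching the coefficients of $1$ and $U_1$, so that the resolution is exact with no hypothesis on $R$ (in particular no domain assumption). Finally, to illustrate Theorem~\ref{theorem-fineberg}, I would re-derive the same answer through the Fineberg module. By Proposition~\ref{proposition-fineberg}, $\F_2(a)$ is the kernel of right multiplication by $\calJ_2 = (\mu/\lambda)\,U_1$ on $\tl_2(a)$, and since $\mu/\lambda \in R^\times$ this coincides with the kernel of $\cdot U_1$ computed above, namely the free rank-one $R$-submodule generated by $U_1 - a$, on which $U_1$ acts as zero; hence $\F_2(a) \cong \t$. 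The map $\t \otimes_{\tl_2(a)} \F_2(a) \to \t$ of Theorem~\ref{theorem-fineberg} sends $1 \otimes (U_1 - a) \mapsto 1\cdot(U_1 - a) = -a$ and is thus multiplication by $-a$; combining the resulting four-term exact sequence in degrees $1$ and $2$ with the isomorphism $\Tor_i^{\tl_2(a)}(\t,\t) \cong \Tor_{i-2}^{\tl_2(a)}(\t,\F_2(a)) \cong \Tor_{i-2}^{\tl_2(a)}(\t,\t)$ for $i \geq 3$ recovers the same formulae, and the $\Ext$ case is dual.
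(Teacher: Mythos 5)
Your proof is correct and takes essentially the same approach as the paper: you build the same $2$-periodic free resolution (the paper uses $\cdot(a-U_1)$ where you use $\cdot(U_1-a)$, an immaterial sign change), tensor/Hom into $\t$ to get the alternating $0$-and-$\pm a$ complex, and read off the answer. Your concluding paragraph re-deriving the result via $\F_2(a)\cong\t$ and Theorem~\ref{theorem-fineberg} likewise matches the paper's Proposition~\ref{prop-F2} and its corollary.
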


\begin{proof}

We define a chain complex of left~$\tl_2$-modules as follows.
The degree is indicated in the right-hand column.  The boundary maps are given by right-multiplication by the indicated element of~$\tl_2$, except for the last, which is the map~$\tl_2\to\t$,~$x\mapsto x\cdot 1$.
\[
    \xymatrix{
        \vdots
        \ar[d]^{(a-U_1)}
        \\
        \tl_2
        \ar[d]^{U_1}
        &
        3
        \\
        \tl_2
        \ar[d]^{(a-U_1)}
        &
        2
        \\
        \tl_2
        \ar[d]^{U_1}
        &
        1
        \\
        \tl_2
        \ar[d]
        &
        0
        \\
        \t
        &
        -1
    }
\]
The composite of consecutive boundary maps is~$0$, due to the computation\[U_1\cdot(a-U_1) = 0 = (a-U_1)\cdot U_1,\] and the fact that~$U_1$ acts by~$0$ on~$\t$.  Moreover, this complex is acyclic, as one sees by considering the bases~$1,U_1$ and~$1,(a-U_1)$ of~$\tl_2$.
Thus the non-negative part of the complex above, which we denote by~$P_\ast$, is a free resolution of the left~$\tl_2$-module~$\t$. 
Thus~$\Tor^{\tl_2}_\ast(\t,\t)$ and~$\Ext^\ast_{\tl_2}(\t,\t)$ are the homology of~$\t\otimes_{\tl_2}P_\ast$ and the cohomology of~$\Hom_{\tl_2}(P_\ast,\t)$ respectively. 
Using the isomorphisms~$\t\otimes_{\tl_2}\tl_2\cong\t$,~$a\otimes x\mapsto a\cdot x$ and~$\Hom^{\tl_2}(\tl_2,\t)\cong \t$,~$f\mapsto f(1)$ in every degree, and working out the induced boundary maps, we see that ~$\t\otimes_{\tl_2}P_\ast$ and~$\Hom_{\tl_2}(P_\ast,\t)$ are isomorphic to the complexes depicted below.
\[
    \xymatrix{
        \vdots
        \ar[d]^{a}
        &&
        \vdots
        \ar@{<-}[d]^a
        &&
        \vdots
        \\
        R
        \ar[d]^{0}
        &&
        R
        \ar@{<-}[d]^0
        &&
        3
        \\
        R
        \ar[d]^{a}
        &&
        R
        \ar@{<-}[d]^a
        &&
        2
        \\
        R
        \ar[d]^{0}
        &&
        R
        \ar@{<-}[d]^0
        &&
        1
        \\
        R
        &&
        R
        &&
        0
    }
\]
The homology and cohomology of these complexes are easily computed, and give the claim.
\end{proof}

\begin{prop}\label{prop-F2}
    When~$n=2$ the Fineberg module satisfies~$\F_2(a)\cong\t$, and the map~$\t\otimes_{\tl_2(a)}\F_2(a)\to\varepsilon_2\cong \t$ is multiplication by~$a$.
\end{prop}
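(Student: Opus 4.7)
The plan is to unravel every object in the proposition completely explicitly, using the fact that for such a small algebra everything can be written in the basis $\{1, U_1\}$ of $\tl_2(a)$.

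First I would identify the top two terms of the complex $W(2)$. Since $\tl_0(a) = \tl_1(a) = R$, the terms $W(2)_1 = \tl_2(a)\otimes_{\tl_0(a)}\t$ and $W(2)_0 = \tl_2(a)\otimes_{\tl_1(a)}\t$ are both canonically isomorphic to $\tl_2(a)$ as left $\tl_2(a)$-modules (via $x\otimes r\mapsto xr$). So the relevant portion of $W(2)$ is a map $\tl_2(a) \to \tl_2(a)$.

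Next I would compute the top differential. Directly from Definition~\ref{defn: W(n) and boundary maps}, under the identifications above, $d^1$ becomes right-multiplication by $1 - \lambda^{-1} s_1 = -(\mu/\lambda) U_1$; alternatively one can quote Proposition~\ref{proposition-fineberg} and check that $\calJ_2 = (\mu/\lambda)U_1$, as $(a_1) = (1)$ with $r=1$ is the only admissible sequence for $n=2$. Either way, the kernel of $d^1$ coincides with the kernel of right-multiplication by $U_1$. In the basis $\{1, U_1\}$, we have $(\alpha + \beta U_1) \cdot U_1 = (\alpha + \beta a)U_1$, so the kernel consists of those elements with $\alpha = -\beta a$, namely $\F_2(a) = R\cdot(a - U_1) \subseteq \tl_2(a)$, which is free of rank one over $R$.

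Then I would check the $\tl_2(a)$-module structure on $\F_2(a)$: the computation $U_1\cdot(a - U_1) = aU_1 - U_1^2 = aU_1 - aU_1 = 0$ shows that $U_1$ acts trivially on the generator $a - U_1$ and hence on all of $\F_2(a)$. Since $U_1$ is the only non-trivial generator of $\tl_2(a)$, this yields $\F_2(a) \cong \t$, with explicit isomorphism $\t \xrightarrow{\cong} \F_2(a)$ sending $1 \mapsto a - U_1$.

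Finally I would trace through the explicit description of the map $\t\otimes_{\tl_2(a)}\F_2(a) \to \varepsilon_2$ given in Theorem~\ref{theorem-fineberg}, together with the identification $\varepsilon_2 \cong \t$ coming from Lemma~\ref{lemma-tensor-homology} (whose proof shows that $\varepsilon_2$ is just $\t\otimes_{\tl_2(a)} W(2)_1 \cong \t$ in the top degree). Under our chosen generator of $\F_2(a)$, the map sends $1\otimes(a-U_1) \mapsto 1\cdot(a - U_1) = a - 0 = a$, so it is indeed multiplication by $a$. There is no real obstacle here; the only subtlety to be careful with is the sign convention, which is absorbed by the choice of generator $a - U_1$ (rather than $U_1 - a$) of $\F_2(a)$.
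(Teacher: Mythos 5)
Your proposal is correct and follows the paper's approach exactly: identify the top differential of $W(2)$ as right-multiplication by $\calJ_2 = (\mu/\lambda)U_1$ (up to a harmless sign) via Proposition~\ref{proposition-fineberg}, compute its kernel to find $\F_2(a) = R\cdot(a-U_1)\cong\t$, and trace the generator $a-U_1$ through the map $\t\otimes_{\tl_2(a)}\F_2(a)\to\varepsilon_2\cong\t$ to see that it sends $1\otimes(a-U_1)$ to $a$. You supply slightly more explicit detail than the paper (the direct computation of $d^1$ from the definition as a cross-check, the kernel calculation in the basis $\{1,U_1\}$, and the verification that $U_1$ annihilates $a-U_1$), but the structure and content are the same.
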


\begin{proof}
    We compute~$\F_2$ explicitly in Example~\ref{example - F2}:
    $
        \F_2\cong \langle a-U_1\rangle\cong \t.
    $
    The map~$\t\otimes_{\tl_2}\F_2\to\varepsilon_2\cong \t$ is the composite map \[\t\otimes_{\tl_2}\F_2\to\t\otimes_{\tl_2} W(2)_{1}=\t\otimes_{\tl_2}\left(\tl_2\otimes_{\tl_0}\t\right)\cong\t.\]
    Under the central equality the basis element~$a-U_1$ of~$\F_2\subset W(2)_1$ gets mapped to~$a-U_1=a$ in the tensor product. Therefore the composite map is given by multiplication by~$a$, as required. 
\end{proof}

\begin{corollary}
    Suppose that~$v\in R$ is a unit and that~$a=v+v^{-1}$.
    Then the groups~$\Tor_i^{\tl_2(a)}(\t,\t)$ and~$\Ext_i^{\tl_2(a)}(\t,\t)$ are as described in Proposition~\ref{proposition-tltwo}.
\end{corollary}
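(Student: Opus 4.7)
The plan is to combine Theorem~\ref{theorem-fineberg} with the explicit description of the Fineberg module in Proposition~\ref{prop-F2}, specialised to $n=2$. Since $n=2$ is even, the theorem provides a four-term exact sequence in degrees $n-1=1$ and $n=2$, together with a periodicity isomorphism $\Tor_i^{\tl_2(a)}(\t,\t)\cong\Tor_{i-2}^{\tl_2(a)}(\t,\F_2(a))$ for $i\geqslant 3$ (and dually for $\Ext$). The corollary is then almost mechanical.

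For the $\Tor$ groups: Proposition~\ref{prop-F2} identifies $\F_2(a)\cong\t$ as left $\tl_2(a)$-modules and describes the central map $\t\otimes_{\tl_2(a)}\F_2(a)\to\t$ as multiplication by $a$. The four-term sequence therefore collapses to
\[
0\longrightarrow \Tor_2^{\tl_2(a)}(\t,\t)\longrightarrow R\xrightarrow{\ \cdot a\ } R\longrightarrow \Tor_1^{\tl_2(a)}(\t,\t)\longrightarrow 0,
\]
so that $\Tor_1^{\tl_2(a)}(\t,\t)\cong R/aR$ and $\Tor_2^{\tl_2(a)}(\t,\t)\cong R_a$. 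For $i\geqslant 3$, the periodicity isomorphism together with $\F_2(a)\cong\t$ yields
\[
\Tor_i^{\tl_2(a)}(\t,\t)\cong \Tor_{i-2}^{\tl_2(a)}(\t,\F_2(a))\cong \Tor_{i-2}^{\tl_2(a)}(\t,\t),
\]
and a straightforward induction on $i$ then pins down the claimed parity pattern.

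The $\Ext$ case is handled in the same spirit, the only slight twist being that we must identify the connecting map in the Ext sequence. By the description in Theorem~\ref{theorem-fineberg}, the map $\t\to\Hom_{\tl_2(a)}(\F_2(a),\t)$ sends $x$ to the homomorphism $f\mapsto f\cdot x$; evaluating at the generator $a-U_1$ of $\F_2(a)\subseteq\tl_2(a)$ and using that $U_1$ acts on $\t$ as zero, this is $(a-U_1)\cdot x=ax$. Under the evaluation isomorphism $\Hom_{\tl_2(a)}(\F_2(a),\t)\cong R$, the map is again multiplication by $a$, and the Ext four-term sequence becomes
\[
0\longrightarrow \Ext^1_{\tl_2(a)}(\F_2(a),\t)\longrightarrow R\xrightarrow{\ \cdot a\ } R\longrightarrow \Ext^2_{\tl_2(a)}(\t,\t)\longrightarrow 0.
\]
This yields $\Ext^2_{\tl_2(a)}(\t,\t)\cong R/aR$ and, using the isomorphism $\F_2(a)\cong\t$ to identify $\Ext^1_{\tl_2(a)}(\F_2(a),\t)$ with $\Ext^1_{\tl_2(a)}(\t,\t)$, also $\Ext^1_{\tl_2(a)}(\t,\t)\cong R_a$. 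The periodicity $\Ext^i_{\tl_2(a)}(\t,\t)\cong \Ext^{i-2}_{\tl_2(a)}(\F_2(a),\t)\cong \Ext^{i-2}_{\tl_2(a)}(\t,\t)$ for $i\geqslant 3$ completes the induction.

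There is no real obstacle here; this corollary is an essentially mechanical combination of the general Theorem~\ref{theorem-fineberg} with the explicit calculation of $\F_2(a)$ in Proposition~\ref{prop-F2}. The only point requiring any care is the identification of the Ext connecting map as multiplication by $a$, which follows by the same short calculation used on the Tor side.
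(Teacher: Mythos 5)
Your proof is correct and takes essentially the same approach as the paper: invoke the four-term exact sequences of Theorem~\ref{theorem-fineberg} for $n=2$, substitute $\F_2(a)\cong\t$ and identify the central maps as multiplication by $a$ via Proposition~\ref{prop-F2}, then apply the degree-shift isomorphism for $i\geqslant 3$. The paper actually leaves the $\Ext$ half to the reader with a one-line remark, so your explicit computation of the $\Ext$ connecting map is a slight elaboration rather than a departure.
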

\begin{proof}
    In the light of Proposition~\ref{prop-F2}, the exact sequence from Theorem~\ref{theorem-fineberg}
       \[
        0
        \to
        \Tor_2^{\tl_2}(\t,\t)
        \to 
        \t\otimes_{\tl_2}\F_2
        \to 
        \t
        \to
        \Tor_{1}^{\tl_2}(\t,\t)
        \to
        0
    \]
    now becomes
       \[
        0
        \to
        \Tor_2^{\tl_2}(\t,\t)
        \to 
        \t\otimes_{\tl_2}\t
        \overset{ a}{\longrightarrow} 
        \t
        \to
        \Tor_{1}^{\tl_2}(\t,\t)
        \to
        0
    \]
    from which one can compute~$\Tor_2^{\tl_2}(\t,\t)=R_a$ and~$\Tor_1^{\tl_2}(\t,\t)=R/aR$. For $i\geqslant3$ we have the recursive formula
    \[
    \Tor_{i}^{\tl_2}(\t,\t)=\Tor_{i-2}^{\tl_2}(\t,\F_2)\cong \Tor_{i-2}^{\tl_2}(\t,\t)
    \]
    which completes the proof. The~$\Ext$ results similarly follow from Theorem~\ref{theorem-fineberg}.
\end{proof}

\section{High-acyclicity}\label{section-high-acyclicity}

In this final section we prove high connectivity of~$W(n)$, Theorem~\ref{theorem-high-acyclicity}.

\setcounter{abcthm}{4}
\begin{abcthm}
    $H_d(W(n))$ vanishes in degrees~$d\leqslant (n-2)$.
\end{abcthm}

\subsection{A filtration}
\label{section-filtration}

In this subsection we introduce a filtration of~$W(n)$. We state a theorem relating the filtration quotients to~$W(n-1)$ (the proof of which is the topic of the next 3 subsections) and therefore by induction prove Theorem~\ref{theorem-high-acyclicity}.

\begin{defn}[The filtration]\label{definition-filtration}
	We define a filtration~$F$ of~$W(n)$,
	\[
		F^0\subseteq F^1 \subseteq
		\cdots
		\subseteq F^n = W(n)
	\]
	as follows.
	\begin{itemize}
		\item
		$F^0$ is defined to be the span of the elements
		of two kinds.
		We call elements of the first kind \emph{basic elements} and these are of the form 
		\[
			x\otimes 1
		\]
		in degrees~$i$ such that~$-1\leqslant i\leqslant n-2$, 
		where~$x$ is represented by a monomial in the~$s_j$
		not involving the letter~$s_1$.
		Elements of the second kind are those of the form 
		\[
			x\cdot (s_1\cdots s_{n-i-1})\otimes 1
		\]
		in degrees~$i$ such that~$0\leqslant i\leqslant n-1$,
		where again~$x$ is represented by a monomial 
		not involving the letter~$s_1$.

		\item
		$F^k$ for~$k\geqslant 1$ is defined to be the span of
		$F^{k-1}$ together with terms of the form 
		\[
			x\cdot(s_1\cdots s_{n-i-1+k})\otimes 1
		\]
		in degrees~$i$ such that~$k\leqslant i\leqslant n-1$,
		where again~$x$ is 
		represented by a monomial not involving~$s_1$.
	\end{itemize}
\end{defn}

\begin{rem}
	Note that in the description of~$F^0$, it is possible 
	for the product~$s_1\cdots s_{n-i-1}$
	to be empty, i.e.~the unit element, if the final index~$(n-i-1)$
	is zero ($i=n-1$). 
	In contrast, in the description of~$F^k$ for~$k\geqslant 1$,
	the product~$s_1\cdots s_{n-i-1+k}$ is never empty.
	This is one reason why it is important for us to treat
	$F^0$ quite separately from the other~$F^k$, as is done in the remainder of this paper.
\end{rem}

\red{In Theorem \ref{theorem-filtration identification} we show that each~$F^k$ is a subcomplex of~$W(n)$. The fact that~$F^n=W(n)$ will follow from Lemma~\ref{lem-basis for F^k}.}

\begin{defn}\label{defn-cone sus trunc}
	Recall that the \emph{cone} on a chain complex~$X$ (or, more precisely, the cone on the identity map of~$X$)
	is the chain complex~$CX$ defined by~$(CX)_i = X_i\oplus X_{i-1}$,
	and with differential defined by 
	\[
		d^i_{CX}(x,y) = (d^i_X(x)+y,-d^{i-1}_X(y)).
	\]
	The \emph{suspension} of a chain complex~$X$ is the complex
	$\Sigma X$ defined by
	\[(\Sigma X)_i = X_{i-1}\] and with 
	the same differential as~$X$.
	The \emph{truncation to degree~$p$} of a chain complex~$X$
	is the chain complex~$\tau_pX$ defined by
	\[
		(\tau_pX)_i
		=
		\left\{\begin{array}{ll}
			X_i, & i\leqslant p\\
			0, & i>p
		\end{array}\right.
	\]
	and with the same differential as~$X$ (in the relevant degrees).
\end{defn}

\begin{rem}
	Note that our definition of cone and suspension do not seem
	to match up very well.  However, we have chosen our conventions
	in order to make the proof of the next theorem as direct as possible,
	and we believe that our choices are the best fit for this 
	purpose.
\end{rem}

\begin{defn}\label{defn-sigma shift map}
    Define the \emph{shift map}~$\sigma$ to be the map
    \[
    \sigma \colon \tl_{n-1}(a)\to\tl_n(a) 
    \]
    which sends each~$U_i$ to~$U_{i+1}$ for~$1\leqslant i \leqslant n-2$, and hence each~$s_i$ to~$s_{i+1}$.
\end{defn}

\begin{lemma}
    Each~$F^k$ consists of~$\tl_{n-1}(a)$-submodules of~$W(n)$, where~$\tl_{n-1}(a)$ acts via the shift map~$\sigma$.
    
    \begin{proof}
        Definition~\ref{definition-filtration} defines each~$F^k$ as the span of certain `base elements' of the form~$y\otimes 1$ where~$y\in\tl_n$ is represented by a monomial in the~$s_j$ subject to certain restrictions.
        Multiplying any such~$y$ on the left by any~$s_j$ for~$1<j\leqslant n-1$ does not affect whether it meets these restrictions.
        Since~$s_j = \sigma(s_{j-1})$ for~$1<j\leqslant n-1$,
        this shows that the generators of~$\tl_{n-1}$ send the base elements of each~$F^k$ to other base elements of~$F^k$, and therefore~$F^k$ itself is stable under the action of~$\tl_{n-1}$.
    \end{proof}
\end{lemma}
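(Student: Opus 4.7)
The plan is to verify stability directly on the spanning elements of each $F^k$. First I would observe that $\tl_{n-1}(a)$ is generated by $U_1, \ldots, U_{n-2}$, whose images under $\sigma$ are $U_2, \ldots, U_{n-1}$. Since $s_{j+1} = \lambda + \mu U_{j+1}$ with $\mu$ a unit (Definition~\ref{defn-IH to TL}), stability of $F^k$ under the $\sigma$-action of $\tl_{n-1}(a)$ is equivalent to stability under left-multiplication by $s_2, \ldots, s_{n-1}$.

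Next I would inspect the spanning elements of each $F^k$ listed in Definition~\ref{definition-filtration}. In all cases they take the form $w \otimes 1$, with $w$ represented by a specific monomial in the $s_j$'s, subject to the restriction that $s_1$ appears only (if at all) as part of a fixed suffix $s_1 s_2 \cdots s_p$. Left-multiplying by some $s_j$ with $j \geqslant 2$ prepends a non-$s_1$ letter to this monomial, producing another monomial of precisely the same shape: the ``prefix'' part (the $x$ in the notation of Definition~\ref{definition-filtration}) has simply gained an extra leading letter, still distinct from $s_1$, while the suffix is unchanged. Hence the result lies in the same spanning set, and so in $F^k$.

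I do not anticipate a real obstacle here; the statement reduces to the observation that the defining restriction on the spanning elements of each $F^k$ concerns only the letter $s_1$, while every generator of $\sigma(\tl_{n-1}(a))$ is built from $s_2, \ldots, s_{n-1}$ and therefore cannot disturb it. The only care needed is to treat the two types of spanning elements of $F^0$ and the additional generators of $F^k$ for $k \geqslant 1$ uniformly under this single observation.
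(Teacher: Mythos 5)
Your proposal is correct and matches the paper's argument: both proofs observe that the defining restrictions on the spanning elements of $F^k$ concern only the letter $s_1$, so left-multiplication by $s_j = \sigma(s_{j-1})$ for $j \geqslant 2$ preserves membership. Your explicit note that $\mu$ is a unit — so that stability under the $U_{j+1}$ is equivalent to stability under the $s_{j+1}$ — is a small clarifying addition that the paper leaves implicit.
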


Here is the main result of this section.

\begin{thm}\label{theorem-filtration identification}
	Each~$F^k$ is a subcomplex of~$W(n)$. We identify
	\[
		F^0\cong C(W(n-1)).
	\]
	And for~$k\geqslant 1$, we have
	\[
		F^k/F^{k-1}\cong\tau_{n-1}\Sigma^{k+1}W(n-1).
	\]
\end{thm}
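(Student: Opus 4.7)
The proof has three parts to dispatch in order: each $F^k$ is a subcomplex, $F^0 \cong C(W(n-1))$, and $F^k/F^{k-1} \cong \tau_{n-1}\Sigma^{k+1} W(n-1)$ for $k \geq 1$. The first task is to establish the key computational lemma underlying everything: for $2 \leq l \leq m$ one has $s_l \cdot (s_1 \cdots s_m) = (s_1 \cdots s_m) \cdot s_{l-1}$ in $\tl_n(a)$, which I will prove by commuting $s_l$ past $s_1, \ldots, s_{l-2}$, applying the braid relation $s_{l-1} s_l s_{l-1} = s_l s_{l-1} s_l$, and then commuting $s_{l-1}$ past $s_{l+1}, \ldots, s_m$. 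Iterating gives $\sigma(\bar z) (s_1 \cdots s_m) = (s_1 \cdots s_m) \bar z$ for any monomial $\bar z$ in the subalgebra of $\tl_{n-1}(a)$ generated by $s_1, \ldots, s_{m-1}$; this identity is the backbone of every subsequent calculation.

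To verify that each $F^k$ is a subcomplex I will perform a case analysis on generators. On a basic element $\sigma(\bar x) \otimes 1 \in F^0_i$, the formula for $d^i_j$ multiplies on the right by $s_{n-i+j-1} \cdots s_{n-i}$, whose indices are all $\geq 2$ (since $i \leq n-2$), so the result stays among basic elements. On a generator $\sigma(\bar x) (s_1 \cdots s_{n-i-1+k'}) \otimes 1$ of $F^{k'}_i$, the key identity lets me rewrite $(s_1 \cdots s_{n-i-1+k'}) (s_{n-i+j-1} \cdots s_{n-i})$ in controlled form: for $j \leq k'$ it collapses to a monomial of high indices times $(s_1 \cdots s_{n-i-1+k'})$, landing in $F^{k'-1}$ of the next degree; for $j = k'+1$ the quadratic relation $s_l^2 = (q-1) s_l + q$ intervenes, producing a handful of terms that after further manipulation still lie in $F^{k'}$.

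For $F^0 \cong C(W(n-1))$, the basic part of $F^0_i$ is identified with $W(n-1)_i$ via the shift map, and the second-kind part is identified with $W(n-1)_{i-1}$ via $\bar x \otimes 1 \mapsto \sigma(\bar x) (s_1 \cdots s_{n-i-1}) \otimes 1$; well-definedness of the second map follows from the key identity, since the ``unshifted'' factor lands in $\tl_{n-i-1}$, the subalgebra over which the tensor in $W(n)_i$ is taken. This yields $F^0_i = W(n-1)_i \oplus W(n-1)_{i-1} = (CW(n-1))_i$. To match the differential against the cone formula $d(x,y) = (dx+y, -dy)$: the differential on basic elements is $\sigma$ applied to the differential of $W(n-1)$; the term $d^i_0$ on a second-kind element absorbs $s_1 \cdots s_{n-i-1}$ entirely into $\tl_{n-i}$, yielding the ``$+y$'' contribution up to a scalar that I will absorb into the identification; and the terms $d^i_j$ for $j \geq 1$ produce the ``$-dy$'' contribution once rewritten via the key identity as second-kind elements in lower degree.

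The hardest part is $F^k/F^{k-1} \cong \tau_{n-1} \Sigma^{k+1} W(n-1)$. The natural candidate is $W(n-1)_{i-k-1} \to F^k_i/F^{k-1}_i$, $\bar x \otimes 1 \mapsto [\sigma(\bar x) (s_1 \cdots s_{n-i-1+k}) \otimes 1]$, for $k \leq i \leq n-1$. The main obstacle of the entire theorem is well-definedness of this map: using the key identity it suffices to show $U_l (s_1 \cdots s_m) \otimes 1 \in F^{k-1}$ for each $l$ in the range $2 \leq l \leq m = n-i-1+k$, and the subtle sub-range is $l > m-k$ where $U_{l-1}$ cannot be pushed across the tensor directly. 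I will handle these cases via the computation $U_l s_{l-1} s_l = \lambda\mu U_l U_{l-1}$, which depends on the identity $\lambda^2 + \lambda\mu a + \mu^2 = 0$ (easily verified for both conventions of $(\lambda,\mu)$ from Definition~\ref{defn-IH to TL}); this splits off a cup, shortening the product and placing the result in $F^{k-1}$. Once well-definedness is in hand, surjectivity is immediate, injectivity follows by a dimension count against the Jones basis of $W(n-1)_{i-k-1}$, and the chain map property follows from the same identity together with the differential computation of the subcomplex step.
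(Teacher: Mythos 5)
Your proposal follows essentially the same route as the paper. The ``key computational lemma'' you isolate --- $s_l(s_1\cdots s_m)=(s_1\cdots s_m)s_{l-1}$ for $2\leq l\leq m$, iterated to give $\sigma(\bar z)(s_1\cdots s_m)=(s_1\cdots s_m)\bar z$ --- is exactly the engine behind Lemma~\ref{lemma-towers} and cases~(1) and~(3) of Lemma~\ref{lemma-products}, and the relation $\lambda^2+\lambda\mu a+\mu^2=0$ underlying your $U_ls_{l-1}s_l=\lambda\mu\,U_lU_{l-1}$ is equivalent to the cubic relation on the $s_i$ that drives Lemma~\ref{lemma-products} case~(2) and the well-definedness of $\psi^k_i$. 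So the algebraic inputs are identical; your $U$-formulation is a clean repackaging of the same computation, and the maps you propose coincide with the paper's $\Phi^0=(\xi,\eta)$ and $\Psi^k$ (up to the sign $(-1)^{-i(k+1)}$ that the paper inserts into $\Psi^k_i$ to make the cone and suspension differentials match on the nose).

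Two points in your sketch need repair in a full write-up. First, in the subcomplex check for $F^{k'}$ you have an off-by-one: writing the relevant product as $(s_1\cdots s_p)(s_q\cdots s_r)$ with $p=n-i-1+k'$, $q=n-i-1+j$, $r=n-i$, the quadratic relation enters at $j=k'$ (where $p=q$), not $j=k'+1$, and there the resulting terms land in $F^{k'-1}$; the case $j\geq k'+1$ has $q>p$ and needs the ``lengthening'' rearrangement $(s_1\cdots s_p)(s_q\cdots s_r)=(s_q\cdots s_{r+1})(s_1\cdots s_{p+1})$ --- derivable from your key identity via Lemma~\ref{lemma-towers}, but genuinely distinct from the plain conjugation you describe, and producing a tower one step longer so that these terms lie only in $F^{k'}$, not $F^{k'-1}$. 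Your sketch conflates the last two regimes, which matters for even stating the subcomplex claim correctly. Second, the claim that $U_ls_{l-1}s_l=\lambda\mu\,U_lU_{l-1}$ by itself ``places the result in $F^{k-1}$'' is not a one-step conclusion: after the substitution one has $\lambda\mu\,(s_1\cdots s_{l-2})U_lU_{l-1}(s_{l+1}\cdots s_m)$, which must be commuted to $\lambda\mu\,U_l(s_{l+1}\cdots s_m)\cdot(s_1\cdots s_{l-2})U_{l-1}$ and then re-expanded via $U_j=\mu^{-1}(s_j-\lambda)$ to exhibit a linear combination of terms $[\text{monomial of index}\geq l]\cdot(s_1\cdots s_{p'})$ with $p'\leq l-1<m$; only then is membership in $F^{k-1}$ visible. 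This is exactly the paper's five-term expansion in the well-definedness proof. Finally, ``injectivity by dimension count'' must be upgraded to ``bijection on Jones bases up to unit scalars,'' since $R$ is an arbitrary commutative ring; the key tool is the expansion of $s_1\cdots s_p$ in the $U$-basis with leading term $\mu^pU_1\cdots U_p$ and every other term having index $\geq 2$ or strictly smaller terminus, which is the paper's route to matching basis elements of source and target for both $\Phi^0$ and $\Psi^k$.
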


\begin{cor}[Theorem~\ref{theorem-high-acyclicity}]\label{corollary-theorem-E}
	For each~$n\geqslant 0$ the complex~$W(n)$ is~$(n-2)$-acyclic, or in other words, its homology vanishes up to and including degree~$(n-2)$.
\end{cor}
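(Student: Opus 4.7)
The plan is to induct on $n$, using the filtration and the identification of its subquotients provided by Theorem~\ref{theorem-filtration identification} to reduce the $(n-2)$-acyclicity of $W(n)$ to the $(n-3)$-acyclicity of $W(n-1)$. The base case $n=0$ is vacuous, since $W(0)$ is concentrated in degree $-1$ and we only demand vanishing in degrees $d\leqslant -2$; the case $n=1$ is also covered by the argument below (equivalently by a direct check that the augmentation $W(1)_0\to W(1)_{-1}$ is surjective).

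For the inductive step, given the filtration $F^0\subseteq F^1\subseteq\cdots\subseteq F^n = W(n)$, the first move is to dispose of $F^0$: since $F^0\cong C(W(n-1))$ is a cone, it is acyclic in every degree, as any cycle $(x,y)\in (CX)_i$ satisfies $y=-dx$ and hence $(x,y)=d(0,x)$ is a boundary.

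Next I would control the quotients $F^k/F^{k-1}\cong \tau_{n-1}\Sigma^{k+1}W(n-1)$ for $k\geqslant 1$. Truncation at degree $n-1$ does not affect homology in strictly lower degrees, so for $d\leqslant n-2$ one has
\[
H_d(F^k/F^{k-1})\cong H_d(\Sigma^{k+1}W(n-1)) \cong H_{d-k-1}(W(n-1)).
\]
For $k\geqslant 1$ and $d\leqslant n-2$ the index satisfies $d-k-1\leqslant n-4\leqslant n-3$, so the inductive hypothesis applies and this group vanishes (and if $d-k-1<-1$ the corresponding chain group is zero, so the homology is trivially zero).

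The last step is an inner induction on $k$: from the short exact sequence $0\to F^{k-1}\to F^k\to F^k/F^{k-1}\to 0$ and the resulting long exact sequence, vanishing of $H_d$ on the outer two terms in the range $d\leqslant n-2$ forces vanishing on $F^k$ in that range. Taking $k=n$ yields the corollary. I expect essentially no obstacle at this stage: the entire substantive content is packaged into Theorem~\ref{theorem-filtration identification}, and the present deduction is routine spectral-sequence-style bookkeeping. The only things to be mindful of are the precise effect of $\tau_{n-1}$ on homology (no change strictly below the truncation degree) and the standard acyclicity of the cone, both handled above.
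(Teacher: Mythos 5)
Your proof is correct and follows essentially the same route as the paper's: induct on $n$, kill $F^0$ using the cone identification, kill the quotients $F^k/F^{k-1}$ ($k\geqslant 1$) via the inductive hypothesis through the identification with $\tau_{n-1}\Sigma^{k+1}W(n-1)$, and conclude by the long exact sequences of the filtration. You fill in the routine justification that truncation leaves homology unchanged strictly below the truncation degree, which the paper states more briefly, but the argument is the same.
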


\begin{proof}
    We prove this by induction on~$n\geqslant 0$.
	One can verify the claim directly in the case
	$n=0$.  Fix~$n\geqslant 1$ and suppose that the
	theorem has been proved for the previous case.
	Now~$W(n)$ has the filtration~$F^0\subseteq F^1
	\subseteq \cdots\subseteq F^n$.
	We prove below that~$F^0$ and all filtration quotients
	$F^k/F^{k-1}$ are~$(n-2)$-acyclic, and then it follows (for example by using the short exact sequences
	$0\to F^{k-1}\to F^k\to F^k/F^{k-1}\to 0$, or by using the spectral sequence of the filtration) that the same holds for~$W(n)$ itself.

	Observe that~$F^0\cong C(W(n-1))$, being isomorphic to a cone, is acyclic.
	Next, for~$k\geqslant 1$ we have~$F^k/F^{k-1} \cong\tau_{n-1}\Sigma^{k+1}W(n-1)$.  
	The induction hypothesis states that~$W(n-1)$ is~$(n-3)$-acyclic, \red{so that~$\Sigma^{k+1}W(n-1)$ is~$(n-2+k)$-acyclic} and in particular~$(n-2)$-acyclic, so that~$\tau_{n-1}\Sigma^{k+1}W(n-1)$ is also~$(n-2)$-acyclic.
	This completes the proof.
\end{proof}

\red{
\begin{remark}[Intuitions and motivations]

The complex of planar injective words $W(n)$ is an analogue of
the complex of injective words $\calC(n)$, 
and
Theorem~\ref{theorem-high-acyclicity}
is the analogue for $W(n)$ of the well-known vanishing result for the homology
of $\calC(n)$; see Remark~\ref{remark-Cn}.

Our starting point in proving Theorem~\ref{theorem-high-acyclicity} was
Kerz's proof~\cite{Kerz} of the vanishing theorem for the homology of
$\calC(n)$.
Kerz identifies within $\calC(n)$ a subcomplex $F^0$ that is isomorphic to 
the cone $C(\calC(n-1))$.
This is then extended to a filtration 
$F^0\subseteq F^1\subseteq\cdots\subseteq F^{n-1}\subseteq \calC(n)$
in which each subsequent filtration quotient $F^k/F^{k-1}$ is isomorphic
to a direct sum of copies of the suspension $\Sigma^{k+1}\calC(n-k-1)$.
(In fact Kerz does not explicitly mention filtrations, but this is one way of 
framing his proof.)
This permits an inductive proof of high-acyclicity as in
Corollary~\ref{corollary-theorem-E}.

Our proof of Theorem~\ref{theorem-high-acyclicity} began as an attempt to 
mimic Kerz's approach.
There is an evident way to embed $W(n-1)$ into $W(n)$ 
--- this is the span of the basic elements of $F^0$ --- 
and this can be extended to an embedding of the cone $C(W(n-1))$ 
into $W(n)$ by considering the elements of the second kind in $F^0$.
The remainder of our proof is the result of attempting 
to extend this embedding to a complete filtration of $W(n)$.
At this stage the parallels with~\cite{Kerz} begin to fail, but the Jones normal form gives us an extra tool. Using this we characterise the basis elements of~$W(n)$ that are not in the image of the cone $C(W(n-1))$, and this characterisation gives a surprising separation into subcomplexes which `look like' suspended and truncated copies of~$W(n-1)$ --- we build our filtration such that these are our filtration quotients~$F^k/F^{k-1}$. 
\end{remark}
}

The final three subsections prove Theorem~\ref{theorem-filtration identification}, by first setting up the required chain map for~$F^0$, then for~$F^k$ and then in the final section proving these chain maps are isomorphisms.

\subsection{Proofs for~$F^0$}
\label{section-F-zero}

In this subsection we prove~$F^0$ is a subcomplex of~$W(n)$. We define a map from the cone~$C(W(n-1))$ to~$F^0$ and prove this is a well defined chain map. 

\begin{lemma}
	$F^0$ is a subcomplex of~$W(n)$.
\end{lemma}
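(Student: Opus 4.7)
The plan is to check directly that each differential $d^i = \sum_{j=0}^i (-1)^j d^i_j$ of $W(n)$ carries every spanning element of $F^0$ back into $F^0$. Since $d^i_j$ right-multiplies the left tensor factor by the fixed monomial $s_{n-i+j-1}\cdots s_{n-i}$ (and scales by $\lambda^{-j}$), it suffices to verify this separately for the two families of spanning elements given in Definition~\ref{definition-filtration}.

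First, for a basic first-kind element $x \otimes 1$ in degree $i$ with $-1 \leqslant i \leqslant n-2$ and $x$ free of $s_1$: the indices $n-i, n-i+1, \ldots, n-i+j-1$ appearing in the new right factor are all at least $n-i \geqslant 2$, so the product $x \cdot s_{n-i+j-1}\cdots s_{n-i}$ still contains no $s_1$. Hence $d^i_j(x \otimes 1)$ is again a first-kind basic element in degree $i-1 \leqslant n-3$. (In the edge case $i=0$, the target is $W(n)_{-1}\cong\t$, which absorbs everything to a scalar multiple of $1 \otimes 1$, itself a first-kind basic element.)

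Second, for a basic second-kind element $x \cdot (s_1 \cdots s_{n-i-1}) \otimes 1$ in degree $i$ with $0 \leqslant i \leqslant n-1$ and $x$ free of $s_1$: when $j=0$, the trailing product $s_1 \cdots s_{n-i-1}$ lies in $\tl_{n-i}$, so it moves across the tensor in degree $i-1$ and acts on $\t$ by the scalar $\lambda^{n-i-1}$, yielding a scalar multiple of $x \otimes 1$, a first-kind basic element. When $j \geqslant 1$, the braid-like commutation $s_p s_q = s_q s_p$ for $|p-q|\geqslant 2$ lets each of $s_{n-i+1}, \ldots, s_{n-i+j-1}$ slide past $s_1, \ldots, s_{n-i-1}$, giving the identity
\[
(s_1 \cdots s_{n-i-1}) \cdot (s_{n-i+j-1} \cdots s_{n-i}) = (s_{n-i+j-1} \cdots s_{n-i+1}) \cdot (s_1 \cdots s_{n-i}),
\]
with the first factor on the right taken to be the unit when $j=1$. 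Both $x$ and $s_{n-i+j-1} \cdots s_{n-i+1}$ avoid $s_1$, and $s_1 \cdots s_{n-i}$ is exactly the canonical tail of a second-kind element in degree $i-1$, so the image is a scalar multiple of a second-kind basic element.

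There is essentially no hard step here; the main care required is bookkeeping the degenerate configurations ($i=-1$, $i=0$, $i=n-1$ when the initial $s$-tail is empty, and $j=1$ when the rearranged prefix becomes empty), in each of which one of the products collapses to the unit but the argument still lands in a basic element of the expected kind. Hence $F^0$ is closed under the differential and is a subcomplex of $W(n)$.
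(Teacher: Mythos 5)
Your proof is correct and follows essentially the same route as the paper's: reduce to checking each face map $d^i_j$ on the two kinds of spanning element, observe that first-kind elements stay first-kind because $n-i\geqslant 2$ keeps $s_1$ out of the new tail, handle $j=0$ for second-kind elements by sliding $s_1\cdots s_{n-i-1}$ across the tensor, and for $j\geqslant 1$ use exactly the commutation identity $(s_1\cdots s_{n-i-1})(s_{n-i+j-1}\cdots s_{n-i})=(s_{n-i+j-1}\cdots s_{n-i+1})(s_1\cdots s_{n-i})$ that appears in the paper. The paper phrases the first-kind case slightly more tersely (``$d^i_j$ only introduces $s_1$ when $i=n-1$'') and doesn't single out the $i=0$ edge case, but these are cosmetic differences only.
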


\begin{proof}
	To prove the claim, we must take a generator of 
	$F^0$ in degree~$i$, and show that
	under the boundary map~$d^i\colon W(n)_i\to W(n)_{i-1}$
	this generator is mapped into~$F^0$.
	Since~$d^i$ is the alternating sum ~$d^i_0-d^i_1+\cdots +(-1)^i d^i_i$,
	it will suffice to fix~$j$ in the range~$0\leqslant j\leqslant i$,
	and show that~$d^i_j$ sends our generator into~$F^0$.
	Recall from Definition~\ref{defn: W(n) and boundary maps} the definition of~$d^i_j$:
	\[
		d^i_j(y\otimes r)
		=
		y\cdot (s_{n-i+j-1}\cdots s_{n-i}) \otimes \lambda^{-j} r.
	\]

	Generators of~$F^0$ come in two kinds.
	The first kind are the basic elements~$x\otimes 1$
	in degrees~$-1\leqslant i\leqslant n-2$ where~$x$ is represented by
	a monomial not featuring the letter~$s_1$.
	The map~$d^i_j$ only introduces a letter~$s_1$ in the case~$i=n-1$, which is excluded here, so that~$d^i_j(x\otimes 1)$ \red{is again a basic element and therefore} also lies
	in~$F^0$.

	The second kind of generators of~$F^0$ are elements
	\[
		x\cdot (s_1\cdots s_{n-i-1})\otimes 1
	\]
	in degrees~$0\leqslant i\leqslant n-1$,
	where~$x$ is represented by a monomial not involving~$s_1$.
	In the case~$j=0$, we have
	\[
		d^i_0(x\cdot (s_1\cdots s_{n-i-1})\otimes 1)
		= x\cdot(s_1\cdots s_{n-i-1})\otimes 1
	\]
	but this lies in~$W(n)_{i-1}=\tl_n\otimes_{\tl_{n-i}}\t$,
	hence is equal to~$x\otimes\lambda^{n-i-1}$, and since~$x$ is 
	represented by a monomial not involving~$s_1$, this does indeed
	lie in~$F^0$.
	(This argument includes the special case~$i=n-1$, where the product
	$s_1\cdots s_{n-i-1}$ is empty, but this clearly creates no issues.)
	In the case~$j\geqslant 1$, we have
	\begin{align*}
		d^i_j(x\cdot (s_1\cdots s_{n-i-1})\otimes 1)
		&= 
		x
		\cdot
		(s_1\cdots s_{n-i-1})
		\cdot 
		(s_{n-i+j-1}\cdots s_{n-i})
		\otimes \lambda^{-j}
		\\
		&= 
		x
		\cdot
		(s_1\cdots s_{n-i-1})
		\cdot 
		(s_{n-i+j-1}\cdots s_{n-i+1})
		\cdot s_{n-i}
		\otimes \lambda^{-j}
		\\
		&= 
		x
		\cdot
		(s_{n-i+j-1}\cdots s_{n-i+1})\cdot
		(s_1\cdots s_{n-i-1})
		\cdot s_{n-i}
		\otimes \lambda^{-j}
		\\
		&= 
		x
		\cdot
		(s_{n-i+j-1}\cdots s_{n-i+1})
		\cdot 
		(s_1\cdots s_{n-i})
		\otimes 
		\lambda^{-j}
		\\
		&= 
		(x
		\cdot
		(s_{n-i+j-1}\cdots s_{n-i+1}))
		\cdot 
		(s_1\cdots s_{n-(i-1)-1})
		\otimes 
		\lambda^{-j}
	\end{align*}
	which lies in~$F^0$ since~$x\cdot(s_{n-i+j-1}\cdots s_{n-i+1})$, does not involve the letter~$s_1$ \red{and so $d^i_j(x\cdot (s_1\cdots s_{n-i-1})\otimes 1)$ is a scalar multiple of a generator of $F^0$, and thus in~$F^0$, as required.}
\end{proof}

\begin{defn}\label{defn-phi for k=0}
	Define a map
	\[
		\Phi^0\colon C(W(n-1))\longrightarrow F^0
	\]
	as follows.
	Recall that
	\begin{align*}
		C(W(n-1))_i 
		&= W(n-1)_i\oplus W(n-1)_{i-1}
		\\
		&= \left(\tl_{n-1}(a)\otimes_{\tl_{n-i-2}(a)}\t\right)
		\oplus
		\left(\tl_{n-1}(a)\otimes_{\tl_{n-i-1}(a)}\t\right)
	\end{align*}
	and that
	\[
		F^0_i\subseteq W(n)_i=\tl_n(a)\otimes_{\tl_{n-i-1}(a)}\t.
	\]
	We define~$\Phi^0$ in degree~$i$ by the rule	
	\[
		\Phi^0_i(x\otimes\alpha,y\otimes\beta) 
		= 
		\xi_i(x\otimes\alpha) + \eta_i(y\otimes\beta)
	\]
	where
    \begin{align*}
        \xi_i\colon W(n-1)_i &\to W(n)_i\\    
      x\otimes\alpha &\mapsto \sigma(x)\otimes\lambda^{n-1}\alpha  
    \end{align*}
	and
	\begin{align*}
        \eta_i\colon W(n-1)_{i-1} &\to W(n)_i\\    	y\otimes\beta &\mapsto \sigma(y)\cdot(s_1\cdots s_{n-i-1})
		\otimes\lambda^i\beta.
	\end{align*}
	It is simple to check that the image of both maps lies in~$F^0_i$.
\end{defn}

\begin{lemma}
	The maps~$\xi_i$ and~$\eta_i$ are well defined.
\end{lemma}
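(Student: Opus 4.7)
The goal is to verify, for each of $\xi_i$ and $\eta_i$, that the given formula on pure tensors is compatible with the defining relations of the tensor product in the source, so that it descends to a well-defined $R$-linear map. For $\xi_i\colon \tl_{n-1}(a)\otimes_{\tl_{n-i-2}(a)}\t\to\tl_n(a)\otimes_{\tl_{n-i-1}(a)}\t$ this will amount to checking that $\xi_i(xz\otimes\alpha)=\xi_i(x\otimes z\alpha)$ for every $z\in\tl_{n-i-2}(a)$; for $\eta_i\colon\tl_{n-1}(a)\otimes_{\tl_{n-i-1}(a)}\t\to\tl_n(a)\otimes_{\tl_{n-i-1}(a)}\t$ it will amount to checking $\eta_i(yz\otimes\beta)=\eta_i(y\otimes z\beta)$ for every $z\in\tl_{n-i-1}(a)$.

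The map $\xi_i$ should be straightforward. The point to exploit is that $\sigma$ raises every subscript by one, so $\sigma\bigl(\tl_{n-i-2}(a)\bigr)$, being generated by $U_2,\ldots,U_{n-i-2}$, is contained in the subalgebra $\tl_{n-i-1}(a)\subseteq\tl_n(a)$. I would therefore move $\sigma(z)$ across the tensor in the target and then observe that $\sigma(z)$ and $z$ act on $\t$ by the same scalar (namely the constant term of $z$), since each $U_j$ and its shift $U_{j+1}$ act by zero. These two steps give the desired equality.

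The map $\eta_i$ is genuinely more delicate: $\sigma(\tl_{n-i-1}(a))$ is generated by $U_2,\ldots,U_{n-i-1}$, which is \emph{not} contained in the subalgebra $\tl_{n-i-1}(a)\subseteq\tl_n(a)$ over which the target tensor is formed, so $\sigma(z)$ cannot simply be slid across. The key tool will be the commutation identity
\[
    s_j\cdot(s_1s_2\cdots s_k) \;=\; (s_1s_2\cdots s_k)\cdot s_{j-1},\qquad 2\leqslant j\leqslant k,
\]
inside $\tl_n(a)$, which I would prove by a quick induction on $k$ using the braid relations among the $s_\ell$ recorded in Definition~\ref{definition-si}. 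Since $U_\ell=\mu^{-1}(s_\ell-\lambda)$, the same identity transfers to the $U_\ell$'s, giving
\[
    U_j\cdot(s_1\cdots s_k) \;=\; (s_1\cdots s_k)\cdot U_{j-1},\qquad 2\leqslant j\leqslant k.
\]
Applying this letter by letter to a monomial $\sigma(z)$ (with $k=n-i-1$) yields the clean relation $\sigma(z)\cdot(s_1\cdots s_{n-i-1})=(s_1\cdots s_{n-i-1})\cdot z$ in $\tl_n(a)$, which then lets me move $z\in\tl_{n-i-1}(a)$ across the tensor in $W(n)_i$ to finish.

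The main obstacle will be establishing the commutation identity and applying it uniformly, while keeping track of the edge cases $i=n-1$ (where $s_1\cdots s_{n-i-1}$ is the empty product, so the argument trivialises) and $i=0$ (where $k=n-1$ and the identity is used in its longest form). Modulo these bookkeeping points, once the commutation identity is in hand both well-definedness checks reduce to direct manipulation of the tensor product relations.
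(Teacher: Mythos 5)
Your proposal is correct and follows essentially the same route as the paper. For $\xi_i$ you use that $\sigma$ restricts to an algebra map $\tl_{n-i-2}\to\tl_{n-i-1}$ compatible with the trivial actions on $\t$ (the paper packages this as $\sigma$ being a map of right modules over that algebra map), and for $\eta_i$ you use the commutation identity $s_{j+1}\cdot(s_1\cdots s_k)=(s_1\cdots s_k)\cdot s_j$ coming from the braid relations — the paper applies it directly to slide $s_j$ across the tensor, while you transfer it to the $U_j$ and treat a general monomial $z$, but this is the same idea.
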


\begin{proof}
	In the case of~$\xi_i$ this is simple to verify, as the
	map~$\sigma\colon\tl_{n-1}\to\tl_n$ is in fact a map of
	right-modules
	with respect to the map of algebras
	$\sigma\colon \tl_{n-i-2}\to\tl_{n-i-1}$.

	In the case of~$\eta_i$, the definition of~$\eta_i(y\otimes\beta)$
	as presented depends on~$y$ and~$\beta$ themselves, and we must
	check that it depends only on~$y\otimes\beta$.
	Thus we must show that
	\[
		\eta_i(ys_j\otimes\beta) = \eta_i(y\otimes\lambda\beta)
	\]
	whenever~$1\leqslant j\leqslant n-i-2$.
	And indeed
	\begin{align*}
		\eta_i(ys_j\otimes\beta)
		&=
		\sigma(ys_j)\cdot(s_1\cdots s_{n-i-1})\otimes\lambda^i\beta
		\\
		&=
		\sigma(y)\cdot s_{j+1}
		\cdot(s_1\cdots s_{n-i-1})\otimes\lambda^i\beta
		\\
		&=
		\sigma(y)
		\cdot(s_1\cdots s_{n-i-1})\cdot s_j\otimes\lambda^i\beta
		\\
		&=
		\sigma(y)
		\cdot(s_1\cdots s_{n-i-1})\otimes\lambda^{i+1}\beta
		\\
		&=
		\eta_i(y\otimes\lambda\beta)
	\end{align*}
	where the third equality holds since~$2\leqslant j+1\leqslant n-i-1$ \red{(a simple way to see this is to draw the~$s_i$ as braids)},
	and the fourth holds since~$j\leqslant n-i-2$ and the tensor product
	is over~$\tl_{n-i-1}$.
\end{proof}

\begin{lemma}
	The~$\xi_i$ and~$\eta_i$ interact with the boundary
	maps of~$W(n)$ in the following way:
	\begin{enumerate}
		\item
		$d^i_j\circ\xi_i = \xi_{i-1}\circ d^{i}_j$
		for~$i$ in the range~$-1\leqslant i \leqslant n-2$
		and~$j$ in the range $0\leqslant j\leqslant i$.

		\item
		$d^i_0\circ\eta_i = \xi_{i-1}$ for~$i$ in the
		range~$0\leqslant i\leqslant n-1$.

		\item
		$d^i_{j+1}\circ \eta_i = \eta_{i-1}\circ d^{i-1}_j$
		for~$i$ in the range~$0\leqslant i\leqslant n-1$
		and~$j$ in the range~$0\leqslant j \leqslant i-1$.
	\end{enumerate}
\end{lemma}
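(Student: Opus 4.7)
All three identities are direct computations that unwind the definitions of $\xi_i$, $\eta_i$, and $d^i_j$, together with the properties of the shift map $\sigma$, the braid relations among the $s_k$, and the fact that $s_k$ acts on $\t$ as multiplication by $\lambda$. The plan is to handle (1) and (2) by tracing definitions, and to reduce (3) to a single commutation identity in $\tl_n(a)$.

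For part (1), I would observe that $\sigma$ is an algebra map sending $s_k\mapsto s_{k+1}$, so $\sigma(x)\cdot(s_{n-i+j-1}\cdots s_{n-i}) = \sigma\bigl(x\cdot(s_{n-i+j-2}\cdots s_{n-i-1})\bigr)$. The right-hand product is exactly what appears in the boundary $d^i_j$ of $W(n-1)_i$ (which uses $s_{(n-1)-i+j-1}\cdots s_{(n-1)-i}$). Carrying the coefficient $\lambda^{n-1}$ along, both $d^i_j\circ\xi_i$ and $\xi_{i-1}\circ d^i_j$ evaluate $x\otimes\alpha$ to $\sigma(x)\cdot(s_{n-i+j-1}\cdots s_{n-i})\otimes \lambda^{n-1-j}\alpha$. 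This takes a couple of lines.

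For part (2), $d^i_0$ is simply the map that reinterprets the tensor product over a larger subalgebra (the product it multiplies by is empty). So $d^i_0\circ\eta_i(y\otimes\beta)=\sigma(y)\cdot(s_1\cdots s_{n-i-1})\otimes\lambda^i\beta$ viewed in $\tl_n\otimes_{\tl_{n-i}}\t$. Since every factor $s_1,\ldots,s_{n-i-1}$ lies in $\tl_{n-i}$, I can push the entire product across the tensor, where it acts on $\t$ by $\lambda^{n-i-1}$. The total coefficient becomes $\lambda^{i}\cdot\lambda^{n-i-1}=\lambda^{n-1}$, giving $\sigma(y)\otimes\lambda^{n-1}\beta=\xi_{i-1}(y\otimes\beta)$.

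The hard part is part (3), which I expect to be the main obstacle. Unwinding both sides, one gets
\[
d^i_{j+1}\circ\eta_i(y\otimes\beta)
=\sigma(y)\cdot(s_1\cdots s_{n-i-1})\cdot(s_{n-i+j}\cdots s_{n-i})\otimes\lambda^{i-j-1}\beta,
\]
\[
\eta_{i-1}\circ d^{i-1}_j(y\otimes\beta)
=\sigma(y)\cdot(s_{n-i+j}\cdots s_{n-i+1})\cdot(s_1\cdots s_{n-i})\otimes\lambda^{i-j-1}\beta.
\]
So the whole lemma reduces to the identity
\[
(s_1\cdots s_{n-i-1})\cdot(s_{n-i+j}\cdots s_{n-i})
=(s_{n-i+j}\cdots s_{n-i+1})\cdot(s_1\cdots s_{n-i}) \quad\text{in }\tl_n(a).
\]
Setting $p=n-i-1$, this reads $(s_1\cdots s_p)(s_{p+j+1}\cdots s_{p+1})=(s_{p+j+1}\cdots s_{p+2})(s_1\cdots s_{p+1})$. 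I would verify it by commuting the block $(s_{p+j+1}\cdots s_{p+2})$, whose indices all exceed $p$ by at least $2$, past the block $(s_1\cdots s_p)$: every pair of letters involved has index difference at least $2$, so the commutation relations of Definition~\ref{definition-si} apply term by term. After this move the remaining trailing $s_{p+1}$ recombines with $(s_1\cdots s_p)$ to form $(s_1\cdots s_{p+1})$, yielding the right-hand side. The case $j=0$ is handled separately (and trivially) since then the block being commuted is empty and both sides collapse directly to $\sigma(y)\cdot(s_1\cdots s_{n-i})\otimes\lambda^{i-1}\beta$. This commutation identity is the only substantive content; once it is in hand, part (3) follows immediately.
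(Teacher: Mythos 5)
Your proposal is correct and follows essentially the same route as the paper: parts (1) and (2) are the same coefficient-tracking computations, and for part (3) you isolate precisely the same commutation identity that the paper establishes inline — commuting the block $(s_{n-i+j}\cdots s_{n-i+1})$ (indices $\geq n-i+1$) past $(s_1\cdots s_{n-i-1})$ (indices $\leq n-i-1$), noting the index gap is $\geq 2$, and then absorbing the trailing $s_{n-i}$. Packaging this as a standalone identity with $p=n-i-1$ and treating $j=0$ as a trivial special case is a minor stylistic difference, not a different argument.
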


\begin{proof}
	For the first point, we have:
	\begin{align*}
		d_j^i(\xi_i(x\otimes\alpha))
		&=
		d_j^i(\sigma(x)\otimes\lambda^{n-1}\alpha)
		\\
		&=
		\sigma(x)\cdot(s_{n-i+j-1}\cdots s_{n-i})
		\otimes\lambda^{-j}\lambda^{n-1}\alpha
		\\
		&=
		\sigma(x\cdot(s_{n-i+j-2}\cdots s_{n-i-1}))
		\otimes\lambda^{-j}\lambda^{n-1}\alpha
		\\
		&=
		\xi_{i-1}(x\cdot(s_{n-i+j-2}\cdots s_{n-i-1})
		\otimes\lambda^{-j}\alpha)
		\\
		&=
		\xi_{i-1}(x\cdot(s_{(n-1)-i+j-1}\cdots s_{(n-1)-i})
		\otimes\lambda^{-j}\alpha)
		\\
		&=
		\xi_{i-1}(d^i_j(x\otimes\alpha)).
	\end{align*}
	For the second point, we have:
	\begin{align*}
		d^i_0(\eta_i(y\otimes\beta))
		&=
		d^i_0(\sigma(y)\cdot (s_1\cdots s_{n-i-1})\otimes\lambda^i\beta)
		\\
		&=
		\sigma(y)\cdot (s_1\cdots s_{n-i-1})\otimes\lambda^i\beta
		\\
		&=
		\sigma(y)\otimes\lambda^{n-i-1}\lambda^i\beta
		\\
		&=
		\sigma(y)\otimes\lambda^{n-1}\beta
		\\
		&=
		\xi_{i-1}(y\otimes\beta),
	\end{align*}
	where the third equality holds because the terms lie in
	$W(n)_{i-1} = \tl_n\ootimes{n-i}\t$.
	And for the third point we have:
	\begin{align*}
		d^i_{j+1}\eta_i(y\otimes\beta)
		&=
		d^i_{j+1}(\sigma(y)\cdot(s_1\cdots s_{n-i-1})
		\otimes\lambda^i\beta)
		\\
		&=
		\sigma(y)
		\cdot(s_1\cdots s_{n-i-1})
		\cdot(s_{n-i+(j+1)-1}\cdots s_{n-i})
		\otimes\lambda^{-j-1}\lambda^i\beta
		\\
		&=
		\sigma(y)
		\cdot(s_1\cdots s_{n-i-1})
		\cdot(s_{n-i+j}\cdots s_{n-i+1})
		\cdot s_{n-i}
		\otimes\lambda^{i-j-1}\beta
		\\
		&=
		\sigma(y)
		\cdot(s_{n-i+j}\cdots s_{n-i+1})
		\cdot(s_1\cdots s_{n-i})
		\otimes\lambda^{i-j-1}\beta
		\\
		&=
		\sigma(y
		\cdot (s_{n-i+j-1}\cdots s_{n-i}))
		\cdot(s_1\cdots s_{n-(i-1)-1})
		\otimes\lambda^{i-1}\lambda^{-j}\beta
		\\
		&=
		\eta_{i-1}(y\cdot (s_{n-i+j-1}\cdots s_{n-i}) 
		\otimes\lambda^{-j}\beta)
		\\
		&=
		\eta_{i-1}(y\cdot (s_{(n-1)-(i-1)+j-1}\cdots s_{(n-1)-(i-1)}) 
		\otimes\lambda^{-j}\beta)
		\\
		&=
		\eta_{i-1}(d^{i-1}_j(y \otimes\beta))
	\end{align*}
	where for the final equality we recall that the source of~$\eta_{i-1}$ is~$W(n-1)_{i-2}$.
\end{proof}

\begin{lemma}
	$\Phi^0$ is a chain map.
\end{lemma}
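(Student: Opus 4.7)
The plan is to verify chain-map-ness by direct computation, using the three identities in the preceding lemma as the engine. The only non-trivial input is bookkeeping of the cone differential, so this is essentially a formal consequence of what has already been established.

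First I would unpack both sides of the desired equality $d^i \circ \Phi^0_i = \Phi^0_{i-1} \circ d^i_{C(W(n-1))}$ on a typical element $(x\otimes\alpha, y\otimes\beta)$. Recall from Definition~\ref{defn-cone sus trunc} that
\[
d^i_{C(W(n-1))}(x\otimes\alpha, y\otimes\beta) = \bigl(d^i(x\otimes\alpha) + y\otimes\beta,\; -d^{i-1}(y\otimes\beta)\bigr),
\]
so applying $\Phi^0_{i-1}$ yields
\[
\xi_{i-1}(d^i(x\otimes\alpha)) + \xi_{i-1}(y\otimes\beta) - \eta_{i-1}(d^{i-1}(y\otimes\beta)).
\]
This is the target expression.

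Next I would compute $d^i\Phi^0_i(x\otimes\alpha, y\otimes\beta) = d^i(\xi_i(x\otimes\alpha)) + d^i(\eta_i(y\otimes\beta))$, handling each summand using the three identities. For the $\xi$ term, property (1) of the previous lemma gives $d^i_j\xi_i = \xi_{i-1}d^i_j$ for every $j$ in range, so taking the alternating sum pulls $\xi_{i-1}$ outside, producing $\xi_{i-1}(d^i(x\otimes\alpha))$. For the $\eta$ term, the $j=0$ summand is $d^i_0\eta_i(y\otimes\beta) = \xi_{i-1}(y\otimes\beta)$ by property (2); and for $j = j'+1$ with $0\leqslant j' \leqslant i-1$, property (3) rewrites $d^i_{j'+1}\eta_i = \eta_{i-1}d^{i-1}_{j'}$. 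Reindexing picks up an overall sign $-1$, so the $j\geqslant 1$ terms assemble to $-\eta_{i-1}(d^{i-1}(y\otimes\beta))$.

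Summing these contributions matches the target expression exactly, establishing the lemma. There is no real obstacle here: the previous lemma was designed precisely so that the three identities correspond to the three summands appearing after expanding the cone differential, and the sign in the third identity meshes with the reindexing shift $j \mapsto j'+1$ to produce the minus sign demanded by the cone formula. The only thing to watch is that the index ranges in properties (1)--(3) cover exactly the range of $j$ appearing in $d^i$ on each piece, which they do.
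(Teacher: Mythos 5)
Your proposal is correct and follows exactly the same route as the paper: expand the cone differential, split $\Phi^0$ into its $\xi$ and $\eta$ parts, and apply identities (1), (2), (3) of the preceding lemma to the corresponding summands, with the reindexing $j\mapsto j'+1$ producing the sign required by the cone formula.
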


\begin{proof}
	Referring to the definition of the differential on
	$C(W(n-1))$ (Definition~\ref{defn-cone sus trunc}), we see that in order to check that 
	$d^i\circ\Phi^0_i = \Phi^0_{i-1}\circ d^i$,
	it is enough to show that 
	$d^i\circ\xi_i(x\otimes\alpha)=\xi_{i-1}(d^i(x\otimes\alpha))$
	and
	$d^i\circ\eta_i(y\otimes\beta) = \xi_{i-1}(y\otimes\beta)
	-
	\eta_{i-1}(d^{i-1}(y\otimes\beta))$.
	Using the previous lemma, for the first we have
	\begin{align*}
		d^i\circ\xi_i(x\otimes\alpha)
		&=
		\sum_{j=0}^i(-1)^j d^i_j(\xi_i(x\otimes\alpha))
		\\
		&=
		\sum_{j=0}^i(-1)^j \xi_{i-1}(d^i_j(x\otimes\alpha))
		\\
		&=
		\xi_{i-1}\left(\sum_{j=0}^i(-1)^j d^i_j(x\otimes\alpha)\right)
		\\
		&=\xi_{i-1}(d^i(x\otimes\alpha)).
	\end{align*}
	And for the second we have
	\begin{align*}
		d^i\circ\eta_i(y\otimes\beta) 
		&=
		\sum_{j=0}^i (-1)^j d^i_j(\eta_i(y\otimes\beta))
		\\
		&=
		d^i_0(\eta_i(y\otimes\beta))
		-
		\sum_{j=0}^{i-1}(-1)^j d^i_{j+1}\eta_i(y\otimes\beta)
		\\
		&=
		\xi_{i-1}(y\otimes\beta)
		-
		\sum_{j=0}^{i-1}(-1)^j\eta_{i-1}d^{i-1}_j(y\otimes\beta)
		\\
		&=
		\xi_{i-1}(y\otimes\beta)
		-
		\eta_{i-1}\left(\sum_{j=0}^{i-1}(-1)^jd^{i-1}_j(y\otimes\beta)\right)
		\\
		&= \xi_{i-1}(y\otimes\beta)
		-
		\eta_{i-1}(d^{i-1}(y\otimes\beta)).
		\qedhere
	\end{align*}
\end{proof}

\subsection{Proofs for~$F^k$,~$k\geqslant 1$}
\label{section-F-k}

In this subsection we prove, for~$k\geqslant 1$, that~$F^k$ is a subcomplex of~$W(n)$. We define a map from~$\tau_{n-1}\Sigma^{k+1}W(n-1)$ to~$F^k/F^{k-1}$ and prove this is a well defined chain map. We start off with some elementary lemmas involving the~$s_j$, which we require for later proofs.

\begin{lemma} \label{lemma-towers}
	Let~$m\geqslant 1$ and~$p\leqslant m$.
	Then
	\[
		s_1\cdots s_m
		\cdots s_p
		=
		(s_m \cdots s_{p+1})
		\cdot
		(s_1\cdots s_m).
	\]
	In the case~$m=p$ the product~$s_m\cdots s_{p+1}$ is empty and therefore equal to~$1$.
\end{lemma}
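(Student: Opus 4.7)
The plan is to prove this by induction on $m - p$, relying only on the braid relations for the $s_i$ (the second and third bullet points of Definition~\ref{definition-si}), since the identity to be proved is an equation between monomials in the $s_i$'s using only braid-type manipulations.

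In the base case $p = m$, both sides reduce to $s_1 \cdots s_m$: the descending part $s_m \cdots s_{p+1}$ on the right is empty by convention, and the left-hand product $s_1 \cdots s_m \cdots s_p$ terminates at $s_m = s_p$. For the inductive step, assume the identity for $p+1$ in place of $p$, so that
\[
    s_1 \cdots s_m \cdots s_{p+1} = (s_m \cdots s_{p+2})\cdot (s_1 \cdots s_m).
\]
Right-multiplying by $s_p$, it then suffices to show the auxiliary identity
\[
    (s_1 \cdots s_m)\cdot s_p \;=\; s_{p+1}\cdot (s_1 \cdots s_m)
\]
for all $1 \leqslant p \leqslant m-1$, since then the product becomes $(s_m \cdots s_{p+2})\cdot s_{p+1}\cdot(s_1 \cdots s_m) = (s_m \cdots s_{p+1})\cdot(s_1\cdots s_m)$, as required.

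The auxiliary identity is itself a small braid-group calculation. Split $s_1\cdots s_m = (s_1\cdots s_{p-1})\cdot s_p\cdot(s_{p+1}\cdots s_m)$, and then push the trailing $s_p$ leftwards: for $j \geqslant p+2$ we have $s_j s_p = s_p s_j$ by the commuting relation, so $s_p$ slides past $s_{p+2}, \ldots, s_m$ and we arrive at $(s_1\cdots s_{p-1})\cdot (s_p s_{p+1} s_p)\cdot (s_{p+2}\cdots s_m)$. Applying the braid relation $s_p s_{p+1} s_p = s_{p+1} s_p s_{p+1}$ and then commuting $s_{p+1}$ past the initial block $s_1 \cdots s_{p-1}$ (which it commutes with, as all indices there differ from $p+1$ by at least $2$) yields $s_{p+1}\cdot (s_1\cdots s_m)$.

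The only step with any content is the auxiliary identity, and even that is a standard braid-move; the rest of the proof is bookkeeping for the induction. I do not expect any real obstacle here.
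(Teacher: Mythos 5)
Your proof is correct, and there is nothing to compare against: the paper states Lemma~\ref{lemma-towers} without proof, treating it as a standard braid-group identity (it is used as a known fact inside the proof of Lemma~\ref{lemma-products}). Your induction on $m-p$ with the auxiliary commutation identity $(s_1\cdots s_m)\,s_p = s_{p+1}\,(s_1\cdots s_m)$ is exactly the elementary argument one would write out, and each step — the far-commutation past $s_{p+2},\ldots,s_m$, the braid relation $s_ps_{p+1}s_p = s_{p+1}s_ps_{p+1}$, and the far-commutation past $s_1,\ldots,s_{p-1}$ — is verified correctly and uses only the relations the paper establishes for the $s_i$ in Definition~\ref{definition-si}.
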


\begin{lemma}\label{lemma-products}
	Let~$p\geqslant 1$,~$q\geqslant r\geqslant 1$.
	Then the product~$(s_1\cdots s_p) \cdot (s_q\cdots s_r)$
	can be described as follows.
	\begin{enumerate}
		\item
		When~$r-1\leqslant p\leqslant q-1$, 
		\[
			(s_1\cdots s_p) \cdot (s_q\cdots s_r)
			=
			(s_q\cdots s_{r+1}) \cdot (s_1\cdots s_{p+1}).
		\]

		\item
		When~$p=q$,~$(s_1\cdots s_p) \cdot (s_q\cdots s_r)$
		is a linear combination of terms of the form
		$(s_t\cdots s_{r+1})\cdot (s_1\cdots s_t)$
		for~$p\geqslant t\geqslant r+1$, as well as
		$s_1\cdots s_r$ and~$s_1\cdots s_{r-1}$.

		\item
		When~$p\geqslant q+1$,
		\[
			(s_1\cdots s_p) \cdot (s_q\cdots s_r)
			=
			(s_{q+1}\cdots s_{r+1})\cdot (s_1\cdots s_p).
		\]
	\end{enumerate}
\end{lemma}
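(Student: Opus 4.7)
My plan is to prove the three cases in the order~(3), (1), (2), working throughout with the abbreviations $A_p = s_1 \cdots s_p$ and $B_{q, r} = s_q s_{q-1} \cdots s_r$. I will use only the braid relations and the commutativity of far-apart generators; the Hecke quadratic relation of Definition~\ref{definition-si} enters only in Case~(2).

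For Case~(3), the plan is to first establish the single-generator identity $A_p \cdot s_q = s_{q+1} \cdot A_p$ whenever $q \leq p - 1$. When $q \leq p - 2$ this follows by commuting $s_q$ past $s_{q+2}, \ldots, s_p$ and then applying a single braid relation at the $s_{q+1} s_q$ joint, after which $s_{q+1}$ commutes all the way to the left. When $q = p - 1$ it is a direct three-letter braid computation at the right end of $A_p$. Iterating this identity across the letters of $B_{q, r}$ then yields $A_p B_{q, r} = B_{q+1, r+1} A_p$.

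Case~(1) I would split along $p = q - 1$ versus $p \leq q - 2$. In the boundary subcase $p = q - 1$, the leading $s_q$ of $B_{q, r}$ extends $A_{q-1}$ to $A_q$, so $A_{q-1} B_{q, r} = A_q \cdot B_{q-1, r}$; this falls under the hypothesis of Case~(3), yielding $B_{q, r+1} A_q = B_{q, r+1} A_{p+1}$ as required. In the interior subcase $p \leq q - 2$, the generator $s_q$ commutes with every factor of $A_p$, so I peel $s_q$ to the left and apply the inductive hypothesis (on $q - r$) to $A_p B_{q-1, r}$, whose parameters still satisfy $r - 1 \leq p \leq (q-1) - 1$. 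The base of the induction is precisely the boundary subcase.

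Case~(2) is handled by induction on $p - r$. The base case $p = r$ reads $A_p B_{p, p} = A_{p-1}\, s_p^2$, which by the Hecke relation is a scalar combination of $A_p = s_1 \cdots s_r$ and $A_{p-1} = s_1 \cdots s_{r-1}$, both allowed forms in the claim. For the inductive step $p > r$, I split off $s_p^2$ between the two staircases to write
\[
A_p B_{p, r} \;=\; \alpha\, A_p B_{p-1, r} \;+\; \beta\, A_{p-1} B_{p-1, r}
\]
for scalars $\alpha, \beta$ coming from the Hecke relation. The first summand falls under Case~(3) and contributes the $t = p$ term of the claim; the second summand falls under Case~(2) at parameter $p - 1$, whose allowed forms are contained in those of the claim, so the inductive hypothesis finishes the argument. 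The main obstacle throughout is bookkeeping: keeping the subscript ranges of the two staircases aligned, especially in the boundary case of Case~(1) and in verifying that recursive applications of Case~(2) stay within the stated family of terms.
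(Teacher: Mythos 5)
Your proof is correct.  It establishes the same statements as the paper but organises the braid-manipulation steps differently.  The paper proves all three cases by direct computation, each time invoking the auxiliary Lemma~\ref{lemma-towers} (the identity $s_1\cdots s_m\cdots s_p=(s_m\cdots s_{p+1})(s_1\cdots s_m)$); in particular Cases~(1) and~(3) are handled by sliding a block of far-apart generators through and applying the towers identity once, and Case~(2) is an induction on $p-r$ in which the towers identity again supplies the $t=p$ term.  You instead avoid Lemma~\ref{lemma-towers} altogether: your Case~(3) iterates a single-generator commutation $A_p\,s_q=s_{q+1}\,A_p$ (valid for $q\leqslant p-1$), your Case~(1) is an induction on $q-r$ whose boundary subcase reduces to Case~(3), and your Case~(2) is the same induction on $p-r$ as in the paper except that the first summand is dispatched by quoting Case~(3) rather than the towers identity.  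This gives a nice dependency structure among the three parts and makes the argument slightly more self-contained, at the cost of a little more case-splitting and explicit induction.  The one point worth being careful about, which you flag under ``bookkeeping,'' is the degenerate subcase $q=r$ inside Case~(1): there $B_{q-1,r}$ is the empty product, so strictly speaking Case~(3) as stated does not apply, but the conclusion holds trivially (both sides equal $A_q$).  Making this explicit, or adopting the convention that your Case~(3) argument iterates over an empty set of letters, closes the argument cleanly.
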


\begin{proof}
	When~$r-1\leqslant p\leqslant q-1$, 
	\begin{align*}
		(s_1\cdots s_p) \cdot (s_q\cdots s_r)
		&=
		(s_1\cdots s_p) 
		\cdot
		(s_q\cdots s_{p+2})
		\cdot 
		(s_{p+1}\cdots s_r)
		\\
		&=
		(s_q\cdots s_{p+2})\cdot 
		(s_1\cdots s_p) \cdot (s_{p+1}\cdots s_r)
		\\
		&=
		(s_q\cdots s_{p+2})\cdot 
		(s_1\cdots s_{p+1}\cdots s_r)
		\\
		&=
		(s_q\cdots s_{p+2})\cdot 
		(s_{p+1}\cdots s_{r+1})\cdot
		(s_1\cdots s_{p+1})
		\\
		&=
		(s_q\cdots s_{r+1}) \cdot (s_1\cdots s_{p+1}),
	\end{align*}
	where we used Lemma~\ref{lemma-towers} to obtain the 
	fourth equality.

	When~$p=q$, we claim that 
	\[
		(s_1\cdots s_p) \cdot (s_q\cdots s_r)
		=
		(s_1\cdots s_p) \cdot (s_p\cdots s_r)
	\]
	is a linear combination of terms of the form
	$(s_t\cdots s_{r+1})\cdot (s_1\cdots s_t)$
	for~$p\geqslant t\geqslant r+1$, as well as
	$s_1\cdots s_r$ and~$s_1\cdots s_{r-1}$.
	We will prove this claim by induction on the difference
	$p-r$.
	When~$p-r=0$, we have
	\[
		(s_1\cdots s_p) \cdot (s_p\cdots s_r)
		=
		s_1\cdots s_p \cdot s_p.
	\]
	Now since~$s_p^2$ is a linear combination of~$s_p$ and~$1$, this is a linear combination of
	$s_1\cdots s_p=s_1\cdots s_r$
	and~$s_1\cdots s_{p-1}=s_1\cdots s_{r-1}$ as required.
	Now let~$p-r\geqslant 1$, and assume that the claim
	holds for all smaller values.  Then
	\[
		(s_1\cdots s_p) \cdot (s_p\cdots s_r)
		=
		(s_1\cdots s_{p-1})\cdot s_p^2\cdot (s_{p-1}\cdots s_r)
	\]
	is a linear combination of
	\begin{align*}
		(s_1\cdots s_{p-1})\cdot s_p\cdot (s_{p-1}\cdots s_r)
		&=
		s_1\cdots s_p\cdots s_r
		\\
		&=
		(s_p\cdots s_{r+1})\cdot (s_1\cdots s_p)
	\end{align*}
	(where we used Lemma~\ref{lemma-towers})
	and
	\[
		(s_1\cdots s_{p-1})\cdot (s_{p-1}\cdots s_r).
	\]
	The former is 
	$(s_t\cdots s_{r+1})\cdot (s_1\cdots s_t)$
	in the case~$t=p$, while the induction hypothesis
	tells us that the latter is a linear combination of
	$(s_t\cdots s_{r+1})\cdot (s_1\cdots s_t)$
	for~$p-1\geqslant t\geqslant r+1$, as well as
	$s_1\cdots s_r$ and~$s_1\cdots s_{r-1}$.
	This completes the proof of the claim.

	When~$p\geqslant q+1$,
	\begin{align*}
		(s_1\cdots s_p) \cdot (s_q\cdots s_r)
		&=
		(s_1\cdots s_{q+1})
		\cdot
		(s_{q+2}\cdots s_p)	
		\cdot 
		(s_q\cdots s_r)
		\\
		&=
		(s_1\cdots s_{q+1})
		\cdot 
		(s_q\cdots s_r)
		\cdot
		(s_{q+2}\cdots s_p)	
		\\
		&=
		(s_1\cdots s_{q+1}\cdots s_r)
		\cdot
		(s_{q+2}\cdots s_p)	
		\\
		&=
		(s_{q+1}\cdots s_{r+1})\cdot (s_1\cdots s_{q+1})
		\cdot
		(s_{q+2}\cdots s_p)	
		\\
		&=
		(s_{q+1}\cdots s_{r+1})\cdot (s_1\cdots s_p)
	\end{align*}
	(where we again used Lemma~\ref{lemma-towers} 
	to obtain the fourth equality) as required.
\end{proof}

\begin{lemma}\label{lemma-subcomplex}
	For~$k\geqslant 1$,~$F^k$ is a subcomplex of~$W(n)$.
\end{lemma}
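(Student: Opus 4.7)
My plan is to reduce the problem to checking the action of the partial face maps $d^i_j$ on the "new" generators of $F^k$, proceeding by induction on $k$ so that $F^{k-1}$ is already known to be a subcomplex. Since $d^i = \sum_j (-1)^j d^i_j$, it is enough to show each $d^i_j$ sends a new generator $g = x \cdot (s_1 \cdots s_{n-i-1+k}) \otimes 1$ (with $k \leq i \leq n-1$ and $x$ a monomial free of $s_1$) into $F^k_{i-1}$. Applying $d^i_j$ gives
\[
    d^i_j(g) \;=\; x \cdot (s_1 \cdots s_{n-i-1+k}) \cdot (s_{n-i+j-1} \cdots s_{n-i}) \otimes \lambda^{-j},
\]
which lives in $W(n)_{i-1} = \tl_n \otimes_{\tl_{n-i}} \t$. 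The key computational tool will be Lemma~\ref{lemma-products} applied with $p = n-i-1+k$, $q = n-i+j-1$ and $r = n-i$; the three cases of that lemma correspond respectively to the three ranges $j \geq k+1$, $j = k$, and $j \leq k-1$.

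I would then handle each range in turn, together with the separate base case $j = 0$. When $j \geq k+1$, Lemma~\ref{lemma-products}(1) rewrites the product as $(s_{n-i+j-1} \cdots s_{n-i+1}) \cdot (s_1 \cdots s_{n-i+k})$; absorbing the prefix into $x$ produces a monomial still free of $s_1$ (since every new factor has index at least $n-i+1 \geq 2$) and the result is a new generator of the $k$-layer of $F^k$ in degree $i-1$, valid because $i-1 \geq j-1 \geq k$. When $j \leq k-1$ (only possible when $k \geq 2$) or $j=0$, a symmetric rewriting via Lemma~\ref{lemma-products}(3) produces a new generator in the $(k-1)$-layer of $F^{k-1}$, hence lies in $F^k$ by the inductive hypothesis. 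The same inductive hypothesis covers the case $j = 0$ by direct inspection, since the product is left unchanged but the tensor is now over a larger subalgebra, yielding a $(k-1)$-layer generator of $F^{k-1}$.

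The main obstacle is the middle case $j = k$, where Lemma~\ref{lemma-products}(2) produces a linear combination of terms of three distinct shapes: a family $(s_t \cdots s_{r+1}) \cdot (s_1 \cdots s_t)$ for $n-i+1 \leq t \leq n-i-1+k$, plus $s_1 \cdots s_{n-i}$ and $s_1 \cdots s_{n-i-1}$. The first family, after absorbing $(s_t \cdots s_{n-i+1})$ into $x$, gives generators of $F^\ell$ for $1 \leq \ell \leq k-1$, hence lie in $F^{k-1}$; the $s_1 \cdots s_{n-i}$ term gives a second-kind generator of $F^0$ in degree $i-1$; but the shortest term, $x \cdot (s_1 \cdots s_{n-i-1}) \otimes \lambda^{-k}$, has sequence length that does not match any of the defining generator patterns for $F^\ell$ in degree $i-1$. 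The resolution is to observe that every factor $s_1, \ldots, s_{n-i-1}$ lies in $\tl_{n-i}$, so the entire product can be absorbed across the tensor, reducing this term to a basic element $x \otimes \lambda^{n-i-1-k}$ of $F^0$ (with a slight separate check when $i = n-1$ makes the product empty). Assembling the cases, each $d^i_j(g)$ lies in $F^k_{i-1}$, so $F^k$ is a subcomplex.
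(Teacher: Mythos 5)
Your proof is correct and follows essentially the same route as the paper's: you reduce to checking each $d^i_j$ on a new generator $x\cdot(s_1\cdots s_{n-i-1+k})\otimes 1$, split into the cases $j=0$, $1\leqslant j\leqslant k-1$, $j=k$, and $j\geqslant k+1$ using the same parts of Lemma~\ref{lemma-products}, and identify each resulting term as a layer generator of $F^0$, $F^{k-1}$, or $F^k$ in degree $i-1$, including the correct resolution of the shortest term $x\cdot(s_1\cdots s_{n-i-1})\otimes\lambda^{-k}$ by absorbing it across the tensor into a basic $F^0$ element. The ``induction on $k$'' wrapper is harmless but not needed for the calculation itself, since only the containment $F^{k-1}\subseteq F^k$ is used.
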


\begin{proof}
	We fix~$k\geqslant 1$ and
	take a generator of 
	\red{$F^k/ F^{k-1}$} in degree~$i$, 
	where $k\leqslant i\leqslant n-1$,
	and show that
	the boundary map $d^i\colon W(n)_i\to W(n)_{i-1}$
	sends our generator into~$F^k$.
	Since~$d$ is the alternating sum $d^i_0-d^i_1+\cdots +(-1)^i d^i_i$,
	it will suffice to fix~$j$ in the range~$0\leqslant j\leqslant i$,
	and show that~$d^i_j$ sends our generator into~$F^k$.
	\red{Recall from Definition~\ref{definition-filtration} that} our generator of~\red{$F^k/ F^{k-1}$} in degree~$i$
	is~$x\cdot(s_1\cdots s_{n-i-1+k})\otimes 1$, where~$x$ does not involve the letter~$s_1$.
	Note that
	\[(n-i-1+k) = (n-1)-i+k\geqslant (n-1)-(n-1)+1=1,\]
	so that the product~$(s_1\cdots s_{n-i-1+k})$ is not empty.
	We have
	\[
		d^i_j(x\cdot(s_1\cdots s_{n-i-1+k})\otimes 1)
		=
		x\cdot (s_1\cdots s_{n-i-1+k})\cdot (s_{n-i-1+j}\cdots s_{n-i})
		\otimes
		\lambda^{-j},
	\]
	where the factor~$(s_{n-i-1+j}\cdots s_{n-i})$ can be empty,
	in the case~$j=0$.
	
	\begin{itemize}
		\item
		First we consider the case~$j=0$.  We find that
		\begin{align*}
			d^i_0(x\cdot(s_1\cdots s_{n-i-1+k})\otimes 1)
			&=
			x\cdot(s_1\cdots s_{n-i-1+k})\otimes 1
			\\
			&=
			x\cdot(s_1\cdots s_{n-(i-1)-1+(k-1)})\otimes 1
		\end{align*}
		lies in~$F^{k-1}$, and therefore in~$F^k$ as required.

		\item
		Now we consider the case~$1\leqslant j \leqslant (k-1)$.
		Then~$(n-i-1+k)\geqslant (n-i-1+j)+1$, so that the
		third item of Lemma~\ref{lemma-products} applies and
		shows that
		\begin{align*}
			d^i_j(x\cdot&(s_1\cdots s_{n-i-1+k})\otimes 1)
			\\
			&=
			x
			\cdot (s_1\cdots s_{n-i-1+k})
			\cdot (s_{n-i-1+j}\cdots s_{n-i})
			\otimes \lambda^{-j}
			\\
			&=
			x
			\cdot (s_{n-i+j}\cdots s_{n-i+1})
			\cdot (s_1\cdots s_{n-i-1+k})
			\otimes \lambda^{-j}
			\\
			&=
			x
			\cdot (s_{n-i+j}\cdots s_{n-i+1})
			\cdot (s_1\cdots s_{n-(i-1)-1+(k-1)})
			\otimes \lambda^{-j}
		\end{align*}
		Since~$n-i+1\geqslant n-(n-1)+1=2$, the word
		$(s_{n-i+j}\cdots s_{n-i+1})$ does not involve~$s_1$,
		and consequently the element above lies in~$F^{k-1}$,
		and therefore in~$F^k$.

		\item
		Now we consider the case~$j=k$.
		Then~$(n-i-1+k) = (n-i-1+j)$ and so the second item of
		Lemma~\ref{lemma-products} applies and shows that
		\begin{align*}
			d^i_k(x\cdot(s_1\cdots s_{n-i-1+k})\otimes 1)
			&=
			\\
			x
			\cdot (s_1\cdots s_{n-i-1+k})
			&\cdot (s_{n-i-1+k}\cdots s_{n-i})
			\otimes \lambda^{-k}
		\end{align*}
		is a linear combination of terms
		\[
			x
			\cdot (s_t\cdots s_{n-i+1})
			\cdot (s_1\cdots s_t)
			\otimes \lambda^{-k}
		\]
		for~$t$ in the range 
		\[
			(n-i+1)\leqslant t\leqslant (n-i-1+k)
			= (n-(i-1)-1 + (k-1))
		\]
		together with
		\[
			x
			\cdot (s_1\cdots s_{n-(i-1)-1})
			\otimes \lambda^{-k}
		\]
		and
		\[
			x
			\cdot (s_1\cdots s_{n-(i-1)-2})
			\otimes \lambda^{-k}
			=
			x\otimes \lambda^{-k}.
		\]
		Now~$(s_t\cdots s_{n-i+1})$ does not involve~$s_1$,
		so the first of these terms lies in~$F^{k-1}$,
		while the second and third lie in~$F^0$.
		So altogether we have the required result.

		\item
		Now we consider the case~$k+1\leqslant j$.
		Here we have
		\[(n-i-1)\leqslant (n-i-1+k)+1\leqslant (n-i-1+j),\]
		so that the first item of Lemma~\ref{lemma-products}
		applies and shows that 
		\begin{align*}
			d^i_j(x\cdot(s_1\cdots s_{n-i-1+k})\otimes 1)
			&=
			x
			\cdot (s_1\cdots s_{n-i-1+k})
			\cdot (s_{n-i-1+j}\cdots s_{n-i})
			\otimes
			\lambda^{-j}
			\\
			&=
			x
			\cdot (s_{n-i-1+j}\cdots s_{n-i+1})
			\cdot (s_1\cdots s_{n-i-1+k+1})
			\otimes
			\lambda^{-j}
			\\
			&=
			x
			\cdot (s_{n-i-1+j}\cdots s_{n-i+1})
			\cdot (s_1\cdots s_{n-(i-1)-1+k})
			\otimes
			\lambda^{-j}.
		\end{align*}
		Since~$(s_{n-i-1+j}\cdots s_{n-i+1})$ does not involve~$s_1$,
		the element above lies in~$F^k$ as required.\qedhere
	\end{itemize}
\end{proof}

\begin{defn}\label{defn-phi for k>0}
	Define a map
	\[
		\Psi^k
		\colon
		\tau_{n-1}\Sigma^{k+1}W(n-1)
		\longrightarrow 
		F^k/F^{k-1}
	\]
	as follows.
	Note that for~$i$ in the range~$k\leqslant i\leqslant (n-1)$,
	\begin{align*}
		[\tau_{n-1}\Sigma^{k+1}W(n-1)]_i
		&=
		W(n-1)_{i-k-1} 
		\\
		&= 
		\tl_{n-1}(a)\otimes_{\tl_{(n-1)-(i-k-1)-1}(a)}\t
		\\
		&=
		\tl_{n-1}(a)\otimes_{\tl_{n-i-1+k}(a)}\t,
	\end{align*}
	while~$(F^k/F^{k-1})_i$ is a quotient of~$\tl_n(a)\otimes_{\tl_{n-i-1}(a)}\t$.
	Define the degree~$i$ part of~$\Psi$ to be the map
	\[
		\Psi^k_i\colon \tl_{n-1}(a)\otimes_{\tl_{n-i-1+k}(a)}\t
		\longrightarrow
		(F^k/F^{k-1})_i
	\]
	given by	
	\[
		\Psi^k_i\colon x\otimes\alpha
		\longmapsto
		(-1)^{-i(k+1)}\sigma(x)
		\cdot (s_1\cdots s_{n-i-1+k})\otimes\lambda^i\alpha.
	\]
	For later convenience, we will
	denote by~$\psi^k_i$ the map
	\[
		\psi^k_i\colon x\otimes\alpha
		\longmapsto
		\sigma(x)
		\cdot (s_1\cdots s_{n-i-1+k})\otimes\lambda^i\alpha,
	\]
	so that~$\Psi^k_i=(-1)^{-i(k+1)}\psi^k_i$.
\end{defn}

\begin{lemma}
	The map $\psi^k_i$ is well defined
	(and the same therefore holds for~$\Psi^k_i$).
\end{lemma}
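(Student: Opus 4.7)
My plan is to verify that the formula for $\psi^k_i$ respects the defining tensor relations in the source. Since $\tl_{n-1}(a)\otimes_{\tl_N(a)}\t$ (with $N=n-i-1+k$) is the quotient of $\tl_{n-1}(a)$ by the right action of $U_1,\ldots,U_{N-1}$ on the $\tl_N(a)$ side, it suffices to show $\psi^k_i(xU_j\otimes\alpha)\in F^{k-1}$ for every $x\in\tl_{n-1}(a)$ and every $j$ with $1\leqslant j\leqslant N-1$.

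The starting point is the braid identity $s_{j+1}(s_1\cdots s_N)=(s_1\cdots s_N)s_j$ for $1\leqslant j\leqslant N-1$ (an instance of Lemma~\ref{lemma-products}(3) with $p=N$ and $q=r=j$). Substituting $U_j=\mu^{-1}(s_j-\lambda)$ promotes this to the identity $(s_1\cdots s_N)U_j=U_{j+1}(s_1\cdots s_N)$, whence $\psi^k_i(xU_j\otimes\alpha)=\sigma(x)(s_1\cdots s_N)U_j\otimes\lambda^i\alpha$. For $1\leqslant j\leqslant n-i-2$, the factor $U_j$ lies in $\tl_{n-i-1}$ and acts as $0$ on $\t$, so the expression vanishes on the nose. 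The substantive regime is $n-i-1\leqslant j\leqslant N-1$.

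Here the crucial local identity is $s_js_{j+1}U_j=\mu\lambda U_{j+1}U_j$, obtained by expanding $s_j$ and $s_{j+1}$ in terms of $U_j,U_{j+1}$ and simplifying using $U_jU_{j+1}U_j=U_j$, $U_j^2=aU_j$, and the numerical identity $\lambda(\lambda+\mu a)+\mu^2=0$ (a short direct check for either choice of $(\lambda,\mu)$ in Definition~\ref{defn-IH to TL}). Multiplying by the prefix $(s_1\cdots s_{j-1})$ and the tail $(s_{j+2}\cdots s_N)$, and using that $U_j$ commutes with $s_m$ for $m\geqslant j+2$, gives the rewriting
\[
    (s_1\cdots s_N)U_j=\mu\lambda(s_1\cdots s_{j-1})U_{j+1}(s_{j+2}\cdots s_N)U_j.
\]
The middle block $U_{j+1}(s_{j+2}\cdots s_N)$ commutes through $(s_1\cdots s_{j-1})$, because all of its indices are at least $j+1$ while those in the staircase are at most $j-1$.

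Setting $y=\sigma(x)U_{j+1}(s_{j+2}\cdots s_N)$, which is manifestly free of $s_1$, and re-expanding the trailing $U_j$ as $\mu^{-1}(s_j-\lambda)$, the above collapses to
\[
    \psi^k_i(xU_j\otimes\alpha)=\lambda^{i+1}y(s_1\cdots s_j)\otimes\alpha-\lambda^{i+2}y(s_1\cdots s_{j-1})\otimes\alpha.
\]
Each summand has the shape ``$s_1$-free factor times a staircase of length at most $N-1$,'' which is exactly the generating shape of $F^{k-1}$: staircases of length between $n-i-1$ and $N-1$ belong to $F^0,\ldots,F^{k-1}$ by definition, while shorter staircases collapse into the first-kind part of $F^0$ through the tensor relations over $\tl_{n-i-1}$. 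The main obstacle I foresee is cleanly handling the edge cases ($j=1$ with empty prefix, $j=N-1$ with empty tail, and $i=n-1$ where the first-kind component of $F^0$ is unavailable), each of which requires a small separate check but should go through without difficulty.
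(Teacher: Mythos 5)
Your proposal is correct, but it takes a genuinely different route from the paper. The paper phrases the tensor relation in terms of the $s_p$ (checking $\psi^k_i(xs_p\otimes\alpha)=\psi^k_i(x\otimes\lambda\alpha)$) and, after conjugating to bring $s_{p+1}$ adjacent to $s_ps_{p+1}$, invokes the cubic identity
\[
s_{p+1}s_ps_{p+1}=\lambda s_ps_{p+1}+\lambda s_{p+1}s_p-\lambda^2 s_p-\lambda^2 s_{p+1}+\lambda^3
\]
(the relation encoding that $\tl_n$ is the quotient of the Iwahori--Hecke algebra). This yields five terms, four of which shorten the staircase and thus drop into $F^{k-1}$, while the fifth reproduces $\psi^k_i(x\otimes\lambda\alpha)$. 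You instead phrase the relation in terms of the $U_j$ (checking $\psi^k_i(xU_j\otimes\alpha)\in F^{k-1}$), and your key local rewrite is the more elementary identity $s_js_{j+1}U_j=\lambda\mu\,U_{j+1}U_j$, which follows from $U_j^2=aU_j$, $U_jU_{j+1}U_j=U_j$, and the scalar identity $\lambda^2+\lambda\mu a+\mu^2=0$ (indeed this scalar identity is exactly the ingredient that makes the paper's cubic relation hold, so the two routes share the same arithmetic core). Your version never has to keep track of a ``surviving'' term equal to $\psi^k_i(x\otimes\lambda\alpha)$ --- the $U_j$ formulation makes the target on the source side equal to zero outright --- and the bookkeeping reduces to showing two summands with shorter staircases fall into lower filtration. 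The trade-off is that you split into the range $1\leqslant j\leqslant n-i-2$ (where the tensor over $\tl_{n-i-1}$ kills the term immediately) and the range $n-i-1\leqslant j\leqslant N-1$ (where the local identity is used), whereas the paper handles all $p\leqslant N-1$ uniformly. All the edge cases you flag do go through (in particular for $i=n-1$ the degenerate ``first-kind'' generator of $F^0$ is recovered as the second-kind generator with empty staircase, as the paper's own remark after Definition~\ref{definition-filtration} anticipates), so your argument is sound.
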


\begin{proof}
	As presented above, 
	the value of~$\psi^k_i(x\otimes\alpha)$ depends on the choices
	of~$x$ and~$\alpha$, rather than on~$x\otimes \alpha$.  
	So to check that~$\psi^k_i$ is well-defined, we must
	check that~$\psi^k_i(xs_p\otimes \alpha)=\psi^k_i(x\otimes\lambda\alpha)$
	whenever~$p\leqslant (n-i-1+k)-1$.
	Let us write~$q=(n-i-1+k)$, so that~$p\leqslant q-1$.
	(In particular we are assuming that~$q\geqslant 2$.)
	Now
	\begin{align*}
		\psi^k_i(xs_p\otimes \alpha)
		&=
		\sigma(xs_p)\cdot(s_1\cdots s_q)\otimes\lambda^i\alpha
		\\
		&=
		\sigma(x)\cdot s_{p+1}\cdot(s_1\cdots s_q)\otimes\lambda^i\alpha
		\\
		&=
		\sigma(x)
		\cdot s_{p+1}
		\cdot (s_1\cdots s_{p-1})
		\cdot(s_p s_{p+1})
		\cdot(s_{p+2}\cdots s_q)
		\otimes\lambda^i\alpha
		\\
		&=
		\sigma(x)
		\cdot (s_1\cdots s_{p-1})
		\cdot(s_{p+1}s_p s_{p+1})
		\cdot(s_{p+2}\cdots s_q)
		\otimes\lambda^i\alpha.
	\end{align*}
	Recall from Definition~\ref{definition-si} that
	\[
		s_{p+1}s_ps_{p+1}
		=
		\lambda s_ps_{p+1}
		+
		\lambda s_{p+1}s_p
		-
		\lambda^2 s_p
		-
		\lambda^2 s_{p+1}
		+
		\lambda^3.
	\]
	Now 
	\begin{align*}
		(s_1\cdots s_{p-1})
		&\cdot(s_p s_{p+1})
		\cdot(s_{p+2}\cdots s_q)
		= (s_1\cdots s_q)
		\\
		(s_1\cdots s_{p-1})
		&\cdot(s_{p+1}s_p)
		\cdot(s_{p+2}\cdots s_q)
		=
		(s_{p+1}\cdots s_q)\cdot(s_1\cdots s_p)
		\\
		(s_1\cdots s_{p-1})
		&\cdot s_p 
		\cdot(s_{p+2}\cdots s_q)
		=
		(s_{p+2}\cdots s_q)\cdot (s_1\cdots s_p)
		\\
		(s_1\cdots s_{p-1})
		&\cdot s_{p+1} 
		\cdot(s_{p+2}\cdots s_q)
		=
		(s_{p+1}\cdots s_q)
		\cdot
		(s_1\cdots s_{p-1})
		\\
		(s_1\cdots s_{p-1})
		&\cdot 1
		\cdot(s_{p+2}\cdots s_q)
		=
		(s_{p+2}\cdots s_q)
		\cdot
		(s_1\cdots s_{p-1})
	\end{align*}
	so it follows that
	\begin{align*}
		\psi^k_i(xs_p\otimes \alpha)
		=&
		\sigma(x)
		\cdot 
		(s_1\cdots s_q)
		\otimes\lambda^{i+1}\alpha
		\\
		+&
		\sigma(x)
		\cdot 
		(s_{p+1}\cdots s_q)\cdot(s_1\cdots s_p)
		\cdot
		\otimes
		\lambda^{i+1}\alpha
		\\
		-&
		\sigma(x)
		\cdot 
		(s_{p+2}\cdots s_q)\cdot (s_1\cdots s_p)
		\otimes
		\lambda^{i+2}\alpha
		\\
		-&
		\sigma(x)
		\cdot 
		(s_{p+1}\cdots s_q) \cdot (s_1\cdots s_{p-1})
		\otimes
		\lambda^{i+2}\alpha
		\\
		+&
		\sigma(x)
		\cdot 
		(s_{p+2}\cdots s_q) \cdot (s_1\cdots s_{p-1})
		\otimes
		\lambda^{i+3}\alpha.
	\end{align*}
	Now~$p<n-i-1+k$, which means that the final four terms above
	all lie in~$F^{k-1}$, so that in~$F^k/F^{k-1}$ we have
	\begin{align*}
		\psi^k_i(xs_p\otimes \alpha)
		&=
		\sigma(x)
		\cdot 
		(s_1\cdots s_q)
		\otimes\lambda^{i+1}\alpha
		\\
		&=
		\sigma(x)
		\cdot 
		(s_1\cdots s_{n-i-1+k})
		\otimes\lambda^{i+1}\alpha
		\\
		&=
		\psi^k_i(x\otimes\lambda\alpha)
	\end{align*}
	as required.	
\end{proof}

\begin{lemma}
	Let~$k\geqslant 1$ and let~$k\leqslant i\leqslant n-1$.
	Then for~$j$ in the range~$j\geqslant k+1$ we have
	$\psi^k_{i-1}\circ d^{i-k-1}_{j-k-1} = d^i_j\circ \psi^k_i$.
\end{lemma}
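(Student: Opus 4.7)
The plan is to verify the equality $\psi^k_{i-1}\circ d^{i-k-1}_{j-k-1} = d^i_j\circ\psi^k_i$ as maps into the quotient $F^k/F^{k-1}$ by direct computation on a generator $x\otimes\alpha\in W(n-1)_{i-k-1}$, in four stages: unpack both sides, simplify the right-hand side using Lemma~\ref{lemma-products}(1), strip a common non-$s_1$ prefix, and finally handle the residual identity modulo $F^{k-1}$.

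Unpacking the definitions gives
\[
\psi^k_{i-1}\,d^{i-k-1}_{j-k-1}(x\otimes\alpha) = \sigma(x)\cdot(s_{n-i+j-1}\cdots s_{n-i+k+1})\cdot (s_1\cdots s_{n-i+k})\otimes\lambda^{i-j+k}\alpha
\]
and
\[
d^i_j\,\psi^k_i(x\otimes\alpha) = \sigma(x)\cdot(s_1\cdots s_{n-i-1+k})\cdot(s_{n-i+j-1}\cdots s_{n-i})\otimes\lambda^{i-j}\alpha,
\]
both viewed in $W(n)_{i-1}$. The assumption $j\geqslant k+1$ is precisely what is needed to apply Lemma~\ref{lemma-products}(1) with $p=n-i-1+k$, $q=n-i+j-1$, $r=n-i$ to the second expression, rewriting its middle factor as $(s_{n-i+j-1}\cdots s_{n-i+1})\cdot(s_1\cdots s_{n-i+k})$. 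Both sides now display the common prefix $\sigma(x)\cdot(s_{n-i+j-1}\cdots s_{n-i+k+1})$, which is a monomial not involving $s_1$; since $F^{k-1}$ is a $\sigma(\tl_{n-1})$-submodule, left-multiplication by this prefix preserves the filtration, and stripping it reduces the lemma to the residual identity
\[
(s_{n-i+k}\cdots s_{n-i+1})\cdot(s_1\cdots s_{n-i+k})\otimes 1 \;\equiv\; \lambda^k\cdot(s_1\cdots s_{n-i+k})\otimes 1 \pmod{F^{k-1}}
\]
in $W(n)_{i-1}$.

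To attack this residual identity I would first prove the purely braid-theoretic equality
\[
(s_{n-i+k}\cdots s_{n-i+1})(s_1\cdots s_{n-i+k}) = (s_1\cdots s_{n-i+k})(s_{n-i+k-1}\cdots s_{n-i})
\]
in $\tl_n$, which follows by iterated application of the identity $s_b(s_1\cdots s_m) = (s_1\cdots s_m)s_{b-1}$ for $1\leqslant b\leqslant m$ (itself a direct consequence of the braid relations satisfied by the $s_i$ in Definition~\ref{definition-si}). Substituting this recasts the residual claim as showing that
\[
(s_1\cdots s_{n-i+k})\cdot\bigl[(s_{n-i+k-1}\cdots s_{n-i})-\lambda^k\bigr]\otimes 1 \in F^{k-1}.
\]
Expanding via $s_p = \lambda+\mu U_p$ writes the bracketed factor as a sum of terms each containing at least one factor $U_q$ with $q\in\{n-i,\ldots,n-i+k-1\}$, since the leading $\lambda^k$ has been cancelled.

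The main obstacle, and the heart of the proof, is then showing that each of these residual $U$-terms, after left-multiplication by $(s_1\cdots s_{n-i+k})$, lies in $F^{k-1}$. I would handle this by induction on $k$, exploiting the Temperley--Lieb identities $U_p^2=aU_p$, $U_pU_{p\pm1}U_p=U_p$, the commutation $U_pU_q=U_qU_p$ for $|p-q|\geqslant 2$, and crucially the cancellation $\lambda^2+\lambda\mu a+\mu^2=0$, which one verifies directly holds for either normalisation $(\lambda,\mu)\in\{(-1,v),(v^2,-v)\}$ in Definition~\ref{definition-si}. A representative computation giving the flavour is the identity $U_p(s_1\cdots s_p) = \lambda\mu\,U_p(s_1\cdots s_{p-2})U_{p-1}$, obtained via this cancellation, which peels off a letter and shortens the ascending product. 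Repeated application of such reductions, combined with careful use of the commutation relations to shift $U_q$'s past $s$'s with far-apart indices, rewrites each residual term as a linear combination of generators of the form $y\cdot(s_1\cdots s_{n-i+l'})\otimes 1$ with $y$ a non-$s_1$ monomial and $l'\leqslant k-1$, thereby certifying membership in $F^{k-1}$. Tracking the filtration level through this combinatorial bookkeeping is where the bulk of the technical work lies.
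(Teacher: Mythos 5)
Your computation of both sides, the application of Lemma~\ref{lemma-products}(1), and the reduction to the residual identity modulo $F^{k-1}$ are all correct, but your route genuinely diverges from the paper's and leaves the hardest step unfinished. The paper never works modulo $F^{k-1}$: after the same application of Lemma~\ref{lemma-products}(1), it recognises $d^i_j\circ\psi^k_i(x\otimes\alpha)$ as $\psi^k_{i-1}$ applied to the representative $x\cdot(s_{n-i+j-2}\cdots s_{n-i})\otimes\lambda^{1-j}\alpha$ of an element of $W(n-1)_{(i-1)-k-1}=\tl_{n-1}\otimes_{\tl_{n-i+k}}\t$, and then slides the trailing $k$ factors $s_{n-i+k-1}\cdots s_{n-i}$ across $\otimes_{\tl_{n-i+k}}$ \emph{there}, where the subalgebra is large enough to absorb them; this produces exactly $d^{i-k-1}_{j-k-1}(x\otimes\alpha)$. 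The well-definedness of $\psi^k_{i-1}$, proved immediately before the statement you are addressing, is what licenses this change of representative. You instead fix lifts in $W(n)_{i-1}$, where those $s$'s can no longer be slid (since the tensor there is over the smaller algebra $\tl_{n-i}$), and must therefore absorb them into $F^{k-1}$ by hand.

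The gap is that you only sketch the residual identity, and the proposed route through the cancellation $\lambda^2+\lambda\mu a+\mu^2=0$ and a $k$-step induction is both incomplete and harder than necessary. Two short arguments close it. First, your residual identity is literally the well-definedness of $\psi^k_{i-1}$ applied $k$ times: iterating $\psi^k_{i-1}(ys_p\otimes\beta)=\psi^k_{i-1}(y\otimes\lambda\beta)$ for $p=n-i,\ldots,n-i+k-1$ gives $\psi^k_{i-1}(s_{n-i+k-1}\cdots s_{n-i}\otimes 1)=\psi^k_{i-1}(1\otimes\lambda^k)$, and unwinding the definition of $\psi^k_{i-1}$ (then dividing by the unit $\lambda^{i-1}$) is exactly your claim. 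Second, a direct terminus argument works without any use of the cancellation: every monomial appearing in $(s_{m-1}\cdots s_{m-k})-\lambda^k$, with $m=n-i+k$, ends in some $U_q$ with $n-i\leqslant q\leqslant n-i+k-1$, so by Lemma~\ref{lem-terminus of JNF} the Jones normal form of $(s_1\cdots s_m)\cdot(\text{monomial})$ has terminus at most $n-i+k-1$, and Lemma~\ref{lem-basis for F^k} then places it in $F^{k-1}_{i-1}$. Either of these would make your proof complete; as written, the bulk of the work you flag as remaining is precisely the part the paper's strategy makes unnecessary.
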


\begin{proof}
	Let~$x\otimes\alpha\in W(n-1)_{i-k-1} =\tl_{n-1}\ootimes{n-i-1+k}\t$.
	Then
	\begin{align*}
		d^i_j(\psi^k_i(x\otimes\alpha))
		&=
		d^i_j(\sigma(x)\cdot (s_1\cdots s_{n-i-1+k})
		\otimes\lambda^i\alpha)
		\\
		&=
		\sigma(x)
		\cdot (s_1\cdots s_{n-i-1+k})
		\cdot (s_{n-i+j-1}\cdots s_{n-i})
		\otimes\lambda^{i-j}\alpha.
	\end{align*}
	Since~$(n-i-1)\leqslant(n-i-1+k)+1\leqslant (n-i+j-1)$,
	we may apply the first part of Lemma~\ref{lemma-products}
	to obtain
	\begin{align*}
		d^i _j(\psi^k_i(x\otimes\alpha))
		&=
		\sigma(x)
		\cdot (s_1\cdots s_{n-i-1+k})
		\cdot (s_{n-i+j-1}\cdots s_{n-i})
		\otimes\lambda^{i-j}\alpha
		\\
		&=
		\sigma(x)
		\cdot (s_{n-i+j-1}\cdots s_{n-i+1})
		\cdot (s_1\cdots s_{n-i+k})
		\otimes\lambda^{i-j}\alpha
		\\
		&=
		\sigma(x)
		\cdot (s_{n-i+j-1}\cdots s_{n-i+1})
		\cdot (s_1\cdots s_{n-(i-1)-1+k})
		\otimes\lambda^{(i-1)}\lambda^{1-j}\alpha
		\\
		&=
		\sigma(x \cdot (s_{n-i+j-2}\cdots s_{n-i}))
		\cdot (s_1\cdots s_{n-(i-1)-1+k})
		\otimes\lambda^{(i-1)}\lambda^{1-j}\alpha
		\\
		&=
		\psi^k_{i-1}(x\cdot (s_{n-i+j-2}\cdots s_{n-i})
		\otimes\lambda^{1-j}\alpha).
	\end{align*}
	In the last line of the above computation, 
	$x\cdot(s_{n-i+j-2}\cdots s_{n-i})\otimes\lambda^{1-j}\alpha$
	is an element of~$W(n-1)_{(i-1)-k-1}=\tl_{n-1}\ootimes{n-i+k}\t$,
	so we have
	\begin{align*}
		x
		\cdot
		(s_{n-i+j-2}&\cdots s_{n-i})
		 \otimes
		\lambda^{1-j}\alpha
		\\
		&=
		x
		\cdot
		(s_{n-i+j-2}\cdots s_{n-i+k})
		\cdot
		(s_{n-i+k-1}\cdots s_{n-i})
		\otimes
		\lambda^{1-j}\alpha
		\\
		&=
		x
		\cdot
		(s_{n-i+j-2}\cdots s_{n-i+k})
		\otimes
		\lambda^k\lambda^{1-j}\alpha
		\\
		&=
		x
		\cdot
		(s_{n-i+j-2}\cdots s_{n-i+k})
		\otimes
		\lambda^{-(j-k-1)}\alpha.
	\end{align*}
	Thus
	\begin{align*}
		d^i_j(\psi^k_i(x\otimes\alpha))
		&=
		\psi^k_{i-1}(x\cdot (s_{n-i+j-2}\cdots s_{n-i})
		\otimes\lambda^{1-j}\alpha)
		\\
		&=
		\psi^k_{i-1}(
			x
			\cdot
			(s_{n-i+j-2}\cdots s_{n-i+k})
			\otimes
			\lambda^{-(j-k-1)}\alpha.
		)
		\\
		&=
		\psi^k_{i-1}(
			x
			\cdot
			(s_{(n-1)-(i-k-1)+(j-k-1)-1}\cdots s_{(n-1)-(i-k-1)})
			\otimes
			\lambda^{-(j-k-1)}\alpha
		)
		\\
		&=
		\psi^k_{i-1}(d^{i-k-1}_{j-k-1}(x\otimes\alpha))
	\end{align*}
	as required.
\end{proof}

\begin{corollary}
	$\Psi^k$ is a chain map.
\end{corollary}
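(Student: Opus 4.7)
The plan is to deduce this corollary directly from the previous lemma together with the subcomplex analysis of Lemma~\ref{lemma-subcomplex}, with the only nontrivial content being a sign verification.

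First I would reduce the chain-map identity $d^i\circ\Psi^k_i = \Psi^k_{i-1}\circ d^{i-k-1}$ to an identity about $\psi^k_i$ by extracting the overall sign $(-1)^{-i(k+1)}$ from $\Psi^k_i$. Since the differential on $\tau_{n-1}\Sigma^{k+1}W(n-1)$ in degree $i$ agrees, up to the shift $j\mapsto j-k-1$, with the differential $d^{i-k-1} = \sum_{j'=0}^{i-k-1}(-1)^{j'}d^{i-k-1}_{j'}$ of $W(n-1)$, the claim reduces to a signed comparison between $\sum_{j=0}^i(-1)^j d^i_j\circ\psi^k_i$ and $(-1)^{k+1}\psi^k_{i-1}\circ d^{i-k-1}$, working in the quotient~$F^k/F^{k-1}$.

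Second, I would observe that the case analysis carried out in the proof of Lemma~\ref{lemma-subcomplex} already shows that, for each $j$ in the range $0\leqslant j\leqslant k$, the component $d^i_j$ sends every generator of $F^k$ into $F^{k-1}$: the case $j=0$ was handled by absorbing $(s_1\cdots s_{n-i-1+k})$ into the next filtration step $F^{k-1}$; the cases $1\leqslant j\leqslant k-1$ used the third part of Lemma~\ref{lemma-products}; and the case $j=k$ used the second part. Consequently those terms vanish in $F^k/F^{k-1}$, and only the components $d^i_j$ with $j\geqslant k+1$ contribute to $d^i\circ\psi^k_i$ in the quotient.

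Third, to these surviving terms I would apply the previous lemma, which states $d^i_j\circ\psi^k_i = \psi^k_{i-1}\circ d^{i-k-1}_{j-k-1}$ for $j\geqslant k+1$, and then reindex by setting $j' = j - k - 1$. This produces
\[
d^i\circ\psi^k_i \;=\; \sum_{j=k+1}^{i}(-1)^j\,\psi^k_{i-1}\circ d^{i-k-1}_{j-k-1} \;=\; (-1)^{k+1}\psi^k_{i-1}\circ\!\!\sum_{j'=0}^{i-k-1}(-1)^{j'}d^{i-k-1}_{j'} \;=\; (-1)^{k+1}\,\psi^k_{i-1}\circ d^{i-k-1}
\]
in $F^k/F^{k-1}$. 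Finally, multiplying both sides by $(-1)^{-i(k+1)}$ and comparing with $\Psi^k_{i-1} = (-1)^{-(i-1)(k+1)}\psi^k_{i-1}$ shows that the signs match exactly, since $-i(k+1)+(k+1) = -(i-1)(k+1)$. The edge case $i=k$ is automatic because both sides are zero (the sum is empty and $W(n-1)_{-1}$ carries no outgoing differential). I expect no real obstacle here: all of the genuine work — the identification of the terms that survive in $F^k/F^{k-1}$ and the compatibility of $\psi^k_i$ with the surviving face maps — has already been done in the preceding two lemmas, and the sign $(-1)^{-i(k+1)}$ in Definition~\ref{defn-phi for k>0} was built in precisely to make this bookkeeping come out right.
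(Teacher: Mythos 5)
Your proof is correct and follows essentially the same route as the paper's: pass to $\psi^k_i$ by extracting the sign, note that Lemma~\ref{lemma-subcomplex} kills the terms $d^i_j$ with $j\leqslant k$ in the quotient $F^k/F^{k-1}$, apply the previous lemma to the surviving terms, reindex $j\mapsto j-k-1$, and check that $(-1)^{-i(k+1)+(k+1)}=(-1)^{-(i-1)(k+1)}$. The only cosmetic difference is that you spell out the degenerate case $i=k$; otherwise the two arguments coincide step for step.
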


\begin{proof}
	The boundary map of~$\tau_{n-1}\Sigma^{k+1}W(n-1)$
	is given in degree~$i$ 
	by the boundary map~$d^{i-k-1}\colon W(n-1)_{i-k-1}\to W(n-1)_{i-k-2}$,
	which is itself given by the formula~$\sum_{j=0}^{i-k-1}(-1)^jd^{i-k-1}_j$.

	The boundary map of~$F^k/F^{k-1}$ is given in degree~$i$ by the boundary map of~$W(n)$ in degree~$i$, which is the alternating sum
	$\sum_{j=0}^i(-1)^jd^i_j$.  However, the proof of
	Lemma~\ref{lemma-subcomplex} shows that~$d^i_0,\ldots,d^i_k$ all
	send~$F^k$ into~$F^{k-1}$, and hence that they vanish in the
	quotient~$F^k/F^{k-1}$.  Thus the boundary map of~$F^k/F^{k-1}$
	is~$\sum_{j=k+1}^i(-1)^jd^i_j$. It follows that

	\begin{align*}
		d^i\circ \Psi^k_i
		&=
		\sum_{j=k+1}^i (-1)^j d^i_j\circ[(-1)^{-i(k+1)}\psi^k_i]
		\\
		&=
		\sum_{j=k+1}^i (-1)^{j-i(k+1)} \psi^k_{i-1}\circ d^{i-k-1}_{j-k-1}
		\\
		&=
		\sum_{j=0}^{i-k-1} (-1)^{j+(k+1)-i(k+1)} \psi^k_{i-1}\circ d^{i-k-1}_j
		\\
		&=
		[(-1)^{-(i-1)(k+1)}\psi^k_{i-1}]
		\circ \sum_{j=0}^{i-k-1} (-1)^{j} \psi^k_{i-1}\circ d^{i-k-1}_j
		\\
		&=
		\Psi^k_{i-1} \circ d^{i-k-1}
	\end{align*}
	as required.
\end{proof}

\subsection{Proof of Theorem~\ref{theorem-filtration identification}}
\label{section-Phi}

In this subsection we prove Theorem~\ref{theorem-filtration identification}, which in turn completes the proof of Theorem~\ref{theorem-high-acyclicity}.

We begin by finding a basis for each part of the filtration in terms of the Jones normal form.  This is done in Lemma~\ref{lem-basis for F^k} below, after some preliminary work.

\begin{lemma}\label{lem-siandui}
	Any word in the $s_i$ not containing $s_1$ is a linear combination
	of words in the $U_i$, none of which involve $U_1$.
	Conversely, any word in the $U_i$ not containing $U_1$ is a linear
	combination of words in the $s_i$ not containing $s_1$.
\end{lemma}

\begin{proof}
	Recall from Definition~\ref{definition-si} 
	that~$s_i=\lambda+\mu U_i$, where $\lambda$ and $\mu$ are both
	units in the ground ring, so that
	$U_i=-\mu^{-1}\lambda + \mu^{-1}s_i$.
	The claim follows immediately.	
\end{proof}

\begin{lemma}\label{lem-string s1 to sp lin comb}
    For~$1\leqslant p \leqslant n-1$, \red{the word~$s_1\ldots s_p$ written in terms of the~$U_i$ generators} is equal to~$\mu^p U_1\cdots U_p$, plus a linear combination of scalar multiples---by units---of words~$w$ in the $U_i$ with the following properties:
    \begin{itemize}
        \item~$i(w)\geqslant 2$ and~$t(w)\leqslant p$ or 
        \item~$i(w)=1$ and~$t(w)< p$.
    \end{itemize}
    In particular only the summand~$w=\mu^p U_1\cdots U_p$ satisfies~$i(w)=1$ and~$t(w)=p$.
    \begin{proof}\red{Using $s_i=\lambda+\mu U_i$ and multiplying out brackets gives the following:}
        \begin{align*}
        s_1\ldots s_p
        &= 
        \sum\limits_{r=0}^p \sum\limits_{(1\leqslant i_1\leqslant\cdots \leqslant i_r \leqslant p)} \lambda^{p-r}\mu^r U_{i_1}U_{i_2}\ldots U_{i_r}
        \\
        &= 
        \mu^p U_1\cdots U_p+
        \sum\limits_{r=0}^{p-1} \sum\limits_{(1\leqslant i_1\leqslant\cdots \leqslant i_r \leqslant p)} \lambda^{p-r}\mu^r U_{i_1}U_{i_2}\ldots U_{i_r}
        \end{align*}
        If~$r=0$ the term is a scalar, which has index~$\infty$ by convention (thus the first point is satisfied). Suppose~$0<r<p$. Then if~$i_1>1$ it follows that~$i(U_{i_1}\ldots U_{i_p})\geqslant 2$. Otherwise~$i_1=1$ and, since~$r<p$, there is some~$j\geqslant 2$ such that~$i_j\geqslant i_{j-1}+2$, so that~$U_{i_1}\cdots U_{i_r}$ can be written as a word with terminus~${i_{j-1}}$, and then the claim follows. Coefficients are given by powers of~$\lambda$ and~$\mu$, and products of these. The terms~$\lambda$ and~$\mu$ are defined via the homomorphisms in Definition~\ref{defn-IH to TL} and lie in the set~$\{-1, \pm v, v^2\}$. Since~$v$ is a unit it follows that all coefficients are units.
    \end{proof}
\end{lemma}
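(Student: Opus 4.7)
The plan is to prove the lemma by a direct binomial-style expansion of $s_1 s_2\cdots s_p$ using $s_i = \lambda + \mu U_i$, followed by an analysis of each resulting term. First, I would write
\[
s_1 s_2 \cdots s_p
= \prod_{i=1}^{p}(\lambda + \mu U_i)
= \sum_{r=0}^{p}\ \sum_{1\leqslant i_1<\cdots<i_r\leqslant p} \lambda^{p-r}\mu^r\, U_{i_1}U_{i_2}\cdots U_{i_r},
\]
since distinct $U_i$ with strictly increasing subscripts appear without further simplification. The term corresponding to $r=p$ (i.e.\ $S=\{1,2,\ldots,p\}$) is exactly $\mu^p U_1\cdots U_p$, whose index is $1$ and whose terminus is $p$; this is the summand isolated in the statement.

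Next I would verify that every other summand meets one of the two bulleted conditions. For $r=0$ we obtain the scalar $\lambda^p$, which by the convention $i(e)=t(e)=\infty$ is treated as trivially falling under the first clause. For $0<r<p$ the analysis splits into two cases. If $i_1\geqslant 2$, then $U_{i_1}\cdots U_{i_r}$ is already written in a form with $i(w)\geqslant 2$ and $t(w)=i_r\leqslant p$, so the first bullet holds. If $i_1=1$, then because $r<p$ the sequence $i_1<\cdots<i_r$ cannot be $1,2,\ldots,r$ continuing up to $p$; pick the least $j\geqslant 2$ with $i_j\geqslant i_{j-1}+2$, so that $(i_1,\ldots,i_{j-1})=(1,2,\ldots,j-1)$. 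Every $i_k$ with $k\geqslant j$ then satisfies $i_k\geqslant j+1$, so each $U_{i_k}$ ($k\geqslant j$) commutes with all of $U_1,\ldots,U_{j-1}$. Iterating this, I would rewrite
\[
U_1U_2\cdots U_{j-1}\,U_{i_j}U_{i_{j+1}}\cdots U_{i_r}
= U_{i_j}U_{i_{j+1}}\cdots U_{i_r}\,U_1U_2\cdots U_{j-1},
\]
which is a word $w$ with $i(w)=1$ and $t(w)=j-1=i_{j-1}<i_j\leqslant p$, so the second bullet holds.

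Finally, the coefficients $\lambda^{p-r}\mu^r$ are all units: by Definition~\ref{defn-IH to TL} we have $(\lambda,\mu)\in\{(-1,v),(v^2,-v)\}$, and since $v\in R^\times$, so are $\lambda,\mu$ and hence every power and product of them. The uniqueness assertion (``in particular \dots'') then follows from the analysis above: in every summand other than $\mu^p U_1\cdots U_p$ either the index exceeds $1$ or the terminus is strictly less than $p$.

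I do not expect any serious obstacle here. The one mildly delicate point is the commutation argument in the third case, where one must be careful that the rewriting preserves the property $i(w)=1$ (it does, because the letters $U_1,\ldots,U_{j-1}$ are untouched and the moved letters all have index $\geqslant j+1$). No use of the quadratic relation $U_i^2=aU_i$ is needed, because each monomial in the expansion has pairwise distinct subscripts.
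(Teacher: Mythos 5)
Your proof is correct and follows essentially the same route as the paper's: expand $s_1\cdots s_p=\prod(\lambda+\mu U_i)$ into monomials with strictly increasing subscripts (the paper's displayed $\leqslant$ is a typo for $<$), isolate the $r=p$ term, and argue by cases on $r$ and $i_1$; you spell out the commutation rewriting that the paper only gestures at, which is a welcome extra. One small point shared with the paper's own proof: when $i_1=1$ and $(i_1,\ldots,i_r)=(1,\ldots,r)$ with $r<p$, there is no $j$ with $i_j\geqslant i_{j-1}+2$, but that case is already in the desired form since $t(w)=r<p$, so neither argument has a real gap.
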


\red{
\begin{lemma}\label{lem-basis lies inside F^k}
	Let~$k\geqslant 0$ and~$-1\leqslant i \leqslant n-1$, and consider elements~$x_{\ul{a},\ul{b}} \otimes 1$, where~$x_{\ul{a},\ul{b}}$ is in Jones normal form and satisfies either:
	\begin{itemize}
		\item~$i(x_{\ul{a},\ul{b}})\geqslant 2$ and~$t(x_{\ul{a},\ul{b}})\geqslant n-i-1$, or
		\item~$i(x_{\ul{a},\ul{b}})=1$ and~$n-i-1\leqslant t(x_{\ul{a},\ul{b}})\leqslant n-i-1+k$.
	\end{itemize}
	Then these elements all lie in $F^k_i$.
	\begin{proof}
		The first type of element lies in~$F^0$, since in this case~$x_{\ul{a},\ul{b}}$ is a word in the~$U_i$s not containing~$U_1$, and by Lemma~\ref{lem-siandui}, this is a linear combination of words in the~$s_i$s not containing~$s_1$ (a basic element). For the second type of element, from the definition of Jones normal form, $x_{\ul{a},\ul{b}}$ must end in a string~$U_1\ldots U_{n-i-1+j}$ for~$0\leq j \leq k$. We proceed by induction on~$k$.\\
		\textbf{Base case:} We start with the base case $k=0$, so the only option is that~$j=0$. i.e.~$x_{\ul{a},\ul{b}}=y_{\ul{a},\ul{b}}U_1\ldots U_{n-i-1}$ for some~$y_{\ul{a},\ul{b}}$ in Jones normal form with $i(y_{\ul{a},\ul{b}})\geq 2$. We aim to show in this case $x_{\ul{a},\ul{b}}$ lies in~$F^0_i$. Compare $x_{\ul{a},\ul{b}}$ to $y_{\ul{a},\ul{b}}s_1\ldots s_{n-i-1}$, which does lie in~$F^0_i$ by Definition~\ref{definition-filtration}. From Lemma~\ref{lem-string s1 to sp lin comb} multiplying out the string $s_1\ldots s_{n-i-1}$ will result in $y_{\ul{a},\ul{b}}s_1\ldots s_{n-i-1}$ being written as a linear combination (up to scalar multiplication by units) of three types of elements, and we consider their image in $\tl_n\otimes_{\tl_{n-i-1}}\t$.
		\begin{enumerate}
		\item $y_{\ul{a},\ul{b}}U_1\ldots U_{n-i-1}$. This is equal to $x_{\ul{a},\ul{b}}$ and appears as a single summand of $y_{\ul{a},\ul{b}}s_1\ldots s_{n-i-1}$.
		\item $y_{\ul{a},\ul{b}}w$ where $i(w)\geqslant 2$ and~$t(w)\leqslant n-i-1$. These are all basic elements since $i(y_{\ul{a},\ul{b}})\geq 2$, and thus lie in $F^0$.
		\item $y_{\ul{a},\ul{b}}w$ where $i(w)=1$ and~$t(w)< n-i-1$. Due to the terminus, these are all zero in $\tl_n\otimes_{\tl_{n-i-1}}\t$.
		\end{enumerate}
		So it follows that in $\tl_n\otimes_{\tl_{n-i-1}}\t$,~$x_{\ul{a},\ul{b}}$ is, up to scalar multiplication by units, equal to a linear combination of $y_{\ul{a},\ul{b}}s_1\ldots s_{n-i-1}$ and basic elements. Since this is a linear combination of elements in~$F^0_i$, it follows that $x_{\ul{a},\ul{b}}$ lies in~$F^0_i$ as required.\\
		\textbf{Inductive step:} We now assume the Lemma is true for $k-1$ and prove for~$k$. Let~$x_{\ul{a},\ul{b}}=y_{\ul{a},\ul{b}}U_1\ldots U_{n-i-1+j}$ for some~$y_{\ul{a},\ul{b}}$ in Jones normal form with $i(y_{\ul{a},\ul{b}})\geq 2$, $t(y_{\ul{a},\ul{b}})>n-i-1+j$, and~$0\leq j\leq k$. When~$0\leq j <k$, by the inductive hypothesis this element lies in~$F^{k-1}_i\subset F^k_i$, and so we can restrict to the case where~$j=k$, i.e.~$x_{\ul{a},\ul{b}}=y_{\ul{a},\ul{b}}U_1\ldots U_{n-i-1+k}$. We aim to show that, in this case, $x_{\ul{a},\ul{b}}$ lies in~$F^k_i$. As in the base case, we compare $x_{\ul{a},\ul{b}}$ with $y_{\ul{a},\ul{b}}s_1\ldots s_{n-i-1+k}$, which lies in~$F^k_i$ by Definition~\ref{definition-filtration}. From Lemma~\ref{lem-string s1 to sp lin comb}, $y_{\ul{a},\ul{b}}s_1\ldots s_{n-i-1+k}$ is a linear combination (up to scalar multiplication by a unit) of three types of elements, which we evaluate in~$\tl_n\otimes_{\tl_{n-i-1}}\t$.
		\begin{enumerate}
			\item $y_{\ul{a},\ul{b}}U_1\ldots U_{n-i-1+k}$. This is equal to $x_{\ul{a},\ul{b}}$ and appears as a single summand of $y_{\ul{a},\ul{b}}s_1\ldots s_{n-i-1+k}$. 
			\item $y_{\ul{a},\ul{b}}w$ where $i(w)\geqslant 2$ and~$t(w)\leqslant n-i-1+k$. Since $i(y_{\ul{a},\ul{b}})\geq 2$ they are all basic elements, and thus lie in $F^0\subset F^k$.
			\item $y_{\ul{a},\ul{b}}w$ where $i(w)=1$ and~$t(w)< n-i-1+k$. Rewriting these in Jones normal form gives elements~$y_{\ul{a},\ul{b}}w=z_{\ul{a},\ul{b}}$ such that~$i(z_{\ul{a},\ul{b}})=1$ and $t(z_{\ul{a},\ul{b}})\leq t(w)<n-i-1+k$ (by Lemmas \ref{lem-index once} and \ref{lem-terminus of JNF}). These are then Jones normal form elements ending in~$U_1 \ldots U_{n-i-1+j}$ for $0\leq j\leq k-1$ so by the inductive hypothesis these lie in~$F^{k-1}_i\subset F^k_i$.
		\end{enumerate}
		Again it follows that in $\tl_n\otimes_{\tl_{n-i-1}}\t$,~$x_{\ul{a},\ul{b}}$ is, up to scalar multiplication by units, equal to a linear combination of $y_{\ul{a},\ul{b}}s_1\ldots s_{n-i-k}$, elements in~$F^k_i$ (by the inductive hypothesis), and basic elements. Since this is a linear combination of elements in~$F^k_i$, it follows that $x_{\ul{a},\ul{b}}$ lies in~$F^k_i$ as required.
	\end{proof}
\end{lemma}}

\begin{lemma}\label{lem-basis for F^k}
    Let~$k\geqslant 0$ and~$-1\leqslant i \leqslant n-1$. Then~$F^k_i$ has basis consisting of elements~$x_{\ul{a},\ul{b}} \otimes 1$, where~$x_{\ul{a},\ul{b}}$ is in Jones normal form and satisfies either:
    \begin{itemize}
        \item~$i(x_{\ul{a},\ul{b}})\geqslant 2$ and~$t(x_{\ul{a},\ul{b}})\geqslant n-i-1$, or
        \item~$i(x_{\ul{a},\ul{b}})=1$ and~$n-i-1\leqslant t(x_{\ul{a},\ul{b}})\leqslant n-i-1+k$.
    \end{itemize}
    \begin{proof}
        This is a subset of the known basis for~$\tl_n\otimes_{\tl_{n-i-1}}\t\supseteq F^k_i$, and by the previous lemma we know these elements lie in~$F^k_i$, so it is enough to show that~$F^k_i$ is spanned by these elements. \red{First of all, note that since~$F^k_i\subseteq \tl_n\otimes_{\tl_{n-i-1}} \t$ any word in $F^k_i$ written in Jones normal form will vanish if~$t(x_{\ul{a},\ul{b}})\leqslant n-i-2$, therefore we will always have~$t(x_{\ul{a},\ul{b}})\geqslant n-i-1$.} By definition~$F^k_i$ is spanned by elements of the form
        \begin{itemize}
            \item~$x\otimes 1$
            \item~$x\cdot (s_1\cdots s_{n-i-1+k'})\otimes 1$
        \end{itemize}
        where~$x$ is a word in the~$U_i$ with~$i(x)\geqslant 2$ (i.e.~containing no~$U_1$s) and $0\leqslant k' \leqslant k$ (note in the case~$i=n-1$ and~$k'=0$ the two kinds coincide). The first kind is spanned by~$x_{\ul{a}, \ul{b}}$ such that~$i(x_{\ul{a}, \ul{b}})\geqslant 2$, \red{as described in the first bullet point in the statement of the lemma.} From Lemma~\ref{lem-string s1 to sp lin comb}, expanding the product~$(s_1\cdots s_{n-i-1+k'})$ in the second kind gives a linear combination of words~$x\cdot w \otimes 1$ such that~$t(w)\leqslant n-i-1+k'$. Either~$i(w)$ will be~$\geqslant 2$ or~$i(w)=1$. In the first case, since~$i(x)\geqslant 2$ it follows that~$i(x\cdot w)\geqslant 2$ and so when written in Jones normal form this will remain the case, giving \red{an element of the first type described in the Lemma}. In the second case, when~$i(w)=1$, since~$i(x)\geqslant 2$ then either~$i(x\cdot w)\geqslant 2$ and \red{as in the previous sentence} we are done, or~$i(x\cdot w)=1$ and, by Lemma~\ref{lem-terminus of JNF}, when written in Jones normal form the terminus~$t(x\cdot w)=t(w)\leqslant n-i-1+k'\leqslant n-i-1+k$ will either remain the same or reduce. \red{This puts us in the setting of the second bullet point in the statement of the lemma, and thus we have shown that the two types of elements span~$F^k_i$.}
    \end{proof}
\end{lemma}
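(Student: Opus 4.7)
The plan is to first observe that the claimed elements lie within the Jones basis of $\tl_n(a) \otimes_{\tl_{n-i-1}(a)} \t$ given by Lemma~\ref{lem-basis for tensor product over smaller tl}, so they are automatically linearly independent. Thus the entire task reduces to showing that $F^k_i$ is spanned by elements of this form.

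To produce such a spanning set, I would read off Definition~\ref{definition-filtration} that $F^k_i$ is generated by elements of two kinds: (i) elements $x \otimes 1$ where $x$ is a monomial in $U_2, \ldots, U_{n-1}$ (equivalently $i(x) \geqslant 2$), and (ii) elements $x \cdot (s_1 \cdots s_{n-i-1+k'}) \otimes 1$ for $0 \leqslant k' \leqslant k$ with $i(x) \geqslant 2$. The generators in (i) produce basis elements of the first type directly, since rewriting a word with no $U_1$ in Jones normal form via Lemma~\ref{lem-terminus of JNF} only rearranges or cancels letters of index $\geqslant 2$, so the resulting Jones normal form still has index $\geqslant 2$.

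For generators of type (ii), the approach is to expand the product $s_1 \cdots s_{n-i-1+k'}$ using $s_j = \lambda + \mu U_j$. By Lemma~\ref{lem-string s1 to sp lin comb} this yields a linear combination (with unit coefficients) of words $w$ such that either $i(w) \geqslant 2$ and $t(w) \leqslant n-i-1+k'$, or $i(w) = 1$ and $t(w) \leqslant n-i-1+k'$. Multiplying on the left by $x$ (which involves no $U_1$) and passing to Jones normal form via Lemma~\ref{lem-terminus of JNF}, in the first case the resulting word still has index $\geqslant 2$; in the second case the index is either $1$ or $\geqslant 2$, and in either event the terminus of the normal form is bounded above by $t(xw) \leqslant n-i-1+k' \leqslant n-i-1+k$. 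Hence the resulting Jones normal forms all satisfy one of the two conditions in the statement (with no upper bound required in the first case).

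Finally, Lemma~\ref{lem-basis for tensor product over smaller tl} shows that any Jones basis element of $\tl_n(a)$ with terminus $\leqslant n-i-2$ vanishes in $\tl_n(a) \otimes_{\tl_{n-i-1}(a)} \t$; this kills the unwanted low-terminus summands and yields precisely the claimed spanning set. I expect the main bookkeeping obstacle to be tracking the two sub-cases $i(w) \geqslant 2$ versus $i(w)=1$ in the expansion of $s_1 \cdots s_{n-i-1+k'}$, in particular verifying that the exceptional $i(w)=1$ summands never push the terminus beyond the allowed upper bound $n-i-1+k$. This is precisely what is pinned down by the careful formulation of Lemma~\ref{lem-string s1 to sp lin comb}, so once that lemma is in hand the argument is essentially a matter of combining it with Lemma~\ref{lem-terminus of JNF}.
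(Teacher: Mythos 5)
Your proposal is correct and follows essentially the same route as the paper: reduce to spanning via Lemma~\ref{lem-basis for tensor product over smaller tl}, split the generators of $F^k_i$ into the two kinds from Definition~\ref{definition-filtration}, expand the string $s_1\cdots s_{n-i-1+k'}$ via Lemma~\ref{lem-string s1 to sp lin comb}, track index and terminus through Lemma~\ref{lem-terminus of JNF}, and discard the low-terminus normal forms that vanish in $\tl_n\otimes_{\tl_{n-i-1}}\t$. The only cosmetic difference is that you invoke Lemma~\ref{lem-basis for tensor product over smaller tl} by name for the final vanishing step where the paper simply cites the inclusion $F^k_i\subseteq\tl_n\otimes_{\tl_{n-i-1}}\t$, and you slightly overstate in one spot by asserting the terminus upper bound in both sub-cases when it is only needed in the $i(x\cdot w)=1$ sub-case; neither affects correctness.
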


\begin{prop}
    The map~$\Phi^0\colon C(W(n-1))\longrightarrow F^0$ from Definition~\ref{defn-phi for k=0} is an isomorphism.
    
    \begin{proof}
        Recall that for~$-1\leqslant i \leqslant (n-1)$,
        \[
        \Phi^0_i:\left(\tl_{n-1}\ootimes{n-i-2}\t\right)
		\oplus
		\left(\tl_{n-1}\ootimes{n-i-1}\t\right) \to F^0_i
        \]
        is given by
        \[
        	\Phi^0_i(x\otimes\alpha,y\otimes\beta) 
		= 
		\xi_i(x\otimes\alpha) + \eta_i(y\otimes\beta)
	\]
	where
	\[
		\xi_i(x\otimes\alpha) = \sigma(x)\otimes\lambda^{n-1}\alpha	
	\]
	and
	\[
		\eta_i(y\otimes\beta) = \sigma(y)\cdot(s_1\cdots s_{n-i-1})
		\otimes\lambda^i\beta.
	\]
        By Lemma~\ref{lem-basis for tensor product over smaller tl}, a basis for the left hand side is given by elements of either the form~$(x_{\ul{a},\ul{b}}\otimes 1, 0)$ such that~$t(x_{\ul{a},\ul{b}})>n-i-3$ or the form 
       ~$(0, x_{\ul{a}',\ul{b}'}\otimes 1)$ such that~$t(x_{\ul{a}',\ul{b}'})>n-i-2$. Under the map~$\Phi^0_i$,~$(x_{\ul{a},\ul{b}}\otimes 1,0)$ is taken to a scalar multiple (by a unit) of~$\sigma(x_{\ul{a},\ul{b}})\otimes 1$, where~$\sigma(x_{\ul{a},\ul{b}})$ is a Jones basis element with~$i(\sigma(x_{\ul{a},\ul{b}}))\geqslant 2$ and~$t(\sigma(x_{\ul{a},\ul{b}}))>n-i-2$. 
        By Lemma~\ref{lem-string s1 to sp lin comb}, the element~$(0,x_{\ul{a}',\ul{b}'}\otimes 1)$ is taken to a linear combination of scalar multiples (by units) of terms~$\sigma(x_{\ul{a}',\ul{b}'})\cdot w\otimes 1$ such that~$t(w)\leqslant n-i-1$. Since~$F^0_i\subseteq \tl_n\otimes_{\tl_{n-i-1}}\t$ the only non-zero terms in the image will occur when~$t(w)= n-i-1$. We consider two cases:~$i(w)\geqslant 2$ or~$i(w)=1$. By Lemma~\ref{lem-terminus of JNF}, converting to Jones normal form in the first case gives an element with index~$i(\sigma(x_{\ul{a}',\ul{b}'})\cdot w)>2$ and terminus~$t(\sigma(x_{\ul{a}',\ul{b}'})\cdot w)=n-i-1$, or zero, since the terminus will either remain the same or reduce when converting. When~$i(w)=1$ and~$t(w)= n-i-1$, by Lemma~\ref{lem-string s1 to sp lin comb} \red{it follows that $w=U_1\ldots U_{n-i-1}$} and therefore the terms will be of the form~$\sigma(x_{\ul{a}',\ul{b}'})\cdot U_1\ldots U_{n-i-1}$. These elements are already in Jones normal form, with index~$1$ and terminus~$n-i-1$. Furthermore all Jones basis elements with this index and terminus arise in this way.
        By Lemma~\ref{lem-basis for F^k} a basis for~$F^0_i$ is given by elements~$y_{\ul{a},\ul{b}}\otimes 1$ where~$y_{\ul{a},\ul{b}}$ is in Jones normal form and satisfies:
    \begin{itemize}
        \item~$i(y_{\ul{a},\ul{b}})\geqslant 2$ and~$t(y_{\ul{a},\ul{b}})\geqslant n-i-1$ or
        \item~$i(y_{\ul{a},\ul{b}})=1$ and~$t(y_{\ul{a},\ul{b}})= n-i-1$.
    \end{itemize}
        By our analysis, all of these elements lie in the image of~$\Phi^0_i$, up to scalar multiplication by units, hence~$\Phi^0$ is a bijection on bases and therefore an isomorphism.
        \end{proof}
	
\end{prop}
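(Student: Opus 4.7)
The plan is to exhibit explicit bases on both sides and verify that $\Phi^0_i$ realizes a bijection between them, up to a triangular change of basis with unit entries on the diagonal.

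First, I would use the decomposition $C(W(n-1))_i = W(n-1)_i \oplus W(n-1)_{i-1}$. By Lemma~\ref{lem-basis for tensor product over smaller tl}, each summand has an explicit Jones basis: elements $x_{\ul a, \ul b} \otimes 1$ with terminus above a certain threshold. On the target side, Lemma~\ref{lem-basis for F^k} specialised to $k=0$ gives a Jones basis for $F^0_i$ splitting into two types: (i) elements $y_{\ul a, \ul b} \otimes 1$ with $i(y_{\ul a, \ul b}) \geqslant 2$ and $t(y_{\ul a, \ul b}) \geqslant n-i-1$, and (ii) elements with $i(y_{\ul a, \ul b}) = 1$ and $t(y_{\ul a, \ul b}) = n-i-1$ exactly.

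Next, I would analyse $\Phi^0_i = \xi_i + \eta_i$ on each basis element. Since the shift map $\sigma$ raises every generator index by one, $\xi_i(x_{\ul a, \ul b} \otimes 1)$ is a unit scalar times $\sigma(x_{\ul a, \ul b}) \otimes 1$, which is automatically a Jones basis element of $F^0_i$ of type (i). For $\eta_i$, the defining formula produces $\sigma(x_{\ul{a'},\ul{b'}}) \cdot (s_1 \cdots s_{n-i-1}) \otimes \lambda^i$. Expanding the string $(s_1 \cdots s_{n-i-1})$ via Lemma~\ref{lem-string s1 to sp lin comb} shows that only terms of terminus exactly $n-i-1$ survive modulo the relation imposed by $\tl_{n-i-1}$, and that the unique index-one contribution is a unit multiple of $\sigma(x_{\ul{a'},\ul{b'}}) \cdot U_1 \cdots U_{n-i-1}$; after reduction to Jones normal form (Lemma~\ref{lem-terminus of JNF}), this produces exactly the type (ii) basis elements of $F^0_i$, while the remaining index-$\geqslant 2$ contributions land among the type (i) basis elements.

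To conclude bijectivity I would order both bases so that the pieces contributed by $\eta_i$ (type (ii)) are listed before those contributed by $\xi_i$ (type (i)). In this ordering the matrix of $\Phi^0_i$ is block upper-triangular: the $\eta_i$-block hits the type (ii) basis vectors with unit diagonal coefficients plus type (i) error terms, while the $\xi_i$-block hits the remaining type (i) basis vectors with unit diagonal coefficients and no further correction. Hence $\Phi^0_i$ is represented by a triangular matrix with unit diagonal entries, therefore invertible; combined with the chain-map property established earlier, this upgrades $\Phi^0$ to an isomorphism of chain complexes.

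The main obstacle I anticipate is the bookkeeping in the expansion of $(s_1 \cdots s_{n-i-1})$: one must track that every term of terminus less than $n-i-1$ vanishes in $\tl_n \otimes_{\tl_{n-i-1}} \t$, that conversion to Jones normal form via Lemma~\ref{lem-terminus of JNF} does not decrease the terminus past the cutoff (so no surviving image leaks out of the span of type (i) and type (ii) basis elements), and that the leading coefficients are genuinely units rather than scalars potentially divisible by the parameter $a$. Lemma~\ref{lem-string s1 to sp lin comb} is designed precisely to make these coefficients products of $\mu$ and $\lambda$, which are units because $v$ is.
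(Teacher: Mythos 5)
Your proposal is correct and takes essentially the same route as the paper: identify Jones bases on both sides, show $\xi_i$ sends domain basis elements bijectively (up to unit scalars) onto the index-$\geqslant 2$ part of the target basis, expand $\eta_i$ via Lemma~\ref{lem-string s1 to sp lin comb} and Lemma~\ref{lem-terminus of JNF} to see that each domain basis element hits a unit multiple of a unique index-$1$ target basis element plus index-$\geqslant 2$ error terms, and conclude invertibility. The one genuine addition is that you make the block-triangular change-of-basis argument explicit (the paper compresses this into ``bijection on bases''), which is a mild gain in rigour; note only that with your stated ordering (type-(ii) first) the matrix is block \emph{lower}-triangular rather than upper, though this does not affect the conclusion, and that the lead index-$1$ term $\sigma(x_{\ul{a}',\ul{b}'})\cdot U_1\cdots U_{n-i-1}$ is already in Jones normal form, so no further reduction is needed there.
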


\begin{lemma}\label{lem-basis for quotient F^k/F^k-1}
    A basis for~$(F^k/F^{k-1})_i$ is given by words~$x_{\ul{a},\ul{b}}$ in Jones normal form such that ~$i(x_{\ul{a},\ul{b}})=1$ and~$t(x_{\ul{a},\ul{b}})= n-i-1+k$.
    \begin{proof}
        This is a direct consequence of taking the quotient of the bases for~$F^k$ and~$F^{k-1}$ given in Lemma~\ref{lem-basis for F^k}.
    \end{proof}
\end{lemma}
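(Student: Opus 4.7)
The plan is to deduce this directly from Lemma~\ref{lem-basis for F^k} by comparing the bases it gives for $F^k_i$ and $F^{k-1}_i$. Both are described as subsets of a single fixed basis of the ambient module $\tl_n(a)\otimes_{\tl_{n-i-1}(a)}\t$, namely the Jones basis provided by Lemma~\ref{lem-basis for tensor product over smaller tl}. Consequently $F^{k-1}_i$ is a free $R$-module direct summand of $F^k_i$, and the image of the complementary subset of basis vectors descends to a free $R$-basis of the quotient $(F^k/F^{k-1})_i$.

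Concretely, I would compare the two conditions from Lemma~\ref{lem-basis for F^k}. The first condition, that $i(x_{\ul{a},\ul{b}})\geqslant 2$ and $t(x_{\ul{a},\ul{b}})\geqslant n-i-1$, is identical for $F^k_i$ and $F^{k-1}_i$, so contributes nothing to the set-theoretic difference. The second condition, that $i(x_{\ul{a},\ul{b}})=1$ and $n-i-1\leqslant t(x_{\ul{a},\ul{b}})\leqslant n-i-1+k$ for $F^k_i$, compared with $n-i-1\leqslant t(x_{\ul{a},\ul{b}})\leqslant n-i-2+k$ for $F^{k-1}_i$, contributes exactly those basis elements with $i(x_{\ul{a},\ul{b}})=1$ and $t(x_{\ul{a},\ul{b}})=n-i-1+k$. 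These are the claimed basis elements for $(F^k/F^{k-1})_i$.

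There is no real obstacle; the only thing to check is the summand observation in the first paragraph, which is immediate from the fact that both spanning sets given by Lemma~\ref{lem-basis for F^k} are subsets of a single linearly independent set inside $\tl_n(a)\otimes_{\tl_{n-i-1}(a)}\t$. All the genuine combinatorial work was already done in Lemma~\ref{lem-basis for F^k}; the present lemma is simply the book-keeping that extracts the quotient.
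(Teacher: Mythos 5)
Your proposal is correct and matches the paper's proof, which is the one-line observation that the basis of the quotient is the set-theoretic difference of the bases from Lemma~\ref{lem-basis for F^k}; you have simply spelled out the bookkeeping that the paper leaves implicit.
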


\begin{prop}
    The map~$\Psi^k: \tau_{n-1}\Sigma^{k+1}W(n-1)
		\longrightarrow 
		F^k/F^{k-1}$ from Definition~\ref{defn-phi for k>0} is an isomorphism.
    \begin{proof}
       Recall for~$i$ in the range~$k\leqslant i\leqslant (n-1)$,
	\[
		\Psi^k_i\colon \tl_{n-1}\ootimes{n-i-1+k}\t
		\longrightarrow
		(F^k/F^{k-1})_i
	\]
	is given by	
	\[
		\Psi^k_i\colon x\otimes\alpha
		\longmapsto
		(-1)^{-i(k+1)}\sigma(x)
		\cdot (s_1\cdots s_{n-i-1+k})\otimes\lambda^i\alpha.
	\]
    	By Lemma~\ref{lem-basis for tensor product over smaller tl} a basis for the domain is given by~$x_{\ul{a},\ul{b}}$ such that $t(x_{\ul{a},\ul{b}})>(n-i-1+k)-1$. Note also that~$x_{\ul{a},\ul{b}}$ does not contain the letter~$U_{n-1}$. By Lemma~\ref{lem-string s1 to sp lin comb}, the image~$\Psi^k_i(x_{\ul{a},\ul{b}})$ is a linear combination of scalar multiples (by units) of terms~$\sigma(x_{\ul{a},\ul{b}})\cdot w$ such that~$t(w)\leqslant n-i-1+k$. These terms are zero in~$(F^k/F^{k-1})_i\subseteq \tl_n\otimes_{\tl_{n-i-1}}\t$ only when~$w$ cannot be written as a word with~$t(w)<n-i-1$. Rewriting these elements in Jones normal form will maintain or decrease the terminus, and~$i(\sigma(x_{\ul{a},\ul{b}}))\geqslant 2$, so~$i(\sigma(x_{\ul{a},\ul{b}})\cdot w)=1$ only when~$i(w)=1$. Therefore by Lemma~\ref{lem-basis for F^k} quotienting out by~$F^{k-1}$ leaves only the term for which~$i(w)=1$ and~$t(w)=n-i-1+k$. In particular by Lemma~\ref{lem-string s1 to sp lin comb} this term is a scalar multiple (by a unit) of~$\sigma(x_{\ul{a},\ul{b}})\cdot U_1\ldots U_{n-i-1+k}$. 
	
    	Since~$\sigma(x_{\ul{a},\ul{b}})$ has index~$\geqslant 2$ and terminus~$>n-i-1+k$, it follows that~$\sigma(x_{\ul{a},\ul{b}})\cdot U_1\ldots U_{n-i-1+k}$ is in Jones normal form. From Lemma~\ref{lem-basis for quotient F^k/F^k-1} this is a Jones basis element for~$F^k/F^{k-1}$ and all basis elements arise in this way. Therefore up to unit scalars, the map~$\Psi^k$ is a bijection on bases, and hence an isomorphism.
	\end{proof}
\end{prop}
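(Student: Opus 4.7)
The plan is to prove bijectivity of $\Psi^k_i$ in each degree $i$ with $k \leqslant i \leqslant n-1$ by exhibiting an explicit correspondence of Jones-type bases. By Lemma~\ref{lem-basis for tensor product over smaller tl} the source $\tl_{n-1}(a)\otimes_{\tl_{n-i-1+k}(a)}\t$ has an $R$-basis consisting of Jones-normal-form elements $x_{\ul{a},\ul{b}}$ in $\tl_{n-1}(a)$ with terminus $b_1 \geqslant n-i-1+k$ (and hence $b_1 \leqslant n-2$), while by Lemma~\ref{lem-basis for quotient F^k/F^k-1} the target $(F^k/F^{k-1})_i$ has basis consisting of Jones-normal-form elements $y_{\ul{a},\ul{b}}$ with index $a_1 = 1$ and terminus exactly $n-i-1+k$. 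The aim is to show $\Psi^k_i$ maps the former basis bijectively onto the latter, up to multiplication by units.

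First I would compute $\Psi^k_i(x_{\ul{a},\ul{b}} \otimes 1) = \pm \sigma(x_{\ul{a},\ul{b}}) \cdot (s_1 s_2 \cdots s_{n-i-1+k}) \otimes \lambda^i$, and expand the product $s_1 s_2 \cdots s_{n-i-1+k}$ using Lemma~\ref{lem-string s1 to sp lin comb}. This gives a sum whose ``leading term'' is $\mu^{n-i-1+k} U_1 U_2 \cdots U_{n-i-1+k}$, plus remainder terms $w$ satisfying either $i(w) \geqslant 2$, or $i(w)=1$ with $t(w) < n-i-1+k$. For the leading term, note that $\sigma$ raises the minimum index of $x_{\ul{a},\ul{b}}$ to at least $2$ and raises its terminus to at least $n-i+k > n-i-1+k$; appending the block $U_1 U_2 \cdots U_{n-i-1+k}$ therefore produces an element already in Jones normal form, and by Lemma~\ref{lem-basis for quotient F^k/F^k-1} it represents a basis element of the quotient.

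Next I would verify that the remainder terms all vanish in $F^k/F^{k-1}$. When $i(w) \geqslant 2$, the product $\sigma(x_{\ul{a},\ul{b}}) \cdot w$ has index $\geqslant 2$ and, after rewriting in Jones normal form using Lemma~\ref{lem-terminus of JNF}, it continues to have index $\geqslant 2$, so by Lemma~\ref{lem-basis for F^k} it lies in $F^{k-1}$. When $i(w) = 1$ and $t(w) < n-i-1+k$, the strengthened terminus statement in Lemma~\ref{lem-terminus of JNF}(c) yields $t(\sigma(x_{\ul{a},\ul{b}}) \cdot w) \leqslant n-i-1+k - 2 < n-i-1$ when the terminus strictly decreases on converting to Jones normal form, so the element vanishes in $\tl_n(a)\otimes_{\tl_{n-i-1}(a)}\t$; otherwise the terminus is unchanged at a value $\leqslant n-i-1+k - 1 \leqslant n - i - 1 + (k-1)$, placing the element in $F^{k-1}$. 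Thus $\Psi^k_i(x_{\ul{a},\ul{b}}\otimes 1)$ equals a unit multiple of the Jones basis element $\sigma(x_{\ul{a},\ul{b}}) \cdot U_1 U_2 \cdots U_{n-i-1+k}$ modulo $F^{k-1}$.

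Finally, I would verify that this assignment is a bijection on bases. Injectivity is immediate since $\sigma$ is injective on words and the appended block is identical for all source basis elements. For surjectivity: any Jones basis element $y_{\ul{a},\ul{b}}$ of $(F^k/F^{k-1})_i$ has index $1$ and terminus $n-i-1+k$, so its rightmost block is precisely $U_1 U_2 \cdots U_{n-i-1+k}$; the remaining prefix uses only generators $U_j$ with $j \geqslant 2$ and $j \leqslant n-1$, so it lies in the image of $\sigma\colon \tl_{n-1}(a) \to \tl_n(a)$ and corresponds to a unique source basis element whose terminus in $\tl_{n-1}(a)$ is $\geqslant n-i-1+k$ as required. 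Since $\Psi^k_i$ sends each source basis element to a unit multiple of the corresponding target basis element, and the change-of-basis matrix is diagonal with invertible entries, $\Psi^k_i$ is an isomorphism. The main technical obstacle throughout is the careful bookkeeping required to control both index and terminus after converting products to Jones normal form; everything else is, once assembled, essentially a counting argument.
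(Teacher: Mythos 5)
Your proof takes essentially the same approach as the paper: expand $s_1\cdots s_{n-i-1+k}$ via Lemma~\ref{lem-string s1 to sp lin comb}, identify the term $\sigma(x_{\ul{a},\ul{b}})\cdot U_1\cdots U_{n-i-1+k}$ as a Jones basis element of $(F^k/F^{k-1})_i$, argue the remaining terms are zero in the quotient, and conclude by a bijection on bases.

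There is, however, a flaw in your analysis of the remainder terms with $i(w)=1$ and $t(w)<n-i-1+k$. You assert that when the terminus strictly decreases under conversion to Jones normal form, one obtains $t(\sigma(x_{\ul{a},\ul{b}})\cdot w)\leqslant n-i-1+k-2 < n-i-1$, forcing the element to vanish. But the inequality $n-i-1+k-2<n-i-1$ is equivalent to $k<2$, so it fails for all $k\geqslant 2$. (There is also a smaller arithmetic slip: starting from $t(w)\leqslant n-i-1+k-1$, the strengthened estimate from Lemma~\ref{lem-terminus of JNF}(c) would give $n-i-1+k-3$, not $n-i-1+k-2$.) Your two-way split ``terminus decreases $\Rightarrow$ vanishes; terminus unchanged $\Rightarrow$ lies in $F^{k-1}$'' therefore omits the genuine possibility that the terminus decreases but remains $\geqslant n-i-1$, in which case the element does not vanish. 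Fortunately the overall conclusion survives, and the fix is simpler than what you attempted: one does not need Lemma~\ref{lem-terminus of JNF}(c) at all here. Converting $\sigma(x_{\ul{a},\ul{b}})\cdot w$ to Jones normal form gives a word of terminus at most $t(w)\leqslant n-i-1+(k-1)$, so by Lemma~\ref{lem-basis for F^k} that word either vanishes in $\tl_n\otimes_{\tl_{n-i-1}}\t$ (if its terminus is $<n-i-1$) or is a basis element of $F^{k-1}_i$ (if its terminus lies in $[n-i-1,\,n-i-1+(k-1)]$); in either case it is zero in $F^k/F^{k-1}$. With this correction the rest of your argument, including the treatment of the $i(w)\geqslant 2$ remainder terms and the surjectivity step, agrees with the paper's proof.
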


\section{Jones-Wenzl projectors and vanishing}\label{section-JW}

This section relates our results with the existence of the \emph{Jones-Wenzl projectors}, to strengthen our vanishing results when~$R$ is a field. This section is written such that the reader can read the introduction, the background on Temperley-Lieb algebras, and continue straight to this section. For the time being we make the substitutions~$a\leftrightarrow\delta$ and~$v\leftrightarrow q$, as is common in the recent literature concerning Jones-Wenzl projectors.

Throughout this section, we will consider a commutative ring $R$, a unit $q\in R$, the parameter $\delta=q+q^{-1}$, and we will work in $\tl_n(\delta)$.  Recall that we show in Theorem~\ref{theorem-invertible} that, when~$\delta$ is invertible,~$\Tor_\ast^{\tl_n(\delta)}(\t,\t)$ and $\Ext^\ast_{\tl_n(\delta)}(\t,\t)$ vanish in every non-zero degree. 
In this section we investigate the case where~$\delta=0$ and $R$ is a field using established results on Jones-Wenzl projectors. We prove the following theorem:

\setcounter{abcthm}{3}
\begin{abcthm}
    Let~$n=2k+1$, and let~$R$ be a field whose characteristic does not divide $\binom{k}{t}$ for any~$0\leq t\leq k$. 
    Let~$q$ be a unit in~$R$ and assume that~$\delta=q+q^{-1}=0$. 
    Then $\Tor_\ast^{\tl_n(0)}(\t,\t)$ and $\Ext_{\tl_n(0)}^\ast(\t,\t)$ vanish in positive degrees.
\end{abcthm}

For example, when $n=3$, $R$ is a field, and $\delta=q+q^{-1}$ for $q\in R^\times$, \red{combining this theorem with Theorem \ref{theorem-invertible}} demonstrates that $\Tor_\ast^{\tl_3(\delta)}(\t,\t)$ and $\Ext_{\tl_3(\delta)}^\ast(\t,\t)$ vanish in positive degrees with no further condition on $\delta$.
And if one wishes to show that 
$\Tor_\ast^{\tl_5(\delta)}(\t,\t)$ and $\Ext_{\tl_5(\delta)}^\ast(\t,\t)$ can be nonzero in positive degrees, then the only chance of this happening is in characteristic $2$.

The theorem is in strict contrast to the~$n$ even case, where we show in Theorem~\ref{theorem-invertible} that for a general ring~$R$ and $\delta$ not invertible,~$\Tor^{\tl_n(\delta)}_{n-1}(\t,\t)=R/bR$ is non-zero for~$b$ some multiple of~$\delta$.
Therefore, in the particular case where $n$ is even, $R$ is a field, and $\delta=0$, there can be no vanishing in all positive degrees.

\subsection{Jones-Wenzl projectors}

In this subsection we introduce the Jones-Wenzl projector and relate its existence to the projectivity of the trivial module $\t$.
The original references are~\cite{JonesIndex} and~\cite{WenzlProjections}, see also \cite{KauffmanLins}, \cite[section 4]{LickorishCalculations}.

\begin{defn}\label{def-JWn}
Recall that~$I_n\subseteq \tl_n$ is the two sided ideal generated by the~$U_i$ for $i=1,\ldots,n-1$. 
Then, if it exists, the\emph{~$n$th Jones-Wenzl projector~$\JW_n$} is the element of $\tl_n$ characterised by the following two properties:
\begin{enumerate}[(i)]
    \item $\JW_n \in 1+I_n$ and
    \item $I_n\cdot \JW_n=0=\JW_n\cdot I_n$.
\end{enumerate}
\end{defn}

\begin{lem}\label{lemma-jw-unique}
If~$\JW_n$ exists it is unique.
\begin{proof}
Suppose a second element~$\JW_n'$ in~$\tl_n$ satisfies (i) and (ii) of Definition~\ref{def-JWn}. Write~$\JW_n=1+i$ and $\JW_n'=1+i'$ for~$i, i' \in I_n$. Then~$\JW_n\cdot i'=0=i\cdot \JW_n'$ by (ii). It follows that 
\[
    \JW_n'=\JW_n' +i\cdot\JW_n'
    =(1+i)\cdot\JW_n'
    =\JW_n\cdot\JW_n'
\]
and similarly that $\JW_n=\JW_n\cdot \JW_n'$. 
\end{proof}
\end{lem}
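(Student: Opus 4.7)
The plan is to exploit the two defining properties of the Jones-Wenzl projector to show that if two candidates exist, they must coincide, by computing the product $\JW_n \cdot \JW_n'$ in two different ways.

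Suppose $\JW_n$ and $\JW_n'$ both satisfy conditions (i) and (ii). By condition (i), I would write $\JW_n = 1 + i$ and $\JW_n' = 1 + i'$ with $i, i' \in I_n$. The key observation is that condition (ii) applied to either element says that this element is annihilated by $I_n$ on both sides, so in particular $\JW_n \cdot i' = 0$ (since $i' \in I_n$) and $i \cdot \JW_n' = 0$ (since $i \in I_n$).

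Now I would compute the product $\JW_n \cdot \JW_n'$ two ways. Expanding the right-hand factor gives $\JW_n \cdot \JW_n' = \JW_n \cdot (1 + i') = \JW_n + \JW_n \cdot i' = \JW_n$. Expanding the left-hand factor gives $\JW_n \cdot \JW_n' = (1 + i) \cdot \JW_n' = \JW_n' + i \cdot \JW_n' = \JW_n'$. Comparing the two evaluations yields $\JW_n = \JW_n'$.

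There is no real obstacle here: the lemma is essentially a one-line consequence of the absorbing property (ii) combined with the normalisation (i). The only mild subtlety to note is that both sides of condition (ii) are used — one to kill $\JW_n \cdot i'$ and the other to kill $i \cdot \JW_n'$ — so the proof genuinely requires two-sided annihilation by $I_n$, rather than just one-sided.
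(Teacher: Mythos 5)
Your proof is correct and is essentially identical to the paper's: both decompose $\JW_n = 1+i$, $\JW_n' = 1+i'$, invoke (ii) to get $\JW_n \cdot i' = 0 = i \cdot \JW_n'$, and evaluate $\JW_n \cdot \JW_n'$ two ways. Your concluding remark that the argument genuinely uses annihilation by $I_n$ on both sides (the right-side absorption of one projector and the left-side absorption of the other) is a correct and worthwhile observation.
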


The Jones-Wenzl projector was first introduced by Jones~\cite{JonesIndex}, was further studied by Wenzl~\cite{WenzlProjections}, and has since become important in representation theory, knot theory and the study of $3$-manifolds. 
It is a key ingredient in the definition of the coloured Jones polynomial and $\mathrm{SU}(2)$ quantum invariants more generally, and is important in the study of tilting modules of (quantum) $\mathfrak{sl}_2$.

\subsection{$\JW_n$ and projectivity of $\t$}

We will now show that the Jones-Wenzl projector exists if and only if the trivial module $\t$ is projective.
Thus, existence of $\JW_n$ implies the vanishing of $\Tor_\ast^{\tl_n(\delta)}(\t,\t)$ and $\Ext^\ast_{\tl_n(\delta)}(\t,\t)$ in positive degrees.
Our own Theorem~\ref{theorem-invertible} implies that vanishing for $\delta$ invertible, while Theorem~\ref{theorem-sharpness} proves non-vanishing for $n$ even and $\delta$ not invertible. 
It turns out that there is a rich interplay between these two sources of (non-)vanishing results.

\begin{prop}\label{prop-JW and t projective}
$\JW_n$ exists if and only if~$\t$ is a projective left~$\tl_n(\delta)$-module, which is if and only if $\t$ is a projective right~$\tl_n(\delta)$-module.
\end{prop}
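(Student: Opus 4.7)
The plan is to prove the equivalence as a small cycle: existence of $\JW_n$ implies left-projectivity of $\t$, which is equivalent to right-projectivity via a natural anti-involution of $\tl_n(\delta)$, and right-plus-left projectivity together produce a $\JW_n$ via a product construction. The one genuinely new observation is that a one-sided projectivity cannot by itself supply a $\JW_n$ (it only yields one of the two annihilation conditions), so the anti-involution is essential.

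First I would record that $\tl_n(\delta)$ admits an anti-involution $*\colon\tl_n(\delta)\to\tl_n(\delta)^{op}$ fixing every $U_i$. This is immediate from Definition~\ref{definition-temperley-lieb}: each defining relation (commutation, $U_iU_jU_i=U_i$, and $U_i^2=\delta U_i$) is palindromic, so reversing the order of a word gives a well-defined $R$-algebra anti-homomorphism. Because $*$ fixes each generator of $I_n$, it sends $I_n$ to $I_n$. Moreover, the trivial left module $\t$ is carried to the trivial right module $\t$ under the $*$-twisting of the module structure (since $U_i$ still acts as zero). It follows that $\t$ is projective as a left $\tl_n(\delta)$-module if and only if it is projective as a right $\tl_n(\delta)$-module.

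Next I would show that existence of $\JW_n$ implies projectivity of $\t$. Using $\JW_n\in 1+I_n$ and $I_n\cdot \JW_n=0$, the left submodule $\tl_n(\delta)\cdot \JW_n$ equals $R\cdot \JW_n$ (since $\tl_n(\delta)=R\oplus I_n$ as $R$-modules, any $x\in\tl_n(\delta)$ can be written $r+j$ and then $x\JW_n=r\JW_n$). The map $R\to \tl_n(\delta)\cdot \JW_n$ sending $r\mapsto r\JW_n$ is injective (its composite with the quotient $\tl_n(\delta)\to\tl_n(\delta)/I_n\cong\t$ is the identity of $R$), and the action of each $U_i$ on $\JW_n$ is zero, so $\tl_n(\delta)\cdot \JW_n\cong\t$ as a left module. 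Finally $\tl_n(\delta)=\tl_n(\delta)\cdot \JW_n\oplus I_n$: the sum is everything because $1=\JW_n-(\JW_n-1)$, and the intersection is zero because any $r\JW_n\in I_n$ maps to $r\in R$ under $\tl_n(\delta)\to\tl_n(\delta)/I_n$ and also to $0$, forcing $r=0$. Thus $\t$ is a direct summand of the free module $\tl_n(\delta)$, hence projective (right-projectivity follows by symmetry, or by applying the anti-involution).

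For the remaining implication — projectivity implying existence of $\JW_n$ — assume $\t$ is projective on both sides. Splitting the quotient $\tl_n(\delta)\to\t$ of left modules gives an element $e\in\tl_n(\delta)$ with $e\in 1+I_n$ and $I_n\cdot e=0$ (the latter because any $j\in I_n$ acts on the image of $1\in\t$ as $j\cdot 1=0$). Analogously the right-module splitting produces $e'\in 1+I_n$ with $e'\cdot I_n=0$. I would then define $\JW_n:=e\cdot e'$ and verify the two required properties directly: $\JW_n\in 1+I_n$ because both factors lie in $1+I_n$; $I_n\cdot \JW_n=(I_n\cdot e)\cdot e'=0$; and $\JW_n\cdot I_n=e\cdot(e'\cdot I_n)=0$. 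The hardest step — or, more accurately, the only step requiring care — is this product construction: it is the bridge between the one-sided nature of projectivity and the two-sided annihilation demanded by the definition of $\JW_n$, and it is the reason the anti-involution had to be invoked so that both ingredients $e$ and $e'$ are available to begin with.
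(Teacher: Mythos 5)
Your proof is correct, and the converse direction (\emph{projectivity of $\t$ implies existence of $\JW_n$}) is handled by a genuinely different mechanism than the paper's. The paper establishes first, in its Lemma~\ref{lem-alternate JW (ii)}, that a one-sided annihilation condition already forces the full two-sided condition (i)+(ii): it uses the anti-automorphism $d\mapsto\bar d$ together with the uniqueness argument from Lemma~\ref{lemma-jw-unique} to upgrade (ii)$'$ to (ii). With that lemma in hand, the paper's proof of the proposition needs only a left-module splitting and is done. You dispense with Lemma~\ref{lem-alternate JW (ii)} and Lemma~\ref{lemma-jw-unique} entirely: you invoke the anti-involution \emph{earlier}, at the module-category level, to conclude that left- and right-projectivity of $\t$ are equivalent, and you then extract splittings on both sides to obtain $e\in 1+I_n$ with $I_n\cdot e=0$ and $e'\in 1+I_n$ with $e'\cdot I_n=0$, verifying by hand that $e\cdot e'$ satisfies the two defining conditions of $\JW_n$. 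Both approaches rely on the same anti-involution, but for different purposes; yours trades the paper's auxiliary lemma about one-sided conditions for a short explicit product construction. A further small improvement in your version is that you obtain the splitting element $e$ directly by lifting $\id_\t$ along the surjection $\tl_n(\delta)\twoheadrightarrow\t$, whereas the paper takes a detour through $\tl_n(\delta)^{\oplus N}$ (projective $=$ summand of free) before reducing to a single summand $\tl_n(\delta)$. Your argument is sound and self-contained.
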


Before proving the proposition we need the following.

\begin{lem}\label{lem-alternate JW (ii)}
In Definition \ref{def-JWn}, it is sufficient to replace (ii) with either
\begin{itemize}
    \item[(ii)'\phantom{'}] $I_n\cdot\JW_n=0$, or
    \item[(ii)''] $\JW_n\cdot I_n=0$.
\end{itemize}
\end{lem}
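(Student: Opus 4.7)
The plan is to exploit the natural anti-involution $\tau\colon \tl_n(\delta)\to \tl_n(\delta)$ that fixes each generator $U_i$; this is well-defined because the defining relations of Definition~\ref{definition-temperley-lieb} are invariant under reversing the order of a product (diagrammatically, $\tau$ is horizontal reflection of diagrams). Since $I_n$ is the two-sided ideal generated by the $\tau$-fixed elements $U_1,\ldots,U_{n-1}$, we have $\tau(I_n)=I_n$.

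Suppose $e=1+i$ with $i\in I_n$ satisfies property (ii)$'$, i.e.\ $I_n\cdot e=0$, and set $f:=\tau(e)=1+\tau(i)\in 1+I_n$. Since $\tau$ is an anti-automorphism and $\tau(I_n)=I_n$, we compute $f\cdot I_n = \tau(e)\cdot\tau(I_n)=\tau(I_n\cdot e)=0$, so $f$ satisfies (i) and (ii)$''$. The key step is then to evaluate the product $f\cdot e$ in two different ways. Using $f=1+\tau(i)$ together with $I_n\cdot e=0$ gives
\[
f\cdot e \;=\; e+\tau(i)\cdot e \;=\; e,
\]
while using $e=1+i$ together with $f\cdot I_n=0$ gives
\[
f\cdot e \;=\; f+f\cdot i \;=\; f.
\]
Comparing yields $e=f$, so $e$ satisfies (ii)$''$ as well, and hence is the Jones-Wenzl projector in the sense of Definition~\ref{def-JWn}.

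The converse direction, starting from an element $e=1+i$ satisfying (ii)$''$, is completely symmetric: apply $\tau$ to produce an $f$ satisfying (ii)$'$ and run the same two-sided computation of $f\cdot e$. There is no significant obstacle in the argument; the only point that requires even a modest check is the existence of $\tau$, which reduces to observing that each of the three families of relations in Definition~\ref{definition-temperley-lieb} reads the same when the order of every product is reversed.
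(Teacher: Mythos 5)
Your proof is correct and follows essentially the same route as the paper: the paper also defines the horizontal-reflection anti-automorphism and then invokes the uniqueness argument of Lemma~\ref{lemma-jw-unique} to deduce $\JW = \overline{\JW}$. You have simply written out explicitly the two-sided evaluation of $f\cdot e$ that the paper compresses into ``the argument of Lemma~\ref{lemma-jw-unique} can be repeated.''
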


\begin{proof}
Suppose~$\JW \in \tl_n$ satisfies (i) and (ii)'. We have suggestively named this element, and will show it is in fact~$\JW_n$, by showing that~$\JW$ also satisfies (ii)'' and hence (ii). Let~$\tl_n\to \tl_n, d\mapsto \overline{d}$ be the anti-automorphism which reverses the order of letters in a monomial i.e.~$\overline{U_{i_1}\ldots U_{i_n}}=U_{i_n}\ldots U_{i_1}$. In diagrammatic terms, this map flips the diagram corresponding to the monomial in the left-to-right direction. 
Since~$\JW$ satisfies (ii)', it follows that $\overline{\JW}$ satisfies (ii)''. 
Then the argument of Lemma~\ref{lemma-jw-unique} can be repeated to show that $\JW=\overline\JW$ so that~$\JW$ satisfies (ii)' and (ii)'', hence it satisfies (ii) and~$\JW=\JW_n$.
\end{proof}

\red{
\begin{proof}[Proof of Proposition~\ref{prop-JW and t projective}]
We prove the equivalence for left-modules.

If~$\JW_n$ exists, then the maps $\t\to\tl_n$, $1\mapsto\JW_n$ and $\tl_n\to\t$, $d\mapsto d\cdot 1$ 
are maps of left~$\tl_n$-modules composing to the identity. It follows that~$\t$ is a direct summand of~$\tl_n$, and thus is projective.

Conversely, if $\t$ is a projective left $\tl_n$-module, 
then the surjection $\tl_n\to\tl_n/I_n = \t$, 
regarded as a map of left $\tl_n$-modules,
has a splitting $s\colon\t\to\tl_n$, again a map of left $\tl_n$-modules.
By construction the element $s(1)$ then satisfies
condition (i) of~\ref{def-JWn} and condition (ii)' of 
\ref{lem-alternate JW (ii)}, so that $\JW_n=s(1)$ exists as required.
\end{proof}
}

\subsection{Jones-Wenzl projectors and quantum binomial coefficients}

Here we work in the Laurent polynomial ring $\Z[q,q^{-1}]$, and we set $\delta=q+q^{-1}$. \red{For this section, let~$n$ and $r$ be integers such that~$n \geqslant r \geqslant 0$.} 

\begin{defn}\label{defn-qbc and qI}
The \emph{quantum integer}~$\qi{n}$ is defined to be
$$
\qi{n}=\frac{q^n-q^{-n}}{q-q^{-1}}=q^{n-1}+q^{n-3}+\ldots + q^{-(n-3)} +q^{-(n-1)},
$$
the \emph{quantum factorial} $\qi{n}!$ is defined by 
\[
    \qi{n}!=\qi{n}\qi{n-1}\cdots\qi{1},
\]
and the \emph{quantum binomial coefficient}~$\qbc{n}{r}$ is then given by computing the normal binomial coefficient but replacing integers with quantum integers:
$$
\qbc{n}{r}=\frac{\qi{n}!}{\qi{r}!\qi{n-r}!}.
$$
\end{defn}

The quantum binomial coefficients satisfy the following recursion relations:
\begin{align*}
    \qbc{n}{r}&=q^{n-r\phantom{(-)}}\qbc{n-1}{r-1}+q^{-r}\qbc{n-1}{r}
    \\
    \qbc{n}{r}&=q^{-(n-r)}\qbc{n-1}{r-1}+q^{r\phantom{-}}\qbc{n-1}{r}.
\end{align*}
Either one of these relations gives an inductive proof that $\qbc{n}{r}$ lies in $\Z[q,q^{-1}]$. 
\red{
And taken together, these relations give an inductive proof that 
$\qbc{n}{r}$ is invariant under inverting $q$, 
and consequently that it lies in $\Z[\delta]$.
(Recall that $\delta=q+q^{-1}$.)
}
This means that we may evaluate $\qbc{n}{r}$ in any ring containing an element named $\delta$, to obtain an element of that ring, which we continue to denote by $\qbc{n}{r}$.

\red{The following result is proved in an Appendix to \cite{EliasLibendinsky} by Webster, using Schur-Weyl duality. For a purely diagrammatic approach see recent work of Spencer \cite[Section 10.3]{Spencer}.}

\begin{thm}[{\cite[Theorem A.2]{EliasLibendinsky}, \cite[Section 10.3]{Spencer}}] \label{prop:JW and qbc connection}
Let $R=\k$ be a field, let $q\in\k$ be nonzero, and set $\delta=q+q^{-1}$.
The~$n$th Jones-Wenzl projector~$\JW_n\in\tl_n(\delta)$ exists if and only if the {quantum binomial coefficients} 
$\qbc{n}{r}$ are non-zero in $\k$ for all~$0\leq r \leq n$.
\end{thm}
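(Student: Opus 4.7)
The plan is to establish the equivalence using Wenzl's inductive construction in one direction and a coefficient or cell-module calculation in the other. The key preliminary is the elementary fact that the condition ``$\qbc{n}{r}\neq 0$ for all $0\leq r\leq n$'' is equivalent, in the field $\k$, to the simpler condition ``$\qi{k}\neq 0$ for all $1\leq k\leq n$'': one direction uses the factorial formula $\qbc{n}{r}=\qi{n}!/(\qi{r}!\qi{n-r}!)$ viewed in $\k$ as a ratio of nonzero field elements, and the converse follows from $\qi{k}=\qbc{k}{1}$ together with the $q$-Pascal recursion, applied inductively to produce a vanishing $\qbc{n}{r}$ whenever some $\qi{k}=0$ with $k\leq n$.

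For the ``if'' direction, I would construct $\JW_n$ by Wenzl's classical recursion
\[
\JW_{k+1} \;=\; \JW_k \;-\; \frac{\qi{k}}{\qi{k+1}}\,\JW_k\,U_k\,\JW_k,
\]
starting from $\JW_1 = 1$ and regarding $\JW_k$ as an element of $\tl_{k+1}$ via the natural inclusion $\tl_k\hookrightarrow\tl_{k+1}$. The hypothesis $\qi{k+1}\neq 0$ ensures the expression is well defined in $\k$, and a simultaneous induction verifies both conditions (i) and (ii) of Definition~\ref{def-JWn} together with the companion identity $U_k\JW_kU_k = (\qi{k+1}/\qi{k})\,U_k$, which is precisely what is required to iterate the construction one more step.

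For the ``only if'' direction, I would exploit an explicit expansion of $\JW_n$ in the diagram basis of $\tl_n$, obtained by iterating Wenzl's recursion formally over $\mathbb{Q}(q)$. This expansion presents the coefficients of $\JW_n$ as rational functions in $q$ whose denominators are products of quantum integers $\qi{k}$ for $k\leq n$. Since $\JW_n$ is assumed to exist as an element of $\tl_n(\delta)$ over $\k$, these rational coefficients must specialise to well-defined elements of $\k$, forcing every $\qi{k}$, and hence every $\qbc{n}{r}$, to be nonzero. The main obstacle is exactly this converse direction: one must pinpoint a specific coefficient, or equivalently a representation-theoretic invariant such as the Gram determinant of a cell module of $\tl_n$, in which each $\qi{k}$ with $k\leq n$ appears essentially and irremovably — so that vanishing of any one of them obstructs the existence of $\JW_n$. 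In practice the cleanest route is the cellular approach: $\JW_n$ acts as a projector onto the top cell module, whose corresponding Gram form has discriminant computable as an explicit product of the $\qi{k}$, and the equivalence then follows from the standard dictionary between quasi-hereditary structure and existence of primitive idempotents.
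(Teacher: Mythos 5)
Your proposal has a fatal gap at the very first step: the claimed equivalence between ``$\qbc{n}{r}\neq 0$ for all $0\leq r\leq n$'' and ``$\qi{k}\neq 0$ for all $1\leq k\leq n$'' is false. Only one implication holds (non-vanishing of the quantum integers implies non-vanishing of the binomials, via the factorial formula). The converse fails: take $\delta=q+q^{-1}=0$, so $\qi{2}=0$, and $n=3$. Then $q^2=-1$ and $\qi{3}=q^2+1+q^{-2}=-1$, so $\qbc{3}{0}=\qbc{3}{3}=1$ and $\qbc{3}{1}=\qbc{3}{2}=-1$ are all nonzero even though $\qi{2}=0$. (This is exactly the content of Proposition~\ref{prop:quantum binomial triangle} and Figure~\ref{fig:quantumtriangle}: at $\delta=0$ the odd rows of the quantum Pascal triangle contain no zeros.) Your inductive appeal to the $q$-Pascal recursion does not propagate the zero: $\qbc{k+1}{1}=q^{k}\qbc{k}{0}+q^{-1}\qbc{k}{1}=q^{k}\neq 0$ even when $\qbc{k}{1}=\qi{k}=0$. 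Since both directions of your argument route through this false equivalence, both collapse. Indeed, if your equivalence were true, the theorem would assert that $\JW_3$ fails to exist at $\delta=0$, contradicting Theorem~\ref{thm-JW} of this paper, which relies on $\JW_{2k+1}$ existing at $\delta=0$ under mild characteristic hypotheses.

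Concretely: in the ``if'' direction, Wenzl's recursion cannot be used under the actual hypothesis, because at $\delta=0$ the step producing $\JW_2$ divides by $\qi{2}=0$; worse, $\JW_2$ genuinely does not exist there (since $\qbc{2}{1}=\delta=0$), yet $\JW_3$ does, so no argument that constructs $\JW_n$ by passing through $\JW_{n-1}$ can prove the stated theorem. In the ``only if'' direction, existence of $\JW_n$ does not force every $\qi{k}$ to be nonzero, because the coefficients of $\JW_n$, viewed as rational functions over $\mathbb{Q}(q)$, can remain regular at points where individual quantum integers vanish; the quantities that must be invertible are precisely the quantum binomial coefficients (specific diagram coefficients of $\JW_n$ equal $\pm\qbc{n}{r}^{-1}$), not the quantum integers. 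A correct proof has to work directly with the binomials — e.g.\ via such coefficient formulas, or via the Gram determinant of the top cell module expressed as a product of quantum binomials. Note also that the paper does not prove this statement at all: it is quoted from Elias \cite[Claim 4.4]{Elias}, so there is no internal proof to compare against beyond the citation.
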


\begin{remark}
    Suppose that $R=\k$ is a field.
    Whenever $\k$, $q$ and $n$ satisfy the conditions of Theorem~\ref{prop:JW and qbc connection}, we obtain the vanishing of $\Tor_\ast^{\tl_n(\delta)}(\t,\t)$ and $\Ext_{\tl_n(\delta)}^\ast(\t,\t)$ in positive degrees.
    (We will refer to this as simply ``vanishing'' for the present remark.)
    \begin{itemize}
        \item
        In the case of $n$ even, Theorems~\ref{theorem-invertible} and~\ref{theorem-sharpness} show that vanishing holds if and only if $\delta\neq 0$, and this is in fact stronger than the result obtained from Theorem~\ref{prop:JW and qbc connection}.
        For example, if we take $n=4$ then the $\qbc{n}{k}$ take values $1$, $\delta(\delta^2-2)$ and $(\delta^2-1)(\delta^2-2)$, so that Theorem~\ref{prop:JW and qbc connection} requires $\delta$ to avoid the values $0,\pm 1,\pm\sqrt{2}$.
        For $n$ even, $\delta$ is always a factor of $\qbc{n}{1}$, so that Theorem~\ref{theorem-invertible} will always apply more generally than Theorem~\ref{prop:JW and qbc connection} in this case.
        \item
        In the case of $n$ odd, the situation is more interesting.
        Theorem~\ref{theorem-invertible} demonstrates vanishing when $\delta\neq 0$.
        But if we take $n=3$, for example, then the $\qbc{n}{k}$ take values $1$ and $(\delta^2-1)$, so that Theorem~\ref{prop:JW and qbc connection} demonstrates vanishing so long as $\delta\neq\pm 1$.
        Neither of these vanishing results implies the other, but taken together they demonstrate vanishing for \emph{all} values of $\delta$.
    \end{itemize}
\end{remark}

\subsection{Identifying the quantum binomial coefficients}
In this section, we identify the quantum binomial coefficients upon specialising~$\delta=q+q^{-1}=0$. The results are assembled in the following proposition.
\begin{prop}\label{prop:quantum binomial triangle}
When~$\delta=q+q^{-1}=0$ then the quantum binomial coefficients have the following form:
\begin{itemize}
    \item When~$n$ is even and~$r$ is odd,~$\qbc{n}{r}=0$.
    \item When~$n$ and~$r$ are both even, let~$n=2a$ and~$r=2t$. Then $\qbc{n}{r}=\bc{a}{t}$.
    \item When~$n$ is odd and~$r$ is even, let~$n=2a+1$ and~$r=2t$. Then $\qbc{n}{r}=(-1)^t\bc{a}{t}$.
    \item When~$n$ and~$r$ are both odd, let~$n=2a+1$ and~$r=2t+1$. Then~$\qbc{n}{r}=(-1)^{a-t}\bc{a}{t}$.
\end{itemize}
\end{prop}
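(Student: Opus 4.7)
The plan is to prove Proposition~\ref{prop:quantum binomial triangle} by strong induction on $n$, using the recursion
\[
    \qbc{n}{r} = q^{n-r}\qbc{n-1}{r-1} + q^{-r}\qbc{n-1}{r}
\]
stated immediately after Definition~\ref{defn-qbc and qI}. Under the specialisation $\delta = q+q^{-1} = 0$ we have $q^{2} = -1$, hence $q^{2k} = (-1)^{k}$ for every integer $k$; this single identity collapses each recursive step to a purely classical statement about ordinary binomial coefficients.

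First I would dispatch the base case $n = 0$, where $\qbc{0}{0} = 1 = \binom{0}{0}$ agrees with case~2 at $a = t = 0$, and note the boundary conventions $\qbc{n-1}{-1} = 0$ and $\qbc{n-1}{n} = 0$. For the inductive step, fix $n \geqslant 1$ and split according to the parities of $n$ and $r$ into the four cases of the proposition, substituting the inductive hypothesis for $\qbc{n-1}{r-1}$ and $\qbc{n-1}{r}$ into the recursion.

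The four cases then proceed as follows. When $n = 2a$ and $r = 2t$, using $q^{2(a-t)} = (-1)^{a-t}$ and $q^{-2t} = (-1)^{t}$ reduces the recursion to classical Pascal's identity $\binom{a-1}{t-1} + \binom{a-1}{t} = \binom{a}{t}$. When $n = 2a$ and $r = 2t+1$, the two terms on the right of the recursion are equal in magnitude but opposite in sign after the substitution $q^{2k} = (-1)^{k}$, so they cancel and produce the vanishing claimed in case~1. When $n = 2a+1$, one of the two terms on the right of the recursion is indexed by $(n-1,\text{odd})$ and hence vanishes by the case~1 applied to $n-1$, so the entire recursion collapses to its surviving term, which yields cases~3 and~4 directly.

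I do not expect any serious obstacle: the argument is essentially bookkeeping, and the only care needed is in tracking signs and handling the endpoint values $r = 0$ and $r = n$. The conceptual point worth flagging is that the apparent asymmetry between cases~3 and~4 comes entirely from the fact that the recursion feeds an odd row of the quantum Pascal triangle from the even row above it, where the vanishing established in case~1 removes one of the two Pascal-like contributions.
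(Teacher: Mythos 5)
Your proof is correct, and it takes a genuinely different route from the paper.

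The paper derives the four formulas by appealing to D\'esarm\'enien's congruence
\[
    \gbc{n}{r}{p}\equiv\binom{a}{t}\gbc{b}{s}{p}\ \ \text{mod }\Phi_k
\]
with $k=2$, translating between Gaussian and quantum binomials via $\qbc{n}{r}=q^{r^2-nr}\gbc{n}{r}{q^2}$, and then computing the power $q^{r^2-nr}$ under $q^2=-1$. Your approach instead runs a direct strong induction on $n$ using only the quantum Pascal recursion and the single identity $q^{2k}=(-1)^k$. I checked the four cases: for $(n,r)=(2a,2t)$ the powers of $q$ collapse to $+1$ and you get classical Pascal; for $(2a,2t+1)$ the common factor $q^{-1}$ pulls out and the bracket is $(-1)^a+(-1)^{a-1}=0$; and for $n=2a+1$ with $r$ of either parity, the term of the recursion coming from the even row $n-1$ with odd index vanishes, leaving exactly the claimed sign times $\binom{a}{t}$. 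The boundary conventions $\qbc{n-1}{-1}=0$ and $\qbc{n-1}{n}=0$ are needed at $r=0$ and $r=n$ (and are what actually make the $n=1$ step work, since there is no odd index in row $0$ to invoke case~1 on); you flag this explicitly, so there is no gap. The trade-off is clear: your argument is elementary and self-contained, while the paper's is shorter on the page and comes from a citation that would generalize smoothly to other root-of-unity specializations of $\delta$ (i.e.\ other cyclotomic moduli $\Phi_k$), which your $k=2$-specific bookkeeping would not.
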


\begin{rem}
Proposition~\ref{prop:quantum binomial triangle} shows that the `quantum Pascal's triangle' with $\delta=0$ looks like a Pascal's triangle in the even rows, with every coefficient separated by a zero, and a `doubled' Pascal's triangle with signs on the odd rows. This is shown in Figure~\ref{fig:quantumtriangle}.
\end{rem}
\begin{figure}
    \centering
    \begin{tikzpicture}[scale=0.4]
    \foreach \x in {0,1,2,3,4,5,6,7,8,9}
    \draw  (\x,9-\x) node {$1$};
    \foreach \x in {0,1,2,3,4,5,6,7,8,9}
    \draw (-\x,9-\x) node {$1$};
    \foreach \x in {0,2,4,6}
    \draw  (\x,7-\x) node {$0$};
    \foreach \x in {0,2,4,6}
    \draw (-\x,7-\x) node {$0$};
    \foreach \x\y in {2/1, -2/1, 0/3}
    \draw (\x,\y) node {$0$};
    \foreach \x\y in {2/3, -2/3, 1/2, -1/2}
    \draw (\x,\y) node {$3$};
    \foreach \x\y in {-3/2, 3/2}
    \draw (\x,\y) node {$-3$};
    \foreach \x\y in {-4/1, 4/1}
    \draw (\x,\y) node {$4$};
    \foreach \x\y in {-3/0, 3/0,-5/0,5/0}
    \draw (\x,\y) node {$-4$};
    \foreach \x\y in {-1/6, 1/6, -5/2,5/2}
    \draw (\x,\y) node {$-1$};
    \foreach \x\y in {-1/4, 1/4}
    \draw (\x,\y) node {$-2$};
    \foreach \x\y in {-7/0, 7/0, -3/4, 3/4}
    \draw (\x,\y) node {$1$};
    \foreach \x\y in {1/0, -1/0, 0/1}
    \draw (\x,\y) node {$6$};
    \draw (0,5) node {$2$};
    \foreach \x in {0,1,2,3,4,5,6,7,8,9}
    \draw (-12,9-\x) node {$n=\x$};
    \end{tikzpicture}
    \caption{The quantum binomial coefficients with~$\delta=q+q^{-1}=0$.}
    \label{fig:quantumtriangle}
\end{figure}
The proof of the four points in this proposition are given by applying a result of D\'esarm\'enien \cite[Proposition 2.2]{Desarmenien}, which we recall below.
This result is given not in terms of \emph{quantum} binomials, but in terms of \emph{Gaussian} binomials, so we recall these first.

Let $p$ be an indeterminate.
The \emph{Gaussian binomial coefficients} are the quantities
\[
    \gbc{n}{r}{p} = \frac{\gi{n}{p}!}{\gi{r}{p}!\gi{n-r}{p}!}
\]
defined in terms of the \emph{Gaussian integers} $\gi{n}{p}=1+p+\cdots +p^{n-1}$ and \emph{Gaussian factorials} $\gi{n}{p}!=\gi{n}{p}\gi{n-1}{p}\cdots\gi{1}{p}$.
The relation between the Gaussian and quantum binomial coefficients is
\[
\qbc{n}{r}=q^{r^2-nr}\gbc{n}{r}{q^2}.
\]

\begin{prop}[{\cite[Proposition 2.2]{Desarmenien}}]\label{prop:desarmenien}
Fix a~$k\geq 0 \in \mathbb{N}$ and let~$\Phi_k$ be the~$k$th cyclotomic polynomial. Let~$n=ka+b$ and~$r=kt+s$ with~$0\leq b,s \leq k-1$. Then the Gaussian binomial coefficient satisfies the congruence
\[
\gbc{n}{r}{p}\equiv\bc{a}{t}\gbc{b}{s}{p} \mod \Phi_k.
\]
\end{prop}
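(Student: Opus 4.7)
The plan is to use the fact that $\Phi_k$ is the minimal polynomial of a primitive $k$-th root of unity $\zeta \in \mathbb{C}$ over $\mathbb{Q}$. Since both sides of the claimed congruence are integer polynomials in $p$, and $\Phi_k$ is monic, polynomial division shows that $\gbc{n}{r}{p} - \bc{a}{t}\gbc{b}{s}{p}$ is divisible by $\Phi_k$ in $\mathbb{Z}[p]$ if and only if it vanishes at $p = \zeta$ in $\mathbb{C}$. Thus it suffices to establish the numerical identity $\gbc{n}{r}{\zeta} = \bc{a}{t}\gbc{b}{s}{\zeta}$, which I will do via the product expansion
\[
    \gbc{n}{r}{p} = \prod_{i=1}^{r} \frac{p^{n-r+i}-1}{p^i-1},
\]
together with the fact that $\zeta^m - 1 = 0$ precisely when $k \mid m$.

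I first count the vanishing orders at $p = \zeta$. The denominator exponents $\{1,\dots,r\}$ contain exactly $t$ multiples of $k$; the numerator exponents $\{n-r+1,\dots,n\}$ contain $\lfloor n/k\rfloor - \lfloor (n-r)/k\rfloor$ multiples of $k$, which equals $t$ when $b \geq s$ and $t+1$ when $b < s$. In the degenerate case $b < s$, the numerator vanishes to strictly higher order, so $\gbc{n}{r}{\zeta} = 0$, and $\gbc{b}{s}{p} = 0$ by convention, so the identity holds trivially. Focusing on $b \geq s$, I resolve the $t$ coincident zeros in numerator and denominator using the Taylor expansion $p^{mk} - 1 = (mk/\zeta)(p - \zeta) + O((p-\zeta)^2)$. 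The denominator zeros sit at $i = k, 2k, \ldots, tk$ and the numerator zeros at $n-r+i = (a-t+1)k, \ldots, ak$, producing leading-coefficient ratio
\[
    \prod_{j=1}^{t} \frac{(a-t+j)k/\zeta}{jk/\zeta} = \prod_{j=1}^{t} \frac{a-t+j}{j} = \bc{a}{t}.
\]

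For the remaining non-zero factors I use $\zeta^k = 1$, so $\zeta^m - 1$ depends only on $m \bmod k$. Both intervals $\{1,\dots,r\}$ and $\{n-r+1,\dots,n\}$ decompose into $t$ complete blocks of $k$ consecutive integers (each contributing every nonzero residue $\ell \in \{1,\dots,k-1\}$ exactly once) plus a partial block of length $s$; the complete blocks produce identical contributions in the numerator and denominator and therefore cancel. The denominator's partial block has residues $\{1,\dots,s\}$, while the numerator's has residues $\{b-s+1, \dots, b\}$ (all nonzero precisely because $b \geq s$), giving a non-zero ratio
\[
    \prod_{\ell=1}^{s} \frac{\zeta^{b-s+\ell}-1}{\zeta^\ell-1} = \gbc{b}{s}{\zeta}.
\]
Multiplying this with the leading-coefficient ratio yields $\gbc{n}{r}{\zeta} = \bc{a}{t}\gbc{b}{s}{\zeta}$, as required. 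The most delicate step is the residue bookkeeping that identifies the numerator's partial block as $\{b-s+1,\dots,b\}$ and verifies that it avoids residue $0$ exactly when $b \geq s$; once that is pinned down, the formula assembles itself.
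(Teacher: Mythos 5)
The paper does not prove Proposition~\ref{prop:desarmenien}; it is cited verbatim from D\'esarm\'enien, so there is no in-text argument to compare you against, and your proof must be judged on its own. It is correct. The reduction from divisibility of an integer polynomial by $\Phi_k$ to vanishing at a single primitive $k$-th root of unity $\zeta$ is legitimate: $\Phi_k$ is monic with integer coefficients, so divisibility in $\Z[p]$ and $\mathbb{Q}[p]$ agree by Gauss's lemma, and irreducibility of $\Phi_k$ over $\mathbb{Q}$ reduces the latter to vanishing at one root. The multiplicity count is right --- $t$ zeros in both numerator and denominator when $b\geqslant s$, one surplus numerator zero when $b<s$, handled by the convention $\gbc{b}{s}{p}=0$. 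Resolving the matched $0/0$ pairs by Taylor expansion gives the ratio $\prod_{j=1}^t (a-t+j)/j=\bc{a}{t}$, and the non-vanishing factors give $\gbc{b}{s}{\zeta}$ once the $t$ complete residue cycles in each range cancel. The one step that deserves a line of justification rather than a parenthesis is the decomposition of $\{n-r+1,\dots,n\}$: any block of $kt$ consecutive integers realizes each residue class modulo $k$ exactly $t$ times, and after peeling those off, the surviving length-$s$ tail of $\{n-r+1,\dots,n\}$ has residues $\{b-s+1,\dots,b\}$, all nonzero precisely when $b\geqslant s$ (using $b,s\leqslant k-1$). Your L'H\^opital-style root-of-unity evaluation is a self-contained alternative to the more common generating-function proof of this $q$-Lucas congruence, and it handles the degenerate case $b<s$ transparently rather than by fiat.
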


\begin{proof}[Proof of Proposition \ref{prop:quantum binomial triangle}]
Note that when~$\delta=q+q^{-1}=0$ that rearranging this equation gives that~$q^{\pm2}=-1$. Recall that the parameter~$p$ in the Gaussian binomial coefficient is~$q^2$, and so~$p^2=q^4=1$.
We invoke Proposition \ref{prop:desarmenien} with~$k=2$. Then the cyclotomic polynomial~$\Phi_2(p)=1+p=1+q^2=0$.
Let~$n=2a+b$ and~$r=2t+s$ with~$0\leq b,s \leq 1$, then the quantum binomial coefficient satisfies
\[
\qbc{n}{r}=q^{r^2-nr}\gbc{n}{r}{q^2}=q^{r^2-nr}\bc{a}{t}\gbc{b}{s}{q^2}.
\]
When~$n$ is even and~$r$ is odd, $\gbc{b}{s}{q^2}=\gbc{0}{1}{q^2}=0$ which gives the first case of the proposition. For all other cases,~$\gbc{b}{s}{q^2}=1$ and so 
\[
\qbc{n}{r}=q^{r^2-nr}\bc{a}{t}.
\]
Computing the coefficient $q^{r^2-nr}$, using~$q^{\pm2}=-1$, yields the result for the remaining three cases.
\end{proof}

\subsection{Proof of theorem}
In this section we prove Theorem~\ref{thm-JW}.

\begin{proof}
This proof puts together three previous results. By Proposition \ref{prop-JW and t projective} we know that if $\JW_n$ exists then~$\t$ is projective and it follows that~$\Tor^{\tl(0)}_i(\t,\t)$ and $\Ext_{\tl_n(0)}^\ast(\t,\t)$ vanish for all~$i>0$. So it is enough to show that~$\JW_n$ exists under the hypotheses of the theorem. Proposition~\ref{prop:JW and qbc connection} tells us that~$\JW_n$ exists precisely when  the quantum binomial coefficients~$\qbc{n}{r}$ are non-zero for all~$1\leq r\leq n$. Finally, Proposition~\ref{prop:quantum binomial triangle} explicitly describes these coefficients when~$\delta=0$. We see that for~$n$ even, there is always a quantum binomial coefficient~$\qbc{n}{r}=0$ and so we learn nothing new. However when~$n=2k+1$ is odd, the quantum binomial coefficients take values in the set
\[\red{\left\{\pm \bc{k}{t}\, \middle| \,0\leq t\leq k\right\}}\]
and---up to sign---all values in this set are realised as some~$\qbc{n}{r}$. The hypotheses of the theorem precisely say that these numbers are non-zero in~$R$ and so the result follows. 
\end{proof}

\end{document}